\documentclass[a4paper,11pt]{article}

\usepackage{anyfontsize}

\usepackage{booktabs}
\usepackage{colortbl}
\usepackage[pdftex,dvipsnames,table]{xcolor}
\usepackage{xfrac}
\usepackage{xargs}
\usepackage[utf8]{inputenc}
\usepackage[mathscr]{euscript}
\usepackage{xspace}
\usepackage{graphicx}
\usepackage{color}
\usepackage{enumerate}
\usepackage[english]{babel}
\usepackage{verbatim}
\usepackage{float}
\usepackage{soul}
\usepackage{rotating}
\usepackage{caption}
\usepackage[draft]{fixme}
\usepackage{longtable}
\usepackage{mathtools}
\usepackage[shortlabels]{enumitem}
\usepackage{tabularx}
\usepackage{bbm}
\usepackage[inkscapelatex=false]{svg}
\usepackage{makecell}
\usepackage{algorithm}
\usepackage{stackengine}

\usepackage{caption}
\usepackage[labelfont=sf]{subcaption}
\definecolor{lightgray}{gray}{0.9}
\floatstyle{plain}
\restylefloat{table}

\makeatletter
\newcommand{\multiline}[1]{%
  \begin{tabularx}{\dimexpr\linewidth-\ALG@thistlm}[t]{@{}X@{}}
    #1
  \end{tabularx}
}
\makeatother

\usepackage{algorithm}%
\usepackage[noend]{algpseudocode}%

\usepackage{hyperref}
\usepackage{cleveref}

\makeatletter
\newcounter{HALG@line}
\renewcommand{\theHALG@line}{\thealgorithm.\arabic{ALG@line}}
\makeatother

\hypersetup{
    colorlinks,
    linkcolor={red!50!black},
    citecolor={blue!50!black},
    urlcolor={blue!80!black}
}

\newcommand{\mytitle}[1]{#1}

\newcommand{\mc}[1]{\mathcal{#1}}

\newcommand{\R}{\mathbb{R}}
\newcommand{\Z}{\mathbb{Z}}

\newcommand{\Q}{\mathbb{Q}}

\newcommand{\mb}[1]{\mathbb{#1}}

\newcommand{\tsc}[1]{\textsc{#1}}
\newcommand{\oa}[1]{\vec{#1}}
\newcommand{\allones}{\mathbf{1}}

\makeatletter
\DeclareRobustCommand{\cev}[1]{%
  {\mathpalette\do@cev{#1}}%
}
\newcommand{\do@cev}[2]{%
  \vbox{\offinterlineskip
    \sbox\z@{$\m@th#1 x$}%
    \ialign{##\cr
      \hidewidth\reflectbox{$\m@th#1\vec{}\mkern4mu$}\hidewidth\cr
      \noalign{\kern-\ht\z@}
      $\m@th#1#2$\cr
    }%
  }%
}
\makeatother

\makeatletter
\DeclareRobustCommand{\overrarrow}[1]{%
  {\mathpalette\do@overrarrow{#1}}%
}
\newcommand{\do@overrarrow}[2]{%
  \vbox{\offinterlineskip
    \sbox\z@{$\m@th#1 x$}%
    \ialign{##\cr
      \hidewidth$\m@th#1\vec{}\mkern2mu$\hidewidth\cr
      \noalign{\kern-\ht\z@}
      $\m@th#1#2$\cr
    }%
  }%
}
\makeatother

\providecommand{\keywords}[1]{\textit{Keywords:} #1}

\DeclareMathOperator{\proj}{\rm{proj}}
\DeclareMathOperator{\conv}{\rm{conv}}

\newcommand{\T}{\mathsf{\scriptscriptstyle T}} %

\newcommand{\Xsub}{\hyperref[set:subtour]{\mc{X}_{\tsc{sub}}}}
\newcommand{\vrpsd}[1]{\hyperref[problem:vrpsd]{\tsc{vrpsd}(#1)}}
\newcommand{\Xcvrp}{\hyperref[set:cvrp]{\mc{X}_{\tsc{cvrp}}}}
\newcommand{\Fq}[1]{\hyperref[set:feasible_region]{\mc{F}(#1)}}
\newcommand{\Qc}{\hyperref[eq:formula_dror]{\mc{Q}_C}}
\newcommand{\Xh}[1]{\hyperref[set:x_h]{\mc{X}_{=}(#1)}}
\newcommand{\supsetXh}[1]{\hyperref[set:x_supseteq_h]{\mc{X}_{\supseteq}(#1)}}

\newcommand{\action}{\hyperref[definition:recourse_policy]{\mc{Y}}}
\newcommand{\policy}{\hyperref[definition:recourse_policy]{\Pi}}
\newcommand{\kbar}[1]{\hyperlink{definition:k_bar}{\bar{k}(#1)}}
\newcommand{\kscen}[1]{\hyperlink{definition:k_xi}{k_\xi(#1)}}

\newcommand{\optQ}{\hyperref[definition:scen_opt_policy]{\mc{Q}^*}}
\newcommand{\optQscen}{\hyperref[definition:scen_opt_policy]{\mc{Q}^*_\xi}}

\newcommand{\dualset}{\hyperlink{definition:set_dual_multipliers}{\mc{A}}}

\newcommand{\Whs}{\hyperref[eq:whs]{W_{HS}}}
\newcommand{\Wof}{\hyperref[eq:wof]{W_{OF}}}
\newcommand{\Wdl}{W_{DL}}

\newcommand{\D}{\hyperref[subsection:input]{D}}
\newcommand{\flow}{\hyperref[lemma:flow_formulation]{\textsc{flow}}}
\newcommand{\sri}{\hyperref[theorem:sri_formulation]{\textsc{sri}}}
\newcommand{\srihat}{\hyperref[set:sri_bar]{\widehat{\textsc{sri}}}}
\newcommand{\set}{S}
\newcommand{\phiv}{\hyperref[eq:phi]{\phi_v}}
\newcommand{\Gammaset}
{\hyperref[set:gamma_single_inequality]{\Gamma}}
\renewcommand{\P}{\hyperref[set:P]{\mc{P}}}

\newcommand{\nufunction}[1]{\hyperref[eq:nu]{\nu(#1)}}
\newcommand{\lpopt}{\hyperref[problem:lagrangian_dual]{z^*}}
\newcommand{\sigmax}[1]{\hyperref[definition:sigma_x]{\sigma_x(#1)}}
\newcommand{\sigmay}[1]{\hyperref[definition:sigma_y]{\sigma_y(#1)}}

\usepackage[margin=1in]{geometry}
\usepackage{amsmath,amsthm, amssymb, amsfonts, amsbsy}
\usepackage{thmtools} %
\usepackage{thm-restate} %
\usepackage{authblk}
\providecommand{\keywords}[1]{\textit{Keywords:} #1}
\usepackage{natbib}
\setcitestyle{authoryear,round}

\makeatletter
\newtheoremstyle{mystyle}%
{\topsep}{\topsep}
{\itshape}{}
{\bfseries}{}
{0.5em}
{\thmname{\@ifempty{#3}{#1}\@ifnotempty{#3}{#3}}}
\makeatother

\theoremstyle{mystyle}

\newcommand{\Halmos}{\qed}

\theoremstyle{plain}
\newtheorem{theorem}{Theorem}
\newtheorem{proposition}{Proposition}
\newtheorem{corollary}{Corollary}
\newtheorem{lemma}{Lemma}
\newtheorem{claim}{Claim}
\newtheorem{fact}{Fact}

\theoremstyle{definition}

\newtheorem{definition}{Definition}
\newtheorem{example}{Example}
\newtheorem{remark}{Remark}
\newtheorem{assumption}{Assumption}

\newtheoremstyle{named}{}{}{}{}{\bfseries}{.}{.5em}{#1 #3}
\theoremstyle{named}

\newenvironment{APPENDICES}{\appendix}{}

\usepackage{silence}
\begin{document}
\title{\mytitle{On vehicle routing problems with stochastic demands -- Scenario-optimal recourse policies}}

\author[1]{Matheus J. Ota\thanks{(mjota@uwaterloo.ca)}}
\author[1]{Ricardo Fukasawa\thanks{(rfukasawa@uwaterloo.ca)}}
\affil[1]{University of Waterloo, Waterloo, Ontario, Canada}

\maketitle

Two-Stage Vehicle Routing Problems with Stochastic Demands (VRPSDs) form a class of stochastic combinatorial optimization problems where routes are planned in advance, demands are revealed upon vehicle arrival, and recourse actions are triggered whenever capacity is exceeded. Following recent works, we consider VRPSDs where demands are given by an empirical probability distribution of scenarios. Existing approaches rely on integer L-shaped (ILS) cuts, whose coefficients are tailored for specific recourse policies. In contrast, we propose a framework that casts recourse policies as solutions of a higher-dimensional mixed-integer program, and we characterize its convex hull in the original lower-dimensional space via a new class of inequalities called scenario recourse inequalities (SRIs). We show that SRIs are valid for any recourse policy satisfying mild assumptions and are sufficient for formulating the VRPSD under a scenario-optimal recourse policy, where the recourse actions are chosen optimally for each scenario. Under this latter policy, we also demonstrate that SRIs dominate several known classes of ILS cuts. We conduct computational experiments on the VRPSD with scenarios under both the classical and the scenario-optimal recourse policies. By using the SRIs, our algorithm solves 329 more instances to optimality  than the previous state-of-the-art ILS algorithm.

\keywords{integer programming, stochastic programming, vehicle routing problem.}

\section{Introduction}
\label{section:intro}

\textit{Two-Stage Vehicle Routing Problems with Stochastic Demands} (VRPSDs) constitute a class of stochastic variants of the classic \textit{Capacitated Vehicle Routing Problem} (CVRP) in which routing decisions are fixed in advance, customer demands are revealed upon vehicle arrival, and capacity violations trigger additional \emph{recourse actions}. Since their introduction over 50 years ago~\citep{tillman1969multiple}, VRPSDs have attracted considerable attention, with strong interest in recent years~\citep{gendreau201650th,louveaux2018exact,hoogendoorn2023improved,florio2022recent, florio2020, ota2024hardness,parada2024disaggregated, Salavati2019175,Salavati2019,salavati2019trsc,
legault2025superadditivity, part1, hoogendoorn2025evaluation}.

A key limitation of most existing literature is its reliance on simplifying assumptions on the demand distributions; in particular, most studies assume independence~\citep{gendreau201650th, laporte2002, jabali2014, Salavati2019175,Salavati2019,salavati2019trsc,hoogendoorn2023improved, parada2024disaggregated, legault2025superadditivity}. Motivated by scenario-based approaches in the general stochastic programming literature~\citep{birge2011introduction},~\cite{part1} recently addressed this limitation by using \textit{demand scenarios}. The authors also generalized previous \emph{integer L-shaped} (ILS) formulations for VRPSDs, allowing them to derive the first branch-and-cut algorithm for the VRPSD with scenarios under the \emph{classical recourse policy}, where a vehicle traverses the route and, as soon as the capacity gets exceeded, it returns to the depot to unload. 

For the most part, the approach of~\cite{part1} is independent of the scenario assumption and does not rely on a specific choice of a recourse policy. However, they use ILS cuts to capture recourse costs, and the coefficients of these cuts depend on the chosen recourse policy. As a result, ILS cuts derived for one policy cannot be directly applied to other policies. This policy-specific dependence is also present in all other ILS-based methods, whether they consider the classical~\citep{jabali2014, parada2024disaggregated, gendreau95}, \emph{optimal}~\citep{Salavati2019175, hoogendoorn2023improved, legault2025superadditivity}, or \emph{rule-based}~\citep{salavati2019trsc} recourse policies.

In this work, we develop an approach tailored to scenario-based VRPSDs that yields cuts whose validity is not tied to specific recourse policies. To accomplish this, we propose a common framework that can accommodate all recourse policies satisfying a natural assumption. Within this framework, recourse policies are represented as feasible solutions of a higher-dimensional mixed-integer program (MIP). This naturally leads us to define \emph{scenario-optimal recourse policies} as the optimal solutions of such a MIP, which correspond to policies that optimally select the recourse actions for each scenario. 

Beyond unifying the modeling of recourse policies, this representation reveals a previously unexplored polyhedral structure, which we use to derive our main results. First, although recourse policies are defined in a higher-dimensional space, we characterize their convex hull using inequalities in the original space. This leads to the introduction of \emph{scenario recourse inequalities} (SRIs), which are valid for any recourse policy satisfying the assumptions of our framework. Second, we show that SRIs enable a formulation for the VRPSD under a scenario-optimal recourse policy that includes decision variables for both the route edges and the recourse actions. Third, we project this last formulation onto the space of ILS formulations. Using our convex hull characterization, we express recourse lower bounds for scenario-optimal recourse policies as optimal values of linear programs (LPs). This allows us to apply LP duality to show that several known classes of ILS cuts are dominated by SRIs.

In summary, our contributions are:
\begin{itemize}
    \item We propose a common framework that accommodates several recourse policies and requires only mild assumptions (see Section~\ref{section:recourse_policies} and Assumptions~\ref{assumption:large_demands},~\ref{assumption:disaggregation},~\ref{assumption:recourse_lower_bound} and~\ref{assumption:recourse_lower_bound2});
    \item We describe the convex hull of recourse policies (Theorem~\ref{theorem:convex_hull});
    \item We define scenario-optimal recourse policies as optimal solutions within this framework, and we formulate the VRPSD under a scenario-optimal recourse policy using SRIs (Definition~\ref{definition:scen_opt_policy} and
    Theorem~\ref{theorem:sri_formulation});
    \item We demonstrate that SRIs are valid for the VRPSD with scenarios under any recourse policy that satisfies the assumptions of our framework (Proposition~\ref{proposition:valid_cuts_recourse});
    \item We show that, under a scenario-optimal recourse policy, several ILS inequalities are dominated by the SRIs (Theorem~\ref{theorem:dominance_sri}, Corollary~\ref{corollary:dominance_sri} and Theorem~\ref{theorem:dominance_set_cut});
    \item We present computational experiments that validate our theoretical findings and show that SRIs can substantially enhance the performance of a state-of-the-art branch-and-cut algorithm (Section~\ref{section:computation}). In particular, by incorporating SRIs, we are able to solve 329 more instances to optimality compared to the approach in~\cite{part1}.
\end{itemize}

The rest of this paper is organized as follows. Section~\ref{section:problem} formally defines the class of problems we study and briefly reviews concepts introduced by~\cite{part1}. Section~\ref{section:recourse_policies} defines recourse policies and characterizes their convex hull. Section~\ref{section:scenario_optimal} introduces scenario-optimal recourse policies and a VRPSD formulation based on SRIs. Section~\ref{section:projection} examines the projection of the polytope defined by SRIs onto the space of ILS formulations. Section~\ref{section:computation} presents computational results, and Section~\ref{section:conclusion} concludes the paper.

\noindent
\paragraph{\mytitle{\textbf{Notation.}}}
We let~$\R_+$ and~$\R_{++}$ denote the nonnegative and positive real numbers, respectively, with analogous notation for~$\Q$ and~$\Z$. For any real number~$a$, we define~$(a)^+ \coloneqq \max\{0, a\}$. For any integer~$a$, we define~$[a] \coloneqq \{1, \ldots, a\}$ if $a > 0$, and $[a] \coloneqq \emptyset$ otherwise. The symbol~$\mb{I}(\cdot)$ represents the indicator function. For any vector~$f$, we use~$f_i$ and~$f(i)$ interchangeably. For any function (or vector)~$f$ and a subset~$S$ of its domain (or coordinates),~$f(S) \coloneqq \sum_{i \in S} f(i)$. We write~$\mathbf{1}$ and~$\mathbf{0}$ to refer to the all-ones and all-zeroes vectors, respectively.

For an undirected graph~$G$, $V(G)$ and $E(G)$ refer to the sets of vertices and edges, while for a directed graph we denote its set of arcs by~$A(G)$. We sometimes write an edge~$\{u,v\}$ (or an arc~$(u,v)$) as~$uv$. For every~$S \subseteq V(G)$,~$\delta_G(S)$ (respectively,~$E_G(S)$) is the set of edges with exactly one endpoint (respectively, two endpoints) in~$S$. In addition, for any two disjoint subsets~$S,T \subseteq V(G)$, we define~$E_G(S,T) \coloneqq \{e \in E(G) : |e \cap S| = |e \cap T| = 1\}$. When the graph is clear from context, we omit~$G$ from the subscript. If~$G$ is a directed graph, then~$\delta_G^+(S)$ (respectively,~$\delta_G^-(S)$) denotes the sets of arcs~$uv \in A(G)$ with~$u \in S$ and~$v \notin S$  (respectively,~$u \notin S$ and~$v \in S$). If~$S$ is a singleton~$\{v\}$, we may write~$S = v$. 

\section{\mytitle{Problem description and ILS formulations}}
\label{section:problem}

As mentioned earlier, we consider in this work the \emph{VRPSD with scenarios}~\citep{part1}, a class of routing problems where customer demands are uncertain and modeled via a finite set of scenarios. We adopt the \emph{two-stage} (or \emph{a priori}) paradigm~\citep{OYOLA2018193, gendreau201650th}, where the \emph{first-stage} decisions specify a feasible \emph{routing plan}, while during the \emph{second-stage}, the planned routes are traversed, and the customer demands are revealed upon vehicle arrival. Consequently, a vehicle may have insufficient capacity to serve a customer, leading to a \emph{route failure}. To handle such failures, a given \emph{recourse policy} prescribes certain \emph{recourse actions} that the vehicle should execute --- typically, these are either \emph{back-and-forth} trips between the failure location and the depot, or \emph{preventive returns}, where the vehicle returns to the depot to unload and then proceeds directly to the next customer in the route. 

In this sense, all the different recourse policies previously proposed in the literature share the key property that the vehicle never carries more load than its capacity, and they only differ in when the trips to the depot are performed to unload~\citep{dror89, Yee1980, yang2000stochastic, Salavati2019175, Salavati2019, salavati2019trsc}. The goal of the VRPSD with scenarios is to find a routing plan that minimizes the sum of the first-stage routing costs and the expected costs of executing the recourse actions.

In Section~\ref{subsection:input}, we describe the input of the VRPSD with scenarios, and in Section~\ref{subsection:vrpsd}, we cast it as an instance of the more general class of problems introduced in~\cite{part1}. This allows us to represent several variants of the VRPSD with scenarios studied in this paper under a common notation. Sections~\ref{subsection:basic_formulation} and~\ref{subsection:ils} then briefly review key concepts from the ILS-based approach of~\cite{part1}, which will serve as a baseline for our method. 

We highlight that while Section~\ref{subsection:ils} presents in detail the ILS cuts from~\cite{part1}, these cuts are not used in this paper until Section~\ref{section:projection}, where we compare certain ILS cuts with the SRIs.

\subsection{\mytitle{Input data}}
\label{subsection:input}

Let~$G = (V, E)$ be a complete undirected graph with edge weights~$c \in \Q^E_+$. The vertex set~$V$ is partitioned as~$\{0\} \dot\cup V_+$, where~$0$ represents the depot and~$V_+$ denotes the set of customers. We denote by~$D = (V, A)$ the digraph obtained from~$G$ by replacing each edge with two arcs in opposite directions. Each vehicle has a capacity of~$C \in \Q_{++}$. 

Customer demands are modeled with a random vector~$d$, whose components~$d(v)$ correspond to the random demand of customer~$v \in V_+$. The random vector~$d$ follows a probability distribution~$\mb{P}$ that is \emph{given by~$N \in \Z_{++}$ scenarios.} We refer to any~$\xi \in [N]$ as a \emph{scenario}, and we associate with this scenario a \emph{demand vector}~$d^\xi \in \Q^{V_+}_+$ and a \emph{realization probability}~$p_\xi \in (0, 1] \cap \Q_+$. These parameters satisfy~$\sum_{\xi \in [N]} p_\xi = 1$ and~$\mb{P}(d = d^\xi) = p_\xi$, for all~$\xi \in [N]$. For convenience, we use~$\bar{d}$ as an abbreviation for~$\mb{E}[d]$. An instance of the VRPSD with scenarios is given by the tuple~$\mc{I} = (G, c, C, d^1, \ldots, d^N, p_1, \ldots, p_N)$, which we assume to be fixed throughout the entire paper. 

As in~\cite{part1}, we also assume that the customer demands in each scenario are never larger than the vehicle capacity:
\begin{assumption}
    \label{assumption:large_demands}
    For every~$\xi \in [N]$ and~$v \in V_+$, we have that~$d^\xi(v) \leq C$.
\end{assumption}

\noindent
The reasoning for Assumption~\ref{assumption:large_demands} is that, whenever scenario~$\xi$ is realized, the vehicle is guaranteed to execute~$\lfloor d^\xi(v) / C \rfloor$ trips between the depot and customer~$v$ in the second stage. Therefore, if~$\lfloor d^\xi(v) / C \rfloor \geq 1$, we can add~$p_\xi (2 c_{0v}) \cdot \lfloor d^\xi(v) / C \rfloor$ to the objective function and preprocess the demand accordingly. %

\subsection{\mytitle{Problem description}}
\label{subsection:vrpsd}

A \emph{route}~$R$ is a simple undirected cycle in~$G$ that includes the depot, i.e.,~$V(R) = \{0, v_1, v_2, \ldots, v_\ell\}$ and~$E(R) = \{\{0, v_1\}, \{v_1, v_2\}, \ldots, \{v_\ell, 0\}\}$, where~$v_1, \ldots, v_\ell \in V_+$ are all distinct. The set of customers in~$R$ is denoted~$V_+(R) = \{v_1, \ldots, v_\ell\}$ and we often represent~$R$ by the tuple~$(v_1, \ldots, v_\ell)$ (implicitly assuming~$v_0 = v_{\ell+1} = 0$). A \emph{subroute} of~$R$ is any route of the form~$R' = (v_i, \ldots, v_j)$ with~$1 \leq i \leq j \leq \ell$, and we write~$R' \subseteq R$ to indicate that~$R'$ is a subroute of~$R$ (even though~$R'$ is not necessarily a subgraph of~$R$). The notation~$c(R)$ is a shorthand for~$c(E(R))$. A \emph{routing plan}~$\mc{R} = \{R_1, \ldots, R_k\}$ is a collection of routes such that~$\{V_+(R_i)\}_{i = 1}^k$ forms a partition of~$V_+$ (note that~$k$ is not necessarily fixed here). 

When discussing specific recourse policies, it is common to distinguish between the two orientations of a route. Accordingly, we associate with each route~$R = (v_1, \ldots, v_\ell)$ two \emph{directed routes}~$\oa{R}, \cev{R} \subseteq D$, which are digraphs with the same vertex set as~$R$, but with arc sets~$A(\oa{R}) = \{(0, v_1), \ldots, (v_\ell, 0)\}$ and~$A(\cev{R}) = \{(0, v_\ell), \ldots, (v_1, 0)\}$. Similarly to undirected routes, we represent directed routes using tuples, so~$\oa{R} = (v_1, \ldots, v_\ell)$ and~$\cev{R} = (v_\ell, \ldots, v_1)$.

It is well known~\citep{TothV02, DantzigFJ54} that routing plans can be represented as integer vectors inside the polytope
\begin{equation}
\tag{$\mc{X}_{\tsc{sub}}$}
\label{set:subtour}
\mc{X}_{\tsc{sub}} =
\left\{x \in [0, 2]^{E}:~
\begin{aligned}
& x(\delta(v)) = 2, & \forall v \in V_+ \\
& x(E(S)) \leq |S| - 1, & \forall \emptyset \subsetneq S \subseteq V_+
\end{aligned}
\right\}.
\end{equation}
In this sense, we sometimes refer to a vector~$\bar{x} \in \Xsub \cap \Z^E$ as a routing plan, and we use~$\mc{R}(\bar{x})$ to denote its corresponding collection of routes. As mentioned before, the routing plans~$\bar{x} \in \Xsub\cap \Z^E$ may not all contain the same number~$k$ of routes.

Most works on the VRPSD (see Table~1 of~\cite{hoogendoorn2025evaluation}) assume that feasible routing plans satisfy the classical \emph{CVRP feasibility} conditions with respect to the expected demands. That is,~$k \in \Z_{++}$ is now given as an input parameter and routing plans~$x$ belong to the set
\begin{equation}
\tag{$\mc{X}_{\tsc{cvrp}}$}
\label{set:cvrp}
\mc{X}_{\tsc{cvrp}} := \mc{X}_{\tsc{sub}} \cap 
\left\{x \in [0, 2]^{E}:~
\begin{aligned}
& x(\delta(0)) = 2 k, & \\
& x(E(S)) \leq |S| - \bar{k}(S), & \forall \emptyset \subsetneq S \subseteq V_+
\end{aligned}
\right\},
\end{equation}
where \hypertarget{definition:k_bar}{$\bar{k}(S) \coloneqq \lceil \bar{d}(S) / C \rceil$}. Thus, in contrast to~$\Xsub$, routing plans~$\bar{x} \in \Xcvrp \cap \Z^E$ use exactly~$k$ routes.

\cite{hoogendoorn2025evaluation} question the use of~$\Xcvrp$ as somewhat arbitrary, and show that, in some settings, using~$\Xcvrp$ instead of~$\Xsub$ can significantly increase the solution cost. Therefore, we consider both sets in this work, and henceforth, whenever we write~$\mc{X}$, we assume that~$\mc{X} \in \{\Xsub, \Xcvrp\}$ (if~$\mc{X} = \Xcvrp$, we also assume that~$k$ is part of the input~$\mc{I}$).

To model the expected recourse cost of a route, we let~$\mc{Q}$ be a \emph{recourse function}~\citep{ota2024hardness}, meaning that it takes the fixed input~$\mc{I}$ as a parameter and maps each route~$R$ to a nonnegative rational number, i.e.,~$\mc{Q}(R ; \mc{I}) \in \Q_+$ for every route~$R$. Since~$\mc{I}$ is fixed, we omit it from the notation. For this fixed input, we define the \emph{VRPSD with scenarios}
with respect to~$\mc{X}$ and~$\mc{Q}$ as
\begin{equation}
\tag{$\tsc{vrpsd}(\mc{X}, \mc{Q})$}
\label{problem:vrpsd}
\min\left\{ \sum_{R \in \mc{R}(x)} [c(R) + \mc{Q}(R)] : x \in \mc{X} \cap \Z^E \right\}.
\end{equation}

We remark that the problem above has precisely the form of the \emph{\mytitle{vehicle routing problems with recourse} (VRPRs)} introduced by~\cite{ota2024hardness}. Problem~\ref{problem:vrpsd} can thus be viewed as a subclass of the VRPRs where the input~$\mc{I}$ is fixed as in Section~\ref{subsection:input}, and the recourse function~$\mc{Q}$ takes as input the route~$R$ (together with the scenario demands and probabilities) and outputs the expected cost of the trips to the depot that ensures the vehicle never carries more load than its capacity. Section~\ref{section:recourse_policies} will more formally define the conditions on~$\mc{Q}$.

With this notation, we can conveniently refer to several variants of the VRPSD with scenarios. For instance, if~$\Qc$ corresponds to the \emph{classical recourse policy} (see Example~\ref{example:classic_recourse}), then the problem considered in the computational experiments of~\cite{part1} is denoted as~$\vrpsd{\Xcvrp, \Qc}$. Following the work of~\cite{hoogendoorn2025evaluation}, we may also drop the assumption of CVRP feasibility, which yields the variant~$\vrpsd{\Xsub, \Qc}$. Furthermore, Section~\ref{section:scenario_optimal} introduces a new \emph{scenario-optimal} recourse function~$\optQ$, which gives rise to problems~$\vrpsd{\Xcvrp, \optQ}$ and~$\vrpsd{\Xsub, \optQ}$.

\subsection{\mytitle{Recourse disaggregations and feasible regions}}
\label{subsection:basic_formulation}

In this subsection, we review the \emph{recourse disaggregation} model studied in~\cite{part1}, which was motivated by the computational success of different algorithms that disaggregate the recourse function~\citep{hoogendoorn2023improved, parada2024disaggregated, legault2025superadditivity}. We refer the reader to~\cite{part1} for a more detailed discussion. For our purposes, we take from that work the following definition and assumption:
\begin{definition}
    \label{definition:recourse_disaggregation}
    For each route~$R$, let~$\{\mc{Q}(R, v)\}_{v \in V_+}$ be nonnegative rational numbers such that~$\mc{Q}(R) = \sum_{v \in V_+(R)} \mc{Q}(R, v)$. We refer to the values~$\{\mc{Q}(R, v) : \text{route~$R$,~$v \in V_+$}\}$ as a \emph{disaggregation} of~$\mc{Q}$. 
\end{definition}

\begin{assumption}
    \label{assumption:disaggregation}
    We have access to a disaggregation~$\{\mc{Q}(R, v) : \text{route~$R$,~$v \in V_+$}\}$ of~$\mc{Q}$ such that, for every route~$R$,~$\mc{Q}(R, v) = 0$, for all~$v \notin V_+(R)$.
\end{assumption}

\noindent
\cite{part1} has a discussion on why Assumption~\ref{assumption:disaggregation} is both natural and theoretically justified. From now on, whenever we write~$\mc{Q}(R, v)$, we implicitly refer to the values of some disaggregation of~$\mc{Q}$.

For any recourse function~$\mc{Q}$, there is always a choice of disaggregation that satisfies Assumption~\ref{assumption:disaggregation} (e.g., pick an arbitrary customer~$v \in V_+(R)$ and assign~$\mc{Q}(R, v) = \mc{Q}(R)$). More interestingly, the following example shows that, in some cases, the disaggregation can be chosen so that each term~$\mc{Q}(R, v)$ corresponds to the expected cost of executing recourse actions \emph{at customer~$v \in V_+(R)$} while traversing route~$R$.

\begin{example}
\label{example:classic_recourse}
Intuitively, the \emph{classical recourse policy} simply determines that the vehicle executes back-and-forth trips whenever it observes a failure. Since we assume~$\mb{P}$ is given by scenarios, we compute the recourse cost of the directed route~$\oa{R} = (v_1, \ldots, v_\ell)$ under the classical recourse policy using the following formula (see~\citep{dror89}):
\begin{equation}
    \tag{$\mc{Q}_C$}
    \label{eq:formula_dror}
    \mc{Q}_C(\oa{R}) = \sum_{j \in [\ell]} 2 \, c_{0v_j} \sum_{\xi \in [N]} p_\xi \, \sum_{t = 1}^\infty \, \mb{I}\left(\sum_{i \in [j - 1]} d^\xi(v_i) \leq t C < \sum_{i \in [j]} d^\xi(v_i)\right).
\end{equation}
Without loss of generality, assume~$\mc{Q}_C(\oa{R}) \leq \mc{Q}_C(\cev{R})$ and define~$\mc{Q}_C(R) \coloneqq \mc{Q}_C(\oa{R})$. The disaggregation used in~\cite{part1} is such that, for each~$j \in [\ell]$,~$\Qc(R, v_j) = 2 \, c_{0v_j} \sum_{\xi \in [N]} p_\xi \, \sum_{t = 1}^\infty \, \mb{I}\left(\sum_{i \in [j - 1]} d^\xi(v_i) \leq t C < \sum_{i \in [j]} d^\xi(v_i)\right).$~\hfill\Halmos
\end{example}

Having established Assumption~\ref{assumption:disaggregation},~\cite{part1} associate with a recourse function~$\mc{Q}$ (and its disaggregation) the feasible region
\begin{equation}
    \tag{$\mc{F}(\mc{X}, \mc{Q})$}
    \label{set:feasible_region}
    \mc{F}(\mc{X}, \mc{Q}) \coloneqq \left\{ (x, \theta) \in  (\mc{X} \cap \Z^E) \times \R^{V_+}_+ : \theta_v \geq \sum_{R \in \mc{R}(x)} \mc{Q}(R, v),\quad \forall v \in V_+ \right\},
\end{equation}

\noindent
and show that problem~$\vrpsd{\mc{X}, \mc{Q}}$ is equivalent to~$\min \{c^\T x + \allones^\T \theta : (x, \theta) \in \Fq{\mc{X}, \mc{Q}}\}$. The authors then argue that previous approaches~\citep{gendreau95, hoogendoorn2023improved, parada2024disaggregated, legault2025superadditivity} all replace the nonlinear constraints on the~$\theta$-variables with certain (linear) ILS cuts that preserve the optimal value of the problem. We discuss some of these cuts in the next subsection.

\subsection{\mytitle{ILS cuts}}
\label{subsection:ils}

Intuitively, ILS cuts are valid inequalities for~$\Fq{\mc{X}, \mc{Q}}$ with a simple structure: given~$\mc{X}' \subseteq \mc{X} \cap \Z^E$ and a lower bound~$\mc{L} \in \Q_+$, an ILS cut is \emph{active} whenever~$x \in \mc{X}'$, in which case it enforces a lower bound of~$\mc{L}$ on the sum of certain~$\theta$-variables; otherwise, the cut is \emph{inactive} and does not impose any additional restrictions. 

Formally, an affine function~$W(x ; \mc{X}')$ is called an \emph{activation function} (with respect to~$\mc{X}' \subseteq \mc{X} \cap \Z^E$) if, for every~$x \in \mc{X} \cap \Z^E$, we have $W(x ; \mc{X}') = 1$ whenever $x \in \mc{X}'$, and $W(x ; \mc{X}') \leq 0$ otherwise. Given a subset of customers~$\set \subseteq V_+$ and a recourse function~$\mc{Q}$ that satisfies Assumption~\ref{assumption:disaggregation}, we define an ILS cut as a valid inequality for~$\Fq{\mc{X}, \mc{Q}}$ of the form~$\theta(\set) \geq \mc{L} \cdot W(x ; \mc{X}')$,
where~$\mc{L} \in \Q_+$ is called a \emph{recourse lower bound} with respect to~$\mc{X}' \subseteq \mc{X} \cap \Z^E$ and~$\set \subseteq V_+$. By the definition of~$\Fq{\mc{X}, \mc{Q}}$, it follows that~$\sum_{v \in \set} \sum_{R \in \mc{R}(\bar{x})} \mc{Q}(R, v) \geq \mc{L}$, for every~$\bar{x} \in \mc{X}'$.

Next, we describe very briefly two of the main classes of ILS cuts in~\cite{part1}.

\paragraph{\mytitle{\textbf{Partial route cuts.}}}

Partial routes are generalizations of routes that were introduced by~\cite{hjorring1999new} and are commonly used in ILS-based algorithms for VRPSDs. A \emph{partial route} is a tuple~$H = (\set_1, \ldots, \set_\ell)$ of disjoint customer subsets such that there exists no index~$i \in [\ell]$ for which both~$\set_i$ and~$\set_{i + 1}$ are not singletons (for convenience,~$\set_0 = \set_{\ell+1} = \{0\}$). We often interpret~$H$ as a subgraph of~$G$ with vertex set~$V(H) \coloneqq \cup_{i \in [\ell + 1]} S_i$ and edge set~$E(H) \coloneqq \cup_{i \in [\ell]} (E(S_i, S_{i - 1} \cup S_{i + 1}) \cup E(S_i))$. In addition, the customers in~$H$ are represented by~$V_+(H) = V(H) \cap V_+$. We say that a route~$R = (v_1, \ldots, v_h)$ \emph{adheres} to partial route~$H = (\set_1, \ldots, \set_\ell)$ if~$V_+(R) = V_+(H)$ and, for each~$i \in [\ell]$, we can label~$\set_i = \{v^i_1, \ldots, v^i_{t_i}\}$ so that~$(v_1, \ldots, v_h) = (v^1_1, \ldots, v^1_{t_1}, \ldots, v^\ell_1, \ldots, v^\ell_{t_\ell})$. 

To refer to the set of routing plans containing routes (or subroutes) that adhere to~$H$, define
\begin{align}
    & \mc{X}_{=}(H) \coloneqq \left\{ x \in \mc{X} \cap \Z^E :~\text{$\exists R 
    \in \mc{R}(x)$ s.t.~$R$ adheres to~$H$} \right\}, \tag{$\mc{X}_{=}(H)$} 
    \label{set:x_h}\\
    & \mc{X}_{\supseteq}(H) \coloneqq \left\{ x \in \mc{X} \cap \Z^E :~\text{$\exists R \in \mc{R}(x) ~\text{and}~ R' \subseteq R$ s.t.~$R'$ adheres 
    to~$H$} \right\}. \tag{$\mc{X}_{\supseteq}(H)$} \label{set:x_supseteq_h}
\end{align}

\noindent
 \cite{part1} proposed the following activation function with respect to~$\supsetXh{H}$:
\begin{equation}
    \label{eq:wof}
    \tag{$W_{OF}$}
    W_{OF}(x; \supsetXh{H}) = 1 + (x(E(H) \setminus \delta(0)) - |V_+(H)| + 1) + \sum_{i \in \{2,  \ell - 1\} \cap [\ell]}(x(E(\set_i)) - |\set_i| + 1).
\end{equation}
They also have shown that the activation function of~\cite{hoogendoorn2023improved} can be expressed as
\begin{equation}
    \label{eq:whs}
    \tag{$W_{HS}$}
    W_{HS}(x; \Xh{H}) = \Wof(x ; \supsetXh{H}) + \sum_{i \in \{1, \ell\}} (x(\delta(S_i) \cap E(H)) + 2 x(E(S_i)) - 2 |S_i|).
\end{equation}
We refer the reader to~\cite{part1} for more details.

Let~$\mc{L}_=(H)$ and~$\mc{L}_{\supseteq}(H)$ be nonnegative rational values such that
\begin{align}
    & \theta(V_+(H)) \geq \mc{L}_=(H) \cdot \Whs(x ; \Xh{H}) \quad \text{and} \label{ineq:partial_route_cut1}\\
    & \theta(V_+(H)) \geq \mc{L}_{\supseteq}(H) \cdot \Wof(x ; \supsetXh{H}) \label{ineq:partial_route_cut2}
\end{align}
are valid for~$\Fq{\mc{X}, \mc{Q}}$. In other words, inequality~\eqref{ineq:partial_route_cut1} (respectively, inequality~\eqref{ineq:partial_route_cut2}) is an ILS cut, and~$\mc{L}_=(H)$ (respectively,~$\mc{L}_\supseteq(H)$) is a recourse lower bound with respect to~$\Xh{H}$ and~$V_+(H)$ (respectively,~$\supsetXh{H}$ and~$V_+(H)$). We henceforth refer to inequalities~\eqref{ineq:partial_route_cut1} and~\eqref{ineq:partial_route_cut2} as \emph{partial route cuts}.

When~$\mc{Q} = \Qc$, the value of~$\mc{L}_=(H)$ can be computed as described in~\cite{part1}. In the case that~$H$ corresponds to a route~$R$ (i.e., every set in~$H$ is a singleton), denoted as~$H = R$, they set~$\mc{L}_=(H) = \Qc(R)$, which implies that inequalities~\eqref{ineq:partial_route_cut1} ensure that the~$\theta$-variables correctly model the recourse cost of a solution. The computation of~$\mc{L}_{\supseteq}(H)$ is addressed later in Section~\ref{section:projection}.

\paragraph{\mytitle{\textbf{Set cuts.}}}

Set cuts are ILS cuts that enforce a lower bound on the recourse cost incurred when visiting a set of customers using the minimum required number of vehicles. They were introduced by~\cite{parada2024disaggregated} for the VRPSD under the classical recourse policy with specific probability distributions, and later extended to more general recourse functions by~\cite{legault2025superadditivity} and~\cite{part1}$.$

Let~$\emptyset \subsetneq \set \subseteq V_+$ and suppose that~$k' \in \Z_{++}$ is a lower bound on the number of routes required to serve the customers in~$\set$, i.e., inequality~$x(E(\set)) \leq |\set| - k'$ is valid for~$\mc{X} \cap \Z^E$. Define the set
\begin{align}
    & \mc{X}(\set, k') \coloneqq \left\{ x \in \mc{X} \cap \Z^E : x(E(S)) = |S| - k' \right\}. \label{set:x_set} \tag{$\mc{X}(S, k')$}
\end{align}
A \emph{set cut} is an ILS cut of the form
\begin{equation}
    \label{ineq:set_cut1}
    \theta(\set) \geq \mc{L}(\set) \cdot \Wdl(x ; \mc{X}(\set, k')),
\end{equation}
where~$\Wdl(x ; \mc{X}(\set, k')) \coloneqq 1 + (x(E(\set)) - |\set| + k')$ is the activation function employed by the DL-shaped method of~\cite{parada2024disaggregated}, and~$\mc{L}(S) \in \Q_+$ is a recourse lower bound with respect to~$\mc{X}(\set, k')$ and~$S$. We explain the specific recourse lower bound used by~\cite{part1} in Section~\ref{section:projection}.

\section{\mytitle{Recourse policies}}
\label{section:recourse_policies}

As mentioned in Sections~\ref{section:intro} and~\ref{section:problem}, there are multiple different recourse policies (and corresponding recourse functions) that have been studied in the literature. A key difficulty in placing these policies within a common framework is that they are typically defined through sets of rules used to compute the recourse, as illustrated in Example~\ref{example:classic_recourse}. In this section, we take a different view and cast \emph{recourse policies} and \emph{recourse actions} as feasible solutions of a MIP based on a network-flow formulation.

Section~\ref{subsection:recourse_policies_definition} provides the formal definition of recourse policies under this framework. Section~\ref{subsection:convex_hull} then leverages polyhedra and network-flow theory to characterize the convex hull of the set of recourse policies. While these structural results do not seem directly connected with the solution of problem~$\vrpsd{\mc{X}, \mc{Q}}$, subsequent sections will make use of them for that purpose. In particular, Section~\ref{section:scenario_optimal} shows that, by adding an objective function to the MIP formulation studied here, we are able to derive valid inequalities for problem~$\vrpsd{\mc{X}, \mc{Q}}$. %

\subsection{\mytitle{Definition via network flows}}
\label{subsection:recourse_policies_definition}

Henceforth, we reserve the notation~$y$ for a vector in~$\R^{[N] \times V_+}$ with entries~$y^\xi_v$, for each scenario~$\xi \in [N]$ and customer~$v \in V_+$. We often represent~$y$ with the tuple~$(y^1, \ldots, y^N)$, where, for each scenario~$\xi \in [N]$,~$y^\xi \in \R^{V_+}$ is the restriction of~$y$ to the entries~$\{y^\xi_v\}_{v \in V_+}$. 

Recall that~$\D = (V, A)$ is the complete digraph obtained from~$G$. Based on the intuitive description of the VRPSD given in the beginning of Section~\ref{section:problem} (where a vehicle never carries more load than its capacity), we formalize the notions of \emph{recourse actions} and \emph{recourse policies} as follows (see Figure~\ref{figure:recourse_action}).

\begin{definition}
\label{definition:recourse_policy}
Fix a directed route~$\oa{R} = (v_1, \ldots, v_\ell)$ and let~$\xi \in [N]$ (recall that~$v_0 = v_{\ell + 1} = 0$). A \emph{recourse action for~$\oa{R}$ and~$\xi$} is a vector~$y^\xi \in \Z^{V_+}_+$ such that there exist~$f^\xi \in \R^{A}_+$ and~$g^\xi \in \R^{V_+}_+$ satisfying
\begin{subequations}
\label{formulation:flow_feasibility}
\begin{align} 
& f^\xi_{(v_{i - 1}, v_i)} + d^\xi(v_i) = f^\xi_{(v_i, v_{i + 1})} + g^\xi_{v_i}, && \forall i \in [\ell], \label{flow:conservation} \\
& f^\xi_{(v_{i - 1}, v_i)} \leq C, && \forall i \in [\ell + 1], \label{flow:capacity} \\
& g^\xi_{v_i} \leq C \cdot y^\xi_{v_i}, && \forall i \in [\ell]. \label{flow:recourse}
\end{align}
\end{subequations}
The set of all recourse actions for~$\oa{R}$ and~$\xi$ is denoted~$\mc{Y}^\xi(\oa{R})$. A \emph{recourse policy for~$\oa{R}$} is a vector~$y = (y^1, \ldots, y^N) \in \Pi(\oa{R})$, where~$\Pi(\oa{R}) \coloneqq \mc{Y}^1(\oa{R}) \times \ldots \times \mc{Y}^N(\oa{R})$ is the set of all recourse policies for~$\oa{R}$.
\end{definition}

\noindent
Note that, although the constraints in Formulation~\eqref{formulation:flow_feasibility} only involve the vertices in~$V_+(\oa{R})$, the set~$\action^\xi(\oa{R})$ is a subset of~$\R^{V_+}_+$, so the entries of a recourse action~$y^\xi\in \action^\xi(\oa{R})$ that correspond to vertices not in~$\oa{R}$ may take any arbitrary value in~$\Z_+$. We adopted this definition to guarantee that~$y^\xi, f^\xi$ and~$g^\xi$ all lie in a common space across all directed routes, which will be useful later in Section~\ref{section:scenario_optimal}.

In a sense, Definition~\ref{definition:recourse_policy} is natural: for each scenario~$\xi \in [N]$, the flow variables~$f^\xi$ determine the load of the vehicle along the route, the recourse action~$y^\xi$ specifies at which customers and how many times we unload at the depot, and the flow variables~$g^\xi$ determine the corresponding amount that we unload. The next example shows that previously proposed recourse policies are indeed covered by Definition~\ref{definition:recourse_policy}.

\begin{example}
\label{example:recourse_action}
Fix a directed route~$\oa{R} = (v_1, \ldots, v_\ell)$. Suppose that a given recourse policy specifies, for each customer~$v_i$, a threshold value~$\tau_i \in [0, C] \cap \Q_+$ indicating that the vehicle should unload at the depot whenever the accumulated load upon arrival at~$v_i$ (including the demand of~$v_i$) exceeds~$\tau_i$. As observed in~\cite{Salavati2019}, several recourse policies fall within this setup: the classical recourse policy~\citep{gendreau95, laporte2002, jabali2014, christiansen2007, GAUVIN2014141} sets all the threshold values to~$C$; preventive recourse policies~\citep{Yee1980, hoogendoorn2023improved, yang2000stochastic, Salavati2019175, florio2020} sets the threshold values according to a dynamic-programming algorithm; and rule-based recourse policies assign thresholds according to other route measures such as volume, risk and distances (see~\cite{salavati2019trsc, Salavati2019} for more details).

Let~$d^\xi$ be any realization of the random vector~$d$ and let~$\bar{y}^\xi \in \Z^{V_+}_+$ be such that each entry~$\bar{y}^\xi_{v_i}$, with~$i \in [\ell]$, counts how many times the given recourse policy executes a recourse action at customer~$v_i$ (either through back-and-forth or preventive returns) under this realization. One can check that~$\bar{y}^\xi$ is feasible for Formulation~\eqref{formulation:flow_feasibility}. In fact,~$\bar{y}^\xi$ is feasible for a more restricted variant of Formulation~\eqref{formulation:flow_feasibility} where we add the constraints~$f^\xi_{(v_{i - 1}, v_i)} + d^\xi(v_i) \leq \tau_i + (C + d^\xi(v_i) - \tau_i) \, y^\xi_{v_i}$,
for all~$i \in [\ell]$. \hfill\Halmos 
\end{example}

Before continuing, we need to address a technicality concerning the directions of the routes. Lemma~\ref{lemma:symmetric_flow} shows that, whenever we speak of recourse policies (in the sense of Definition~\ref{definition:recourse_policy}), we may ignore route directions. In other words, for every scenario~$\xi \in [N]$ and route~$R$, we may write~$\policy(R) \coloneqq \Pi(\oa{R}) = \Pi(\cev{R})$ and~$\action^\xi(R) \coloneqq \mc{Y}^\xi(\oa{R}) = \mc{Y}^\xi(\cev{R})$. The proof is given in Appendix~\ref{appendix:proof_symmetric_flow} and was inspired by the work of~\cite{hernandez2003one}.

\begin{restatable}{lemma}{lemmasymmetricflow}
\label{lemma:symmetric_flow}
Let~$R$ be a route and~$\xi \in [N]$. For any vector~$y^\xi \in \Z^{V_+}$, we have that~$y^\xi \in \mc{Y}^\xi(\oa{R})$ if and only if there exist~$f^\xi \in \R^{A}_+$ and~$g^\xi \in \R^{V_+}_+$ such that
\begin{subequations}
\label{formulation:flow2_feasibility}
\begin{align} 
& f^\xi_{(v_{i - 1}, v_i)} + f^\xi_{(v_{i + 1}, v_i)} + d^\xi(v_i) = f^\xi_{(v_i, v_{i + 1})} + f^\xi_{(v_i, v_{i - 1})} + g^\xi_{v_i}, && \forall i \in [\ell], \label{flow2:conservation} \\
& f^\xi_{(v_{i - 1}, v_i)} \leq \frac{C}{2}, && \forall i \in [\ell + 1], \label{flow2:capacity1} \\
& f^\xi_{(v_i, v_{i - 1})} \leq \frac{C}{2}, && \forall i \in [\ell + 1], \label{flow2:capacity2} \\
& g^\xi_{v_i} \leq C \cdot y^\xi_{v_i}, && \forall i \in [\ell]. \label{flow2:recourse}
\end{align}
\end{subequations}
In particular, this implies that~$\mc{Y}^\xi(\oa{R}) = \mc{Y}^\xi(\cev{R})$.
\end{restatable}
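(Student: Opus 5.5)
Both implications are proved by explicit transformations of the flow vectors; the symmetric formulation is the ``one-commodity'' trick of \cite{hernandez2003one}. The idea is that a vehicle carrying load $f$ can be represented by a forward flow $\tfrac{f}{2}$ of \emph{load} and a backward flow $\tfrac{C-f}{2}$ of \emph{free space}.

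\textbf{($\Rightarrow$)} Let $(f^\xi,g^\xi)$ satisfy Formulation~\eqref{formulation:flow_feasibility} for $\oa{R}$. For each $i\in[\ell+1]$ define
\[
\tilde f_{(v_{i-1},v_i)} \coloneqq \tfrac{1}{2} f^\xi_{(v_{i-1},v_i)}, \qquad
\tilde f_{(v_i,v_{i-1})} \coloneqq \tfrac{1}{2}\bigl(C - f^\xi_{(v_{i-1},v_i)}\bigr),
\]
set all other arcs to zero, and keep $\tilde g = g^\xi$. By \eqref{flow:capacity} both values lie in $[0, C/2]$, so \eqref{flow2:capacity1}--\eqref{flow2:capacity2} hold. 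For conservation at $v_i$, the left-hand side of \eqref{flow2:conservation} is
\[
\tfrac12 f_{i-1,i} + \tfrac12 (C - f_{i,i+1}) + d^\xi(v_i),
\]
and the right-hand side is
\[
\tfrac12 f_{i,i+1} + \tfrac12 (C - f_{i-1,i}) + g^\xi_{v_i}.
\]
Their difference is $f_{i-1,i} + d^\xi(v_i) - f_{i,i+1} - g^\xi_{v_i}$, which is zero by \eqref{flow:conservation}. Constraint \eqref{flow2:recourse} is unchanged.

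\textbf{($\Leftarrow$)} Given a solution $(\tilde f,\tilde g)$ of Formulation~\eqref{formulation:flow2_feasibility}, define for each $i\in[\ell+1]$
\[
f_{(v_{i-1},v_i)} \coloneqq \tilde f_{(v_{i-1},v_i)} - \tilde f_{(v_i,v_{i-1})} + \tfrac{C}{2}
\]
and $g \coloneqq \tilde g$.
\begin{itemize}
\item Since both $\tilde f$ values lie in $[0,C/2]$, we get $f\in[0,C]$, so nonnegativity and \eqref{flow:capacity} hold.
\item Rearranging \eqref{flow2:conservation} gives
\[
\bigl(\tilde f_{(v_{i-1},v_i)}-\tilde f_{(v_i,v_{i-1})}\bigr) + d^\xi(v_i) = \bigl(\tilde f_{(v_i,v_{i+1})}-\tilde f_{(v_{i+1},v_i)}\bigr) + \tilde g_{v_i}.
\]
Adding $C/2$ to both sides yields \eqref{flow:conservation}. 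The constant $C/2$ cancels, so no boundary condition at the depot is needed.
\item \eqref{flow:recourse} is inherited directly.
\end{itemize}
Hence $y^\xi\in\mc{Y}^\xi(\oa{R})$.

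\textbf{Direction independence.} Formulation~\eqref{formulation:flow2_feasibility} depends only on the undirected route. Reversing $R$ relabels $v_i\mapsto v_{\ell+1-i}$, and this merely swaps the roles of \eqref{flow2:capacity1} and \eqref{flow2:capacity2} and permutes the terms of \eqref{flow2:conservation}. Applying the equivalence to both $\oa{R}$ and $\cev{R}$ therefore gives $\mc{Y}^\xi(\oa{R})=\mc{Y}^\xi(\cev{R})$.

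The only delicate step is the bookkeeping in the reverse direction: checking that the net flow plus $C/2$ stays in $[0,C]$ and that the constant cancels in conservation, including at the first and last customers, whose neighbours are the depot. Both checks are immediate from the bounds $0\le \tilde f\le C/2$ and the fact that conservation is imposed only at customers.
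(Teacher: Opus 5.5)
Your construction is the paper's: your forward map (forward arcs $\tfrac12 f^\xi$, reversed arcs $\tfrac12(C-f^\xi)$) is exactly the paper's average $(\overrightarrow{f}^\xi+\cev{f}^\xi)/2$ with $\cev{f}^\xi_{(v_{i+1},v_i)}=C-\overrightarrow{f}^\xi_{(v_i,v_{i+1})}$, and your reverse map (net flow plus $C/2$) is the paper's verbatim. The only difference is in the last claim: the paper obtains $\mc{Y}^\xi(\oa{R})=\mc{Y}^\xi(\cev{R})$ by noting that $\cev{f}^\xi$ itself certifies membership in $\mc{Y}^\xi(\cev{R})$, while you use the reversal symmetry of the undirected formulation. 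Both are valid.

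There is, however, one case your forward direction does not cover, and the paper's argument has the same blind spot: a single-customer route $R=(v_1)$. Then $v_0=v_2=0$, so $(v_0,v_1)$ and $(v_2,v_1)$ are the same arc $(0,v_1)$, and likewise $(v_1,v_0)=(v_1,v_2)=(v_1,0)$. The conservation constraint therefore reads $2f^\xi_{(0,v_1)}+d^\xi(v_1)=2f^\xi_{(v_1,0)}+g^\xi_{v_1}$; the paper points this out in Appendix~\ref{appendix:proof_flow_formulation}.

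Your definition prescribes both $\tfrac12 f^\xi_{(0,v_1)}$ and $\tfrac12(C-f^\xi_{(v_1,0)})$ for arc $(0,v_1)$, and these differ in general, so $\tilde f$ is not well defined. This is exactly the depot-adjacent degeneracy your closing remark overlooks, and it bites in the forward direction, not the reverse one. Adding the two values, as the paper's average does, violates the doubled conservation, and one of the $C/2$ bounds, unless $g^\xi_{v_1}=d^\xi(v_1)$.

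The repair is a one-line special case. For $\ell=1$, take $\tilde f=\tfrac12 f^\xi$ on $(0,v_1)$ and $(v_1,0)$ with no complementary flow. Then $2\tilde f_{(0,v_1)}+d^\xi(v_1)=f^\xi_{(0,v_1)}+d^\xi(v_1)=f^\xi_{(v_1,0)}+g^\xi_{v_1}=2\tilde f_{(v_1,0)}+g^\xi_{v_1}$, and both values lie in $[0,C/2]$.

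Your reverse direction already works for $\ell=1$: the two original arcs $(0,v_1)$ and $(v_1,0)$ are distinct, and the constant $C/2$ still cancels. The ``in particular'' claim is trivial there since $\oa{R}=\cev{R}$.
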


\begin{figure}[htb]
    \subcaptionsetup[figure]{position=bottom,font=footnotesize}
    \centering
    \subfloat[Recourse action~$y^\xi = (0, 0, 1, 0)^\T$ corresponding to the classical recourse policy.]{\includegraphics[width=0.35\textwidth]{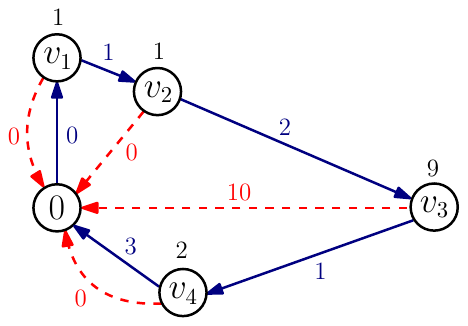} \label{figure:recourse_action_a}}
    \hspace{3cm}
    \subfloat[Recourse action~$y^\xi = (0, 1, 0, 1)^\T$.]{\includegraphics[width=0.35\textwidth]{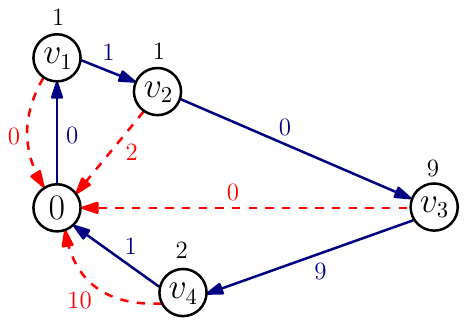}\label{figure:recourse_action_b}}
    \\[0.5cm]
    \caption{Recourse actions for an instance with 4 customers and~$C = 10$. We show recourse actions~$y^\xi = (y^\xi_{v_1}, y^\xi_{v_2}, y^\xi_{v_3}, y^\xi_{v_4})^\T$ for~$\oa{R} = (v_1, v_2, v_3, v_4)$ and a given scenario~$\xi \in [N]$. The black numbers (on top of the vertices) refer to the demands in scenario~$\xi$, the blue numbers (near the solid arcs) indicate the values of~$f^\xi$ and the red numbers (near the dashed arcs) indicate the values of~$g^\xi$.}
    \label{figure:recourse_action}
\end{figure}

\subsection{\mytitle{The convex hull of recourse policies}}
\label{subsection:convex_hull}

The following extension of Hoffman's circulation theorem~\citep{hoffman1958some} (which is a classical application of the max-flow/min-cut theorem, see Section 3.3 of~\cite{cook2011combinatorial}) will be key for our discussion.

\begin{theorem}[Theorem 3.18 of~\cite{cook2011combinatorial}] 
    Let~$D' = (V', A')$ be a directed graph. Let~$h \in \R^{A'}_+$ be a vector of arc capacities and let~$d' \in \R^{V'}$ be a vector of node demands such that~$d'(V') = 0$. There exists a vector~$f \in \R^{A'}_+$ satisfying
    \begin{subequations}
    \label{formulation:hoffman}
    \begin{align*} 
    & f(\delta^-_{D'}(v)) - f(\delta^+_{D'}(v)) = d'_v, & \forall v \in V', \\
    & 0 \leq f_a \leq h_a, & \forall a \in A',
    \end{align*}
    \end{subequations}
    if and only if~$h(\delta^-_{D'}(\set)) \geq d'(\set)$, for all~$\set \subseteq V'$ (or equivalently,~$h(\delta^+_{D'}(\set)) \geq d'(V' \setminus \set)$, for all~$\set \subseteq V'$).
    \label{theorem:hoffman}
\end{theorem}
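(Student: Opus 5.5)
The plan is to prove necessity by summing the conservation equations over a set, and sufficiency by reduction to the max-flow/min-cut theorem on an auxiliary network with a super-source and a super-sink.

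\emph{Necessity.} Suppose a feasible $f$ exists and fix $\set \subseteq V'$. Sum the conservation equations over $v \in \set$. Every arc with both endpoints in $\set$ enters one vertex of $\set$ and leaves another, so it cancels. What remains is $f(\delta^-_{D'}(\set)) - f(\delta^+_{D'}(\set)) = d'(\set)$. Using $f \geq 0$ and $f \leq h$, this gives $d'(\set) \leq f(\delta^-_{D'}(\set)) \leq h(\delta^-_{D'}(\set))$.

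\emph{The two conditions are equivalent.} Replace $\set$ by $V' \setminus \set$. Then $\delta^-_{D'}(V' \setminus \set) = \delta^+_{D'}(\set)$, and the hypothesis $d'(V') = 0$ is not even needed for this rewriting. Hence $h(\delta^-_{D'}(V'\setminus\set)) \geq d'(V'\setminus\set)$ for all $\set$ is exactly $h(\delta^+_{D'}(\set)) \geq d'(V'\setminus\set)$ for all $\set$.

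\emph{Sufficiency: the auxiliary network.} Vertices with $d'_v>0$ have net inflow, so they act as sinks; vertices with $d'_v<0$ act as sources. Build a network $N^*$ from $D'$ as follows.
\begin{itemize}
\item Add a source $s$ and a sink $t$.
\item For each $v$ with $d'_v < 0$, add an arc $(s,v)$ of capacity $-d'_v$.
\item For each $v$ with $d'_v > 0$, add an arc $(v,t)$ of capacity $d'_v$.
\item Keep the original arcs of $D'$ with capacities $h$.
\end{itemize}
Let $M \coloneqq \sum_{v : d'_v > 0} d'_v$. Because $d'(V') = 0$, $M$ also equals $\sum_{v: d'_v<0} (-d'_v)$.

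\emph{Correspondence with feasible $f$.} An $s$--$t$ flow of value $M$ must saturate every $s$-arc and every $t$-arc. Restricting it to $A'$ then gives a vector with $0 \leq f \leq h$ and inflow minus outflow equal to $d'_v$ at each $v$, which is the required $f$. So it suffices to show the maximum $s$--$t$ flow has value at least $M$. By max-flow/min-cut, this holds if every $s$--$t$ cut has capacity at least $M$.

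\emph{Cut bookkeeping (the main obstacle).} Take a cut with $s$-side $\{s\} \cup U$, where $U \subseteq V'$. Its capacity is
\[
\sum_{v \notin U,\, d'_v<0} (-d'_v) \;+\; h(\delta^+_{D'}(U)) \;+\; \sum_{v \in U,\, d'_v>0} d'_v .
\]
Subtract $M$ and cancel the positive-demand vertices inside $U$. The condition ``capacity $\geq M$'' becomes
\[
h(\delta^+_{D'}(U)) \;\geq\; \sum_{v \notin U,\, d'_v>0} d'_v \;+\; \sum_{v\notin U,\, d'_v<0} d'_v \;=\; d'(V' \setminus U).
\]
Setting $\set = V' \setminus U$, this is exactly $h(\delta^-_{D'}(\set)) \geq d'(\set)$, which is the hypothesis. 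This completes the argument.
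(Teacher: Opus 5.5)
Your proof is correct. Necessity follows from summing the conservation equations over $S$, and sufficiency from the super-source/super-sink reduction to max-flow/min-cut; your cut-capacity bookkeeping lands exactly on $h(\delta^-_{D'}(S)) \geq d'(S)$. The paper does not prove this statement itself but cites it as Theorem~3.18 of~\cite{cook2011combinatorial}, calling it a classical application of the max-flow/min-cut theorem, which is the route you take.
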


Let~$R = (v_1, \ldots, v_\ell)$ be a route. From now on, we abbreviate~$d^\xi(V_+(R))$ as~$d^\xi(R)$ and~$y^\xi(V_+(R))$ as~$y^\xi(R)$. Moreover, for every scenario~$\xi \in [N]$ and set~$\set \subseteq V_+$, we define~\hypertarget{definition:k_xi}{$k_\xi(\set) \coloneqq \lceil d^\xi(\set) / C \rceil$}, and we use~$\kscen{R}$ as a shorthand for~$\kscen{V_+(R)}$. In what follows, we recall that a subroute~$R'$ of~$R$, denoted~$R' \subseteq R$, is a route of the form~$R' = (v_i, \ldots, v_j)$, for some~$1 \leq i \leq j \leq \ell$.

The next lemma applies Hoffman's circulation theorem to project out the flow variables in Definition~\ref{definition:recourse_policy}. Note that this result does not fully describe the convex hull of recourse policies, as Definition~\ref{definition:recourse_policy} also requires~$y$ to be an integer vector.

\begin{lemma}
    \label{lemma:projection_recourse_action}
    Let~$R$ be a route,~$\xi \in [N]$ and~$\bar{y}^\xi \in \R_+^{V_+}$. There exist~$(f^\xi, g^\xi)$ feasible for Formulation~\eqref{formulation:flow_feasibility} with respect to~$\bar{y}^\xi$ if and only if~$\bar{y}^\xi(R') \geq (d^\xi(R') / C) - 1$, for every~$R' \subseteq R$.
\end{lemma}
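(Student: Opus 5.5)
Fix an orientation $\oa{R} = (v_1,\ldots,v_\ell)$. The plan is to rewrite Formulation~\eqref{formulation:flow_feasibility} as a flow problem on an auxiliary digraph and apply Theorem~\ref{theorem:hoffman}. The auxiliary digraph $D'$ has vertex set $\{0^s, v_1,\ldots,v_\ell, 0^t\}$, where the depot is split into a source copy $0^s$ and a sink copy $0^t$. Its arcs are:
\begin{itemize}
\item route arcs $(v_{i-1},v_i)$ for $i \in [\ell+1]$, with $v_0 = 0^s$, $v_{\ell+1} = 0^t$, each of capacity $C$;
\item unloading arcs $(v_i,0^t)$ for $i\in[\ell]$, of capacity $C\bar y^\xi_{v_i}$, carrying $g^\xi_{v_i}$.
\end{itemize}
Demand enters at each $v_i$ as a supply $d^\xi(v_i)$. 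To make the system a circulation with total demand zero, I would assign node demand $d'_{v_i} = -d^\xi(v_i)$ and $d'_{0^t} = d^\xi(R)$. For $0^s$ I would either set $d'_{0^s}=0$, or, more cleanly, add an arc $(0^t,0^s)$ of capacity $C$ so that the flow $f_{(0,v_1)}$ is recirculated. Under this correspondence, feasible $(f^\xi,g^\xi)$ for \eqref{flow:conservation}--\eqref{flow:recourse} correspond exactly to feasible flows in Theorem~\ref{theorem:hoffman}. The conservation equations match, and the total balance works out because the flow into $0^t$ equals $f_{(v_\ell,0)} + g(R) = d^\xi(R) + f_{(0,v_1)}$.

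By Theorem~\ref{theorem:hoffman}, feasibility is equivalent to $h(\delta^-(S)) \ge d'(S)$ for all $S\subseteq V'$. Since $d'(v_i) < 0$ or $=0$, only sets containing $0^t$ can have positive $d'(S)$, so I restrict to those. Write $S = \{0^t\} \cup T$ (possibly including $0^s$), with $T\subseteq V_+(R)$, and let $U = V_+(R)\setminus T$. Then
\[ d'(S) = d^\xi(R) - d^\xi(T) = d^\xi(U). \]
The incoming capacity $h(\delta^-(S))$ consists of:
\begin{itemize}
\item $C\bar y^\xi(U)$ from the unloading arcs leaving $U$;
\item $C$ times the number of route arcs entering $S$ from outside, counting the recirculation arc if $0^s\notin S$.
\end{itemize}

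The key reduction is that the binding cuts come from $U$ being a contiguous block $\{v_i,\ldots,v_j\}$, i.e.\ a subroute $R'$. For such a block, choosing $S$ to contain everything else (including $0^s$) makes exactly one route arc enter $S$, namely the arc out of $v_j$. The cut condition then reads $C\bar y^\xi(R') + C \ge d^\xi(R')$, which is precisely the stated inequality. This gives necessity.

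For sufficiency, a general $U$ decomposes into maximal contiguous blocks $R'_1,\ldots,R'_m$. Each block contributes at least one entering route arc (its last vertex points into $S$), so
\[ h(\delta^-(S)) \ge \sum_{r}\bigl(C\bar y^\xi(R'_r) + C\bigr) \ge \sum_r d^\xi(R'_r) = d'(S). \]
The degenerate case $U=\emptyset$ is trivial. The main bookkeeping obstacle is $0^s$ and the case $U = V_+(R)$: if $0^s\notin S$, the recirculation arc contributes an extra $C$, which only helps. I would verify this case carefully, and check that the arc $(v_\ell, 0^t)$ is internal to $S$ when $0^t\in S$, so that it does not count.

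I would also confirm that $d'(S)\le 0 \le h$ for sets not containing $0^t$, so that those cuts are automatic. Conservation then forces $f_{(0,v_1)}\ge 0$ and all remaining constraints hold.
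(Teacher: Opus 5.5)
Your proposal is correct and follows essentially the same route as the paper. Both apply Hoffman's theorem to the route digraph augmented with unloading arcs of capacity $C\bar{y}^\xi_{v_i}$ into the depot, then split each cut set into maximal subroutes, each of which contributes exactly one route arc of capacity $C$. The paper keeps a single depot vertex and uses the equivalent $\delta^+$ form over $S\subseteq V_+(R)$, which is essentially the complement of your $S$.

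One bookkeeping slip does not affect the argument: the recirculation arc $(0^t,0^s)$ never enters a set containing $0^t$. The extra $C$ you get when $0^s\notin S$ actually comes from the arc $(0^s,v_1)$ when $v_1\in S$, and either way it only increases $h(\delta^-(S))$.
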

\begin{proof}
    Let~$R = (v_1, \ldots, v_\ell)$ and let~$D'$ be a digraph with vertex set~$V(R) = \{0, v_1, \ldots, v_\ell\}$ and arc set given by the union of the sets~$A_1 = A(R)$ and~$A_2 = \{(v_i, 0)\}_{i \in [\ell]}$. Set~$d'_0 = d^\xi(R)$ and~$d'_{v_i} = - d^\xi(v_i)$, for each~$i \in [\ell]$. Additionally, set arc arc capacities as~$h_a = C$ if~$a \in A_1$, and~$h_a = C \, \bar{y}^\xi_{v_i}$ if~$a = (v_i, 0) \in A_2$. 
    
    Theorem~\ref{theorem:hoffman} implies that Formulation~\eqref{formulation:flow_feasibility} is feasible if and only if~$h(\delta^+_{D'}(\set)) \geq d'(V(R) \setminus \set)$, for every~$\set \subseteq V_+(R)$ (since~$d'_v$ may be positive only for~$v = 0$). Fix such a set~$\set \subseteq V_+(R)$ and decompose it as the disjoint union~$\set = \dot\cup \{V_+(R_q)\}_{q \in [t]}$, where~$R_1, \ldots, R_t$ are maximal disjoint subroutes of~$R$. That is, for each~$R_q = (v_i, \ldots, v_j)$, we have that~$v_{j + 1} \notin S$, so~$(v_j, v_{j + 1}) \in \delta^+_{D'}(V_+(R_q))$ (note that if~$j = \ell$, then~$v_{j + 1} = 0$). 
    
    By the choice of the subroutes, it follows that~$h(\delta^+_{D'}(\set)) = C \sum_{i = 1}^p (\bar{y}^\xi(R_i) + 1)$. In addition,~$d'(V(R) \setminus \set) = d^\xi(\set) = \sum_{i = 1}^p d^\xi(R_i)$, so we conclude that~$h(\delta^+_{D'}(\set)) \geq d'(V(R) \setminus \set)$ is equivalent to~$C \sum_{i = 1}^p (\bar{y}^\xi(R_i) + 1) \geq \sum_{i = 1}^p d^\xi(R_i)$, proving that the conditions in the lemma are sufficient for the existence of the desired~$(f^\xi, g^\xi)$. Necessity follows from the case where~$S$ is such that~$t = 1$.
\end{proof}

Lemma~\ref{lemma:projection_recourse_action} allows us to derive a \emph{perfect formulation} (see Chapter~4 of~\cite{ipbook}) for the set of recourse policies. The key observation that enables this result is that the matrix associated with the constraints in Lemma~\ref{lemma:projection_recourse_action} has the \emph{consecutive ones} property, meaning that it is \emph{totally unimodular} (TU).

\begin{theorem}
\label{theorem:convex_hull}
For every route~$R$,
\begin{equation*}
\conv(\policy(R)) =
\left\{y \in \R^{[N] \times V_+}_+:~y^\xi(R') \geq \kscen{R'} - 1, \quad \forall R' \subseteq R,~\xi \in [N]
\right\}.
\end{equation*}
\end{theorem}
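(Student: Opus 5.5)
The plan is to reduce the statement to a single scenario and a single route, and then combine Lemma~\ref{lemma:projection_recourse_action} with total unimodularity. Since $\policy(R) = \action^1(R) \times \cdots \times \action^N(R)$, and the convex hull of a Cartesian product is the product of the convex hulls, it suffices to show, for each fixed $\xi \in [N]$, that
\[
\conv(\action^\xi(R)) = P^\xi \coloneqq \left\{ y^\xi \in \R^{V_+}_+ : y^\xi(R') \geq \kscen{R'} - 1,~\forall R' \subseteq R \right\}.
\]

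\textbf{Step 1: integer points.} By Lemma~\ref{lemma:projection_recourse_action}, an integer vector $y^\xi \in \Z^{V_+}_+$ belongs to $\action^\xi(R)$ if and only if $y^\xi(R') \geq d^\xi(R')/C - 1$ for all $R' \subseteq R$. Because the left-hand side is an integer, this holds if and only if $y^\xi(R') \geq \lceil d^\xi(R')/C \rceil - 1 = \kscen{R'} - 1$. Hence $\action^\xi(R) = P^\xi \cap \Z^{V_+}$, which gives the inclusion $\conv(\action^\xi(R)) \subseteq P^\xi$, since $P^\xi$ is convex.

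\textbf{Step 2: integrality of $P^\xi$.} The constraint matrix, restricted to the coordinates $v_1,\dots,v_\ell$, has one row per subroute $(v_i,\dots,v_j)$ with ones exactly in columns $i,\dots,j$. It therefore has the consecutive-ones property, i.e.\ it is an interval matrix and hence TU. Appending the identity rows for nonnegativity preserves TU. The right-hand sides $\kscen{R'} - 1$ are integers, so by Hoffman--Kruskal the polyhedron $P^\xi$ restricted to the route coordinates is integral.

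\textbf{Step 3: handling the remaining coordinates.} This is the main obstacle. The coordinates $v \notin V_+(R)$ appear in no constraint except nonnegativity, so $P^\xi$ is the product of an integral polyhedron with $\R_+^{V_+\setminus V_+(R)}$. It is unbounded, so it is not a polytope; likewise $\action^\xi(R)$ is infinite and its convex hull need not be closed a priori. I will show directly that $P^\xi = \conv(P^\xi \cap \Z^{V_+})$. By Minkowski--Weyl, $P^\xi = \conv(\text{vertices}) + \text{recession cone}$. The vertices are integral by Step 2. The recession cone is $\R^{V_+}_+$, because all constraint coefficients are nonnegative and so increasing coordinates preserves feasibility. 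Thus any $y \in P^\xi$ can be written as $\sum_k \lambda_k z^k + r$ with integral vertices $z^k$ and $r \geq 0$. I then write $r$ as a convex combination of integer-lattice directions: take $z^k + \lceil \cdot \rceil$-scaled points $z^k + M e_v$ with $M$ large, noting that these stay in $P^\xi \cap \Z^{V_+}$ by monotonicity. This expresses $y$ as a finite convex combination of integer points of $P^\xi$, which yields $P^\xi \subseteq \conv(\action^\xi(R))$ and completes the proof.
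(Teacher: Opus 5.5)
Your proposal is correct and follows essentially the paper's argument. You characterize the integer points via Lemma~\ref{lemma:projection_recourse_action} plus rounding, then use the consecutive-ones (interval, hence TU) structure of the constraints to get integrality; the scenario-by-scenario splitting and your Step~3 only add detail, since the paper obtains $P = \conv(P \cap \Z^{[N]\times V_+})$ for the right-hand-side polyhedron $P$ directly from the standard Hoffman--Kruskal/integral-polyhedron theorem, which already covers unbounded rational polyhedra. If you keep Step~3, replace the garbled ``$\lceil\cdot\rceil$-scaled points'' sentence with the explicit identity $z^k + r = \bigl(1 - \sum_v r_v/M\bigr) z^k + \sum_v (r_v/M)\,(z^k + M e_v)$ for an integer $M \geq \sum_v r_v$, whose points $z^k + M e_v$ lie in $P^\xi \cap \Z^{V_+}$ by monotonicity.
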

\begin{proof}
Fix a route~$R$ and let~$\tilde{\Pi}$ be the set in the RHS of the statement. We first show that~$\policy(R) = \tilde{\Pi} \cap \Z^{[N] \times V_+}$. If~$\bar{y} \in \tilde{\Pi} \cap \Z^{[N] \times V_+}$, then it follows from Lemma~\ref{lemma:projection_recourse_action} that~$\bar{y} \in \policy(R)$. Conversely, if~$y' \in \policy(R)$, then~$y'$ satisfies the inequalities in Lemma~\ref{lemma:projection_recourse_action}, and as~$y'$ is integer, we can round the RHS of these inequalities to learn that~$y' \in \tilde{\Pi} \cap \Z^{V_+}$. 

Let~$M$ be the matrix given by the inequalities defining~$\tilde{\Pi}$. Note that we may rearrange the columns of~$M$ so that its rows have the consecutive ones property, so we know that~$M$ is TU (see Corollary~2.10 of~\cite{NemhauserW88}). Hence, Theorem~4.4 of~\cite{ipbook} implies that~$\tilde{\Pi}$ is integral, and by the definition of an integral convex set,~$\tilde{\Pi} = \conv(\tilde{\Pi} \cap \Z^{[N] \times V_+}) = \conv(\policy(R))$.
\end{proof}

The same argument can be easily extended to the multi-route setting and to the case where upper bounds are imposed on the recourse action vectors. We leave the proof to Appendix~\ref{appendix:proof_corollary_convex_hull}.
\begin{restatable}{corollary}{corollaryconvexhull}
    \label{corollary:convex_hull2}
    Let~$\mc{R}$ be a collection of disjoint routes and let~$b \in \Z^{V_+}_+$. Then
    \begin{equation*}
    \conv\left( \bigcap_{R \in \mc{R}} (\policy(R) \cap [\mathbf{0}, b]^N) \right) =
    \left\{y \in [\mathbf{0}, b]^N:~y^\xi(R') \geq \kscen{R'} - 1, \quad \forall R \in \mc{R},~R' \subseteq R,~\xi \in [N]
    \right\}.
    \end{equation*}
\end{restatable}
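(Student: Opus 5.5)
The plan mirrors the proof of Theorem~\ref{theorem:convex_hull}: first identify the integer points, then show that the constraint matrix is totally unimodular. Let $\tilde{\Pi}_b$ denote the right-hand side of the statement.

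\textbf{Step 1: integer points.} We show that $\tilde{\Pi}_b \cap \Z^{[N]\times V_+} = \bigcap_{R \in \mc{R}} (\policy(R) \cap [\mathbf{0}, b]^N)$. Take an integer $\bar y$ in $[\mathbf{0},b]^N$. It lies in $\policy(R)$ if and only if, for each $\xi$, the vector $\bar y^\xi$ admits flows $(f^\xi,g^\xi)$ for Formulation~\eqref{formulation:flow_feasibility}. By Lemma~\ref{lemma:projection_recourse_action}, this holds if and only if $\bar y^\xi(R') \geq d^\xi(R')/C - 1$ for all $R'\subseteq R$. Because $\bar y$ is integral, this last condition is equivalent to $\bar y^\xi(R') \ge \kscen{R'}-1$. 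Intersecting over $R \in \mc{R}$ and $\xi\in[N]$ gives the claim.

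\textbf{Step 2: total unimodularity.} Write the rows of $\tilde{\Pi}_b$ as a matrix $M$, consisting of the subroute rows, the identity rows for $y \ge \mathbf{0}$, and the identity rows for $y \le b$. Fix an ordering of the columns: group them by scenario $\xi$, and within each scenario list the customers of $R_1$ in route order, then those of $R_2$, and so on. The customers not covered by $\mc{R}$ go last. Each subroute row $y^\xi(R')$ has support on a block of consecutive columns within one scenario and one route, so it is an interval row. The identity rows are trivially interval rows. Hence $M$ has the consecutive-ones property in its rows and is therefore TU (Corollary~2.10 of~\cite{NemhauserW88}). Appending identity rows, whether with sign $+$ or $-$, preserves total unimodularity, so the rows for the bounds $\mathbf{0}\le y\le b$ cause no difficulty.

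\textbf{Step 3: integrality.} The right-hand sides $\kscen{R'}-1$, $0$ and $b$ are all integers. By Theorem~4.4 of~\cite{ipbook}, the polytope $\tilde{\Pi}_b$ is therefore integral, and it is bounded because of the box. Consequently $\tilde{\Pi}_b = \conv(\tilde{\Pi}_b\cap\Z^{[N]\times V_+})$, and Step~1 turns this into the stated equality.

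\textbf{Expected obstacle.} The only delicate point is Step~1 with several routes. One must check that membership in $\policy(R)$ constrains only the coordinates of $V_+(R)$, so that the conditions for different disjoint routes decouple and each one is handled independently by Lemma~\ref{lemma:projection_recourse_action}. One must also check that customers outside every route are restricted only by the box. Both facts follow directly from Definition~\ref{definition:recourse_policy}, since the entries of $y^\xi$ for vertices not in the route are free in $\Z_+$.
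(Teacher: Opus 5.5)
Your proposal is correct and follows essentially the same route as the paper. Total unimodularity of the consecutive-ones subroute matrix, with the box rows appended, gives integrality, and Lemma~\ref{lemma:projection_recourse_action} plus rounding of the integral left-hand side identifies the integer points with $\bigcap_{R \in \mc{R}} (\policy(R) \cap [\mathbf{0}, b]^N)$. The only cosmetic difference is that the paper gets the inclusion $\subseteq$ from Theorem~\ref{theorem:convex_hull} and convexity, and argues $\supseteq$ via extreme points of the right-hand side being integral; this is equivalent to your Steps~1 and~3.
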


\section{\mytitle{Scenario-optimal recourse policies and scenario-recourse inequalities}}
\label{section:scenario_optimal}

Recall from Sections~\ref{subsection:basic_formulation} and~\ref{subsection:ils} that~\cite{part1} show that existing approaches for the VRPSD replace the nonlinear constraints~$\theta_v \geq \sum_{R \in \mc{R}(x)} \mc{Q}(R, v)$ in~$\Fq{\mc{X}, \mc{Q}}$ with families of valid linear inequalities known as \emph{ILS cuts}. Importantly, these cuts rely on \emph{recourse lower bounds} derived under very specific forms of the recourse function, which makes it difficult to extend their validity to other variants of the problem.

Given that we model recourse policies as solutions to the network-flow Formulation~\eqref{formulation:flow_feasibility}, we now assign weights to the variables~$y^\xi_v$ in order to obtain lower bounds on the terms~$\mc{Q}(R, v)$. This allows us to derive stronger non-ILS cuts that remain valid for~$\Fq{\mc{X}, \mc{Q}}$ across different recourse functions~$\mc{Q}$ associated with VRPSD recourse policies. In addition, we show that, for a \emph{scenario-optimal} recourse policy~$\optQ$, these cuts are sufficient to model problem~$\vrpsd{\mc{X}, \optQ}$ (without the need for ILS cuts).

\subsection{\mytitle{Assigning weights to recourse policies}}
\label{subsection:weights}

To link a generic recourse function~$\mc{Q}$ with a recourse policy, we adopt the following assumption.
\begin{assumption}
\label{assumption:recourse_lower_bound}
We have a disaggregation of~$\mc{Q}$ and vectors~$w \in \Q^{V_+}_+$ and~$b 
\in \Z^{V_+}_+$ such that, for every route~$R$, there exists~$\bar{y}_R \in \policy(R) \cap [\mathbf{0}, b]^N$ satisfying~$\mc{Q}(R, v) \geq \sum_{\xi \in [N]} p_\xi w_v (\bar{y}_R)^\xi_v$, for all~$v \in V_+(R)$.
\end{assumption}

\noindent
For example, in the case of the classical recourse policy --- which was considered in several works~\citep{gendreau95, laporte2002, jabali2014, hoogendoorn2023improved, parada2024disaggregated} and extended to scenarios by~\cite{part1} --- we can set~$w_v = 2c_{0v}$. In fact, as we argue next (recall also Example~\ref{example:recourse_action}), Assumption~\ref{assumption:recourse_lower_bound} captures a structural property shared by several recourse functions used in VRPSDs. 

The strength of the inequalities derived in the next sections depends critically on the lower bounding vector~$w \in \R^{V_+}_+$. For example, one may always set~$w = \mathbf{0}$ and then Assumption~\ref{assumption:recourse_lower_bound} is trivially satisfied, but this would lead to weak inequalities. Intuitively, one should try to set~$w_v$ as a ``tight'' lower bound on the cost of executing recourse actions at customer~$v \in V_+(R)$. For example, for recourse policies that allow preventive return trips between the customers and the depot~\citep{Yee1980, Salavati2019175, salavati2019trsc, laporte2002, hoogendoorn2023improved, jabali2014, GAUVIN2014141, florio2020, christiansen2007}, we may use~$w_v = (\min_{u \in V_+ \setminus \{v\}} \{2 \, c_{0v}, c_{0u} + c_{0v} - c_{uv}\})^+$, for all~$v \in V_+$ (see Remark~\ref{remark:extension_assumption}).

Similarly, one should seek for the lowest possible upper bound vector~$b$ satisfying Assumption~\ref{assumption:recourse_lower_bound}. In fact, for any recourse policy (in the sense of Definition~\ref{definition:recourse_policy}), we may always assume that~$b \leq 2 \cdot \allones$. To see this, fix a directed route~$\oa{R} = (v_1, \ldots, v_\ell)$ and let~$(\bar{f}, \bar{g}, \bar{y})$ be feasible for~\eqref{formulation:flow_feasibility}. By Assumption~\ref{assumption:large_demands}, for any~$i \in [\ell]$ and~$\xi \in [N]$, we know that~$\bar{f}^\xi_{(v_{i - 1}, v_i)} + d^\xi(v_i) \leq 2 C$, which implies that at most two unload trips are needed at~$v_i$, so we may assume that~$\bar{y}^\xi_{v_i} \leq 2$. Actually, since the classical recourse policy never fails at the same customer more than once, we may set~$b = \allones$ whenever~$\mc{Q} = \Qc$.

Having said this, we now fix~$\mc{Q}$ as a recourse function that satisfies Assumption~\ref{assumption:recourse_lower_bound} with parameters~$w$ and~$b$. In view of the definition of recourse actions/policies (Definition~\ref{definition:recourse_policy}), we introduce a \emph{scenario-optimal recourse policy} that, given a route, optimally selects the recourse actions for each scenario.
\begin{definition}
\label{definition:scen_opt_policy}
Let~$w \in \Q^{V_+}_+$ and let~$b \in \Z^{V_+}_+$ be such that, for every route~$R$, the set~$\policy(R) \cap [\mathbf{0}, b]^N$ is nonempty. The \emph{scenario-optimal recourse cost} for a route~$R$ is defined as
\begin{equation}
    \tag{$\mc{Q}^*$}
    \mc{Q}^*(R) \coloneqq \min \left\{ \sum_{\xi \in [N]} \sum_{v \in V_+} p_\xi w_v y^\xi_v : ~ y \in \policy(R) \cap [\mathbf{0}, b]^N \right\}.
\end{equation}
A \emph{scenario-optimal recourse policy} is a vector~$y^* \in \policy(R)$ that attains this minimum.
\end{definition}

Since we fixed~$w$ and~$b$, we omit these parameters from the notation for~$\optQ$, and we sometimes refer to~$\optQ$ as the \emph{scenario-optimal recourse function}. It will also be useful to refer to the scenario-optimal recourse cost of a route~$R$ in a specific scenario. Thus, for each~$\xi \in [N]$, we define~$\mc{Q}^*_\xi(R) \coloneqq \min \left\{ \sum_{v \in V_+} w_v y^\xi_v : y^\xi \in \action^\xi(R) \cap [\mathbf{0}, b] \right\}$, meaning that~$\optQ(R) = \sum_{\xi \in [N]} p_\xi \, \optQscen(R)$.

In the remainder of this section, we leverage the polyhedral results from Section~\ref{section:recourse_policies} to derive a formulation for~$\vrpsd{\mc{X}, \optQ}$ that does not rely on flow variables. This formulation captures the scenario-optimal recourse function through a new class of non-ILS cuts, which we call \emph{scenario-recourse inequalities (SRIs)}. These inequalities not only model~$\optQ$ (Theorem~\ref{theorem:sri_formulation}), but they are also valid for any recourse function satisfying Assumption~\ref{assumption:recourse_lower_bound} (Proposition~\ref{proposition:valid_cuts_recourse}).

\begin{remark}
\label{remark:extension_assumption}
It was recently observed by~\cite{legault2025superadditivity} that, in some cases, the lower bound~$w_v = (\min_{u \in V_+ \setminus \{v\}} \{2 \, c_{0v}, c_{0u} + c_{0v} - c_{uv}\})^+$ can be quite weak. For example, if there exists~$u \in V_+$ such that the depot is located close to the midpoint between~$u$ and~$v$, then~$c_{0u} + c_{0v} - c_{uv}$ is close to zero. 

Let~$\oa{R}$ be a directed route and let~$v$ and~$u$ be distinct customers such that~$c_{0u} + c_{0v} - c_{uv}$ is close to zero. If arc~$(v, u)$ belongs to~$\oa{R}$, then the recourse cost at~$v$ is indeed expected to be small, since, after visiting~$v$, the vehicle can first stop at the depot to unload and then proceed to~$u$, with little additional cost for this detour. However, if~$(v, u) \notin A(\oa{R})$, then the recourse cost at~$v$ might be much larger than~$c_{0u} + c_{0v} - c_{uv}$. To address this issue, we propose the following weakening of Assumption~\ref{assumption:recourse_lower_bound}:
\begin{assumption}
\label{assumption:recourse_lower_bound2}
We have a disaggregation of~$\mc{Q}$ and vectors~$w \in \Q^{A}_+$ and~$b 
\in \Z^{V_+}_+$ such that, for every route~$R = (v_1, \ldots, v_\ell)$, there exists~$\bar{y}_R \in \policy(R) \cap [\mathbf{0}, b]^N$ satisfying
\begin{align*}
    & \mc{Q}(R, v_i) \geq \min\{w_{(v_i, u)} : u \in \{0, v_{i - 1}, v_{i + 1}\} \} \cdot \sum_{\xi \in [N]} p_\xi (\bar{y}_R)^\xi_{v_i}, & \forall i \in [\ell].
\end{align*}
\end{assumption}
Under Assumption~\ref{assumption:recourse_lower_bound2}, for every~$v \in V_+$, we may set~$w_{(v, 0)} = 2 c_{0v}$, and for every distinct~$v, u \in V_+$, we may set~$w_{(v, u)} = c_{0u} + c_{0v} - c_{uv}$. In Appendix~\ref{appendix:extension}, we extend part of the forthcoming results to the case where~$\mc{Q}$ satisfies Assumption~\ref{assumption:recourse_lower_bound2}.~\hfill\Halmos
\end{remark}

\subsection{\mytitle{Formulations under a scenario-optimal recourse policy}}
\label{subsection:scenario_optimal_formulation}

We start by extending the definition of recourse policies from routes to solutions (i.e., routing plans). Thus, for each~$\bar{x} \in \mc{X} \cap \Z^E$, define~$\policy(\bar{x}) \coloneqq \bigcap_{R \in \mc{R}(\bar{x})} \policy(R)$. We naturally map each recourse policy~$\bar{y} \in \policy(\bar{x})$ to its corresponding collection of vectors~$\{\bar{y}_R\}_{R \in \mc{R}(\bar{x})}$ according to the customers in each route:

\begin{fact}
    \label{fact:policy_split}
    Let~$\mc{R}$ be a collection of disjoint routes.
    \begin{itemize}
        \item If~$\bar{y} \in \cap_{R \in \mc{R}} \policy(R)$, for each~$R \in \mc{R}$, define~$\bar{y}_R \in \Z^{[N] \times V_+}_+$ so that~$(\bar{y}_R)^\xi_v = \mb{I}(v \in V_+(R)) \cdot \bar{y}^\xi_v$~for all~$v \in V_+$ and~$\xi \in [N]$. Then, for every~$R \in \mc{R}$,~$\bar{y}_R \in \policy(R)$.
        \item Conversely, if~$\{\bar{y}_R\}_{R \in \mc{R}}$ is such that each~$\bar{y}_R$ belongs to~$\policy(R)$, define~$\bar{y} \in \Z^{[N] \times V_+}$ so that~$\bar{y}^\xi_v = \sum_{R \in \mc{R}} \mb{I}(v \in V_+(R)) \cdot (\bar{y}_R)^\xi_v$~for every~$v \in V_+$ and~$\xi \in [N]$. Then~$\bar{y} \in \cap_{R \in \mc{R}} \policy(R)$.
    \end{itemize}
\end{fact}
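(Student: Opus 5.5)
The plan is to handle the two bullets of Fact~\ref{fact:policy_split} separately. In both cases we use two things. First, $\policy(R) = \action^1(R) \times \cdots \times \action^N(R)$. Second, membership $y^\xi \in \action^\xi(R)$ depends only on the entries $y^\xi_v$ with $v \in V_+(R)$, apart from the requirement $y^\xi \in \Z^{V_+}_+$. This is immediate from Definition~\ref{definition:recourse_policy}: constraints~\eqref{flow:conservation}--\eqref{flow:recourse} only involve $f^\xi$ on arcs of $\oa{R}$, $g^\xi_{v_i}$, and $y^\xi_{v_i}$ for $i \in [\ell]$. We may therefore argue scenario by scenario and route by route.

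For the first bullet, fix $R \in \mc{R}$ and $\xi \in [N]$. Since $\bar{y} \in \policy(R)$, there exist $(f^\xi, g^\xi)$ satisfying~\eqref{formulation:flow_feasibility} for $\oa{R}$ with respect to $\bar{y}^\xi$. The vector $(\bar{y}_R)^\xi$ agrees with $\bar{y}^\xi$ on $V_+(R)$, and it is zero (hence a nonnegative integer) elsewhere. So the same $(f^\xi, g^\xi)$ witness $(\bar{y}_R)^\xi \in \action^\xi(R)$. Doing this for all $\xi$ gives $\bar{y}_R \in \policy(R)$. Nonnegativity and integrality of $\bar{y}_R$ follow from those of $\bar{y}$.

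For the second bullet, we use disjointness of the routes. For $v \in V_+(R)$, the sum defining $\bar{y}^\xi_v$ has exactly one nonzero indicator, namely the one for $R$, so $\bar{y}^\xi_v = (\bar{y}_R)^\xi_v$. For $v$ in no route, $\bar{y}^\xi_v = 0$. Hence $\bar{y} \in \Z^{[N]\times V_+}_+$, and on $V_+(R)$ it coincides with $\bar{y}_R$. For each $R$ and $\xi$, the flows $(f^\xi, g^\xi)$ witnessing $(\bar{y}_R)^\xi \in \action^\xi(R)$ therefore also witness $\bar{y}^\xi \in \action^\xi(R)$. Thus $\bar{y} \in \policy(R)$ for every $R$, i.e., $\bar{y} \in \cap_{R\in\mc{R}}\policy(R)$.

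The only point needing care is the locality observation: the flow witness for one route imposes nothing on coordinates outside that route. This is exactly why Definition~\ref{definition:recourse_policy} places $\action^\xi(\oa{R})$ in the full space $\Z^{V_+}_+$. Alternatively, one could invoke Lemma~\ref{lemma:projection_recourse_action}, whose inequalities $y^\xi(R') \ge d^\xi(R')/C - 1$ over subroutes $R' \subseteq R$ visibly involve only coordinates in $V_+(R)$. Lemma~\ref{lemma:symmetric_flow} removes any dependence on orientation. No real obstacle is expected.
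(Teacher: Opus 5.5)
Your proof is correct and takes the same approach as the paper. In both bullets, $\bar{y}$ and $\bar{y}_R$ coincide on $V_+(R)$; for the second bullet this uses disjointness. The constraints in Definition~\ref{definition:recourse_policy} involve only the coordinates indexed by $V_+(R)$, so the same flow witnesses certify membership in $\Pi(R)$. You also make explicit the locality, nonnegativity and integrality details (including customers covered by no route) that the paper's two-line proof leaves implicit.
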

\begin{proof} In both cases, for every~$R \in \mc{R}$ and~$\xi \in [N]$, we have~$\bar{y}^{\xi}_v=(\bar{y}_R)^\xi_v$, for all~$v \in V_+(R)$. By Definition~\ref{definition:recourse_policy}, in both cases,~$\bar{y}\in \policy(R)\iff \bar{y}_R\in\policy(R)$, for every~$R \in \mc{R}$.
\end{proof}

Combining Definition~\ref{definition:scen_opt_policy} and Fact~\ref{fact:policy_split} yields the following nonlinear formulation for~$\vrpsd{\mc{X}, \optQ}$:
\begin{subequations}
\label{formulation:scen_opt_basic}
\begin{align} 
\min ~~& c^\T x + \sum_{v \in V_+} \theta_v, & \nonumber \\
\text{s.t.~~} & \theta_v \geq \sum_{\xi \in [N]} p_\xi w_v y^\xi_v, & \forall v \in V_+, \label{scen_opt_basic:theta} \\
& x \in \mc{X} \cap \Z^E, & \label{scen_opt_basic:x} \\
& y \in \policy(x) \cap [\mathbf{0}, b]^N. &  \label{scen_opt_basic:policy}
\end{align}
\end{subequations}

\noindent
It follows from Assumption~\ref{assumption:recourse_lower_bound} that the projection of the feasible region of Formulation~\eqref{formulation:scen_opt_basic} onto the~$(x, \theta)$-space yields a relaxation of~$\Fq{\mc{X}, \mc{Q}}$. 
\begin{proposition}
    \label{proposition:valid_cuts_recourse}
    For any recourse function~$\mc{Q}$ satisfying Assumption~\ref{assumption:recourse_lower_bound},
    $$\Fq{\mc{X}, \mc{Q}} \subseteq \{(x, \theta) : \exists y \in \R^{[N] \times V_+} ~\text{such that}~(x, \theta, y)~\text{is feasible for Formulation~\eqref{formulation:scen_opt_basic}}\}.$$
\end{proposition}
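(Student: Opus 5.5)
Take an arbitrary $(\bar{x}, \bar{\theta}) \in \Fq{\mc{X}, \mc{Q}}$; we build a $\bar{y}$ that makes $(\bar{x}, \bar{\theta}, \bar{y})$ feasible for Formulation~\eqref{formulation:scen_opt_basic}. Constraint~\eqref{scen_opt_basic:x} holds by definition of $\Fq{\mc{X}, \mc{Q}}$. For each route $R \in \mc{R}(\bar{x})$, Assumption~\ref{assumption:recourse_lower_bound} supplies a vector $\bar{y}_R \in \policy(R) \cap [\mathbf{0}, b]^N$ with $\mc{Q}(R, v) \geq \sum_{\xi \in [N]} p_\xi w_v (\bar{y}_R)^\xi_v$ for all $v \in V_+(R)$. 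Entries of $\bar{y}_R$ outside $V_+(R)$ are unconstrained by Definition~\ref{definition:recourse_policy}, so we zero them first. This keeps $\bar{y}_R$ in $\policy(R)$ because the flow system only involves $V_+(R)$, and keeps it in $[\mathbf{0}, b]^N$ because $b \geq \mathbf{0}$.

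Next we glue the per-route vectors together using the second part of Fact~\ref{fact:policy_split}. The routes in $\mc{R}(\bar{x})$ are disjoint and their customer sets partition $V_+$. So we set $\bar{y}^\xi_v = (\bar{y}_R)^\xi_v$, where $R$ is the unique route containing $v$. Fact~\ref{fact:policy_split} then gives $\bar{y} \in \bigcap_{R \in \mc{R}(\bar{x})} \policy(R) = \policy(\bar{x})$. Because every coordinate of $\bar{y}$ is copied from some $\bar{y}_R \in [\mathbf{0}, b]^N$, we also have $\bar{y} \in [\mathbf{0}, b]^N$. Hence~\eqref{scen_opt_basic:policy} holds.

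Finally we check~\eqref{scen_opt_basic:theta}. Fix $v \in V_+$ and let $R_v$ be its route. By Assumption~\ref{assumption:disaggregation}, $\mc{Q}(R, v) = 0$ for every other route $R \in \mc{R}(\bar{x})$. Therefore
\[
\bar{\theta}_v \;\geq\; \sum_{R \in \mc{R}(\bar{x})} \mc{Q}(R, v) \;=\; \mc{Q}(R_v, v) \;\geq\; \sum_{\xi \in [N]} p_\xi w_v (\bar{y}_{R_v})^\xi_v \;=\; \sum_{\xi \in [N]} p_\xi w_v \bar{y}^\xi_v .
\]
The first inequality is the defining constraint of $\Fq{\mc{X}, \mc{Q}}$, and the second is Assumption~\ref{assumption:recourse_lower_bound}. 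Nonnegativity of $\bar{\theta}$ is inherited from $\Fq{\mc{X}, \mc{Q}}$.

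The only delicate point is the bookkeeping in the gluing step: we must confirm that zeroing coordinates outside $V_+(R)$ preserves membership in both $\policy(R)$ and the box $[\mathbf{0}, b]^N$. This is immediate, as noted in the first paragraph, so no real obstacle is expected.
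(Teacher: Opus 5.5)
Your proof is correct and follows essentially the same argument as the paper: you pick $\bar{y}_R$ for each route from Assumption~\ref{assumption:recourse_lower_bound}, glue these vectors with Fact~\ref{fact:policy_split} (which keeps the result in the box), and chain the inequalities for each $v$. The differences are cosmetic. Zeroing the entries outside $V_+(R)$ is harmless but unnecessary, since the gluing rule in Fact~\ref{fact:policy_split} already ignores those entries. You also invoke Assumption~\ref{assumption:disaggregation} to get an equality, where the paper only uses nonnegativity of the disaggregation to get $\sum_{R'} \mc{Q}(R', v) \geq \mc{Q}(R, v)$.
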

\begin{proof}
    Fix a recourse function~$\mc{Q}$ as in the statement and let~$(\bar{x}, \bar{\theta}) \in \Fq{\mc{X}, \mc{Q}}$. We show that there exists~$\bar{y} \in \policy(\bar{x}) \cap [\mathbf{0}, b]^N$ such that~$(\bar{\theta}, \bar{y})$ satisfies~\eqref{scen_opt_basic:theta}. 
    For each route~$R \in \mc{R}(\bar{x})$, choose~$\bar{y}_R \in \policy(R) \cap [\mathbf{0}, b]^N$ according to Assumption~\ref{assumption:recourse_lower_bound}. 
    Combining these vectors using Fact~\ref{fact:policy_split} we obtain the desired~$\bar{y} \in \policy(\bar{x})$. Indeed, since~$\{\bar{y}_R\}_{R \in \mc{R}(\bar{x})} \subseteq [\mathbf{0}, b]^N$, we have~$\bar{y}\in [\mathbf{0}, b]^N$.
    Moreover, for every~$v \in V_+$, let~$R \in \mc{R}(\bar{x})$ be such that~$v \in V_+(R)$. Then
    \begin{equation*}
        \bar{\theta}_v \geq \sum_{R' \in \mc{R}(\bar{x})} \mc{Q}(R', v) \geq \mc{Q}(R, v)  \overset{\text{Assumption~\ref{assumption:recourse_lower_bound}}}{\geq} \sum_{\xi \in [N]} p_\xi w_v (\bar{y}_R)^\xi_v \overset{\text{Fact~\ref{fact:policy_split}}}{=} \sum_{\xi \in [N]} p_\xi w_v \bar{y}^\xi_v.
    \end{equation*}
\end{proof}

Proposition~\ref{proposition:valid_cuts_recourse} motivates our study of valid inequalities for Formulation~\eqref{formulation:scen_opt_basic} (recall that in Section~\ref{subsection:weights} we fixed~$\mc{Q}$ to be a recourse function that satisfies Assumption~\ref{assumption:recourse_lower_bound}). As a first step in this direction, we build on Lemma~\ref{lemma:symmetric_flow} to reformulate the family of sets~$\{\policy(x)\}_{x \in \mc{X} \cap \Z^E}$ as a single MIP, parameterized by a vector~$\bar{x} \in \mc{X} \cap \Z^E$. In this result, we link the~$x$ variables, defined on the undirected graph~$G = (V, E)$, with the flow variables in Definition~\ref{definition:recourse_policy} (see also Lemma~\ref{lemma:symmetric_flow}), defined on the associated directed graph~$\D = (V, A)$. We defer the proof to Appendix~\ref{appendix:proof_flow_formulation}. 
\begin{restatable}{lemma}{lemmaflowformulation}
    \label{lemma:flow_formulation}
    Fix~$b \in \Z^{V_+}_+$. For every~$\bar{x} \in \mc{X}$, define
    \begin{equation*}
    \textsc{flow}(\bar{x}) \coloneqq
    \left\{ y \in [\mathbf{0}, b]^N :~
    \begin{aligned}
    & f^\xi(\delta^-_{\D}(v)) + d^\xi(v) = f^\xi(\delta^+_{\D}(v)) + g^\xi_v, && \forall v \in V_+, \xi \in [N] \\
    & f^\xi_{(u, v)} \leq \frac{C}{2} \bar{x}_{\{u, v\}}, && \forall uv \in A, \xi \in [N] \\
    & g^\xi_v \leq C \cdot y^\xi_{v}, && \forall v \in V_+, \xi \in [N] \\
    & f \in \R_+^{[N] \times A},~~g \in \R_+^{[N] \times V_+}
    \end{aligned}
    \right\}.
    \end{equation*}
    Then~$\flow(\bar{x}) \cap \Z^{[N] \times V_+} = \policy(\bar{x}) \cap [\mathbf{0}, b]^N$, for every~$\bar{x} \in \mc{X} \cap \Z^E$.
\end{restatable}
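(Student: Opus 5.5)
The plan is to prove the set equality by establishing both inclusions, working scenario by scenario and route by route, and to reduce everything to Lemma~\ref{lemma:symmetric_flow}. Throughout, fix $\bar{x} \in \mc{X} \cap \Z^E$. Since $\bar{x} \in \Xsub$ is integer, it represents a routing plan $\mc{R}(\bar{x})$ whose routes have disjoint customer sets covering $V_+$. The only subtlety is an edge $\{0,v\}$ with $\bar{x}_{0v} = 2$, which corresponds to a route $(v)$ with a single customer. For a route $(v)$ the two arcs $(0,v)$ and $(v,0)$ in Formulation~\eqref{formulation:flow2_feasibility} are each used "twice" (once as $(v_{0},v_1)$ and once as $(v_{2},v_1)$). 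Hence in this case I will read the bound $\frac{C}{2}\bar{x}_{0v} = C$ as the sum of two $C/2$ capacities, and treat this case separately.

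\textbf{Inclusion $\supseteq$.} Take $y \in \policy(\bar{x}) \cap [\mathbf{0},b]^N$ and fix $\xi$. For each $R \in \mc{R}(\bar{x})$ we have $y^\xi \in \action^\xi(R)$, so Lemma~\ref{lemma:symmetric_flow} gives flows $(f^{\xi}_R, g^\xi_R)$ satisfying \eqref{formulation:flow2_feasibility} on the arcs of $R$ in both directions. I then define the following:
\begin{itemize}
\item $f^\xi$ as the sum of the $f^\xi_R$, each supported on the arcs of its own route, and set $f^\xi = 0$ on arcs $(u,v)$ with $\bar{x}_{uv}=0$;
\item $g^\xi_v$ as $(g^\xi_R)_v$ for the unique route $R$ containing $v$.
\end{itemize}
Conservation at $v \in V_+$ holds because only the route through $v$ contributes arcs at $v$, and the arcs in $\delta^\pm_{\D}(v)$ used by that route are exactly $(v_{i\pm1},v_i)$ and $(v_i,v_{i\pm1})$. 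The capacity bound holds as follows:
\begin{itemize}
\item arcs with $\bar{x}_{uv}=1$ carry at most $C/2$;
\item arcs between customers belong to one route only;
\item arcs at the depot with $\bar{x}_{0v}=1$ belong to one route;
\item in the singleton case $\bar{x}_{0v}=2$, the arc $(0,v)$ carries $f_{(v_0,v_1)}+f_{(v_2,v_1)} \le C$, and similarly for $(v,0)$.
\end{itemize}
Hence $y \in \flow(\bar{x})$.

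\textbf{Inclusion $\subseteq$.} Take an integer $y \in \flow(\bar{x})$ with witnesses $(f,g)$. Arcs with $\bar{x}_{uv}=0$ carry no flow. For each route $R = (v_1,\dots,v_\ell)$ and each $\xi$, I restrict $f^\xi$ to the arcs $(v_{i-1},v_i)$ and $(v_i,v_{i-1})$, and $g^\xi$ to $V_+(R)$. This restriction satisfies conservation \eqref{flow2:conservation}, because $\delta^\pm_{\D}(v_i)$ restricted to the support consists exactly of those arcs. It also satisfies \eqref{flow2:capacity1}–\eqref{flow2:capacity2}, because every such arc has $\bar{x}=1$ and hence capacity $C/2$, and it satisfies \eqref{flow2:recourse}. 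Lemma~\ref{lemma:symmetric_flow} then gives $y^\xi \in \action^\xi(R)$, so $y \in \policy(\bar{x})$, and $y \le b$ holds by definition of $\flow$.

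\textbf{The main obstacle} is the singleton route $(v)$ in the $\subseteq$ direction, where the flow $f_{(0,v)} \le C$ must be split into two pieces each at most $C/2$ to match \eqref{formulation:flow2_feasibility}. I would handle it directly instead of through the lemma. Conservation gives $f_{(0,v)} + d^\xi(v) = f_{(v,0)} + g_v$ with $f_{(0,v)}, f_{(v,0)} \le C$. I then take the one-directional flow of Definition~\ref{definition:recourse_policy} with inflow $f_{(0,v)}$ and outflow $f_{(v,0)}$, which lies within capacity $C$. This shows $y^\xi \in \action^\xi((v))$ without using the split, and avoids the issue altogether. The $\supseteq$ direction handles the singleton case symmetrically by summing the two half-flows.
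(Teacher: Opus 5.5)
Your proof is correct and follows essentially the same argument as the paper: both inclusions are handled route by route, routes with at least two customers are reduced to Lemma~\ref{lemma:symmetric_flow} by restricting or assembling the flows, and single-customer routes (the only case with $\bar{x}_{0v}=2$) are treated separately. The only difference is in that singleton case. The paper writes down explicit flows using Assumption~\ref{assumption:large_demands} (e.g., $f^\xi_{(v,0)}=d^\xi(v)$, $f^\xi_{(0,v)}=0$, $g^\xi_v=0$), whereas you transfer the witnessing flows directly; since the constraints of $\flow(\bar{x})$ at a singleton customer coincide with those of Definition~\ref{definition:recourse_policy} for the route $(v)$, you could use that same direct transfer in the $\supseteq$ direction too, which avoids applying Lemma~\ref{lemma:symmetric_flow} to a singleton route (its appendix proof treats $(v_{i-1},v_i)$ and $(v_{i+1},v_i)$ as distinct arcs).
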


Lemma~\ref{lemma:flow_formulation} yields a MIP model for Formulation~\eqref{formulation:scen_opt_basic}, since we can replace constraint~\eqref{scen_opt_basic:policy} with the constraints and variables in the (parametric) MIP formulation for~$\flow(\bar{x}) \cap \Z^{[N] \times V_+}$. However, this approach may be inefficient, as it leads to a formulation for~$\vrpsd{\mc{X}, \optQ}$ that has integrality on the~$y$-variables and uses~$N \cdot (|A| + |V_+|)$ additional flow variables. Similarly to Lemma~\ref{lemma:projection_recourse_action}, we address this issue by applying Hoffman's circulation theorem (Theorem~\ref{theorem:hoffman}) to project out the flow variables. We leave the proof to Appendix~\ref{appendix:proof_benders}.

\begin{restatable}{proposition}{propositionbenders}
    \label{proposition:benders}
    Fix~$b \in \Z^{V_+}_+$.
    For every~$\bar{x} \in \mc{X}$,
    \begin{equation*}
        \flow(\bar{x}) = \left\{ y \in [\mathbf{0}, b]^N : y^\xi(\set) \geq \frac{d^\xi(\set)}{C} + \bar{x}(E(\set)) - |\set|, \quad \forall \emptyset \subsetneq \set \subseteq V_+,~\xi \in [N] \right\}.
    \end{equation*}
\end{restatable}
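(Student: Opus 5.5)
The plan is to mimic the proof of Lemma~\ref{lemma:projection_recourse_action}: project out $(f,g)$ with Hoffman's circulation theorem (Theorem~\ref{theorem:hoffman}), one scenario at a time. The constraints defining $\flow(\bar{x})$ decouple across scenarios. The only coupling between $y$ and the flow variables is $g^\xi_v \leq C y^\xi_v$, and the box $y \in [\mathbf{0}, b]^N$ appears on both sides of the claimed identity. It therefore suffices to fix $\xi \in [N]$ and $\bar{y}^\xi \in \R^{V_+}_+$, and to show the following: there exist $f^\xi \in \R^A_+$ and $g^\xi \in \R^{V_+}_+$ satisfying the $\xi$-constraints if and only if $\bar{y}^\xi(\set) \geq d^\xi(\set)/C + \bar{x}(E(\set)) - |\set|$ for every $\emptyset \subsetneq \set \subseteq V_+$.

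\textbf{Step 1: build a circulation network.} Take the digraph $D'$ with vertex set $V$. Its arcs are all arcs $uv \in A$, with capacity $h_{uv} = \frac{C}{2}\bar{x}_{\{u,v\}}$, together with one extra arc $e_v$ from $v$ to $0$ for each $v \in V_+$, with capacity $C\,\bar{y}^\xi_v$. The flow on $e_v$ plays the role of $g^\xi_v$. These extra arcs are parallel to $(v,0)$. If a multigraph is unwanted, subdivide $e_v$ by a new node of demand $0$; this does not change the cut conditions.

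Set node demands $d'_v = -d^\xi(v)$ for $v \in V_+$ and $d'_0 = d^\xi(V_+)$, so that $d'(V) = 0$. The conservation constraints of $\flow(\bar{x})$ are exactly Hoffman's equations at the customers. There is a subtlety at the depot: $\flow(\bar{x})$ imposes no conservation there, whereas Hoffman requires it. Summing the customer equations shows that any feasible $(f^\xi,g^\xi)$ automatically gives net inflow $d^\xi(V_+)$ at $0$, counting the $g$-arcs. Hence Hoffman's depot equation is implied, and feasibility of the two systems coincides.

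\textbf{Step 2: evaluate the cut condition.} Use the form $h(\delta^+_{D'}(\set)) \geq d'(V \setminus \set)$ for all $\set \subseteq V$.
\begin{itemize}
\item If $0 \in \set$, then $V \setminus \set \subseteq V_+$, so the right-hand side is $\leq 0$ and the condition holds trivially. The case $\set = \emptyset$ gives $0 \geq d'(V) = 0$.
\item If $\emptyset \neq \set \subseteq V_+$, the right-hand side is $d^\xi(V_+) - d^\xi(V_+ \setminus \set) = d^\xi(\set)$. The arcs leaving $\set$ are the arcs of $D$ crossing $\delta(\set)$ in the outward direction, plus the arcs $e_v$ for $v \in \set$. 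Hence $h(\delta^+_{D'}(\set)) = \frac{C}{2}\bar{x}(\delta(\set)) + C\,\bar{y}^\xi(\set)$.
\end{itemize}
Since $\bar{x} \in \mc{X}$ satisfies the degree equations $\bar{x}(\delta(v)) = 2$ for all $v \in V_+$ (this also holds for fractional $\bar{x}$), we get $\bar{x}(\delta(\set)) = 2|\set| - 2\bar{x}(E(\set))$. The condition then becomes $C(|\set| - \bar{x}(E(\set))) + C\bar{y}^\xi(\set) \geq d^\xi(\set)$, which is exactly the inequality in the statement. Combining over all $\xi$ and intersecting with the box $[\mathbf{0}, b]^N$ gives the claimed equality.

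\textbf{Main obstacle.} The algebra in Step 2 is routine. The point needing care is Step 1: matching the system defining $\flow(\bar{x})$, which has no depot conservation constraint and whose arc capacities come from the undirected $\bar{x}$, exactly to Hoffman's setting. This means checking both directions of the equivalence, in particular that the depot equation is implied by the customer equations, and correctly modelling the $g$-arcs in parallel with the $(v,0)$ arcs.
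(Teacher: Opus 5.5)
Your proposal is correct and follows essentially the same argument as the paper. The paper likewise applies Hoffman's circulation theorem to $D$ augmented with a parallel copy of each arc $(v,0)$ of capacity $C\bar{y}^\xi_v$, uses the same node demands, discards the sets containing the depot because $d'_v \leq 0$ on $V_+$, and finishes with $\bar{x}(\delta(S)) = 2|S| - 2\bar{x}(E(S))$. Your explicit check that the depot conservation equation follows from summing the customer equations is a detail the paper leaves implicit.
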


Now recall from Section~\ref{subsection:convex_hull} that~$\kscen{\set} = \lceil d^\xi(S) / C \rceil$. Since Formulation~\eqref{formulation:scen_opt_basic} enforces integrality on the~$(x, y)$-variables, we may round the RHS of the inequalities in Proposition~\ref{proposition:benders} to obtain
\begin{equation}
    \label{ineq:sri}
    y^\xi(\set) \geq \kscen{\set} + x(E(\set)) - |\set|,
\end{equation}
which we call a \emph{scenario-recourse inequality (SRI)}. (We invite the reader to compare the SRIs with the inequalities in Theorem~\ref{theorem:convex_hull}.) The following example illustrates that SRIs may indeed yield a stronger relaxation than~$\flow(\bar{x})$.

\begin{example}
\label{example:sri}
Consider the setting in Figure~\ref{figure:recourse_action_b}. Suppose that~$\bar{x} \in \mc{X} \cap \Z^E$ is such that~$\mc{R}(\bar{x})$ only contains the route~$R = (v_1, v_2, v_3, v_4)$ and let~$(\bar{y}, \bar{f}, \bar{g})$ be a particular feasible solution for the formulation of~$\flow(\bar{x})$ in Lemma~\ref{lemma:flow_formulation}. We illustrate these vectors in Figure~\ref{figure:example_sri} for a fixed scenario~$\xi \in [N]$. Note that, in this example,~$\sum_{v \in V_+} w_v \bar{y}^\xi_v = 2.4$, which is less than the scenario-optimal recourse cost~$\optQscen(R) = 4$.

\begin{figure}[htbp]
  \centering
  \includegraphics[width=0.5\textwidth]{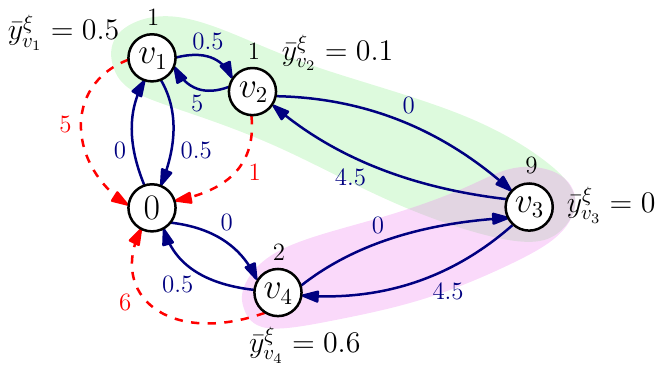}
  \caption{Graphical representation of~$(\bar{x}, \bar{y}^\xi, \bar{f}^\xi, \bar{g}^\xi)$. Here we have~$C = 10$,~$w_{v_3} = 6$ and~$w_{v_1} = w_{v_2} = w_{v_4} = 2$. The black numbers next to each vertex indicate the scenario demand vector~$d^\xi$. The blue and solid arcs represent the vector~$\bar{f}^\xi$, while the red and dashed arcs represent the vector~$\bar{g}^\xi$.}
  \label{figure:example_sri}
\end{figure}

Let~$\set_1 = \{v_1, v_2, v_3\}$ and~$\set_2 = \{v_3, v_4\}$ (green and violet regions in Figure~\ref{figure:example_sri}, respectively). Observe that~$\bar{y}^\xi$ violates the SRIs for~$\set_1$ and~$\set_2$ (with respect to scenario~$\xi$), since, for all~$i \in [2]$,~$$0.6 = \bar{y}^\xi(\set_i) < \lceil d^\xi(\set_i) / C \rceil + \bar{x}(E(\set_i)) - |\set_i| = 1.$$

Interestingly, in this case, the SRIs are sufficient to correctly capture the scenario-optimal recourse cost~$\optQscen(R)$. To see this, consider the vector~$(y')^\xi \in \action^\xi(R)$ defined as~$(y')^\xi_{v_2} = (y')^\xi_{v_4} = 1$ and~$(y')^\xi_{v_1} = (y')^\xi_{v_3} = 0$, and notice that~$\optQscen(R) = \sum_{v \in V_+} w_v (y')^\xi_v = 4$. Summing the SRIs for~$S_1$ and~$S_2$ (and fixing~$x = \bar{x}$) yields~$y^\xi_{v_1} + y^\xi_{v_2} + 2 y^\xi_{v_3} + y^\xi_{v_4} \geq 2$, and multiplying this inequality by 2 we learn that~$$\sum_{v \in V_+} w_v y^\xi_v \geq 2 y^\xi_{v_1} + 2 y^\xi_{v_2} + 4 y^\xi_{v_3} + 2 y^\xi_{v_4} \geq 4.$$ Therefore,~$(\bar{x}, (y')^\xi)$ minimizes~$\sum_{v \in V_+} w_v y^\xi_v$ subject to the constraints that~$y^\xi \in [\mathbf{0}, b]$ and~$(\bar{x}, y^\xi)$ satisfies all the SRIs of the form~$y^\xi(\set) \geq \kscen{\set} + \bar{x}(E(\set)) - |\set|$, for every~$\emptyset \subsetneq \set \subseteq V_+$.\hfill\Halmos
\end{example}

It turns out that a consequence of our polyhedral study in Section~\ref{subsection:convex_hull} is that the situation illustrated in Example~\ref{example:sri} holds more generally, and constraint~\eqref{scen_opt_basic:policy} can be replaced by the SRIs (and~$y \in [\mathbf{0}, b]^N$).

\begin{restatable}{theorem}{theoremsri}
\label{theorem:sri_formulation}
Fix~$b \in \Z^{V_+}_+$. For every~$\bar{x} \in \mc{X}$, define
\begin{equation*}
    \textsc{sri}(\bar{x}) \coloneqq \left\{ y \in [\mathbf{0}, b]^N : y^\xi(\set) \geq \kscen{\set} + \bar{x}(E(\set)) - |\set|, \quad \forall \emptyset \subsetneq \set \subseteq V_+, \xi \in [N] \right\}.
\end{equation*}
Then~$\sri(\bar{x}) = \conv(\policy(\bar{x}) \cap [\mathbf{0}, b]^N)$, for every~$\bar{x} \in \mc{X} \cap \Z^E$.
\end{restatable}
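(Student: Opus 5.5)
The plan is to reduce the statement to Corollary~\ref{corollary:convex_hull2}, applied with $\mc{R} = \mc{R}(\bar{x})$. Fix $\bar{x} \in \mc{X} \cap \Z^E$. Since the routes in $\mc{R}(\bar{x})$ are disjoint, that corollary gives
\[
\conv(\policy(\bar{x}) \cap [\mathbf{0}, b]^N) = \Big\{y \in [\mathbf{0}, b]^N : y^\xi(R') \geq \kscen{R'} - 1,\ \forall R \in \mc{R}(\bar{x}),\ R' \subseteq R,\ \xi \in [N]\Big\}.
\]
Call the right-hand side $P(\bar{x})$. It then suffices to show $\sri(\bar{x}) = P(\bar{x})$, which I will do by proving the two inclusions.

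\textbf{$\sri(\bar{x}) \subseteq P(\bar{x})$.} Take a subroute $R' = (v_i, \ldots, v_j)$ of a route $R \in \mc{R}(\bar{x})$ and set $\set = V_+(R')$. The edges of $R$ joining consecutive customers of $R'$ all lie in $E(\set)$, so $\bar{x}(E(\set)) \geq |\set| - 1$. The case $\set = V_+(R)$ with $\ell = 1$ and $\bar{x}_{0v} = 2$ is fine, since it needs only $|\set| - 1 = 0$. Hence the SRI for $\set$ yields $y^\xi(R') \geq \kscen{R'} - 1$.

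\textbf{$P(\bar{x}) \subseteq \sri(\bar{x})$.} Fix $\emptyset \subsetneq \set \subseteq V_+$ and $\xi$. Since $\bar{x}$ is integral, $\bar{x}(E(\set))$ counts the edges of the routes induced inside $\set$. Decompose $\set$ into maximal runs $\set = \dot\cup_{q \in [t]} V_+(R_q)$, where each $R_q$ is a subroute of some route in $\mc{R}(\bar{x})$. This is the same decomposition used in the proof of Lemma~\ref{lemma:projection_recourse_action}, now taken across possibly several routes.

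I expect the main obstacle here to be the counting of $\bar{x}(E(\set))$, including the edge cases. Within each maximal run there are exactly $|V_+(R_q)| - 1$ edges in $E(\set)$. The exception is when a run covers an entire route $R$ whose customers are all in $\set$. Edges through the depot are not in $E(\set)$ because $0 \notin \set$, so even then the count is $|V_+(R)| - 1$. Two-customer routes $(u,v)$ with $\bar{x}_{uv} = 1$ are consistent with this, as are single-customer routes with $\bar{x}_{0v} = 2$. Consequently $\bar{x}(E(\set)) = |\set| - t$. I will double-check that no edge of $E(\set)$ joins two different runs: such an edge would contradict maximality within a route, and there are no edges between different routes.

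With this count, the required SRI $y^\xi(\set) \geq \kscen{\set} - t$ follows by summing the inequalities of $P(\bar{x})$ over the runs:
\[
y^\xi(\set) = \sum_{q \in [t]} y^\xi(R_q) \geq \sum_{q \in [t]} \big(\kscen{R_q} - 1\big) \geq \kscen{\set} - t.
\]
The last step uses subadditivity of the ceiling, $\sum_q \lceil a_q \rceil \geq \lceil \sum_q a_q \rceil$. Together the two inclusions give $\sri(\bar{x}) = P(\bar{x}) = \conv(\policy(\bar{x}) \cap [\mathbf{0}, b]^N)$.
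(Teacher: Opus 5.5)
Your proof is correct. It shares the paper's skeleton: reduce to Corollary~\ref{corollary:convex_hull2} with $\mc{R} = \mc{R}(\bar{x})$, and note that the subroute inequalities are SRIs because $\bar{x}(E(V_+(R'))) = |V_+(R')| - 1$. Where you differ is the other inclusion.

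The paper proves $\conv(\policy(\bar{x}) \cap [\mathbf{0}, b]^N) \subseteq \sri(\bar{x})$ as follows. It takes an integer policy $\bar{y}$ and uses Lemma~\ref{lemma:flow_formulation} to place it in $\flow(\bar{x})$. It then invokes the Hoffman-based Proposition~\ref{proposition:benders} to get $\bar{y}^\xi(S) \geq d^\xi(S)/C + \bar{x}(E(S)) - |S|$, rounds by integrality, and finishes with convexity.

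You instead show directly that every SRI at integral $\bar{x}$ is implied by a sum of subroute inequalities. Splitting $S$ into $t$ maximal runs gives $\bar{x}(E(S)) = |S| - t$, and subadditivity of the ceiling does the rest.

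Your route is more elementary and self-contained. It needs no flow model and no use of Hoffman's theorem beyond what Corollary~\ref{corollary:convex_hull2} already relies on. It also handles fractional $y$ directly rather than passing through integer points. And it yields an extra structural fact: for integral $\bar{x}$, every SRI other than the subroute ones is redundant.

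The paper's route costs nothing extra, since Lemma~\ref{lemma:flow_formulation} and Proposition~\ref{proposition:benders} are needed anyway. It also presents SRIs as the rounded Hoffman cuts of the flow model. That viewpoint still makes sense for fractional $\bar{x} \in \mc{X}$, which is the setting of the LP-relaxation analysis in Section~\ref{section:projection}.
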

\begin{proof}
Fix~$\bar{x} \in \mc{X} \cap \Z^E$. Since~$\conv(\cap_{R \in \mc{R}(\bar{x})} (\policy(R) \cap [\mathbf{0}, b]^N)) = \conv(\policy(\bar{x}) \cap [\mathbf{0}, b]^N)$, it follows from Corollary~\ref{corollary:convex_hull2} that
\begin{equation}
    \label{eq:sri_proof_convex_hull}
    \conv(\policy(\bar{x}) \cap [\mathbf{0}, b]^N) = \left\{y \in [\mathbf{0}, b]^N:~y^\xi(R') \geq \kscen{R'} - 1, \quad \forall R \in \mc{R}(\bar{x}), R' \subseteq R, \xi \in [N]
\right\}.
\end{equation}

Let~$\bar{y} \in \policy(\bar{x}) \cap [\mathbf{0}, b]^N$. By Lemma~\ref{lemma:flow_formulation},~$\policy(\bar{x}) \cap [\mathbf{0}, b]^N = \flow(\bar{x}) \cap \Z^{[N] \times V_+}$. Since~$\bar{y}$ is integer, it follows from Proposition~\ref{proposition:benders} that~$\bar{y} \in \sri(\bar{x}) $. Convexity of~$\sri(\bar{x})$ then implies that~$\conv(\policy(\bar{x}) \cap [\mathbf{0}, b]^N) \subseteq \sri(\bar{x})$. Conversely, the inequalities describing~$\conv(\policy(\bar{x}) \cap [\mathbf{0}, b]^N)$ in~\eqref{eq:sri_proof_convex_hull} are also included in the formulation defining~$\sri(\bar{x})$ (i.e., they are SRIs with~$x$ fixed to~$\bar{x}$), so~$\sri(\bar{x}) \subseteq \conv(\policy(\bar{x}) \cap [\mathbf{0}, b]^N)$.
\end{proof}

Since optimizing over~$\policy(\bar{x}) \cap [\mathbf{0}, b]^N$ is equivalent to optimizing over its convex hull, we reformulate Formulation~\eqref{formulation:scen_opt_basic} as
\begin{subequations}
\label{formulation:sri}
\begin{align} 
\min ~~& c^\T x + \sum_{v \in V_+} \theta_v, & \nonumber \\
\text{s.t.~~} & \theta_v \geq \sum_{\xi \in [N]} p_\xi w_v y^\xi_v, && \forall v \in V_+, \label{sri_formulation:theta} \\
& y^\xi(\set) \geq \kscen{\set} + x(E(\set)) - |\set|, && \forall \emptyset \subsetneq \set \subseteq V_+, \xi \in [N], \label{sri_formulation:sri} \\
& (x, y) \in (\mc{X} \cap \Z^E) \times [\mathbf{0}, b]^N. &  \label{sri_formulation:domain}
\end{align}
\end{subequations}

Due to the exponential number of SRIs, we describe in Appendix~\ref{appendix:separation} the heuristics that we use to iteratively separate these inequalities as cutting planes. In these heuristics, for each candidate set~$S \subseteq V_+$, we ``aggregate'' the SRIs across the scenarios using the following simple fact.
\begin{fact}
\label{fact:aggregated_sri}
Let~$(\bar{x}, \bar{y}) \in \mc{X} \times [\mathbf{0}, b]^N$,~$\xi' \in [N]$ and~$\set \subseteq V_+$ be such that~$\bar{y}^{\xi'}(\set) < \kscen{\set} + \bar{x}(E(\set)) - |\set|$. Define~$\Xi \coloneqq \{\xi \in [N] : \bar{y}^{\xi}(\set) < k_{\xi}(\set) + \bar{x}(E(\set)) - |\set|\}$, then~$\sum_{\xi \in \Xi} \bar{y}^\xi(\set) < \sum_{\xi \in \Xi} (\kscen{\set} + \bar{x}(E(\set)) - |\set|)$.~\hfill\Halmos
\end{fact}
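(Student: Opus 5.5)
The statement is a direct consequence of the definition of~$\Xi$; it needs no structural property of~$\mc{X}$ or of the recourse policies.

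\textbf{Step 1: $\Xi$ is nonempty.} By hypothesis, scenario~$\xi'$ satisfies~$\bar{y}^{\xi'}(\set) < k_{\xi'}(\set) + \bar{x}(E(\set)) - |\set|$. This is exactly the membership condition for~$\Xi$, so~$\xi' \in \Xi$.

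\textbf{Step 2: sum the strict inequalities.} For each~$\xi \in \Xi$ we have, by definition of~$\Xi$, the strict inequality~$\bar{y}^\xi(\set) < \kscen{\set} + \bar{x}(E(\set)) - |\set|$ between two real numbers. I will sum these inequalities over~$\xi \in \Xi$. A finite sum of strict inequalities with at least one term remains strict. This gives
\[
\sum_{\xi \in \Xi} \bar{y}^\xi(\set) < \sum_{\xi \in \Xi} \left(\kscen{\set} + \bar{x}(E(\set)) - |\set|\right).
\]

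The only point needing care is that the sum is over a nonempty index set; otherwise both sides would be~$0$ and strictness would fail. Step~1 settles this, so I expect no real obstacle.

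It may be worth adding a remark on what the fact implies. Every scenario outside~$\Xi$ satisfies the corresponding SRI. Restricting the aggregation to~$\Xi$ therefore produces the most violated aggregated inequality over all subsets of scenarios for the fixed~$\set$. That inequality is valid, since it is a sum of valid SRIs.
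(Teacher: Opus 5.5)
Your proof is correct and matches the paper, which states this fact without proof as immediate: $\xi' \in \Xi$ makes the index set nonempty, and summing the strict per-scenario inequalities over $\Xi$ gives the claim. Your side remark is also right: choosing $\Xi$ maximizes the absolute violation among aggregated SRIs for the fixed $\set$, because it includes exactly the scenarios with positive individual violation.
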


\noindent
Accordingly, we call inequality~$\sum_{\xi \in \Xi} y^\xi(\set) \geq \sum_{\xi \in \Xi} (\kscen{\set} + x(E(\set)) - |\set|)$ an \emph{aggregated SRI} with respect to~ $\Xi \subseteq [N]$ and~$\set \subseteq V_+$. Preliminary experiments indicate that separating aggregated SRIs, rather than individual SRIs, significantly improves the overall efficiency of the algorithm.

\subsection{\mytitle{LP formulation for the scenario-optimal recourse function}}
\label{subsection:lp_scen_opt}

Besides Theorem~\ref{theorem:sri_formulation}, another consequence of Theorem~\ref{theorem:convex_hull} is that the scenario-optimal recourse cost of a route can be formulated as an LP (note that directly applying Definition~\ref{definition:recourse_policy} to the optimization problem in Definition~\ref{definition:scen_opt_policy} yields a MIP). Recall that~$\optQ(R) = \sum_{\xi \in [N]} p_\xi \, \optQscen(R)$. For a route~$R$ and scenario~$\xi \in [N]$,
\begin{subequations}
\label{formulation:recourse_scen_opt}
\begin{align} 
\optQscen(R) = \min ~~& \sum_{v \in V_+} w_v y^\xi_v, & \nonumber \\
\text{s.t.~~} & y^\xi(R') \geq \kscen{R'} - 1, & \forall R' \subseteq R, \label{ineq:form:recourse_scen_opt}\\
& y^\xi \in [\mathbf{0}, b].
\end{align}
\end{subequations}

Observe that Formulation~\eqref{formulation:recourse_scen_opt} with respect to~$R' \subseteq R$ (i.e., including only constraints~\eqref{ineq:form:recourse_scen_opt} for~$R'' \subseteq R'$) yields a relaxation of Formulation~\eqref{formulation:recourse_scen_opt} with respect to~$R$, which leads to the following property of~$\optQ$.

\begin{corollary}
\label{corollary:monotonicity}
Let~$\optQ$ be a recourse function as in Definition~\ref{definition:scen_opt_policy} with parameters~$w \in \Q^{V_+}_+$ and~$b \in \Z^{V_+}_+$. Choose a disaggregation of~$\optQ$ satisfying Assumption~\ref{assumption:disaggregation} and such that for every route~$R$, there exists~$y^* \in \policy(R)$ with~$\optQ(R, v) = \sum_{\xi \in [N]} p_\xi w_v (y^*)^\xi_v$, for every~$v \in V_+$. Then, for every route~$R$ and subroute~$R' \subseteq R$,~$\sum_{v \in V_+(R')} \optQ(R', v) \leq \sum_{v \in V_+(R')} \optQ(R, v)$.
\end{corollary}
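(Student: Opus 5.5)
The plan is to compare the two sides scenario by scenario, using the LP characterization in Formulation~\eqref{formulation:recourse_scen_opt}, and to exploit the fact that the scenario-optimal cost of $R'$ is a minimum over a relaxation satisfied by the restriction of an optimal policy for $R$.

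Fix a route $R$, a subroute $R' \subseteq R$, and the vector $y^* \in \policy(R)$ given by the hypothesis for $R$. The hypothesis says $\optQ(R, v) = \sum_{\xi} p_\xi w_v (y^*)^\xi_v$ for every $v$, and $\optQ(R) = \sum_v \optQ(R,v)$. I would first argue that we may take $y^* \in [\mathbf{0}, b]^N$. Summing the hypothesis over $v$ shows that $y^*$ attains $\optQ(R)$. If some entry exceeds $b$, I would replace $y^*$ by the restriction $\tilde y$ of an optimal element of $\policy(R) \cap [\mathbf{0}, b]^N$; the key point is that only the restriction property below is needed, not optimality.

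Next, define $\hat y$ by $\hat y^\xi_v = (y^*)^\xi_v$ for $v \in V_+(R')$ and $\hat y^\xi_v = 0$ otherwise. For every subroute $R'' \subseteq R'$ we have $R'' \subseteq R$, and $V_+(R'') \subseteq V_+(R')$. By Theorem~\ref{theorem:convex_hull} applied to $R$, $y^*$ satisfies $(y^*)^\xi(R'') \geq \kscen{R''} - 1$, and hence so does $\hat y$. Since $\hat y$ is integral and nonnegative, Theorem~\ref{theorem:convex_hull} applied to $R'$ (its integer points are exactly $\policy(R')$) gives $\hat y \in \policy(R')$, and moreover $\hat y \in [\mathbf{0}, b]^N$. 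By Definition~\ref{definition:scen_opt_policy} it follows that
\[
\sum_{v\in V_+(R')} \optQ(R',v) = \optQ(R') \le \sum_\xi\sum_v p_\xi w_v \hat y^\xi_v = \sum_{v\in V_+(R')} \optQ(R,v).
\]
The left equality holds by Assumption~\ref{assumption:disaggregation}, since $\optQ(R',v)=0$ for $v \notin V_+(R')$. The right equality holds by the hypothesis on $y^*$ for $R$.

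The main obstacle is the bound $y^* \le b$, which the hypothesis does not state explicitly. I would handle it by observing that the equality $\optQ(R,v) = \sum_\xi p_\xi w_v (y^*)^\xi_v$ is only used on $V_+(R')$. If needed, the natural reading is that $y^*$ is a scenario-optimal recourse policy per Definition~\ref{definition:scen_opt_policy}, which lies in $[\mathbf{0}, b]^N$. Alternatively, for coordinates with $w_v > 0$, one can clip $y^*$ to $\min\{y^*, b\}$ after checking that the constraints with right-hand side $\kscen{R''}-1$ are still satisfied by some optimal bounded policy of the same cost. I expect to adopt the first reading.
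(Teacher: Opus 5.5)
Your main argument is correct and is essentially the paper's proof. The paper also takes $y^*$ to be a scenario-optimal policy for $R$, hence in $[\mathbf{0}, b]^N$. It observes that each $(y^*)^\xi$ is feasible for Formulation~\eqref{formulation:recourse_scen_opt} with respect to $R'$ and sums over scenarios. Your explicit zeroing of the coordinates outside $V_+(R')$, and your use of Theorem~\ref{theorem:convex_hull} to place $\hat{y}$ in $\Pi(R')$, make precise a step the paper leaves implicit.

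Commit to that first reading and drop the fallbacks (substituting a different bounded optimum, or clipping at $b$). Both break the identity $\sum_{\xi}\sum_{v \in V_+(R')} p_\xi w_v \hat{y}^\xi_v = \sum_{v \in V_+(R')} \optQ(R, v)$, which holds only for the specific witness. With a witness outside $[\mathbf{0}, b]^N$ the inequality can in fact fail. For example, take $N=1$, three customers $u,v,z$ each with demand $0.6C$, $w=(1,2,1)$, $b=(0,1,1)$, $R=(u,v,z)$ and $R'=(u,v)$. The witness $y^*=(1,0,1)$ attains $\optQ(R)=2$, yet $\optQ(R')=2>1=\optQ(R,u)+\optQ(R,v)$.
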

\begin{proof}
Let~$y^* \in \policy(R)$ be a scenario-optimal recourse policy such that, for all~$v \in V_+$,~$\optQ(R, v) = \sum_{\xi \in [N]} p_\xi w_v (y^*)^\xi_v$. For each scenario~$\xi \in [N]$,~$(y^*)^\xi$ is feasible for Formulation~\eqref{formulation:recourse_scen_opt} with respect to~$R'$, which implies that~$\optQscen(R') \leq \sum_{v \in V_+(R')} p_\xi w_v (y^*)^\xi_v$. Summing this inequality over all scenarios~$\xi \in [N]$ yields the desired result.
\end{proof}

Corollary~\ref{corollary:monotonicity} implies that scenario-optimal recourse policies satisfy the \emph{monotonicity/(restricted) superadditivity} properties discussed previously in the context of the \emph{DL-shaped method} \citep{parada2024disaggregated, legault2025superadditivity, part1}. Therefore, rather than using SRIs, one could also solve~$\vrpsd{\mc{X}, \optQ}$ using the \emph{path} and \emph{set} cuts of the DL-shaped method (path cuts are a special case of the partial route cuts~\eqref{ineq:partial_route_cut2} from Section~\ref{subsection:ils}). However, as we demonstrate next, we can instead leverage LP duality and Formulation~\eqref{formulation:recourse_scen_opt} to prove that these ILS cuts are already dominated by the SRIs.

\section{\mytitle{Projecting out the scenario variables and a branch-and-cut algorithm}}
\label{section:projection}

In this section, we study the projection of Formulation~\eqref{formulation:sri} onto the~$(x, \theta)$-space of~$\Fq{\mc{X}, \mc{Q}}$ from both theoretical and computational perspectives. In view of the results in the previous section, it is natural to ask whether it is preferable to formulate~$\vrpsd{\mc{X}, \optQ}$ using ILS cuts, SRIs, or a combination of both. 

From a theoretical standpoint, we show in Section~\ref{subsection:projected_sri} that the projection of Formulation~\eqref{formulation:sri} onto the~$(x, \theta)$-variables can be described by a family of inequalities which we call \emph{projected SRIs}.%
\footnote{In a sense, projected SRIs are similar to the \emph{aggregated cuts} used in single-cut implementations of the Benders' decomposition method~\citep{rahmaniani2017benders}. We use the term ``projected SRI'' because we already adopted the term ``aggregated SRIs'' for the inequalities in Fact~\ref{fact:aggregated_sri}.}
Section~\ref{subsection:comparing_cuts} shows that, under certain assumptions on the \emph{recourse lower bounds} (recall Section~\ref{subsection:ils}), projected SRIs dominate the ILS cuts from~\cite{part1} with respect to the recourse function~$\optQ$.

From a computational perspective, we note that the proofs in Section~\ref{subsection:comparing_cuts} are constructive, and they provide algorithms to generate the dominating projected SRIs. Moreover, in preliminary experiments, we observed that the LP relaxation of Formulation~\eqref{formulation:sri} often provides strong dual bounds, even when the SRIs are separated heuristically (as in Appendix~\ref{appendix:separation}). However, the large number of~$y$-variables can make solving the formulation to integrality computationally expensive. Motivated by this observation, Section~\ref{subsection:lp_bound} shows that solving the LP relaxation of Formulation~\eqref{formulation:sri} also provides a single projected SRI that recovers the LP bound of Formulation~\eqref{formulation:sri} in the~$(x, \theta)$-space. The results of Sections~\ref{subsection:comparing_cuts} and~\ref{subsection:lp_bound} are then combined in Section~\ref{subsection:sri_algorithm} to develop a branch-and-cut algorithm for solving~$\vrpsd{\mc{X}, \optQ}$ using projected SRIs. Finally, Section~\ref{subsection:extension} discusses how we extend this algorithm to different recourse policies.

In the remainder, we fix the parameters~$w \in \Q^{V_+}_+$ and~$b \in \Z^{V_+}_+$ of the scenario-optimal recourse function~$\optQ$ (Definition~\ref{definition:scen_opt_policy}). We also fix a disaggregation of~$\optQ$ as specified in Corollary~\ref{corollary:monotonicity}.

\subsection{\mytitle{Projected SRIs}}
\label{subsection:projected_sri}

In this section, we use the following standard notation for referring to projections. For any set~$\mc{H} \subseteq \R^{n_1 + n_2}$ containing vectors of the form~$(h_1, h_2)$ with~$h_1 \in \R^{n_1}$ and~$h_2 \in \R^{n_2}$, we define~$\proj_{h_1}(\mc{H}) \coloneqq \{h_1 \in \R^{n_1} : \text{~$\exists h_2 \in \R^{n_2}$ such that~}(h_1, h_2) \in \mc{H}\}$. Now, for every~$\bar{x} \in \mc{X}$, we augment the set~$\sri(\bar{x})$ as follows:
{\small
\begin{equation}
\tag{$\widehat{\tsc{sri}}(\bar{x})$}
\label{set:sri_bar}
\widehat{\tsc{sri}}(\bar{x}) \coloneqq 
\left\{(\theta,  y) \in \R^{V_+}_+ \times [\mathbf{0}, b]^N :
\begin{aligned}
    & y^\xi(\set) \geq \kscen{\set} + \bar{x}(E(\set)) - |\set|, & \forall \emptyset \subsetneq \set \subseteq V_+,~\xi \in [N] \\
    & \theta_v \geq \sum_{\xi \in [N]} p_\xi w_v y^\xi_v, & \forall v \in V_+
\end{aligned}
\right\}.
\end{equation}}

\noindent
Therefore, the projection of the feasible region of the LP relaxation of Formulation~\eqref{formulation:sri} is given by
\begin{equation}
\tag{$\mc{P}$}
\label{set:P}
\mc{P} \coloneqq 
    \left\{ (x, \theta) : x \in \mc{X},~\theta \in \proj_{\theta}(\srihat(x)) \right\}.
\end{equation}

To better understand the strength of the LP relaxation of Formulation~\eqref{formulation:sri}, we study the polyhedron~$\P$, which in turn leads us to analyze the set~$\proj_{\theta}(\srihat(x))$. Therefore, one of the goals of this subsection is to explicitly write~$\proj_{\theta}(\srihat(x))$ as the intersection of half-spaces defined by inequalities on the~$\theta$-variables.

To do this, we first consider in Lemma~\ref{lemma:projection_single_inequality} the simpler case of projecting a single inequality defined on the~$y$-variables. This will not only be useful for characterizing~$\proj_{\theta}(\srihat(x))$, but it will also be key in Section~\ref{subsection:comparing_cuts} to construct projected SRIs that dominate certain ILS cuts. The proof is left to Appendix~\ref{appendix:proof_projection_single_inequality}.

\begin{restatable}{lemma}{lemmaprojectionsingleinequality}
\label{lemma:projection_single_inequality}
Let~$a^1, \ldots, a^N$ be vectors in~$\R^{V_+}$ and let~$h \in \R$. Consider the set
\begin{equation*}
\mc{H} = 
\left\{(\theta,  y) \in \R^{V_+}_+ \times \R^{[N] \times V_+}_+ :~
    \sum_{\xi \in [N]} (a^\xi)^\T y^\xi \geq h,
    ~\theta_v \geq \sum_{\xi \in [N]} p_\xi w_v y^\xi_v, \quad \forall v \in V_+
\right\}.
\end{equation*}
The following holds:
\begin{itemize}
    \item if there exists~$v \in V_+$ and~$\xi \in [N]$ such that~$w_v = 0$ and~$a^\xi_v > 0$, then~$\proj_{\theta}(\mc{H}) = \R^{V_+}_+$;
    \item otherwise,~$\proj_{\theta}(\mc{H}) = \left\{\theta \in \R^{V_+}_+ : \sum_{v \in V_+ : w_v > 0} \left(\max_{\xi \in [N]} \left\{ \frac{a^\xi_v}{p_\xi w_v} \right\} \right)^+ \theta_v \geq h \right\}$.
\end{itemize}
\end{restatable}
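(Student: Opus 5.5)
The plan is to prove the two set inclusions directly, handling each case of the statement separately. In both cases the inclusion $\proj_{\theta}(\mc{H}) \subseteq \R^{V_+}_+$ is immediate from the definition of $\mc{H}$. So the work is:
\begin{itemize}
\item in the first case, lifting an arbitrary $\theta \geq \mathbf{0}$ to a point of $\mc{H}$;
\item in the second case, proving that the displayed inequality is valid (call it \emph{validity}) and that every $\theta \geq \mathbf{0}$ satisfying it lifts to a point of $\mc{H}$ (\emph{lifting}).
\end{itemize}

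\textbf{First case.} Suppose $w_{v'} = 0$ and $a^{\xi'}_{v'} > 0$ for some $v'$ and $\xi'$. Fix any $\bar\theta \in \R^{V_+}_+$. I would set every entry of $y$ to zero except $y^{\xi'}_{v'} = \max\{0, h\}/a^{\xi'}_{v'}$. Then $\sum_{\xi} (a^\xi)^\T y^\xi = a^{\xi'}_{v'} y^{\xi'}_{v'} \geq h$. The $\theta$-constraints also hold:
\begin{itemize}
\item for $v = v'$, its right-hand side is $p_{\xi'} w_{v'} y^{\xi'}_{v'} = 0 \leq \bar\theta_{v'}$;
\item for every $v \neq v'$, its right-hand side is $0 \leq \bar\theta_v$.
\end{itemize}
Hence $(\bar\theta, y) \in \mc{H}$, which gives $\proj_\theta(\mc{H}) = \R^{V_+}_+$.

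\textbf{Second case: validity.} Now $a^\xi_v \leq 0$ whenever $w_v = 0$. Write $\alpha_v \coloneqq (\max_{\xi} a^\xi_v/(p_\xi w_v))^+$ for each $v$ with $w_v > 0$. Take $(\theta, y) \in \mc{H}$ and bound $\sum_\xi a^\xi_v y^\xi_v$ for each $v$.
\begin{itemize}
\item If $w_v = 0$, every coefficient $a^\xi_v$ is nonpositive and $y \geq \mathbf{0}$, so the sum is at most $0$.
\item If $w_v > 0$, then $a^\xi_v \leq \alpha_v p_\xi w_v$ for every $\xi$, by the definition of the maximum and of the positive part. Using $y \geq \mathbf{0}$, then $\alpha_v \geq 0$ together with the $\theta$-constraint, we get
\[
\sum_\xi a^\xi_v y^\xi_v \leq \alpha_v \sum_\xi p_\xi w_v y^\xi_v \leq \alpha_v \theta_v .
\]
\end{itemize}
Summing over $v$ and using $\sum_\xi (a^\xi)^\T y^\xi \geq h$ yields $\sum_{v : w_v > 0} \alpha_v \theta_v \geq h$.

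\textbf{Second case: lifting.} Let $\bar\theta \geq \mathbf{0}$ satisfy $\sum_{v : w_v>0} \alpha_v \bar\theta_v \geq h$. For each $v$ with $w_v > 0$ and $\alpha_v > 0$, choose $\xi_v \in \arg\max_\xi a^\xi_v/(p_\xi w_v)$ and set $y^{\xi_v}_v = \bar\theta_v/(p_{\xi_v} w_v)$. Set all other entries of $y$ to $0$. Then:
\begin{itemize}
\item each $\theta$-constraint holds, with equality for the $v$'s where we set an entry and trivially (right-hand side $0$) for the rest;
\item $\sum_\xi (a^\xi)^\T y^\xi = \sum_{v:\alpha_v>0} a^{\xi_v}_v \bar\theta_v/(p_{\xi_v} w_v) = \sum_{v:w_v>0} \alpha_v \bar\theta_v \geq h$.
\end{itemize}

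The only point needing care is the role of the positive part in the lifting step. When $\alpha_v = 0$ (all ratios for $v$ are nonpositive) we simply set $y_v = 0$, and this costs nothing because such $v$ contribute $0$ to the left-hand side. The degenerate situation where no $v$ has $\alpha_v > 0$ is also covered: the inequality then reads $0 \geq h$, and $y = \mathbf{0}$ works.
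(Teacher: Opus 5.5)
Your proof is correct, but for the second case it follows a different route from the paper.

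\textbf{The paper's route.} The paper fixes $\bar\theta$ and writes membership in $\proj_{\theta}(\mc{H})$ as feasibility of an LP in $y$. It then passes to the dual, with multipliers $\gamma_v \leq 0$ for the $\theta$-constraints and $\eta \geq 0$ for the single inequality. It argues that one may take $\gamma_v = 0$ when $w_v = 0$ and $\gamma_v = -\eta \, (\max_{\xi} a^\xi_v/(p_\xi w_v))^+$ otherwise. So $\bar\theta$ lies in the projection if and only if $\eta\,\bigl(h - \sum_{v : w_v > 0} (\max_{\xi} a^\xi_v/(p_\xi w_v))^+ \bar\theta_v\bigr) \leq 0$ for every $\eta \geq 0$.

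\textbf{Your route.} You stay entirely in the primal. Validity comes from the termwise bound $a^\xi_v \leq \alpha_v p_\xi w_v$. The reverse inclusion comes from an explicit lift that places all of $\bar\theta_v$ on one maximizing scenario $\xi_v$.

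\textbf{What each buys.} Your version is more elementary and constructive. It exhibits the witness $y$ and makes transparent why the maximum over scenarios and the positive part appear: only the worst scenario of each customer is ever worth using, and customers with $\alpha_v = 0$ contribute nothing. However, it verifies a formula that must be known in advance. Its single-scenario lift is also special to one inequality. With several inequalities, for instance $y^1_v \geq 1$ and $y^2_v \geq 1$ with $p_1 = p_2 = 1/2$ and $w_v = 1$, $\bar\theta_v$ must be split across scenarios. The paper's duality argument derives the coefficients rather than guessing them. It is also the duality viewpoint that the paper reuses when projecting the full SRI system in Proposition~\ref{proposition:proj_gamma_characterization}.

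In the first case, your choice $y^{\xi'}_{v'} = \max\{0,h\}/a^{\xi'}_{v'}$ is just a concrete instance of the paper's ``arbitrarily large'' choice.
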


Next, we apply Lemma~\ref{lemma:projection_single_inequality} to the case where the inequality~$\sum_{\xi \in [N]} (a^\xi)^\T y^\xi \geq h$ is given by a conic combination of SRIs and upper bound constraints on the~$y$-variables. Thus, we reserve the symbol~$\alpha$ for a vector of multipliers associated with the SRIs, with components~$\alpha^\xi_\set \geq 0$, for all~$\set \subseteq V_+$ and~$\xi \in [N]$. Similarly,~$\beta$ is reserved for a vector of multipliers associated with the upper bound constraints~$y^\xi_v \leq b_v$, with components~$\beta^\xi_v \leq 0$, for all~$v \in V_+$ and~$\xi \in [N]$. 

Given~$\alpha \geq 0$ and~$\beta \leq 0$, consider
\begin{align}
    & \sum_{\xi \in [N]} \sum_{\set \subseteq V_+} \alpha^\xi_\set y^\xi(\set) \geq \sum_{\xi \in [N]} \sum_{\set \subseteq V_+} \alpha^\xi_\set \, (\kscen{\set} + x(E(\set)) - |\set|) \quad \text{and} \label{ineq:conic_comb_sri} \\ 
    & \sum_{\xi \in [N]} \sum_{v \in V_+} \beta^\xi_v \, y^\xi_v \geq \sum_{\xi \in [N]} \sum_{v \in V_+} \beta^\xi_v \, b_v. \label{ineq:conic_comb_upper_bound}
\end{align}
Summing~\eqref{ineq:conic_comb_sri} with~\eqref{ineq:conic_comb_upper_bound} and rearranging the terms gives
\begin{equation}
    \label{ineq:conic_comb}
    \sum_{\xi \in [N]} \sum_{v \in V_+} \left(\beta^\xi_v + \sum_{\set \subseteq V_+ : v \in \set} \alpha^\xi_\set \right) y^\xi_v \geq \sum_{\xi \in [N]} \sum_{\set \subseteq V_+} \alpha^\xi_\set x(E(\set)) + \nufunction{\alpha, \beta}, 
\end{equation}
where
\begin{equation}
    \tag{$\nu$}
    \label{eq:nu}
    \nu(\alpha, \beta) \coloneqq  \sum_{\xi \in [N]} \sum_{\set \subseteq V_+} \alpha^\xi_\set ( \kscen{\set} - |\set| ) + \sum_{\xi \in [N]} \sum_{v \in V_+} \beta^\xi_v b_v.
\end{equation}

For each~$\bar{x} \in \mc{X}$, define
\begin{equation}
    \tag{$\Gamma$}
    \label{set:gamma_single_inequality}
    \Gamma(\bar{x}, \alpha, \beta) \coloneqq \left\{(\theta,  y) \in \R^{V_+}_+ \times [\mathbf{0}, b]^N : ~\text{$(\bar{x}, y)$ satisfies~\eqref{ineq:conic_comb}},~~\theta_v \geq \sum_{\xi \in [N]} p_\xi w_v y^\xi_v, \quad \forall v \in V_+\right\}.
\end{equation}

\noindent 
By Lemma~\ref{lemma:projection_single_inequality}, if~$\left(\beta^\xi_v + \sum_{\set \subseteq V_+ : v \in \set} \alpha^\xi_\set \right) \leq 0$, for all~$\xi \in [N]$ and~$v \in V_+$ with~$w_v = 0$, we have that~$\proj_{\theta}(\Gammaset(\bar{x}, \alpha, \beta))$ contains precisely all~$\theta \in \R^{V_+}_+$ such that
\begin{equation}
    \label{ineq:projected_sri}
    \sum_{v \in V_+ : w_v > 0} \phiv(\alpha, \beta) \, \theta_v \geq \sum_{\xi \in [N]} \sum_{S \subseteq V_+} \alpha^\xi_S \, \bar{x}(E(S)) + \nufunction{\alpha, \beta},
\end{equation}
where, for each~$v \in V_+$ with~$w_v > 0$,
\begin{equation}
\tag{$\phi_v$}
\label{eq:phi}
\phi_v(\alpha, \beta) \coloneqq \left( \max_{\xi \in [N]}\left\{ \frac{\beta^\xi_v + \sum_{\set \subseteq V_+ : v \in \set} \alpha^\xi_\set}{p_\xi w_v} \right\} \right)^+.
\end{equation}

We refer to inequality~\eqref{ineq:projected_sri} as the \emph{projected SRI} associated with~$(\alpha, \beta)$ (even when the fixed~$\bar{x}$ is replaced by a free variable~$x$). For convenience, we define~\hypertarget{definition:set_dual_multipliers}{$\mc{A}$} as the set of multipliers~$(\alpha, \beta)$, with~$\alpha \geq 0$ and~$\beta \leq 0$, such that~$\left(\beta^\xi_v + \sum_{\set \subseteq V_+ : v \in \set} \alpha^\xi_\set \right) \leq 0$, for all~$\xi \in [N]$ and~$v \in V_+$ with~$w_v = 0$. The next result builds on Lemma~\ref{lemma:projection_single_inequality} to prove that projected SRIs characterize~$\proj_{\theta} (\srihat(\bar{x}))$. The proof is left to Appendix~\ref{appendix:proof_proj_gamma_characterization}.

\begin{restatable}{proposition}{propositionprojgammacharacterization}
    \label{proposition:proj_gamma_characterization}
    For each~$\bar{x} \in \mc{X}$,~$\proj_{\theta}(\srihat(\bar{x}))$ contains precisely the set of all~$\bar{\theta} \in \R^{V_+}_+$ satisfying the projected SRIs~\eqref{ineq:projected_sri} (with~$x$ fixed to~$\bar{x}$) associated with every~$(\alpha, \beta) \in \dualset$.
    Consequently,
    {\small
    \begin{equation*}
    \P =
    \left\{(x, \theta) \in \mc{X} \times \R^{V_+}_+ :~
    \begin{aligned}
        & \sum_{v \in V_+ : w_v > 0} \phiv(\alpha, \beta) \, \theta_v \geq \sum_{\xi \in [N]} \sum_{\set \subseteq V_+} \alpha^\xi_\set \, x(E(\set)) + \nufunction{\alpha, \beta}, & \forall (\alpha, \beta) \in \dualset
    \end{aligned}
    \right\}.
    \end{equation*}}
\end{restatable}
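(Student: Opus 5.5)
The plan is to prove the two inclusions separately for a fixed~$\bar{x} \in \mc{X}$. The statement about~$\P$ then follows at once from the definition of~$\P$.

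\emph{Easy inclusion.} Let~$\bar{\theta} \in \proj_{\theta}(\srihat(\bar{x}))$, witnessed by some~$\bar{y}$, and fix~$(\alpha, \beta) \in \dualset$.
\begin{itemize}
\item Since~$\bar{y}$ satisfies every SRI (with~$x = \bar{x}$) and every bound~$y^\xi_v \leq b_v$, it satisfies the conic combinations~\eqref{ineq:conic_comb_sri} and~\eqref{ineq:conic_comb_upper_bound}, hence their sum~\eqref{ineq:conic_comb}.
\item Therefore~$(\bar{\theta}, \bar{y}) \in \Gammaset(\bar{x}, \alpha, \beta)$.
\item Apply Lemma~\ref{lemma:projection_single_inequality} with~$a^\xi_v = \beta^\xi_v + \sum_{\set \ni v} \alpha^\xi_\set$ and~$h = \sum_{\xi} \sum_{\set} \alpha^\xi_\set \bar{x}(E(\set)) + \nufunction{\alpha, \beta}$. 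The lemma allows~$y$ in~$\R_+$, and~$[\mathbf{0}, b]^N \subseteq \R_+$, so~$\proj_\theta(\Gammaset) \subseteq \proj_\theta(\mc{H})$. Because~$(\alpha,\beta) \in \dualset$, we are in the second case of the lemma, so~$\bar{\theta}$ satisfies~\eqref{ineq:projected_sri}.
\end{itemize}

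\emph{Hard inclusion.} Let~$\bar{\theta} \geq 0$ satisfy all projected SRIs. We must exhibit~$y \in [\mathbf{0}, b]^N$ that satisfies all SRIs and has~$\sum_\xi p_\xi w_v y^\xi_v \leq \bar{\theta}_v$ for every~$v$.
\begin{itemize}
\item Argue by contradiction via Farkas' lemma. Suppose the linear system in~$y$ is infeasible. Its constraints are the SRIs (with~$x = \bar{x}$), the bounds~$y^\xi_v \leq b_v$, the constraints~$-\sum_\xi p_\xi w_v y^\xi_v \geq -\bar{\theta}_v$, and~$y \geq 0$.
\item Farkas then gives multipliers~$\alpha \geq 0$ (SRIs),~$\beta \leq 0$ (bounds) and~$\lambda \geq 0$ (the~$\theta$-constraints) such that, for every~$\xi$ and~$v$,
\[
\beta^\xi_v + \sum_{\set \ni v} \alpha^\xi_\set - \lambda_v p_\xi w_v \leq 0,
\]
and
\[
\sum_{\xi} \sum_{\set} \alpha^\xi_\set \bigl(\kscen{\set} + \bar{x}(E(\set)) - |\set|\bigr) + \sum_{\xi, v} \beta^\xi_v b_v - \sum_v \lambda_v \bar{\theta}_v > 0 .
\]
\item The first condition gives~$(\alpha, \beta) \in \dualset$: when~$w_v = 0$ the~$\lambda$ term vanishes, so the coefficient of~$y^\xi_v$ is nonpositive.
\item For~$w_v > 0$, dividing by~$p_\xi w_v$ and maximizing over~$\xi$ gives~$\phiv(\alpha, \beta) \leq \lambda_v$, since~$\lambda_v \geq 0$ absorbs the positive part.
\item Because~$\bar{\theta} \geq 0$,
\[
\sum_{v : w_v > 0} \phiv(\alpha,\beta)\,\bar{\theta}_v \leq \sum_v \lambda_v \bar{\theta}_v < \sum_{\xi} \sum_{\set} \alpha^\xi_\set \bar{x}(E(\set)) + \nufunction{\alpha,\beta},
\]
which contradicts the projected SRI for~$(\alpha, \beta)$.
\end{itemize}

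The main obstacle is the Farkas bookkeeping: getting the signs and the orientation of each constraint exactly right, and checking that the nonnegativity of~$y$ is what turns the dual equalities into inequalities (the~$\leq 0$ condition above). One could instead reduce to Lemma~\ref{lemma:projection_single_inequality} by noting that every valid inequality for a polyhedron's projection is implied by a single aggregated constraint. The direct Farkas argument, however, produces the needed~$(\alpha, \beta)$ explicitly, so I would use it.
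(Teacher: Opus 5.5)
Your proof is correct, but it takes a somewhat different route from the paper's.

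\textbf{The paper's route.} It proves both inclusions through one chain of equivalences. It writes the feasibility LP in $y$, keeps the constraints $\sum_{\xi} p_\xi w_v y^\xi_v \leq \bar{\theta}_v$ and $y \geq 0$, and applies Lagrangian duality only to the SRIs and the upper bounds. This makes $\bar{\theta} \in \proj_\theta(\widehat{\textsc{sri}}(\bar{x}))$ equivalent to: $\bar{\theta}$ lies in the projection of the single-inequality set $\mathcal{H}$ for every $\alpha \geq 0$, $\beta \leq 0$. Lemma~\ref{lemma:projection_single_inequality} then turns each such condition into the projected SRI when $(\alpha,\beta) \in \mathcal{A}$, and into a vacuous condition otherwise.

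\textbf{Your route.} Your easy inclusion uses the same lemma but needs no duality at all; it is a direct aggregation of a witness $\bar{y}$. You also make explicit that $\Gamma(\bar{x},\alpha,\beta) \subseteq \mathcal{H}$ because $[\mathbf{0},b]^N \subseteq \R_+$, which the paper glosses over. Your hard inclusion applies Farkas to the whole system, dualizing the $\theta$-constraints too via $\lambda$. The bound $\phi_v(\alpha,\beta) \leq \lambda_v$ then re-derives, inside the certificate, the computation done in the proof of Lemma~\ref{lemma:projection_single_inequality}. This is exactly the ``Benders-type'' argument the paper mentions just before its proof and deliberately avoids.

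\textbf{What each buys.} Your argument duplicates the lemma's dual computation, but it is elementary and self-contained, and it produces the violated projected SRI explicitly. It also gives finiteness: the Farkas cone $\{(\alpha,\beta,\lambda)\}$ depends on neither $\bar{x}$ nor $\bar{\theta}$. So one may restrict to its finitely many extreme rays, which shows that finitely many projected SRIs describe $\proj_\theta(\widehat{\textsc{sri}}(\bar{x}))$; the paper's Lagrangian argument does not give this. The paper's route is shorter given the lemma, which it needs anyway in Section~\ref{subsection:comparing_cuts}. It also treats both inclusions uniformly and shows directly that multipliers outside $\mathcal{A}$ impose no restriction.
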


\begin{example}
\label{example:simple_projected_sri}

Suppose that~$\mc{X} = \Xcvrp$ and
let~$\emptyset \subsetneq \set \subseteq V_+$ be such that~$w_v > 0$, for all~$v \in \set$. Let~$\Xi = \{\xi \in [N] : \kscen{\set} > \kbar{\set}\}$ and assume that~$\Xi$ is nonempty. Set~$\beta = \mathbf{0}$ and~$\alpha^\xi_{\set'} = \mathbb{I}(\set' = \set \text{~and~} \xi \in \Xi)$, for all~$\set' \subseteq V_+$ and~$\xi \in [N]$. The pair~$(\alpha, \beta)$ yields the following aggregated SRI~(recall Fact~\ref{fact:aggregated_sri}):
\begin{equation*}
    \label{ineq:example_simple_sri}
    \sum_{\xi \in \Xi} y^\xi(\set) \geq \sum_{\xi \in \Xi} (\kscen{\set} + x(E(\set)) - |\set|) = |\Xi| \cdot (x(E(\set)) - |\set|) + \sum_{\xi \in \Xi} \kscen{\set}
\end{equation*}

\noindent
By Lemma~\ref{lemma:projection_single_inequality}, for any~$\bar{x} \in \mc{X}$,~$\bar{\theta}$ belongs to~$\proj_{\theta}\Gammaset(\bar{x}, \alpha, \beta)$ if and only if~$\bar{\theta}$ satisfy the projected SRI:
\begin{equation}
    \label{ineq:example_simple_projected_sri}
    \sum_{v \in \set} \max_{\xi \in \Xi}\left\{\frac{1}{p_\xi w_v}\right\} \, \theta_v \geq |\Xi| \cdot (x(E(\set)) - |\set|) + \sum_{\xi \in \Xi} \kscen{\set}.
\end{equation} 
We call inequality~\eqref{ineq:example_simple_projected_sri} a \emph{projected aggregated SRI} associated with~$\Xi$ and~$\set$.
\hfill\Halmos
\end{example}

\subsection{\mytitle{Constructing projected SRIs that dominate ILS cuts}}
\label{subsection:comparing_cuts}

In this subsection, we show that, under some conditions on the recourse lower bounds, the ILS cuts shown in Section~\ref{subsection:ils} (applied to the scenario-optimal recourse function~$\optQ$) are valid for~$\P$. The overall proof strategy consists of two main steps: (1) leverage Formulation~\eqref{formulation:recourse_scen_opt} to express a recourse lower bound as the optimal value of an LP; and (2) use the dual multipliers~$(\alpha, \beta)$ of this LP and Lemma~\ref{lemma:projection_single_inequality} to construct a dominating projected SRI.

\subsubsection{\mytitle{Set cuts}}
\label{subsection:set_cuts_comparison}

The approach of~\cite{part1} to compute a recourse lower bound for the set cut~\eqref{ineq:set_cut} can be interpreted as a simple greedy algorithm. Fix~$\set = \{v_1, \ldots, v_\ell\} \subseteq V_+$ and a scenario~$\xi \in [N]$. Let~$k' \in \Z_{++}$ be such that inequality~$\bar{x}(E(\set)) \leq |\set| - k'$ is valid for~$\mc{X} \cap \Z^E$, and let~$\bar{x} \in \mc{X} \cap \Z^E$ satisfy this inequality at equality. Additionally, suppose that~$0 < \kscen{\set} - k' \leq b(\set)$ (see Remark~\ref{remark:feasibility_set_cut}).

To obtain a lower bound on the recourse cost, we assign the~$\kscen{\set} - k'$ failures observed by~$\bar{x}$ to the customers in~$\set$ in increasing order of their recourse weights. Thus, assume that~$w_{v_1} \leq \ldots \leq w_{v_\ell}$, and let~$j \in [\ell]$ be the smallest index such that~$\kscen{\set} - k' \leq \sum_{i \in [j]} b_{v_i}$. A lower bound on the recourse cost paid in scenario~$\xi \in [N]$ is then given by~$\mc{L}^*_\xi(\set, k') = \sum_{i \in [j - 1]} b_{v_i} w_{v_i} + ((\kscen{\set} - k') - \sum_{i \in [j - 1]} b_{v_i})\, w_{v_j}$. (Recall that~$[a] = \emptyset$, for every integer~$a \leq 0$.)

It is easy to see that this greedy algorithm is equivalent to the LP:
\begin{subequations}
\label{formulation:recourse_lower_bound_set}
\begin{align} 
\mc{L}_\xi^*(\set, k') \coloneqq \min ~~& \sum_{v \in V_+} w_v y^\xi_v, & \nonumber \\
\text{s.t.~~} & y^\xi(\set) \geq \kscen{\set} - k', && \quad\quad\quad (\alpha^\xi_\set) \\
& y^\xi_v \leq b_v, & \forall v \in V_+, & \quad\quad\quad (\beta^\xi_v) \\
& y^\xi \geq 0.
\end{align}
\end{subequations}

\noindent
\cite{part1} set their recourse lower bound as~$\mc{L}^*(\set, k') \coloneqq \sum_{\xi \in [N]} p_\xi \, \mc{L}^*_\xi(\set, k')$ (they use~$\Qc$, so they assume~$b = \allones$). Note that~$\mc{L}^*_\xi(\set, k') = 0$ whenever~$\kscen{\set} \leq k'$.

Replacing this recourse lower bound in the ILS cut~\eqref{ineq:set_cut1} we obtain the set cut
\begin{equation}
    \label{ineq:set_cut}
    \theta(\set) \geq \mc{L}^*(\set, k') \cdot \Wdl(x ; \mc{X}(\set, k')),
\end{equation}
where we recall from Section~\ref{subsection:ils} that~$\Wdl(x ; \mc{X}(\set, k')) = 1 + (x(E(\set)) - |\set| + k')$.

\begin{remark}
    \label{remark:feasibility_set_cut}Formulation~\eqref{formulation:recourse_lower_bound_set} is feasible if and only if~$\kscen{\set} - k' \leq b(\set)$. This condition always holds when~$b \geq \allones$, since, by Assumption~\ref{assumption:large_demands},~$\kscen{\set} = \lceil d^\xi(\set) / C \rceil \leq |\set| \leq b(\set)$. In fact, choosing~$b$ as in Definition~\ref{definition:scen_opt_policy} already guarantees feasibility. To see this, suppose that there exists~$\bar{x} \in \mc{X}(\set, k')$ (otherwise, the set cut~\eqref{ineq:set_cut} is always inactive). For each route~$R \in \mc{R}(\bar{x})$, there exists~$\bar{y}_R \in \policy(R) \cap [\mathbf{0}, b]^N$. Combining these vectors with Fact~\ref{fact:policy_split} we obtain~$\bar{y} \in \policy(\bar{x}) \cap [\mathbf{0}, b]^N$, and since SRIs (with~$x$ fixed to~$\bar{x}$) are valid for~$\policy(\bar{x})$, we get~$b(\set) \geq \bar{y}^\xi(\set) \geq \kscen{\set} + \bar{x}(E(\set)) - |\set| = \kscen{\set} - k'$.
    \hfill\Halmos
\end{remark}

One may note that Formulation~\eqref{formulation:recourse_lower_bound_set} has a similar structure to the inner problems found in robust optimization problems with budget uncertainty sets~\citep{bertsimas2003robust, bertsimas2004price}. Thus, analogously to known results in the robust optimization literature, we can write closed-form expressions for an optimal dual solution for Formulation~\eqref{formulation:recourse_lower_bound_set}. Combining this with the general strategy outlined at the beginning of this subsection yields the desired results. The proofs are deferred to Appendix~\ref{appendix:proof_theorem_dominance_set_cut}, and a simple example of the dominating projected SRI is offered in Appendix~\ref{appendix:example_set_cut}.

\begin{restatable}{lemma}{lemmarobustdual}
    \label{lemma:robust_dual}
    Let~$w \in \Q^{V_+}_+$ and~$b \in \Z^{V_+}_+$. Fix a scenario~$\xi \in [N]$ and let~$\emptyset\subsetneq \set \subseteq V_+$ be such that~$S = \{v_1, \ldots, v_\ell\}$ and~$w_{v_1} \leq \ldots \leq w_{v_\ell}$. Let~$k' \in \Z_{++}$ be such that~$\kscen{\set} - k' \leq b(\set)$ and let~$j \in [\ell]$ be the smallest index such that~$\kscen{\set} - k' \leq \sum_{i \in [j]} b_{v_i}$. Set~$\bar{\alpha}^\xi_\set = \mb{I}(\kscen{\set} > k') \cdot w_{v_j}$ and~$\bar{\beta}^\xi_v = \mb{I}(v \in \{v_i\}_{i \in [j - 1]}) \cdot (w_v - w_{v_j})$, for every~$v \in V_+$.
    Then~$(\bar{\alpha}^\xi_\set, \bar{\beta}^\xi)$ is optimal for the dual of Formulation~\eqref{formulation:recourse_lower_bound_set} and~$\bar{\alpha}^\xi_\set \leq \mc{L}^*_\xi(\set, k')$.
\end{restatable}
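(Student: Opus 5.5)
We plan to verify optimality through weak duality: check that $(\bar{\alpha}^\xi_\set, \bar{\beta}^\xi)$ is dual feasible, and that its dual objective equals the primal value $\mc{L}^*_\xi(\set, k')$ produced by the greedy primal solution. First, write the dual of Formulation~\eqref{formulation:recourse_lower_bound_set}:
\begin{equation*}
\max\Big\{ (\kscen{\set} - k')\,\alpha^\xi_\set + \sum_{v \in V_+} b_v \beta^\xi_v : \alpha^\xi_\set\,\mb{I}(v \in \set) + \beta^\xi_v \leq w_v~\forall v \in V_+,~\alpha^\xi_\set \geq 0,~\beta^\xi \leq 0 \Big\}.
\end{equation*}
Next, define the primal greedy vector $\bar{y}^\xi$: set $\bar{y}^\xi_{v_i} = b_{v_i}$ for $i < j$, set $\bar{y}^\xi_{v_j} = (\kscen{\set}-k') - \sum_{i<j} b_{v_i}$, and set all other entries to zero. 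The choice of $j$ gives $0 \le \bar{y}^\xi_{v_j} \le b_{v_j}$. Hence $\bar{y}^\xi$ is feasible whenever $\kscen{\set} > k'$, with objective exactly $\mc{L}^*_\xi(\set,k')$.

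We then split into two cases.

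\textbf{Case $\kscen{\set} \le k'$.} The primal optimum is $0$, attained by $y^\xi=\mathbf{0}$, since all weights are nonnegative. We must be careful with $j$ here: since $\kscen{\set}-k' \le 0 \le b_{v_1}$, we get $j=1$, so $\bar{\beta}^\xi = \mathbf{0}$ and $\bar{\alpha}^\xi_\set = 0$. This dual solution is feasible because $w \ge 0$, and its value is $0$. So it is optimal, and $\bar{\alpha}^\xi_\set = 0 \le \mc{L}^*_\xi$.

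\textbf{Case $\kscen{\set} > k'$.} Dual feasibility is checked coordinate by coordinate:
\begin{itemize}
\item For $v=v_i$ with $i<j$: $\bar{\alpha}+\bar{\beta}_v = w_{v_j} + w_v - w_{v_j} = w_v$. Also $\bar{\beta}_v \le 0$, because $w_{v_i} \le w_{v_j}$ by the sorting.
\item For $v = v_i$ with $i \ge j$: $\bar{\alpha} = w_{v_j} \le w_{v_i}$ and $\bar{\beta}_v=0$.
\item For $v\notin \set$: the constraint reads $0 \le w_v$, which holds.
\item Finally, $\bar{\alpha} = w_{v_j} \ge 0$.
\end{itemize}
The dual objective is
\begin{equation*}
(\kscen{\set}-k')\,w_{v_j} + \sum_{i<j} b_{v_i}(w_{v_i}-w_{v_j}) = \sum_{i<j} b_{v_i} w_{v_i} + \Big((\kscen{\set}-k') - \sum_{i<j} b_{v_i}\Big) w_{v_j},
\end{equation*}
which equals the primal value. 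Weak duality then gives optimality of both solutions.

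For the bound $\bar{\alpha}^\xi_\set \le \mc{L}^*_\xi(\set,k')$, we use that every $y^\xi_{v}$ with $v\in\set$ carries weight at least $w_{v_1}$. The more direct route is the dual expression itself: $\mc{L}^*_\xi = \bar{\alpha}(\kscen{\set}-k') + \sum_v b_v \bar{\beta}_v$. Rearranging the display above, the dual value equals $\sum_{i<j} b_{v_i}w_{v_i} + r\,w_{v_j}$ with remainder $r = (\kscen{\set}-k') - \sum_{i<j} b_{v_i}$. By minimality of $j$ we have $r > 0$, and since $r$ is an integer (because $b$, $\kscen{\set}$ and $k'$ are integers), $r \ge 1$. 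Together with $w \ge 0$, this gives $\mc{L}^*_\xi \ge w_{v_j} = \bar{\alpha}^\xi_\set$.

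The main obstacle is bookkeeping at the edge cases. One is the degenerate definition of $j$ when $\kscen{\set}\le k'$. The other is the integrality argument giving $r\ge1$, which is exactly what makes the inequality $\bar{\alpha}\le\mc{L}^*_\xi$ hold.
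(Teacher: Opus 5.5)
Your proof is correct and follows essentially the paper's argument: you verify dual feasibility of $(\bar{\alpha}^\xi_\set, \bar{\beta}^\xi)$, match its objective to the greedy value, and use integrality of the remainder ($r \geq 1$) to get $\bar{\alpha}^\xi_\set \leq \mc{L}^*_\xi(\set,k')$. The only difference is that you certify the LP value explicitly via the greedy primal vector and weak duality, whereas the paper relies on the ``easy to see'' greedy--LP equivalence; also, for $j=1$ the fact $r>0$ comes from the case hypothesis $k_\xi(\set) > k'$, not from the minimality of $j$.
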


\begin{restatable}{theorem}{theoremdominancesetcut}
    \label{theorem:dominance_set_cut}
    Let~$w \in \Q^{V_+}_+$,~$b \in \Z^{V_+}_+$,~$\emptyset \subsetneq \set \subseteq V_+$ and~$k' \in \Z_{++}$. Assume that Formulation~\eqref{formulation:recourse_lower_bound_set} is feasible for every scenario~$\xi \in [N]$ (see Remark~\ref{remark:feasibility_set_cut}). For each~$\xi \in [N]$, let~$(\bar{\alpha}^\xi_\set, \bar{\beta}^\xi)$ be as specified in Lemma~\ref{lemma:robust_dual}, and set~$\hat{\alpha} \geq 0$ and~$\hat{\beta} \leq 0$ as
    \begin{align*}
        \hat{\alpha}^\xi_{\set'} & = \mb{I}(\set' = \set) \cdot (p_\xi \, \bar{\alpha}^\xi_{\set}), && \forall \set' \subseteq V_+, \xi \in [N], \\
        \hat{\beta}^\xi_v & = p_\xi \, \bar{\beta}^\xi_v, && \forall v \in V_+, \xi \in [N].
    \end{align*}
    Then~$(\hat{\alpha}, \hat{\beta}) \in \dualset$, and for every~$\bar{x} \in \{x \in \mc{X} : x(E(\set)) \leq |\set| - k'\}$, the set cut~\eqref{ineq:set_cut} (with~$x$ fixed to~$\bar{x}$) is valid for~$\proj_{\theta} (\Gammaset(\bar{x}, \hat{\alpha}, \hat{\beta}))$. In particular, if~$x(E(\set)) \leq |\set| - k'$ is valid for~$\mc{X}$, then the set cut~\eqref{ineq:set_cut} (with~$x$ free) is dominated by the projected SRI~\eqref{ineq:projected_sri} associated with~$(\hat{\alpha}, \hat{\beta})$, and is thus valid for~$\P$.
\end{restatable}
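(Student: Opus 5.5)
First I will check that $(\hat{\alpha}, \hat{\beta}) \in \dualset$. Since $p_\xi > 0$ and $\bar{\alpha}^\xi_\set \geq 0$, we have $\hat{\alpha} \geq 0$. Also $\bar{\beta}^\xi_v = w_v - w_{v_j} \leq 0$ for $v \in \{v_1,\dots,v_{j-1}\}$ because of the sorting, so $\hat{\beta} \leq 0$. For the condition on customers with $w_v = 0$, the coefficient of $y^\xi_v$ in \eqref{ineq:conic_comb} is
\begin{itemize}
\item $0$ if $v \notin \set$;
\item $p_\xi \bar{\alpha}^\xi_\set$ if $v \in \set \setminus \{v_1,\dots,v_{j-1}\}$;
\item $p_\xi(\bar{\alpha}^\xi_\set + w_v - w_{v_j})$ if $v \in \{v_1,\dots,v_{j-1}\}$.
\end{itemize}
In the third case, if $\kscen{\set} > k'$ the coefficient equals $p_\xi w_v$, which is $0$ when $w_v = 0$. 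If $\kscen{\set} \le k'$ it equals $p_\xi(w_v - w_{v_j}) \le 0$. In the second case with $w_v = 0$, the sorting gives $w_{v_j} \le w_v = 0$ whenever $v$ comes at or after $v_j$, so $\bar{\alpha}^\xi_\set = 0$. Hence $(\hat\alpha,\hat\beta)\in\dualset$.

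Next I will compute the projected SRI \eqref{ineq:projected_sri} for $(\hat\alpha,\hat\beta)$ and bound its $\phi$ coefficients. For $v \in \set$ with $w_v > 0$, the ratio in $\phi_v$ is $\bar{\alpha}^\xi_\set / w_v$ if $v$ comes at or after $v_j$, and $\le 1$ otherwise. In the first case, $\bar{\alpha}^\xi_\set \le w_{v_j} \le w_v$, so the ratio is again $\le 1$. For $v \notin \set$ the ratio is $0$. Thus $\phi_v(\hat\alpha,\hat\beta) \le \mathbb{I}(v\in\set)$, and since $\theta \ge 0$,
\[
\theta(\set) \ \ge\ \sum_{v: w_v>0}\phiv(\hat\alpha,\hat\beta)\,\theta_v .
\]
For the right-hand side, I expand $\nu(\hat\alpha,\hat\beta)$ as
\[
\sum_\xi p_\xi\Big[\bar\alpha^\xi_\set(\kscen{\set}-|\set|) + \sum_v \bar\beta^\xi_v b_v\Big].
\]
LP duality (Lemma~\ref{lemma:robust_dual}) gives $\mc{L}^*_\xi(\set,k') = \bar\alpha^\xi_\set(\kscen{\set}-k') + \sum_v\bar\beta^\xi_v b_v$. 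Substituting, the right-hand side of the projected SRI becomes
\[
\sum_\xi p_\xi\Big[\mc{L}^*_\xi(\set,k') + \bar\alpha^\xi_\set\big(\bar x(E(\set)) - |\set| + k'\big)\Big].
\]
Writing $t := \bar x(E(\set)) - |\set| + k' \le 0$, this is $\mc{L}^*(\set,k') + \sum_\xi p_\xi\bar\alpha^\xi_\set\, t$.

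The key step is to compare this with the set cut's right-hand side $\mc{L}^*(\set,k')(1+t)$. Since $t\le 0$ and $\bar\alpha^\xi_\set \le \mc{L}^*_\xi(\set,k')$ by Lemma~\ref{lemma:robust_dual}, we get
\[
\sum_\xi p_\xi \bar\alpha^\xi_\set\, t \ \ge\ \mc{L}^*(\set,k')\, t .
\]
So the projected SRI's right-hand side is at least $\mc{L}^*(\set,k')(1+t)$. Any $\theta \in \proj_\theta(\Gammaset(\bar x,\hat\alpha,\hat\beta))$ satisfies the projected SRI by Lemma~\ref{lemma:projection_single_inequality}, and chaining the two displayed inequalities shows it satisfies the set cut \eqref{ineq:set_cut}.

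For the final claim, if $x(E(\set))\le|\set|-k'$ is valid for $\mc{X}$, then the same pointwise comparison holds for every $x\in\mc X$, so the set cut is dominated by the projected SRI. The projected SRI is valid for $\P$ by Proposition~\ref{proposition:proj_gamma_characterization}, hence so is the set cut.

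The main obstacle is the sign bookkeeping in the $\dualset$ membership, together with the observation that the bound uses $t\le 0$ and $\bar\alpha\le\mc L^*_\xi$ in the right direction. The case $\kscen{\set}\le k'$ needs a separate check, but there $\bar\alpha^\xi_\set = 0$, $\bar\beta^\xi = 0$ (since $j=1$) and $\mc L^*_\xi = 0$, so it is trivial.
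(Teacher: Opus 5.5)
Your proof is correct. It rests on the same ingredients as the paper's: dual feasibility $\bar{\alpha}^\xi_S+\bar{\beta}^\xi_v\le w_v$ for $v\in S$ (which gives both membership in $\mathcal{A}$ and your bound $\phi_v(\hat\alpha,\hat\beta)\le\mathbb{I}(v\in S)$), the duality identity for $\mathcal{L}^*_\xi(S,k')$, the bound $\bar{\alpha}^\xi_S\le\mathcal{L}^*_\xi(S,k')$, and $\bar{x}(E(S))-|S|+k'\le 0$. The paper chains these in the lifted $(\theta,y)$-space and then invokes Lemma~\ref{lemma:projection_single_inequality}, whereas you compare the projected SRI's coefficients and right-hand side with the set cut's directly; this yields the dominance claim without needing the projection of $\Gamma(\bar{x},\hat{\alpha},\hat{\beta})$ (which imposes $y\le b$) to coincide exactly with the projected SRI.
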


\subsubsection{\mytitle{Partial route cuts}}
\label{subsection:partial_route_comparison}

To compute recourse lower bounds for the partial route cut~\eqref{ineq:partial_route_cut2}, we propose an approach based on relaxing Formulation~\eqref{formulation:recourse_scen_opt}. Let~$H = (\set_1, \ldots, \set_\ell)$ be a partial route. We write~$H' \subseteq H$ to refer to a partial route of the form~$H' = (\set_i, \ldots, \set_j)$ for some~$1 \leq i \leq j \leq \ell$. Moreover, for every scenario~$\xi \in [N]$, we use the shorthands~$y^\xi(H') = y^\xi(V_+(H'))$ and~$k_
\xi(H') = \kscen{V_+(H')}$. For each scenario~$\xi \in [N]$, let~$\mc{L}_\xi^*(H)$ be the optimal value of the LP:
\begin{subequations}
\label{formulation:recourse_lower_bound}
\begin{align} 
\mc{L}_\xi^*(H) \coloneqq \min ~~& \sum_{v \in V_+} w_v y^\xi_v, & \nonumber \\
\text{s.t.~~} & y^\xi(H') \geq \kscen{H'} - 1, && \forall H' \subseteq H, & ~~~~~~(\alpha^\xi_{H'}) \label{ineq:formulation_recourse_lower_bound1} \\
& y^\xi_v \leq b_v, && \forall v \in V_+, &~~~~~~(\beta^\xi_v) \label{ineq:formulation_recourse_lower_bound2} \\
& y^\xi \geq 0,
\end{align}
\end{subequations}
and define~$\mc{L}^*(H) \coloneqq \sum_{\xi \in [N]} p_\xi  \, \mc{L}^*_\xi(H)$. 

To verify that Formulation~\eqref{formulation:recourse_lower_bound} yields a relaxation of Formulation~\eqref{formulation:recourse_scen_opt}, let~$R$ be a route adhering to~$H$ (recall the definition of adherence in Section~\ref{subsection:ils}). Note that the constraints in Formulation~\eqref{formulation:recourse_lower_bound} (with respect to~$H$) are also included in Formulation~\eqref{formulation:recourse_scen_opt} (with respect to~$R$). In addition, by the choice of~$b$ in Definition~\ref{definition:scen_opt_policy}, we know that~$\policy(R) \cap [\mathbf{0}, b]^N \neq \emptyset$, so Formulation~\eqref{formulation:recourse_lower_bound} is always feasible.

Replacing~$\mc{L}^*(H)$ into the partial route cut~\eqref{ineq:partial_route_cut2} yields
\begin{equation}
    \label{ineq:partial_route_cut}
    \theta(V_+(H)) \geq \mc{L}^*(H) \cdot \Wof(x ; \supsetXh{H}).
\end{equation}
It is not hard to see that inequality~\eqref{ineq:partial_route_cut} is valid for~$\Fq{\mc{X}, \optQ}$. Indeed, this follows from the monotonicity of the disaggregation of~$\optQ$ (Corollary~\ref{corollary:monotonicity}) together with the fact that Formulation~\eqref{formulation:recourse_lower_bound} provides a relaxation of Formulation~\eqref{formulation:recourse_scen_opt} for any route~$R$ that adheres to~$H$. Rather than providing more details on this argument, we establish validity by proving that inequality~\eqref{ineq:partial_route_cut} is valid for~$\P \supseteq \Fq{\mc{X}, \optQ}$. Specifically, using the optimal dual variables~$(\alpha^\xi, \beta^\xi)$ associated with Formulation~\eqref{formulation:recourse_lower_bound}, we prove the following result in Appendix~\ref{appendix:proof_comparison}.
\begin{restatable}{theorem}{theoremdominancesri}
    \label{theorem:dominance_sri}
    Let~$w \in \Q^{V_+}_+$ and suppose that~$b \in \Z^{V_+}_+$ is such that~$\policy(R) \cap [\mathbf{0}, b]^N \neq \emptyset$, for every route~$R$. Let~$H$ be a partial route. Then there exist~$(\alpha, \beta) \in \dualset$ such that the projected SRI~\eqref{ineq:projected_sri} associated with~$(\alpha, \beta)$ dominates the partial route cut~\eqref{ineq:partial_route_cut}.
\end{restatable}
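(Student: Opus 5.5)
The plan is to follow the two-step strategy announced at the start of Section~\ref{subsection:comparing_cuts}: take optimal dual multipliers of the LP~\eqref{formulation:recourse_lower_bound} in each scenario, scale them by~$p_\xi$, and show through Lemma~\ref{lemma:projection_single_inequality} that the resulting projected SRI has left-hand side at most~$\theta(V_+(H))$ and right-hand side at least~$\mc{L}^*(H)\cdot\Wof(x;\supsetXh{H})$.

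\emph{Step 1: dual multipliers.} Fix~$\xi\in[N]$. Formulation~\eqref{formulation:recourse_lower_bound} is feasible, as noted after its definition, and bounded, so LP duality gives an optimal dual~$(\alpha^\xi,\beta^\xi)$ with~$\alpha^\xi\ge 0$ and~$\beta^\xi\le 0$. Dual feasibility reads~$\beta^\xi_v+\sum_{H'\subseteq H: v\in V_+(H')}\alpha^\xi_{H'}\le w_v$ for all~$v\in V_+$, and strong duality gives~$\mc{L}^*_\xi(H)=\sum_{H'}\alpha^\xi_{H'}(\kscen{H'}-1)+\sum_v\beta^\xi_v b_v$. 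I would normalize this dual in two ways, neither of which worsens the objective:
\begin{itemize}
\item set~$\beta^\xi_v=0$ for~$v\notin V_+(H)$;
\item set~$\alpha^\xi_{H'}=0$ whenever~$\kscen{H'}\le 1$.
\end{itemize}
Then define~$\alpha^\xi_{S'}\coloneqq p_\xi\,\alpha^\xi_{H'}$ if~$S'=V_+(H')$ for some~$H'\subseteq H$, and~$0$ otherwise, and~$\beta^\xi_v\coloneqq p_\xi\beta^\xi_v$. Dual feasibility directly gives~$(\alpha,\beta)\in\dualset$, since the combined coefficient is~$\le p_\xi w_v=0$ when~$w_v=0$. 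It also gives~$\phiv(\alpha,\beta)\le 1$ for every~$v$ with~$w_v>0$, and~$\phiv(\alpha,\beta)=0$ for~$v\notin V_+(H)$, because no~$\alpha$-term contains such a~$v$ and~$\beta^\xi_v=0$ there. Since~$\theta\ge 0$, the left-hand side of~\eqref{ineq:projected_sri} is at most~$\theta(V_+(H))$.

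\emph{Step 2: right-hand side.} Write~$A\coloneqq\sum_\xi p_\xi\sum_{H'}\alpha^\xi_{H'}$ for the total scaled~$\alpha$-mass. Expanding~$\nufunction{\alpha,\beta}$ and using strong duality, the right-hand side of~\eqref{ineq:projected_sri} becomes
\[
\mc{L}^*(H)+\sum_{\xi}p_\xi\sum_{H'\subseteq H}\alpha^\xi_{H'}\bigl(x(E(V_+(H')))-|V_+(H')|+1\bigr).
\]
It then suffices to prove two inequalities for every~$x\in\mc{X}$.
\begin{enumerate}
\item[(a)] For each~$H'\subseteq H$ with~$\alpha^\xi_{H'}>0$, we need~$x(E(V_+(H')))-|V_+(H')|+1\ge \Wof(x;\supsetXh{H})-1$. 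Equivalently, the slack~$|V_+(H')|-1-x(E(V_+(H')))\ge 0$ must be bounded by the total slack appearing in~\eqref{eq:wof}.
\item[(b)] We need~$A\,(\Wof-1)\ge \mc{L}^*(H)(\Wof-1)$.
\end{enumerate}
For (a), I would argue edge by edge. The edges of~$E(H)\setminus\delta(0)$ missing from~$E(V_+(H'))$ are edges inside~$V_+(H)$ but outside~$V_+(H')$, together with edges between~$V_+(H')$ and~$V_+(H)\setminus V_+(H')$. I would bound these by~$|V_+(H)\setminus V_+(H')|$ using the subtour constraints~$x(E(S))\le|S|-1$ and the degree equations~$x(\delta(v))=2$. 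The extra terms for~$S_2$ and~$S_{\ell-1}$ in~\eqref{eq:wof} are nonpositive on~$\mc{X}$, which only helps.

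\emph{Main obstacle.} I expect (b) to be the hard part. First, it requires~$\Wof\le 1$ on~$\mc{X}$; this again follows from the subtour constraints, since each bracket in~\eqref{eq:wof} is~$\le 0$. Second, it requires the inequality~$A\le\mc{L}^*(H)$, the analogue of~$\bar\alpha^\xi_\set\le\mc{L}^*_\xi(\set,k')$ in Lemma~\ref{lemma:robust_dual}. The normalization~$\alpha^\xi_{H'}=0$ when~$\kscen{H'}\le 1$ makes each~$\alpha$-term contribute at least~$\alpha^\xi_{H'}$ to the dual objective, but the negative~$\beta$-terms could spoil the bound. To control them, I would first try to pick an optimal dual extreme point with small support, using the consecutive-ones structure inherited from Theorem~\ref{theorem:convex_hull} as in the greedy argument of Lemma~\ref{lemma:robust_dual}. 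Failing that, I would instead scale~$\alpha$ and~$\beta$ by a common factor~$\lambda\in(0,1]$ chosen so that~$\lambda A\le\lambda\mc{L}^*$ holds, and recover the full bound using~$\Wof\le 1$.
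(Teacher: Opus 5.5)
Your skeleton is the paper's: scale optimal duals of Formulation~\eqref{formulation:recourse_lower_bound} by $p_\xi$, use dual feasibility to get $\phi_v\le 1$ (so the left-hand side is at most $\theta(V_+(H))$), and reduce the right-hand side to (a) and (b). However, neither step is actually established, and the argument you sketch for (a) is false. You propose to drop the $S_2,S_{\ell-1}$ terms of $W_{OF}$ and prove $x(E(V_+(H')))-|V_+(H')|+1\ge x(E(H)\setminus\delta(0))-|V_+(H)|+1$ by bounding the missing edges by $|V_+(H)\setminus V_+(H')|$. Take $H=(\{a\},\{b,c\},\{d\})$, $H'=(\{b,c\})$, and $x\in\mathcal{X}_{\textsc{sub}}\cap\mathbb{Z}^E$ the single route $(a,b,d,c)$. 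The missing edges $ab,ac,bd,cd$ carry $x$-value $3>|\{a,d\}|=2$, and $x_{bc}-1=-1<0=x(E(H)\setminus\delta(0))-|V_+(H)|+1$. Inequality (a) survives here only because the correction term $x(E(S_2))-|S_2|+1=-1$ lowers $W_{OF}-1$ to $-1$. So those terms are indispensable, not merely helpful. (Your edge count does work when $H$ is a route, since then customer--customer edges have $x_e\le 1$.) What is needed is an argument that exploits the alternation of singletons and non-singletons in $H$. The paper (Lemma~\ref{lemma:sri_activation}) takes a maximal counterexample $H'=(S_i,\ldots,S_j)$ and extends it in one of two ways. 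If $S_{j+1}$ is a non-singleton, then $S_j$ is a singleton, so $x(E(S_j\cup S_{j+1}))\le|S_{j+1}|$. If $S_{j+1}=\{v\}$, it extends by $S_{j+1}\cup S_{j+2}$ using $x(E(v,S_j\cup S_{j+2}))\le 2$. This forces $i\le 2$ and $j\ge\ell-1$, and the boundary edges together with the $S_2$/$S_{\ell-1}$ terms are then absorbed into subtour constraints on $S_1\cup S_2$ and $S_{\ell-1}\cup S_\ell$.

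For (b) you correctly isolate $\sum_{H'}\alpha^\xi_{H'}\le\mathcal{L}^*_\xi(H)$ as the crux but leave it open, and your normalizations do not imply it, even for routes. Let $N=1$, $V_+=\{u,v\}$, $H=(\{u\},\{v\})$, $k_\xi(u)=k_\xi(v)=1$, $k_\xi(\{u,v\})=2$, $b=\mathbf{1}$, $w_u=1$, $w_v=3$. Then $\mathcal{L}^*(H)=1$ and the partial route cut reads $\theta_u+\theta_v\ge x_{uv}$. Both $(\alpha_H,\beta_u)=(1,0)$ and $(3,-2)$, with all other multipliers zero, are optimal duals satisfying your normalizations. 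The second gives the projected SRI $\theta_u+\theta_v\ge 3x_{uv}-2$. At the point $x_{uv}=\tfrac12$, $x_{0u}=x_{0v}=\tfrac32$ of $\mathcal{X}_{\textsc{sub}}$, this inequality admits $\theta=\mathbf{0}$ while the cut does not, so the dual must be chosen carefully. The $\lambda$-scaling fallback cannot help. The condition $\lambda A\le\lambda\mathcal{L}^*$ is just $A\le\mathcal{L}^*$ again, and any $\lambda<1$ lowers the right-hand side at activated points ($W_{OF}=1$) to $\lambda\mathcal{L}^*(H)<\mathcal{L}^*(H)$.

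The missing idea is the exchange argument of Lemma~\ref{lemma:existence_dual}. Take an optimal dual of minimum support with $\beta_v=\min\{w_v-\sum_{H'\ni v}\alpha_{H'},0\}$, set $S=\{v:\beta_v<0\}$, and rewrite $\mathcal{L}^*_\xi(H)=\sum_{v\in S}w_vb_v+\sum_{H'}\alpha_{H'}\bigl(k_\xi(H')-b(V_+(H')\cap S)-1\bigr)$. Suppose some $H'$ with $\alpha_{H'}>0$ had $k_\xi(H')-b(V_+(H')\cap S)\le 1$. Then lowering $\alpha_{H'}$ and raising $\beta_v$ on $V_+(H')\cap S$ by a common $\varepsilon$ stays feasible, does not decrease the objective, and shrinks the support, a contradiction. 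Hence every bracket is at least $1$, which gives $\sum_{H'}\alpha_{H'}\le\mathcal{L}^*_\xi(H)$.
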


Combining Theorem~\ref{theorem:dominance_sri} with the results in~\cite{part1} yields the following consequences.
\begin{corollary}
    \label{corollary:dominance_sri}
    Let~$w \in \Q^{V_+}_+$ and suppose that~$b \in \Z^{V_+}_+$ is such that~$\policy(R) \cap [\mathbf{0}, b]^N \neq \emptyset$, for every route~$R$.
    The following inequalities are dominated by projected SRIs (and thus valid for~$\P$):
    \begin{enumerate}[(a)]
        \item the partial route cuts of~\cite{part1}:~$\theta(V_+(H)) \geq \mc{L}^*(H) \cdot \Whs(x ; \Xh{H})$, for every partial route~$H$; \label{item:corollary_partial_route}
        \item the path cuts of~\cite{parada2024disaggregated}:~$\theta(V_+(R)) \geq \optQ(R) \cdot \Wof(x ; \supsetXh{H})$, for every route~$R$; \label{item:corollary_path_cuts}
        \item the ILS cuts of~\cite{gendreau201650th} (which assume that~$\mc{X} \subseteq \{x \in \R^E : x(\delta(0)) = 2k\}$):\\
        $\allones^\T \theta \geq \left(\sum_{R \in \mc{R}(\bar{x})} \optQ(R) \right) \cdot \left(1 - |V_+| + k + \sum_{e \in E \setminus \delta(0) : \bar{x}_e = 1} x_e \right)$, for every~$\bar{x} \in \mc{X} \cap \Z^E$. \label{item:corollary_gendreau_cuts}
    \end{enumerate}
\end{corollary}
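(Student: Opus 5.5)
The plan is to derive all three items from Theorem~\ref{theorem:dominance_sri}, plus two elementary facts.

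\emph{Fact (i): comparison of activation functions.} For every $x \in \mc{X}$, $\Whs(x;\Xh{H}) \leq \Wof(x;\supsetXh{H})$. Indeed, for $i \in \{1,\ell\}$ the added term satisfies
\[
x(\delta(\set_i)\cap E(H)) + 2x(E(\set_i)) - 2|\set_i| \;\leq\; x(\delta(\set_i)) + 2x(E(\set_i)) - 2|\set_i| \;=\; \sum_{v\in \set_i} x(\delta(v)) - 2|\set_i| \;=\; 0,
\]
using $x \geq 0$ and the degree constraints.

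\emph{Fact (ii): sums of projected SRIs.} Suppose $(\alpha,\beta),(\alpha',\beta') \in \dualset$. Then $(\alpha+\alpha',\beta+\beta') \in \dualset$, because the defining sign conditions are additive. Moreover $\phiv(\alpha+\alpha',\beta+\beta') \leq \phiv(\alpha,\beta)+\phiv(\alpha',\beta')$, since a max of sums is at most the sum of maxes and $(\cdot)^+$ is subadditive. The right-hand side of \eqref{ineq:projected_sri}, including $\nufunction{\cdot}$, is linear in the multipliers. Since $\theta \geq 0$, the projected SRI for the summed multipliers therefore dominates the sum of the two projected SRIs.

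\emph{Item (a).} Because $\mc{L}^*(H) \geq 0$, Fact (i) shows that for $x \in \mc{X}$ the cut with $\Whs$ is implied by the partial route cut \eqref{ineq:partial_route_cut}. That cut is in turn dominated by a projected SRI by Theorem~\ref{theorem:dominance_sri}.

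\emph{Item (b).} Take $H = R$, i.e.\ all sets are singletons. The partial routes $H' \subseteq H$ are then exactly the subroutes $R' \subseteq R$, so Formulation~\eqref{formulation:recourse_lower_bound} coincides with Formulation~\eqref{formulation:recourse_scen_opt}. Hence $\mc{L}^*_\xi(R) = \optQscen(R)$ for every $\xi$, and summing over scenarios gives $\mc{L}^*(R) = \optQ(R)$. The path cut is therefore exactly \eqref{ineq:partial_route_cut} with $H=R$, and Theorem~\ref{theorem:dominance_sri} applies.

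\emph{Item (c).} This is the step needing an actual computation. Let $\mc{R}(\bar{x}) = \{R_1,\dots,R_k\}$.
\begin{itemize}
\item For a route $H = R_j$, every set is a singleton, so the extra summands in \eqref{eq:wof} vanish. This gives $\Wof(x;\supsetXh{R_j}) = 1 + a_j(x)$, where $a_j(x) \coloneqq x(E(R_j)\setminus\delta(0)) - (|V_+(R_j)|-1)$.
\item Since $\sum_j (|V_+(R_j)|-1) = |V_+|-k$, the edges $e \in E\setminus\delta(0)$ with $\bar{x}_e=1$ are exactly $\bigcup_j (E(R_j)\setminus\delta(0))$. So the Gendreau activation equals $1 + \sum_j a_j(x)$.
\item For $x \in \mc{X}$, the subtour constraint gives $x(E(R_j)\setminus\delta(0)) \leq x(E(V_+(R_j))) \leq |V_+(R_j)|-1$, so $a_j(x) \leq 0$. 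Hence $1 + a_j(x) \geq 1 + \sum_{j'} a_{j'}(x)$ for each $j$.
\item Summing the path cuts of item (b) over $j$, and using that the routes partition $V_+$ so that $\sum_j \theta(V_+(R_j)) = \allones^\T\theta$, we get
\[
\allones^\T\theta \;\geq\; \sum_j \optQ(R_j)\,\bigl(1+a_j(x)\bigr) \;\geq\; \Bigl(\sum_j \optQ(R_j)\Bigr)\Bigl(1+\sum_{j'}a_{j'}(x)\Bigr),
\]
which is the Gendreau cut.
\end{itemize}
Finally, each path cut is dominated by a projected SRI. By Fact (ii), the projected SRI associated with the summed multipliers dominates the sum of these projected SRIs, and hence dominates the Gendreau cut on $\mc{X}$.

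The main obstacle is Fact (ii), i.e.\ checking that projected SRIs are closed under summation up to domination. This is what turns "dominated by a sum of projected SRIs" into "dominated by a single projected SRI."
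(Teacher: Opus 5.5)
Your arguments for (a) and (b) match the paper's. For (a) you use $W_{HS}(x;\mc{X}_{=}(H))\le W_{OF}(x;\mc{X}_{\supseteq}(H))$ on $\mc{X}$ together with $\mathcal{L}^*(H)\ge 0$; you verify the inequality via the degree constraints, whereas the paper only asserts it. For (b) you use $\mathcal{L}^*(R)=\mathcal{Q}^*(R)$ plus Theorem~\ref{theorem:dominance_sri}. For (c) the paper just cites Claim~2 of \cite{part1}, which says that path cuts dominate the Gendreau cuts. Your itemized computation is a correct self-contained proof of that claim. Combined with (b), it already gives what the paper proves: each Gendreau cut is implied by the projected SRIs dominating the path cuts of the routes in $\mc{R}(\bar x)$, and hence is valid for $\mc{P}$.

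Your last paragraph, however, does not follow as written. Theorem~\ref{theorem:dominance_sri} only says that each path cut $P_j$ is dominated by some projected SRI $\Sigma_j$. That does not imply that $\sum_j\Sigma_j$ dominates $\sum_j P_j$. On $\theta\ge 0$, one inequality dominates another only up to a positive scaling, and this is not preserved under addition. For example, $2\theta_1\ge 2$ dominates $\theta_1\ge 1$, and $\theta_2\ge 1$ dominates itself. Yet $(\theta_1,\theta_2)=(3/2,0)$ satisfies $2\theta_1+\theta_2\ge 3$ and violates $\theta_1+\theta_2\ge 2$.

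So Fact~(ii) is not the real obstacle. What is missing is the specific form of the multipliers $(\hat\alpha_j,\hat\beta_j)$ built in the \emph{proof} (not the statement) of Theorem~\ref{theorem:dominance_sri}:
\begin{enumerate}
\item Dual feasibility in \eqref{formulation:dual_recourse} gives $\phi_v(\hat\alpha_j,\hat\beta_j)\le 1$.
\item Lemma~\ref{lemma:existence_dual}(i) and the support of $\hat\alpha_j$ give $\phi_v(\hat\alpha_j,\hat\beta_j)=0$ for $v\notin V_+(R_j)$.
\item The right-hand side of $\Sigma_j$ at $\bar x$ equals $\sum_\xi p_\xi B^\xi(\bar x)$, with $B^\xi$ as in that proof for $H=R_j$. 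This is at least $\mathcal{Q}^*(R_j)\,W_{OF}(\bar x;\mc{X}_{\supseteq}(R_j))$ for every $\bar x\in\mc{X}$.
\end{enumerate}
With these, $\phi_v(\sum_j\hat\alpha_j,\sum_j\hat\beta_j)\le 1$ for every $v$. Hence $\allones^\T\theta\ge\sum_{v\in V_+:w_v>0}\phi_v(\sum_j\hat\alpha_j,\sum_j\hat\beta_j)\,\theta_v\ge\sum_j\mathcal{Q}^*(R_j)\,(1+a_j(\bar x))$, and your single-SRI claim goes through. Either add this argument or drop the paragraph. The corollary, as the paper proves it via Claim~2, only needs domination by a family of projected SRIs.
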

\begin{proof}
    Item~\ref{item:corollary_partial_route} holds because~$\Wof(\bar{x} ; \supsetXh{H}) \geq \Whs(\bar{x} ; \Xh{H})$ for all~$\bar{x} \in \mc{X}$. For item~\ref{item:corollary_path_cuts}, take a route~$R = (v_1, \ldots, v_\ell)$ and note that~$\mc{L}^*(H) = \optQ(R)$, so we are done by Theorem~\ref{theorem:dominance_sri}. Finally, Claim~2 of~\cite{part1} shows that the path cuts dominate those of~\cite{gendreau201650th}, proving item~\ref{item:corollary_gendreau_cuts}.
\end{proof}

\subsection{\mytitle{Recovering the higher-dimensional LP relaxation bound with a single projected SRI}}
\label{subsection:lp_bound}

As discussed in the beginning of Section~\ref{section:projection} (and suggested by the theoretical results from the previous section), the LP relaxation bound of Formulation~\eqref{formulation:sri}, here denoted as~$z^*$, can be fairly strong, but solving the formulation to integrality is challenging due to the large number of~$y$-variables. In this subsection, we show how to recover~$z^*$ in the~$(x,\theta)$-space using a single projected SRI, thus obtaining strong root-node bounds without having the~$y$-variables in the model.

The approach consists of first solving the LP relaxation of Formulation~\eqref{formulation:sri}, and then using the optimal dual variables associated with the SRIs and upper bound constraints on the~$y$-variables to construct the desired projected SRI. We remark that the ideas here were inspired by recent advances on recovering the Dantzig–Wolfe bound via cutting planes~\citep{chen2024recovering, corner}.

We start by replacing every occurrence of~$\theta_v$ in Formulation~\eqref{formulation:sri} with the expression~$\sum_{\xi \in [N]} p_\xi w_v y^\xi_v$. We then dualize the SRIs and the upper bound constraints on the~$y$-variables into the objective function of the resulting formulation. This yields the Lagrangian dual:
{\small
\begin{equation}
\label{problem:lagrangian_dual}
z^* = \max_{\alpha \geq 0, \beta \leq 0} \left\{ \min_{x \in \mc{X}, y \geq 0} \left\{
\begin{aligned} 
& c^\T x + \sum_{v \in V_+} \sum_{\xi \in [N]} p_\xi w_v y^\xi_v + \sum_{\xi \in [N]} \sum_{\set \subseteq V_+} \alpha^\xi_\set (\kscen{\set} + x(E(\set)) - |\set| - y^\xi(\set)) \\
& + \sum_{\xi \in [N]} \sum_{v \in V_+} \beta^\xi_v (b_v - y^\xi_v)
\end{aligned} \right\} \right\}.
\end{equation}}
Since problem~\eqref{problem:lagrangian_dual} is separable, we write~$\lpopt = \max_{\alpha \geq 0, \beta \leq 0} \{\sigmax{\alpha} + \sigmay{\alpha, \beta} + \nufunction{\alpha, \beta}\}$, where
\begin{align}
    & \sigma_x(\alpha) \coloneqq \min_{x \in \mc{X}} \left\{ \sum_{\xi \in [N]} \sum_{uv \in E} \left( c_{uv} + \sum_{\set \subseteq V_+ : u,v \in \set} \alpha^\xi_\set \right) x_{uv} \right\}, \tag{$\sigma_x$} \label{definition:sigma_x} \\
    & \sigma_y(\alpha, \beta) \coloneqq \min_{y \geq 0} \left\{ \sum_{\xi \in [N]} \sum_{v \in V_+} \left(p_\xi w_v - \beta^\xi_v - \sum_{\set \subseteq V_+ : v \in \set} \alpha^\xi_\set \right) y^\xi_v \right\}, \tag{$\sigma_y$} \label{definition:sigma_y}
\end{align}
and recall that~$\nu(\alpha, \beta) = \sum_{\xi \in [N]} \sum_{\set \subseteq V_+} \alpha^\xi_\set ( \kscen{\set} - |\set| ) + \sum_{\xi \in [N]} \sum_{v \in V_+} \beta^\xi_v b_v$, as defined in page~\pageref{eq:nu}.

The following theorem is proved in Appendix~\ref{appendix:proof_theorem_single_inequality}.
\begin{restatable}{theorem}{theoremsingleinequality}
    \label{theorem:single_inequality}
    For every~$\alpha \geq 0$ and~$\beta \leq 0$, 
    $$\min\{c^\T x + \allones^\T \theta : x \in \mc{X},~\theta \in \proj_{\theta}(\Gammaset(x, \alpha, \beta))\} \geq \sigmax{\alpha} + \sigmay{\alpha, \beta} + \nufunction{\alpha, \beta}.$$
\end{restatable}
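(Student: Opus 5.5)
The plan is a direct weak-duality argument: take an arbitrary feasible point of the left-hand minimization and bound its objective from below by the Lagrangian value $\sigmax{\alpha} + \sigmay{\alpha, \beta} + \nufunction{\alpha, \beta}$. Two degenerate cases are dispatched first. If the left-hand problem is infeasible, its value is $+\infty$ and there is nothing to prove. If some coefficient $p_\xi w_v - \beta^\xi_v - \sum_{\set \ni v} \alpha^\xi_\set$ is negative, then $\sigmay{\alpha,\beta} = -\infty$ and the inequality is trivial. So we may assume all these coefficients are nonnegative, in which case $\sigmay{\alpha,\beta} = 0$, although the argument below does not actually need this.

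Fix $x \in \mc{X}$ and $\theta \in \proj_{\theta}(\Gammaset(x, \alpha, \beta))$. By the definition of $\Gammaset$, there is a $y \in [\mathbf{0}, b]^N$ with two properties: first, $\theta_v \geq \sum_{\xi} p_\xi w_v y^\xi_v$ for every $v \in V_+$; second, $(x, y)$ satisfies the combined inequality~\eqref{ineq:conic_comb}. Summing the first property over $v$ gives
\[
c^\T x + \allones^\T \theta \;\geq\; c^\T x + \sum_{\xi}\sum_{v} p_\xi w_v y^\xi_v .
\]

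Next we add a nonnegative quantity to the right-hand side, namely the slack of~\eqref{ineq:conic_comb}:
\[
\sum_{\xi}\sum_{v}\Bigl(\beta^\xi_v + \sum_{\set \ni v}\alpha^\xi_\set\Bigr) y^\xi_v \;-\; \sum_{\xi}\sum_{\set}\alpha^\xi_\set\, x(E(\set)) \;-\; \nufunction{\alpha,\beta} \;\geq\; 0 .
\]
Adding this slack and regrouping terms yields
\[
c^\T x + \allones^\T\theta \;\geq\; \Bigl[c^\T x + \sum_{\xi}\sum_{\set}\alpha^\xi_\set\, x(E(\set))\Bigr] + \sum_{\xi}\sum_{v}\Bigl(p_\xi w_v - \beta^\xi_v - \sum_{\set\ni v}\alpha^\xi_\set\Bigr) y^\xi_v + \nufunction{\alpha,\beta}.
\]

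It remains to bound each bracketed term by its minimum. For the $x$-term, we rewrite $\sum_{\set}\alpha^\xi_\set\, x(E(\set)) = \sum_{uv\in E}\bigl(\sum_{\set \ni u,v}\alpha^\xi_\set\bigr)x_{uv}$, so the bracket is exactly the objective defining~\eqref{definition:sigma_x}. The definition as printed sums $c_{uv}$ over $\xi$ as well; this is a notational slip, and the intended objective counts $c^\T x$ once. Since $x \in \mc{X}$, the bracket is at least $\sigmax{\alpha}$. For the $y$-term, $y \geq 0$ is feasible for the minimization defining~\eqref{definition:sigma_y}, so that term is at least $\sigmay{\alpha,\beta}$. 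Taking the infimum over all feasible $(x,\theta)$ gives the claimed inequality.

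The only point needing care is this bookkeeping. One must check that the regrouping of $\alpha$-terms matches the coefficients in $\sigma_x$ and $\sigma_y$ exactly. One must also note that, since the projection is used only as a witness for $y$, the upper bounds $y \le b$ need not be invoked, because their contribution is already carried by the $\beta$-terms inside~\eqref{ineq:conic_comb} and $\nu$.
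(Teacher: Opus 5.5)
Your argument is correct and is essentially the paper's proof. The paper dualizes the single inequality \eqref{ineq:conic_comb} with a multiplier $\eta \geq 0$ and takes $\eta = 1$, which is your slack step written out by hand; note that you actually \emph{subtract} the nonnegative slack from the right-hand side rather than add it, though the inequality you display is the right one. Your reading of $\sigma_x$ with $c$ counted once (as in the Lagrangian dual \eqref{problem:lagrangian_dual}) is correct, and, like the paper, you never need the bound $y \leq b$.
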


\begin{corollary}
    \label{corollary:single_inequality_optimal}
    Suppose that we solve the LP relaxation of Formulation~\eqref{formulation:sri} to optimality. Let~$\alpha^* \geq 0$ and~$\beta^* \leq 0$ be optimal dual solutions  associated with the SRIs and upper bound constraints on the~$y$-variables, respectively. Then~$\lpopt = \min\{c^\T x + \allones^\T \theta : x \in \mc{X},~\theta \in \proj_{\theta}(\Gammaset(x, \alpha^*, \beta^*))\}$.
\end{corollary}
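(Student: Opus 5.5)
We prove the two inequalities $z^*\le \min\{\cdots\}$ and $z^*\ge \min\{\cdots\}$ separately, where the minimum is taken over $x\in\mc{X}$ and $\theta\in\proj_{\theta}(\Gammaset(x,\alpha^*,\beta^*))$.

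\emph{Lower bound.} By LP duality, $(\alpha^*,\beta^*)$ is optimal for the Lagrangian dual~\eqref{problem:lagrangian_dual}. This holds because the LP relaxation of Formulation~\eqref{formulation:sri}, after eliminating $\theta$, has $x\in\mc{X}$ kept as a polyhedral constraint and only the SRIs and the upper bounds dualized. Lagrangian duality for linear programs is tight, and the optimal LP multipliers of the dualized constraints attain the maximum. Hence $z^* = \sigmax{\alpha^*}+\sigmay{\alpha^*,\beta^*}+\nufunction{\alpha^*,\beta^*}$. Theorem~\ref{theorem:single_inequality} then gives directly that the minimum in the statement is at least this value, i.e.\ at least $z^*$. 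The one point needing care is that the minimum is finite, so the Lagrangian value is indeed $z^*$. Dual feasibility guarantees this: the reduced costs $p_\xi w_v-\beta^{*\xi}_v-\sum_{S\ni v}\alpha^{*\xi}_S$ are nonnegative, which makes $\sigmay{\alpha^*,\beta^*}=0$.

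\emph{Upper bound.} Let $(x^*,\theta^*,y^*)$ be an optimal solution of the LP relaxation of Formulation~\eqref{formulation:sri}. Then $y^*$ satisfies every SRI with $x=x^*$ and $y^*\in[\mathbf{0},b]^N$. Since inequality~\eqref{ineq:conic_comb} is a nonnegative combination of these SRIs and of the upper-bound constraints (recall $\beta^*\le 0$), the pair $(x^*,y^*)$ satisfies~\eqref{ineq:conic_comb} for $(\alpha^*,\beta^*)$. Moreover $\theta^*_v\ge\sum_\xi p_\xi w_v y^{*\xi}_v$ and $\theta^*\ge 0$. Therefore $(\theta^*,y^*)\in\Gammaset(x^*,\alpha^*,\beta^*)$, so $\theta^*\in\proj_\theta(\Gammaset(x^*,\alpha^*,\beta^*))$. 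Its objective value is $c^\T x^*+\allones^\T\theta^* = z^*$, so the minimum is at most $z^*$.

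The main obstacle is justifying $z^*$ equals the Lagrangian value at $(\alpha^*,\beta^*)$. The argument is standard LP duality: write the dual of the LP relaxation, including the dual variables for the constraints defining $\mc{X}$. For fixed $(\alpha^*,\beta^*)$, the remaining inner minimization over $x\in\mc{X}$, $y\ge 0$ has, as its own LP dual, the rest of the optimal dual solution. Its value therefore equals the full dual objective, which is $z^*$. All other steps are direct.
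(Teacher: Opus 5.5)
Your proof is correct and essentially the paper's: writing $z'$ for the minimum in the statement, you obtain $z'\ge\lpopt$ from Theorem~\ref{theorem:single_inequality} together with the fact that the LP multipliers $(\alpha^*,\beta^*)$ are optimal for the Lagrangian dual~\eqref{problem:lagrangian_dual}, exactly as the paper does. The only difference is in $z'\le\lpopt$: the paper cites Proposition~\ref{proposition:proj_gamma_characterization}, whereas you check directly that the LP optimum $(x^*,\theta^*,y^*)$ lies in $\Gammaset(x^*,\alpha^*,\beta^*)$ because~\eqref{ineq:conic_comb} is a nonnegative combination of the SRIs and upper bounds, which is more elementary and does not require verifying $(\alpha^*,\beta^*)\in\dualset$.
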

\begin{proof}
    Let~$z' = \min\{c^\T x + \allones^\T \theta : x \in \mc{X},\ \theta \in \proj_{\theta}(\Gammaset(x, \alpha^*, \beta^*))\}$. By Proposition~\ref{proposition:proj_gamma_characterization},~$\P \subseteq \{(x, \theta) : x \in \mc{X},~\theta \in \proj_{\theta}(\Gammaset(x, \alpha^*, \beta^*))\}$, so~$\lpopt \geq z'$. For the converse, apply Theorem~\ref{theorem:single_inequality} to obtain~$z' \geq \sigmax{\alpha^*} + \sigmay{\alpha^*, \beta^*} + \nufunction{\alpha^*, \beta^*}$. We are now done by the fact that~$(\alpha^*, \beta^*)$ is optimal for the outer problem in the Lagrangian dual~\eqref{problem:lagrangian_dual} (\cite{frangioni2005lagrangian} has a nice proof via ``partial dualization'').
\end{proof}

\subsection{\mytitle{Branch-and-cut algorithm for the VRPSD under a scenario-optimal recourse policy}}
\label{subsection:sri_algorithm}

We now propose a two-phase branch-and-cut algorithm for solving problem~$\vrpsd{\mc{X}, \optQ}$. First, we solve the LP relaxation of Formulation~\eqref{formulation:sri}  and construct projected SRIs using the results from Sections~\ref{subsection:projected_sri} and~\ref{subsection:lp_bound}. This allows us to recover strong bounds in the~$(x, \theta)$-space. In the second step, we solve the problem to integrality by separating the projected SRIs developed in Sections~\ref{subsection:projected_sri} and~\ref{subsection:comparing_cuts}. Since these projected SRIs dominate the path cuts of~\cite{parada2024disaggregated} (Corollary~\ref{corollary:dominance_sri}), it follows from Theorem~1 of~\cite{part1} that this procedure yields an exact algorithm for solving~$\vrpsd{\mc{X}, \optQ}$. We describe the algorithm in more detail next.

\noindent
\textbf{\textit{\mytitle{Step 1: Solving the high-dimensional LP to obtain a strong root node relaxation.}}} \label{step1} We use a cutting-plane procedure to (approximately) solve the LP relaxation of Formulation~\eqref{formulation:sri}. We start the LP with the variable bounds and the degree constraints in~$\mc{X}$. Given a candidate solution~$(\bar{x}, \bar{\theta})$, we first try to separate RCIs (or SECs) using the CVRPSEP package of~\cite{Lysgaard2004}, obtaining a collection~$\mc{S}$ of subsets~$S \subseteq V_+$. For each~$S \in \mc{S}$, we try to separate an aggregated SRI using Fact~\ref{fact:aggregated_sri}. If~$\mc{S} = \emptyset$, we instead call the (aggregated) SRI separation heuristics described in Appendix~\ref{appendix:separation}. We run this procedure until no violated cut is found or 60 seconds have elapsed.

Let~$\tilde{z} \leq \lpopt$ be the LP bound obtained with the previous procedure and let~$(\tilde{\alpha}, \tilde{\beta})$ be the corresponding optimal dual multipliers associated with the SRIs and the upper bound constraints on the~$y$-variables. Recall that aggregated SRIs have the form~$\sum_{\xi \in \Xi} y^\xi(\set) \geq \sum_{\xi \in \Xi} (\kscen{\set} + x(E(\set)) - |\set|)$, for~$\Xi \subseteq [N]$ and~$\set \subseteq V_+$. Thus, rather than having duals for each SRI, we have dual multipliers~$\Lambda^\Xi_\set$ for the aggregated SRIs. To build~$\tilde{\alpha}$ from~$\Lambda$, we may set~$\tilde{\alpha}^\xi_\set = \sum_{\Xi \subseteq [N] : \xi \in \Xi} \Lambda^\Xi_\set$, for each~$\xi \in [N]$ and~$\set \subseteq V_+$. We include this construction here only to write the formulations and inequalities in terms of~$\tilde{\alpha}$ instead of~$\Lambda$. Our implementation works directly with~$\Lambda$ and builds the projected SRI associated with~$(\tilde{\alpha}, \tilde{\beta})$ without explicitly constructing~$\tilde{\alpha}$.

Let~$\tilde{\mc{X}}$ be the relaxation of~$\mc{X}$ obtained by considering only the RCIs (or SECs, if~$\mc{X} = \Xsub$) that were separated while solving the LP relaxation of Formulation~\eqref{formulation:sri}. Moreover, let~$\mc{U} \coloneqq \{(\set,\Xi) : \Lambda^\Xi_\set > 0~\text{and~$w_v > 0$ for all~$v \in \set$}\}$. We then solve the following relaxation:
\begin{equation}
    \label{formulation:root_node}
    \min \left\{c^\T x + \allones^\T \theta : ~\text{$(x, \theta) \in \tilde{\mc{X}} \times \R^{V_+}_+$ and~$(x, \theta)$ satisfies~\eqref{ineq:example_simple_projected_sri}, for all~$(S, \Xi) \in \mc{U}$}\right\}.
\end{equation}

\noindent
We note in passing that preliminary experiments showed that it is better to use the projected aggregated SRIs~\eqref{ineq:example_simple_projected_sri} than the inequalities developed in Theorem~\ref{theorem:dominance_set_cut}.

If the optimal value of Formulation~\eqref{formulation:root_node} is at least~$\tilde{z}$, we proceed to~\hyperref[step2]{Step 2}. Otherwise, we add the projected SRI~\,$\sum_{v \in V_+ : w_v > 0} \phiv(\tilde{\alpha}, \tilde{\beta}) \, \theta_v \geq \sum_{\xi \in [N]} \sum_{\set \subseteq V_+} \tilde{\alpha}^\xi_\set \, x(E(\set)) + \nufunction{\tilde{\alpha}, \tilde{\beta}}$. Note that~$\tilde{z}$ is just a lower bound on~$\lpopt$. Nevertheless, Theorem~\ref{theorem:single_inequality} still implies that the projected SRI associated with~$(\tilde{\alpha}, \tilde{\beta})$ recovers the bound~$\tilde{z}$ in the~$(x, \theta)$-space (even if we replace~$\mc{X}$ by~$\tilde{\mc{X}}$).

One may wonder why we add the projected aggregated SRIs~\eqref{ineq:example_simple_projected_sri} if Theorem~\ref{theorem:single_inequality} already guarantees that one projected SRI is enough to recover the bound~$\tilde{z}$. Our reasoning here is the same as highlighted in~\citep{chen2024recovering, gamrath2010experiments, corner}: using a single inequality to recover a strong bound can be detrimental to the MIP solver performance, since it frequently leads to a high-dimensional optimal face. By adding several projected SRIs, we try to mitigate this issue.

\noindent
\textbf{\textit{\mytitle{Step 2: Branch-and-cut algorithm based on the separation of projected SRIs.}}} \label{step2} After constructing a strong root-node relaxation in~\hyperref[step1]{Step 1}, we proceed similarly to the ILS-based algorithm of~\cite{part1}, which separates partial route and set cuts. However, whenever a violated partial route cut is found, instead of adding the cut, we use the results from Section~\ref{subsection:comparing_cuts} to separate a dominating projected SRI.

Given a candidate solution~$(\bar{x}, \bar{\theta})$, we use the separation algorithm of~\cite{part1} to (possibly) find a partial route~$H = (\set_1, \ldots, \set_\ell)$ such that~$\Wof(\bar{x}; \supsetXh{H}) > 0$. We then construct a single LP with~$N \cdot |V_+(H)|$ variables to simultaneously solve the  LPs~\eqref{formulation:recourse_lower_bound} for every scenario~$\xi \in [N]$. To improve the efficiency, for each scenario~$\xi \in [N]$, we only consider constraints of type~\eqref{ineq:formulation_recourse_lower_bound1} corresponding to \emph{minimal} partial routes~$H' \subseteq H$, meaning that, for every~$H'' \subseteq H'$ with~$H'' \neq H'$,~$\kscen{H''} < \kscen{H'}$. In this way, for each scenario~$\xi \in [N]$, our formulation only uses~$\ell(\kscen{H} - 1)$ constraints of type~\eqref{ineq:formulation_recourse_lower_bound1}, rather than~$\ell^2$.

By solving the previous LP, we obtain dual multipliers~$(\alpha, \beta)$ and an associated projected SRI that we add to the formulation. Theorem~\ref{theorem:dominance_sri} implies that this projected SRI dominates the partial route cut~$\theta(V_+(H)) \geq \mc{L}^*(H) \cdot \Wof(x; \supsetXh{H})$. Therefore, by Theorem~1 of~\cite{part1} this procedure yields a correct branch-and-cut algorithm for~$\vrpsd{\mc{X}, \optQ}$, since if~$\bar{x}$ is integer, then the separation of~\cite{part1} is guaranteed to examine the partial routes~$H = R$, with~$R \in \mc{R}(\bar{x})$.

To improve performance, the algorithm of~\cite{part1} also separates set cuts. Whenever a violated set cut is detected, we instead add a corresponding projected aggregated SRI~\eqref{ineq:example_simple_projected_sri}. Preliminary experiments indicate that these inequalities are more effective in practice than the dominating projected SRI described in Theorem~\ref{theorem:dominance_set_cut}. Further details on our separation procedures are provided in Appendix~\ref{appendix:separation_algorithm}.

\subsection{
\mytitle{Extension to other recourse functions}}
\label{subsection:extension}

Suppose now that~$\mc{Q} \geq \optQ$ is a recourse function satisfying Assumption~\ref{assumption:recourse_lower_bound} (with the same parameters~$w \in \Q^{V_+}_+$ and~$b \in \Z^{V_+}_{++}$ used for defining~$\optQ$). By Propositions~\ref{proposition:valid_cuts_recourse} and~\ref{proposition:proj_gamma_characterization}, we know that projected SRIs are valid for~$\P \supseteq \Fq{\mc{X}, \mc{Q}}$, so we can solve~$\vrpsd{\mc{X}, \mc{Q}}$ by separating additional inequalities on top of the algorithm described in Section~\ref{subsection:sri_algorithm}. 

More precisely, for any candidate solution~$(\bar{x}, \bar{\theta}) \in (\mc{X} \cap \Z^E) \times \R^{V_+}_+$, we can verify if every route~$R \in \mc{R}(\bar{x})$ satisfies~$\sum_{v \in V_+(R)} \bar{\theta}_v \geq \mc{Q}(R)$. If not, we add the partial route cut~$\theta(V_+(R)) \geq \mc{Q}(R) \cdot \Whs(x ; \Xh{R})$ to the formulation. As shown by~\cite{part1}, these cuts suffice to recover the recourse cost of a solution. In our implementation, we also incorporate additional partial route cuts developed in~\cite{part1} for the recourse function~$\Qc$ (see Appendix~\ref{appendix:separation_algorithm}).

\section{\mytitle{Computational experiments}}
\label{section:computation}

We conducted experiments on the VRPSD with scenarios under both the scenario-optimal and classical recourse policies, with and without the assumption of CVRP feasibility. In other words, we considered problems~$\vrpsd{\mc{X}, \mc{Q}}$, for~$\mc{X} \in \{\Xcvrp, \Xsub\}$ and~$\mc{Q} \in \{\optQ, \Qc\}$. Our goal here is twofold. First, we complement the theoretical findings in Section~\ref{subsection:comparing_cuts} by empirically comparing the performance of projected SRIs and ILS cuts with respect to~$\optQ$. Second, since Proposition~\ref{proposition:valid_cuts_recourse} ensures that inequalities valid for~$\optQ$ are also valid for~$\Qc$, we also investigate whether incorporating projected SRIs can improve the performance of the ILS-based algorithm for~$\vrpsd{\mc{X}, \Qc}$ proposed by~\cite{part1}.

\subsection{\mytitle{Experimental setup}}

Our approaches were evaluated on the instances developed by~\cite{part1} for the VRPSD with scenarios, which in turn are based on the 270 instances of~\cite{jabali2014} and the 20 instances of~\cite{Dinh2018}. We implemented three algorithmic approaches:
\begin{itemize}[leftmargin=*]
    \item \tsc{ils:} the ILS-based method proposed by~\cite{part1}, which we applied to both~$\optQ$ and~$\Qc$;
    \item \tsc{sri:} the SRI-based algorithm discussed in Section~\ref{subsection:sri_algorithm}, which only applies to~$\optQ$;
    \item \tsc{ils+sri:} the extension combining projected SRIs with ILS cuts described in Section~\ref{subsection:extension}, which we applied only to~$\Qc$.
\end{itemize}
All algorithms were implemented in C++ using Gurobi~12 as the LP/MIP solver and the Lemon library~\citep{lemon} for basic graph operations. Experiments were executed in single-thread mode with a time limit of 1800 seconds per instance on a machine equipped with an \mbox{Intel(R) Xeon(R) Gold 6142 CPU @ 2.60 GHz.}

\subsection{\mytitle{Numerical results}}

We first consider in Table~\ref{table:results_scenopt} the scenario-optimal recourse function~$\optQ$. Each row represents a different combination of first-stage feasibility region ($\mc{X} \in \{\Xcvrp, \Xsub\}$) and instance set (``Dinh'' or ``Jabali''). The ``Total'' column indicates the total number of instances in each set. For each algorithm, we report the number of instances solved to optimality within the time limit (``Solved''), the average solution time in seconds (``T(s)''), the average optimality gap for unsolved instances (``G(\%)''), and the average root node gap (``RG(\%)''). The average solution times were computed considering only the instances that were solved to optimality by both algorithms. Moreover, the optimality and root gaps were measured relative to the best primal bound found across the algorithms.

{\renewcommand{\arraystretch}{1.2}
\begin{table}[htb!]
\captionsetup{subrefformat=parens,font=normalsize}
\centering
\small
\begin{tabular}{l@{\hskip 1em}l@{\hskip 1em}r@{\hskip 2em}rrrrcrrrr}
\toprule
& & & \multicolumn{4}{c}{\tsc{ils}} & & \multicolumn{4}{c}{\tsc{sri}} \\
\cmidrule(r){5-7} \cmidrule(r){9-12}
$\mc{X}$ & Instance Set & Total & Solved & T(s) & G(\%) & RG(\%) & & Solved & T(s) & G(\%) & RG(\%) \\
\midrule
$\Xcvrp$ & Dinh & 20 & 9 & 204.08 & 8.15 & 7.75 & & 9 & 117.53 & 7.71 & 6.65 \\
$\Xcvrp$ & Jabali & 270 & 267 & 54.02 & 1.56 & 1.73 & & 267 & 92.34 & 0.70 & 1.23 \\
\hline
$\Xsub$ & Dinh & 20 & 0 & -- & 29.00 & 34.39 & & 4 & -- & 9.39 & 12.44 \\
$\Xsub$ & Jabali & 270 & 77 & 211.58 & 6.74 & 8.00 & & 249 & 67.99 & 2.40 & 2.41 \\
\bottomrule
\end{tabular}
\caption{Comparison of ILS and SRI algorithms on VRPSD instances with~$\mc{Q} = \optQ$.}
\label{table:results_scenopt}
\end{table}
}

We see that algorithm~\tsc{sri} consistently achieved smaller root gaps than~\tsc{ils}, which validates our theoretical findings from Section~\ref{subsection:comparing_cuts}, where we proved that the inequalities used by algorithm~\tsc{sri} dominate the ILS cuts used by~\tsc{ils} (since~$\mc{Q} = \optQ$). Although this improved relaxation does not translate into an overall better performance when~$\mc{X} = \Xcvrp$, it yields a substantial improvement when~$\mc{X} = \Xsub$, with~\tsc{sri} solving~176 more instances than~\tsc{ils} in total.

These different behaviors can be explained by the fact that enforcing CVRP feasibility (i.e.,~$x \in \Xcvrp \cap \Z^E$) may significantly restrict the feasible region, strengthening the ILS set cuts and leaving little room for improvement via better recourse approximation. On the other hand, when~$\mc{X}=\Xsub$, the feasible region is typically much larger, which makes the quality of the recourse approximation critical. As pointed out by~\cite{hoogendoorn2025evaluation}, while the CVRP-feasibility assumption is often reasonable, it can be arbitrary and undesirable in certain settings. In such situations, our experiments highlight that projected SRIs provide a tighter recourse approximation without artificially restricting the problem formulation. %

Table~\ref{table:results_classical} shows that a similar situation occurs for the classical recourse policy ($\mc{Q} = \Qc$). The hybrid approach~\tsc{ils+sri} consistently achieved stronger root gaps than~\tsc{ils}, and this improvement translated into an overall better performance of the algorithm. As with the scenario-optimal recourse policy, the gains were much more pronounced for~$\mc{X} = \Xsub$, but algorithm~\tsc{ils+sri} still solved~5 more instances than~\tsc{ils} when~$\mc{X} = \Xcvrp$. These results highlight the value of our generic treatment and Proposition~\ref{proposition:valid_cuts_recourse}, which enables the use of SRIs to recourse functions besides the scenario-optimal one. In this case, we successfully applied (projected) SRIs to the recourse function~$\Qc$.

{\renewcommand{\arraystretch}{1.2}
\begin{table}[htb!]
\captionsetup{subrefformat=parens,font=normalsize}
\centering
\small
\begin{tabular}{l@{\hskip 1em}l@{\hskip 1em}r@{\hskip 2em}rrrrcrrrr}
\toprule
& & & \multicolumn{4}{c}{\tsc{ils}} & & \multicolumn{4}{c}{\tsc{ils+sri}} \\
\cmidrule(r){5-7} \cmidrule(r){9-12}
$\mc{X}$ & Instance Set & Total & Solved & T(s) & G(\%) & RG(\%) & & Solved & T(s) & G(\%) & RG(\%) \\
\midrule
$\Xcvrp$ & Dinh & 20 & 8 & 246.56 & 7.34 & 7.48 & & 11 & 125.14 & 8.67 & 6.38 \\
$\Xcvrp$ & Jabali & 270 & 261 & 47.72 & 1.13 & 1.81 & & 263 & 90.26 & 0.96 & 1.32 \\
\hline
$\Xsub$ & Dinh & 20 & 0 & -- & 30.59 & 35.80 & & 4 & -- & 12.56 & 14.89 \\
$\Xsub$ & Jabali & 270 & 73 & 203.90 & 7.21 & 8.42 & & 217 & 90.24 & 2.87 & 2.83 \\
\bottomrule
\end{tabular}
\caption{Comparison of ILS and SRI algorithms on VRPSD instances with~$\mc{Q} = \Qc$.}
\label{table:results_classical}
\end{table}
}

Lastly, to complement the results from Tables~\ref{table:results_scenopt} and~\ref{table:results_classical}, we show in Figure~\ref{figure:experiments_time} the empirical cumulative distribution of execution times for the algorithms. Specifically, consider Figure~\ref{figure:time_cvrp} and take as an example a point~$p = (p_1, p_2) \in \R^2$ on the line of algorithm~\tsc{(ils+)sri}. Let~$\mc{I}$ be the union of the instance sets of~\cite{jabali2014} and~\cite{Dinh2018}, so~$|\mc{I}| = 290$. Let~$\hat{\mc{I}}(\optQ) \subseteq \mc{I}$ (respectively,~$\hat{\mc{I}}(\Qc) \subseteq \mc{I}$) be the subset of instances that algorithm~\tsc{sri} (respectively,~\tsc{ils+sri}) solved within~$p_1$ seconds using first-stage feasible region~$\Xcvrp$. Point~$p_2$ is then given by the ratio~$p_2 = \frac{|\hat{\mc{I}}(\optQ)| + |\hat{\mc{I}}(\Qc)|}{2|\mc{I}|}$. Figure~\ref{figure:time_basic} was constructed similarly for~$\mc{X} = \Xsub$. 

These plots confirm that projected SRIs are particularly effective when no CVRP-feasibility assumption is imposed. We also point out that the ``\tsc{(ils+)sri}'' curve exhibits slow progress in the first seconds. This is due to the procedure described in Step~1 of Section~\ref{subsection:sri_algorithm}, where we solve a high-dimensional formulation in the~$(x, y)$-space for 60 seconds. Overall, after~200 seconds and with~$\mc{X} = \Xcvrp$, we see no significant difference between the performance of ILS and SRI-based methods. More detailed computational results can be found in Appendix~\ref{appendix:experiments}.

\begin{figure}[htb]
    \centering
    \subfloat[Execution time for~$\Xcvrp$.]{\includegraphics[width=0.44\textwidth]{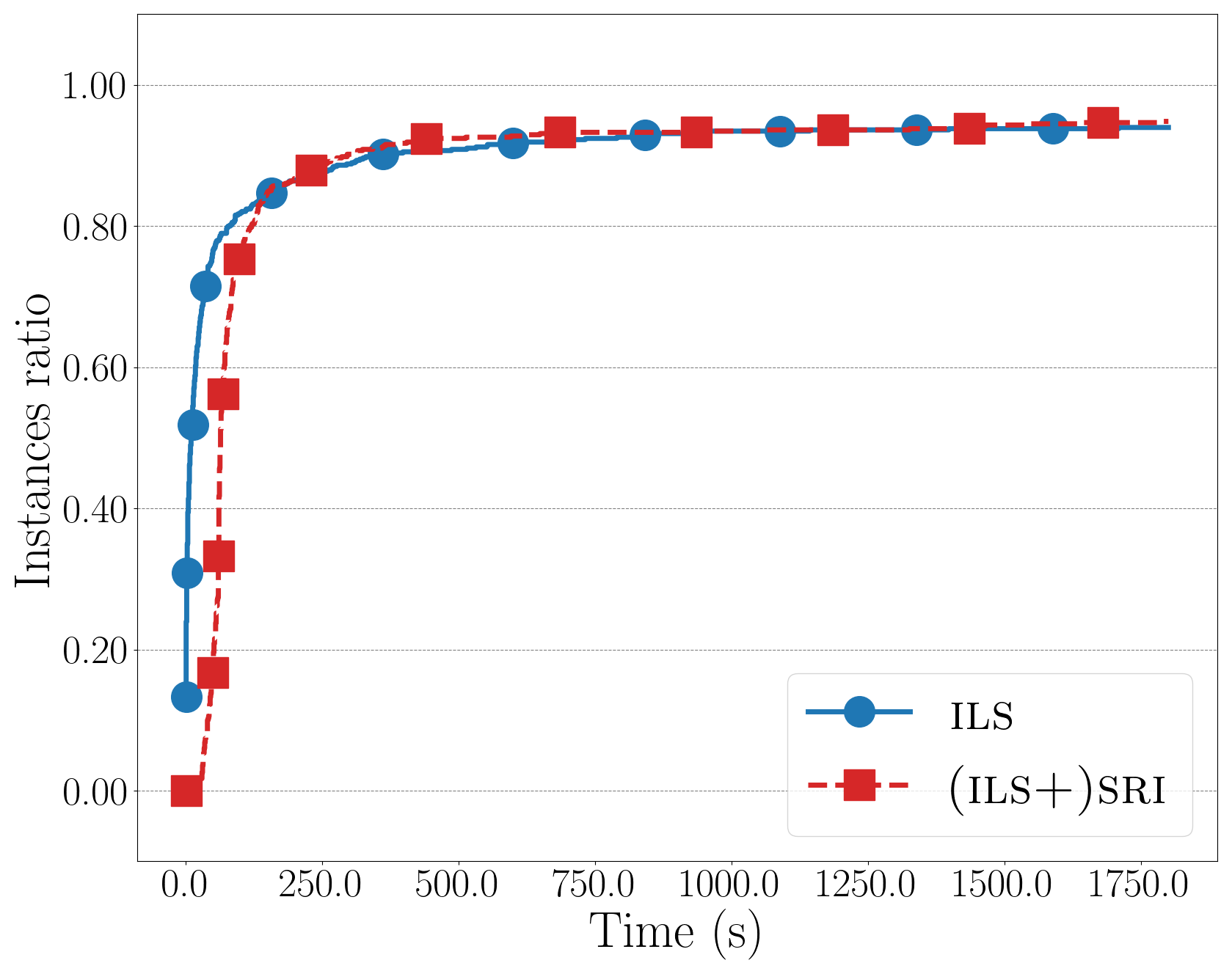} \label{figure:time_cvrp}}
    \hfill
    \subfloat[Execution time for~$\Xsub$.]{\includegraphics[width=0.44\textwidth]{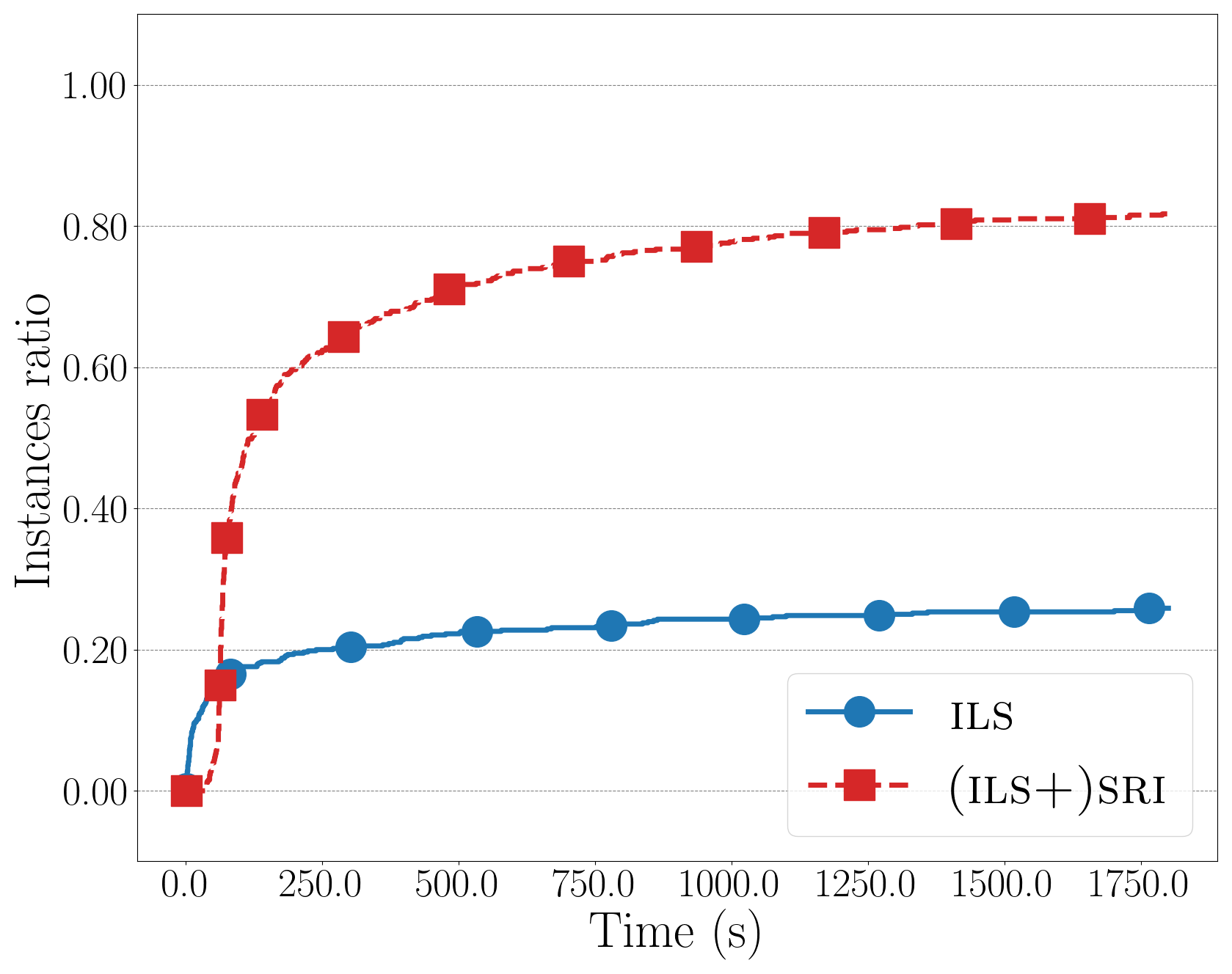}\label{figure:time_basic}}
    \\[0.5cm]
    \caption{Empirical cumulative distribution of the execution times. The legend~\tsc{(ils+)~sri} refers to algorithm~\tsc{sri} when~$\mc{Q} = \optQ$, and to algorithm~\tsc{ils+sri} when~$\mc{Q} = \Qc$.}
    \label{figure:experiments_time}
\end{figure}

\section{\mytitle{Conclusion and future work}}
\label{section:conclusion}

In this paper, we introduced a framework for the VRPSD with scenarios in which recourse policies are represented as feasible solutions of a flow-based MIP. We characterized the convex hull of such recourse policies and introduced a new class of inequalities called scenario recourse inequalities (SRIs). We showed that SRIs yield a formulation for the VRPSD under a scenario-optimal recourse policy, and we further demonstrated that they are valid for other recourse policies and dominate certain ILS cuts.

Our computational results show that, compared to ILS cuts, SRIs provide a better approximation of the recourse function and are particularly useful when the assumption of CVRP feasibility is not imposed. Since we can now better approximate the recourse function, there is less need to rely on CVRP feasibility, which imposes a somewhat arbitrary hard capacity constraint on the sum of expected demands (see~\cite{hoogendoorn2025evaluation}). Using our results, one could instead consider more flexible approaches, such as reducing the occurrence of failures by assigning larger costs to the recourse actions. We leave this direction as future work.

Overall, we have shown that modeling recourse decisions via a network flow formulation can be useful and yields a polyhedral structure that can be exploited. For the future, it would be interesting to study the separation of projected SRIs and to explore similar ideas in other stochastic routing problems where the recourse policy involves other types of resource collection/consumption along routes.

\bibliographystyle{plainnat}
\bibliography{bibliography}

\begin{APPENDICES}

\section{Proof of Lemma~\ref{lemma:symmetric_flow}}
\label{appendix:proof_symmetric_flow}

\lemmasymmetricflow*
\begin{proof}
Let~$\bar{y}^\xi \in \action^\xi(\oa{R})$. We first show that~$\bar{y}^\xi$ also belongs to~$\action^\xi(\cev{R})$. Using Definition~\ref{definition:recourse_policy}, we know that there exist~$\overrarrow{f}^\xi$ and~$\bar{g}^\xi$ satisfying Formulation~\eqref{formulation:flow_feasibility}. Construct vector~$\cev{f}^\xi \in \R^{A}_+$ by setting~$\cev{f}^\xi_{a} = C - \overrarrow{f}^\xi_{(v_i, v_{i + 1})}$ if~$a = (v_{i + 1}, v_i)$ (for some~$i \in \{0, \ldots, \ell\}$) and~$\cev{f}^\xi_a = 0$ otherwise. Clearly,~$\cev{f}^\xi_a \in [0, C]$, for all~$a \in A$. Moreover, for each~$i \in [\ell]$,
\begin{align}
    & \overrarrow{f}^\xi_{(v_{i - 1}, v_i)} + d^\xi(v_i) = \overrarrow{f}^\xi_{(v_i, v_{i + 1})} + g^\xi_{v_i} \nonumber \\
    \iff & C - \overrarrow{f}^\xi_{(v_i, v_{i + 1})} + d^\xi(v_i) = C - \overrarrow{f}^\xi_{(v_{i - 1}, v_i)} + g^\xi_{v_i} \nonumber \\
    \iff & \cev{f}^\xi_{(v_{i + 1}, v_i)} + d^\xi(v_i) = \cev{f}^\xi_{(v_i, v_{i - 1})} + g^\xi_{v_i}, \nonumber
\end{align}
so the pair~$(\cev{f}^\xi, \bar{g}^\xi)$ verifies that~$\bar{y}^\xi \in \action^\xi(\cev{R})$. Now take the convex combination~$\bar{f}^\xi = (\overrarrow{f}^\xi + \cev{f}^\xi) / 2$ and observe that~$(\bar{f}^\xi, \bar{g}^\xi)$ is feasible for Formulation~\eqref{formulation:flow2_feasibility}, proving one direction of the statement.

For the converse, suppose now that~$(\bar{f}^\xi, \bar{g}^\xi)$ is feasible for Formulation~\eqref{formulation:flow2_feasibility} with~$y^\xi = \bar{y}^\xi$ integer. Construct a vector~$\overrarrow{f}^\xi \in \R^A_+$ by setting~$\overrarrow{f}^\xi_{a} = \bar{f}^\xi_{(v_i, v_{i + 1})} + \left(\frac{C}{2} -  \bar{f}^\xi_{(v_{i + 1}, v_i)} \right)$ if~$a = (v_i, v_{i + 1})$ (for some~$i \in \{0, \ldots, \ell\}$) and~$\overrarrow{f}^\xi_a = 0$ otherwise. Note that~$\overrarrow{f}^\xi_a \in [0, C]$, for every~$a \in A$. Additionally, for each~$i \in [\ell]$,
\begin{align}
    & \bar{f}^\xi_{(v_{i - 1}, v_i)} + \bar{f}^\xi_{(v_{i + 1}, v_i)} + d^\xi(v_i) = \bar{f}^\xi_{(v_i, v_{i + 1})} + \bar{f}^\xi_{(v_i, v_{i - 1})} + \bar{g}^\xi_{v_i} \nonumber \\
    \iff & \bar{f}^\xi_{(v_{i - 1}, v_i)} + \left(\frac{C}{2} - \bar{f}^\xi_{(v_i, v_{i - 1})} \right) + d^\xi(v_i) = f^\xi_{(v_i, v_{i + 1})} + \left(\frac{C}{2} - \bar{f}^\xi_{(v_{i + 1}, v_i)} \right) + \bar{g}^\xi_{v_i} \nonumber \\
    \iff & \overrarrow{f}^\xi_{(v_{i - 1}, v_i)} + d^\xi(v_i) = \overrarrow{f}^\xi_{(v_i, v_{i + 1})} + \bar{g}^\xi_{v_i}, \nonumber
\end{align}
so~$(\overrarrow{f}^\xi, \bar{g}^\xi)$ is feasible for Formulation~\eqref{formulation:flow_feasibility} and~$\bar{y}^\xi \in \action^\xi(\oa{R})$.
\end{proof}

\section{Proof of Corollary~\ref{corollary:convex_hull2}}
\label{appendix:proof_corollary_convex_hull}

\corollaryconvexhull*
\begin{proof}
Let~$\tilde{\Pi}$ be the polytope in the RHS of the statement. By Theorem~\ref{theorem:convex_hull}, for every route~$R \in \mc{R}$, we know that~$y \in \policy(R)$ satisfies~$y^\xi(R') \geq \kscen{R'} - 1$, for every~$R' \subseteq R$ and~$\xi \in [N]$. Hence,~$\cap_{R \in \mc{R}} (\policy(R) \cap [\mathbf{0}, b]^N)$ lies in~$\tilde{\Pi}$. By convexity of~$\tilde{\Pi}$, this gives~$\conv\left(\bigcap_{R \in \mc{R}} (\policy(R) \cap [\mathbf{0}, b]^N) \right) \subseteq \tilde{\Pi}$.

To prove the converse, 
let~$\bar{y} \in [\mathbf{0}, b]^N$ be an extreme point of~$\tilde{\Pi}$. 
Note that if~$\tilde{\Pi} = \emptyset$ (i.e. $\bar{y}$ does not exist), we are done. 
It suffices to show~$\bar{y} \in \cap_{R \in \mc{R}} \policy(R)$. Just as in the proof of Theorem~\ref{theorem:convex_hull}, we note that the matrix corresponding to the inequalities defining~$\tilde{\Pi}$ is TU. Therefore, by Theorem 4.5 of~\cite{ipbook}, we have~$\bar{y} \in \Z^{[N] \times V_+}_+$. Lemma~\ref{lemma:projection_recourse_action} then implies that, for every route~$R \in \mc{R}$,~$\bar{y} \in \policy(R)$.
\end{proof}

\section{Proof of Lemma~\ref{lemma:flow_formulation}}
\label{appendix:proof_flow_formulation}

Before proving this result, we note that, in the case where~$R = (v_1, \ldots v_\ell) = (v_1)$, we have that~$v_0 = v_{2} = 0$, and thus the flow conservation constraints in Lemma~\ref{lemma:symmetric_flow} reduce to~$2 f^\xi_{(0, v_1)} + d^\xi(v_1) = 2 f^\xi_{(v_1, 0)} + g^\xi_{v_1}$, meaning that the flow variables on~$A(\oa{R})$ are counted twice. This is why routes with a single customer are treated differently in the following proof.

\lemmaflowformulation*
\begin{proof}
Fix~$\bar{x} \in \mc{X} \cap \Z^E$.

\noindent\underline{$\flow(\bar{x})\cap\Z^{[N]\times V_+}\subseteq \policy(\bar{x})\cap [\mathbf{0},b]^N$}: Clearly~$\flow(\bar{x})\cap\Z^{[N]\times V_+} \subseteq [\mathbf{0},b]^N$, so we just need to show~$\flow(\bar{x})\cap\Z^{[N]\times V_+} \subseteq \policy(\bar{x})$. We may assume~$\flow(\bar{x})\neq \emptyset$. Note that given a feasible solution to $\flow(\bar{x})$ we can round up its components and remain feasible. Since we assume~$\flow(\bar{x})\neq \emptyset$, we have~$\flow(\bar{x})\cap \Z^{[N] \times V_+}\neq \emptyset$. 
So let $\bar{y}\in \flow(\bar{x}) \cap \Z^{[N] \times V_+}$ and set~$\{\bar{y}_R\}_{R\in R(\bar{x})}$ as in Fact~\ref{fact:policy_split}. To show~$\bar{y} \in \policy(\bar{x})$, it suffices to prove that~$\bar{y}_R \in \policy(R)$, for each~$R\in R(\bar{x})$. 

Fix a route~$R \in \mc{R}(\bar{x})$ and a scenario~$\xi \in [N]$. In what follows, we construct~$f^\xi$ and~$g^\xi$ such that~$(\bar{y}^\xi,\bar{f}^\xi,\bar{g}^\xi)$ satisfies the constraints in Formulation~\eqref{formulation:flow2_feasibility} (with respect to route~$R$ and~$\xi$). Any entry of~$\bar{f}^\xi$ and~$\bar{g}^\xi$ not explicitly specified is assumed to be zero.

Suppose first that~$R$ has a single customer~$v$. Set~$\bar{f}^\xi_{0v} = 0$,~$\bar{f}^\xi_{v0}= d^\xi(v) / 2$, and~$g^\xi_v = 0$, for all~$v \in V_+$. By Assumption~\ref{assumption:large_demands},~$d^\xi(v) \leq C$, which implies that~$(\bar{y}^\xi,\bar{f}^\xi,\bar{g}^\xi)$ is feasible for Formulation~\eqref{formulation:flow2_feasibility}. Assume then that~$|V_+(R)| \geq 2$. Let $(\hat{f},\hat{g})$ be such that $(\bar{y},\hat{f},\hat{g})$ is feasible for $\flow(\bar{x})$. Construct~$\bar{f}^\xi$ and~$\bar{g}^\xi$ as follows:
\begin{align*}
    & \bar{f}^\xi_a = {\hat{f}}^\xi_a, & \forall a \in A(\oa{R}) \cup A(\cev{R}), \\
    & \bar{g}^\xi_v = \hat{g}^\xi_v, & \forall v \in V_+(R).
\end{align*}

\noindent
Feasibility of~$(\bar{y},\hat{f},\hat{g})$ to the formulation of~$\flow(\bar{x})$ implies feasibility of~$(\bar{y}^\xi,\bar{f}^\xi,\bar{g}^\xi)$ to Formulation~\eqref{formulation:flow2_feasibility}.

\noindent\underline{$\flow(\bar{x})\cap\Z^{[N]\times V_+}\supseteq \policy(\bar{x})\cap [\mathbf{0},b]^N$}:

A similar reasoning can be applied for the reverse direction. Let~$\hat{y} \in \policy(\bar{x}) \cap [\mathbf{0}, b]^N$ (if no such $\hat{y}$ exists, the result is trivial). We construct~$(\hat{f},\hat{g})$ so that~$(\hat{y},\hat{f},\hat{g})$ is feasible for the formulation of~$\flow(\bar{x})$. First, apply Fact~\ref{fact:policy_split} to decompose~$\hat{y}$ into vectors~$\{\hat{y}_R\}_{R\in \mc{R}(\bar{x})}$. Since~$R(\bar{x})$ is a routing plan, the graphs defined by each route~$R \in R(\bar{x})$ intersect only at the depot, and therefore, one can select the entries of~$(\hat{f},\hat{g})$ for each graph induced by each route in~$R(\bar{x})$ independently. Again, any entry of~$\hat{f}$ and~$\hat{g}$ not explicitly specified is assumed to be zero.

For each route~$R \in \mc{R}(\bar{x})$, we set entries of~$\hat{f}$ and~$\hat{g}$ as follows. If~$V_+(R) = \{v\}$, for every~$\xi \in [N]$, we set~$\hat{g}^\xi_v = 0$,~$\hat{f}^\xi_{(v,0)} = d^\xi(v)$ and~$\hat{f}^\xi_{(0,v)} = 0$. Note that~$\bar{x}_{0v} = 2$, so~$\hat{f}^\xi_{(v, 0)} \leq (C / 2) \cdot \bar{x}_{0v}$ by Assumption~\ref{assumption:large_demands}. If~$R$ has at least two customers, we apply Lemma~\ref{lemma:symmetric_flow} and let~$(\bar{f}, \bar{g})$ be such that $(\hat{y},\bar{f},\bar{g})$ satisfies the constraints of  Formulation~\eqref{formulation:flow2_feasibility} for $R$ and $\xi\in [N]$. Set~$\hat{f}^\xi_a = \bar{f}^\xi_a$, for every~$a \in A(\oa{R}) \cup A(\cev{R})$, and~$\hat{g}^\xi_v = \bar{g}^\xi_v$, for every~$v \in V_+(R)$. In either case, $(\hat{y}, \hat{f}, \hat{g})$ satisfies the constraints of $\flow(\bar{x})$ for $R$.
\end{proof}

\section{Proof of Proposition~\ref{proposition:benders}}
\label{appendix:proof_benders}

\propositionbenders*
\begin{proof}
Let~$\bar{x} \in \mc{X}$ and~$\bar{y} \in [\mathbf{0}, b]^N$. Fix a scenario~$\xi \in [N]$ and create a digraph~$D' = (V, A')$ obtained from~$\D$ by adding, for each~$v \in V_+$, a duplicate of arc~$(v, 0)$. Next, construct a vector of arc capacities~$h \in \R^{A'}$ by setting, for each~$(u, v) \in A$,~$h_{(u, v)} = (C / 2) \, \bar{x}_{\{u, v\}}$; and, for each duplicate arc~$(v, 0) \in A' \setminus A$,~$h_{(v, 0)} = C \bar{y}^\xi_v$. Additionally, set~$d'_0 = d^\xi(V_+)$ and~$d'_v = -d^\xi(v)$, for all~$v \in V_+$. 

From this construction, apply~Theorem~\ref{theorem:hoffman} to learn that a tuple~$(\bar{y}^\xi, f^\xi, g^\xi)$ satisfying the constraints in~$\flow(\bar{x})$ with respect to scenario~$\xi \in [N]$ exists if and only if
\begin{align*}
    & h(\delta^+_{D'}(\set)) \geq d'(V \setminus \set), && \forall \emptyset \subsetneq \set \subseteq V, & \\
    \iff & h(\delta^+_{D'}(\set)) \geq d'(V \setminus \set), && \forall \emptyset \subsetneq \set \subseteq V_+, &~~~~\text{(as~$d'_v \leq 0$, for all~$v \in V_+$)} \\
    \iff & C \, \left(\bar{y}^\xi(\set) + \frac{\bar{x}(\delta_G(\set))}{2} \right) \geq d^\xi(S) && \forall \emptyset \subsetneq \set \subseteq V_+.
\end{align*}
Since~$\bar{x} \in \mc{X}$, we close the proof by applying~$\bar{x}(\delta_G(\set)) = 2 |\set| - 2 \bar{x}(E_G(\set))$ to the last inequality.
\end{proof}

\section{\mytitle{Heuristics for separating SRIs}}
\label{appendix:separation}

Before discussing our separation routines, we remark that the separation of SRIs is strongly NP-hard in general. Indeed, suppose that~$(\bar{x}, \bar{y}) \in \mc{X} \times [\mathbf{0}, b]^N$ is a candidate solution with~$\bar{y}^\xi = \mathbf{0}$, for some~$\xi \in [N]$. Since for every~$\set \subseteq V_+$, inequality~$\bar{y}^\xi(\set) \geq \kscen{\set} + \bar{x}(E(\set)) - |\set|$ is equivalent to~$\bar{x}(E(\set)) \leq |\set| - \lceil d^\xi(\set) / C \rceil$,
it follows that separating a violated SRI reduces to finding a violated \emph{rounded-capacity inequality} (RCI). As the separation of RCIs is strongly NP-hard~\citep{DIARRASSOUBA201786}, we do not expect to separate the SRIs exactly in pseudo-polynomial time.

We thus propose two separation heuristics: one based on the CVRPSEP package of~\cite{Lysgaard2004}, and another one based on a MIP formulation. In our implementation, we first call the CVRPSEP-based heuristic. If this heuristic fails to separate (aggregated) SRIs, we then call the MIP-based heuristic.

By ordering the scenarios by their total demands, we assume henceforth that~$d^1(V_+) \geq \ldots \geq d^N(V_+)$.

\subsection{\mytitle{Heuristic based on the CVRPSEP package}}
\label{subsection:cvrpsep}

This separation heuristic simply calls CVRPSEP, for each scenario~$\xi = 1, \ldots, N$, and checks whether the returned candidate sets correspond to violated SRIs. The overall procedure is summarized in Algorithm~\ref{algorithm:cvrpsep_heuristic}. We make a few brief observations on this algorithm. First, we limit CVRPSEP to return at most 10 sets, so~$|\mc{\set}| \leq 10$ in line 4. Second, line 6 effectively constructs the aggregated SRIs discussed in Fact~\ref{fact:aggregated_sri}. Finally, we stop the algorithm earlier in line 10 to avoid calling CVRPSEP for every scenario.

\begin{algorithm}[htb]
  \hspace*{\algorithmicindent}
  ~\textbf{Input:}~$(\bar{x}, \bar{y}) \in \mc{X} \times [\mathbf{0}, b]^N$.\\
  \hspace*{\algorithmicindent}
  \textbf{Output:} A set of pairs~$(\set, \Xi)$, with~$\emptyset \subsetneq \set \subseteq V_+$ and~$\Xi \subseteq [N]$, such that~$\sum_{\xi \in \Xi} \bar{y}^\xi(\set) < \sum_{\xi \in \Xi} (\kscen{\set} + \bar{x}(E(\set)) - |\set|)$. Each of these violated inequalities will be later added to a relaxation of Formulation~\eqref{formulation:sri}.
\begin{algorithmic}[1]
    \Procedure {\textsc{SRI-CVRPSEP}}{$\bar{x}, \bar{y}$}
    \State {$\mc{C} \gets \emptyset$}
    \For{$\xi = 1, \ldots, N$}
        \State {Call CVRPSEP with demand vector~$d^\xi$ to get a family of sets~$\mc{\set} \subseteq 2^{V_+}$ such that~$\bar{x}(E(\set)) > |\set| - \kscen{\set}$, for every~$\set \in \mc{\set}$.}
        \For {$\set \in \mc{\set}$}
            \State {$\Xi \gets \{\xi' \in [N] : \bar{y}^{\xi'}(\set) < k_{\xi'}(\set) + \bar{x}(E(\set)) - |\set|\}$}
            \If {$\Xi \neq \emptyset$}
                \State {$\mc{C} \gets \mc{C} \cup \{(\set, \Xi)\}$}
            \EndIf
        \EndFor

        \If {$\mc{C} \neq \emptyset$}
            \State {\textbf{break}}
        \EndIf
    \EndFor
    \State {\textbf{return}~$\mc{C}$}
\EndProcedure
\end{algorithmic}
\caption{\textsc{SRI-CVRPSEP}}
\label{algorithm:cvrpsep_heuristic}
\end{algorithm}

\subsection{\mytitle{MIP formulation}}
\label{subsection:exact_separation}

To separate SRIs, we also adapt the MIP model of~\cite{pavlikov2024exact}, which was originally proposed for the exact separation of RCIs. To explain this approach, let~$(\bar{x}, \bar{y}) \in \mc{X} \times [\mathbf{0}, b]^N$ be a candidate solution. This solution violates an SRI if and only if there exists a scenario~$\xi \in [N]$ and a set~$\set \subseteq V_+$ such that
\begin{equation}
    \left\lceil \frac{d^\xi(\set)}{C} \right\rceil + \bar{x}(E(\set)) - |\set| - \bar{y}^\xi(\set) > 0.
    \label{ineq:violated_sri}
\end{equation}

By scaling, we assume without loss of generality that the vehicle capacity and the demands in scenario~$\xi$ are all integers, i.e.,~$C \in \Z_{++}$ and~$d^\xi \in \Z^{V_+}_+$. Fix a parameter~$\varepsilon \in (0, 1)$. The problem of maximizing the LHS of~\eqref{ineq:violated_sri} can be solved with the following formulation:
\begin{subequations}
\label{formulation:separation}
\begin{align} 
\max ~~& (\gamma + 1) - \sum_{v \in V_+} (\bar{y}^\xi_v + 1) q_v + \sum_{e \in E \setminus \delta(0)} \bar{x}_e h_e, & \nonumber \\
\text{s.t.~~} & \gamma C \leq \sum_{v \in V_+} d^\xi_v q_v - \varepsilon, & \label{separation:t1} \\
& h_{uv} \leq q_u, & \forall uv \in E \setminus \delta(0), \label{separation:z1} \\
& h_{uv} \leq q_v, & \forall uv \in E \setminus \delta(0), \label{separation:z2} \\
& h_{uv} \geq q_u + q_v - 1, & \forall uv \in E \setminus \delta(0), \label{separation:z3} \\
& \gamma \in \Z,~q \in \{0, 1\}^{V_+},~h \in \R^{E \setminus \delta(0)}_+.
\end{align}
\end{subequations}

\noindent
The binary variables~$q_v$, for each~$v \in V_+$, indicate whether customer~$v$ belongs to the set~$\set$. \mbox{Constraints~\eqref{separation:z1}-\eqref{separation:z3}} enforce that~$h_{uv} = 1$ if and only if~$q_u = q_v = 1$, meaning that~$h_{uv}$ indicates whether~$uv \in E$ belongs to~$E(\set)$. 

Finally, let~$(\bar{\gamma}, \bar{q}, \bar{h})$ be optimal for Formulation~\eqref{formulation:separation} and let~$\set = \{v \in V_+ : \bar{q}_v = 1\}$. We now use the same argument as in~\cite{pavlikov2024exact} to show that inequality~\eqref{separation:t1} guarantees that~$\bar{\gamma} = \lceil d^\xi(\set) / C \rceil - 1$. Let us write~$d^\xi(\set) = tC + r$, with~$t \in \Z_+$ and~$r \in [0, C)$, and note that~$d^\xi(\set) \in \Z_+$. If~$r$ is a positive integer, then~$\varepsilon < r$, which implies that~$tC < d^\xi(\set) - \varepsilon < (t + 1)C$. Since~$\bar{\gamma}$ is integral and we maximize~$\gamma$ in the objective function, it follows that~$\bar{\gamma} = t = \lceil d^\xi(\set) / C \rceil - 1$. If~$r = 0$, we have~$(t - 1) C < d^\xi(\set) - \varepsilon < tC$, so~$\bar{\gamma} = t - 1 = \lceil d^\xi(\set) / C \rceil - 1$.

Although Formulation~\eqref{formulation:separation} exactly separates SRIs, our implementation yields a heuristic for the following reasons. First, rather than maximizing the original objective function, we change the objective function to zero and enforce the constraint~$(\gamma + 1) - \sum_{v \in V_+} (\bar{y}_v + 1) q_v + \sum_{e \in E \setminus \delta(0)} \bar{x}_e h_e \geq 0.01$. Second, for each scenario~$\xi = 1, \ldots, N$, we solve Formulation~\eqref{formulation:separation} with a time limit of 30 seconds. 

Whenever we find a violated SRI of the form~$y^{\xi}(\set) < \kscen{\set} + \bar{x}(E(\set)) - |\set|$, we apply Fact~\ref{fact:aggregated_sri} and add the corresponding aggregated SRI. As in Algorithm~\ref{algorithm:cvrpsep_heuristic}, we stop the separation procedure whenever a violated (aggregated) SRI is found.

\section{Proof of Lemma~\ref{lemma:projection_single_inequality}}
\label{appendix:proof_projection_single_inequality}

\lemmaprojectionsingleinequality*
\begin{proof}
It is clear that~$\proj_{\theta}(\mc{H}) \subseteq \R^{V_+}_+$. Now take any~$\theta' \in \R^{V_+}_+$ and suppose that~$v \in V_+$ and~$\xi \in [N]$ are such that~$w_v = 0$ and~$a^\xi_v > 0$. Choose~$\bar{y} \in \R^{[N] \times V_+}_+$ with~$\bar{y}^\xi_v$ arbitrarily large and the other entries set to zero. Then~$\sum_{\xi' \in [N]} (a^{\xi'})^\T \bar{y}^{\xi'} \geq h$ and, for every~$u \in V_+$,~$\theta'_{u} \geq \sum_{\xi' \in [N]} p_{\xi'} w_u \bar{y}^{ \xi'}_u$, which shows that~$\theta' \in \proj_{\theta}(\mc{H})$. For the rest of the proof, we assume that~$a^\xi_v \leq 0$, for every~$\xi \in [N]$ and~$v \in V_+$ with~$w_v = 0$.

Fix~$\bar{\theta}$ to be an arbitrary vector in~$\R^{V_+}_+$. Then~$\bar{\theta}$ belongs to~$\proj_{\theta}(\mc{H})$ if and only if the LP below is feasible and has optimal value zero:
\begin{subequations}
\begin{align*} 
\min ~~& ~0 && \nonumber \\
\text{s.t.~~} & \sum_{\xi \in [N]} p_\xi w_v y^\xi_v \leq \bar{\theta}_v, & \forall v \in V_+, && (\gamma_v) \\
& \sum_{\xi \in [N]} (a^\xi)^\T y^\xi \geq h, &&& (\eta) \\
& y^\xi_v \geq 0, & \forall v \in V_+, \xi \in [N].
\end{align*}
\end{subequations}
Taking the dual:
\begin{subequations}
\label{formulation:dual_proof_single_inequality}
\begin{align} 
\max ~~& \sum_{v \in V_+} \gamma_v \bar{\theta}_v + \eta h, & \\
\text{s.t.~~} & p_\xi w_v \gamma_v + \eta \, a^\xi_v \leq 0, & \forall v \in V_+, \xi \in [N], \label{ineq:form:dual_proof_single_inequality}\\
& \eta \geq 0 \\
& \gamma_v \leq 0, & \forall v \in V_+.
\end{align}
\end{subequations}

\noindent
Clearly~$(\gamma, \eta) = (\mathbf{0}, 0)$ is feasible for the above formulation, so~$\bar{\theta}$ belongs to~$\proj_{\theta}(\mc{H})$ if and only if every feasible solution~$(\bar{\gamma}, \bar{\eta})$ for Formulation~\eqref{formulation:dual_proof_single_inequality} satisfies~$\sum_{v \in V_+} \bar{\gamma}_v \bar{\theta}_v + \bar{\eta} h \leq 0$. 

Without loss of generality, we may assume that such a feasible solution takes the following specific form. If~$w_v = 0$, we know that~$a^\xi_v \leq 0$ for every~$\xi \in [N]$, so the corresponding inequality~\eqref{ineq:form:dual_proof_single_inequality} is redundant. Since~$\bar{\theta}_v \geq 0$, we may then assume that~$\bar{\gamma}_v = 0$. If~$w_v > 0$, we instead assume that
$$\bar{\gamma}_v = \min_{\xi \in [N]} \left\{- \bar{\eta} \, \left(\frac{a^\xi_v}{ p_\xi w_v} \right), 0 \right\} = - \bar{\eta} \cdot \left(\max_{\xi \in [N]} \left\{\frac{a^\xi_v}{p_\xi w_v} \right\} \right)^+.$$ 

\noindent
Therefore,~$\bar{\theta} \in \R^{V_+}_+$ belongs to~$\proj_{\theta}(\mc{H})$ if and only if~$$\max_{\eta \geq 0} \left\{ \eta \left( - \sum_{v \in V_+ : w_v > 0} \left(\max_{\xi \in [N]} \left\{ \frac{a^\xi_v}{p_\xi w_v} \right\} \right)^+ \bar{\theta}_v + h \right) \right\} \leq 0,$$ 
which is equivalent to~$\left( - \sum_{v \in V_+ : w_v > 0} \left(\max_{\xi \in [N]} \left\{ \frac{a^\xi_v}{p_\xi w_v} \right\} \right)^+ \bar{\theta}_v + h \right) \leq 0$.
\end{proof}

\section{Proof of Proposition~\ref{proposition:proj_gamma_characterization}}
\label{appendix:proof_proj_gamma_characterization}

Before presenting the proof, we note that Proposition~\ref{proposition:proj_gamma_characterization} is shown via Lagrangian duality, rather than by repeating the ``Benders-type'' argument in the proof of Lemma~\ref{lemma:projection_single_inequality}. The latter approach has the advantage of showing that $\proj_{\theta}(\srihat(\bar{x}))$ can be represented with a finite number of projected SRIs. We used the Lagrangian approach to avoid repeating essentially the same argument as in Lemma~\ref{lemma:projection_single_inequality}, and we remind the reader that isolating the argument in Lemma~\ref{lemma:projection_single_inequality} is crucial for the results in Section~\ref{subsection:comparing_cuts}.

\propositionprojgammacharacterization*
\begin{proof}
Fix~$\bar{x} \in \mc{X}$ and let~$\bar{\theta} \in \proj_{\theta}(\srihat(\bar{x}))$. Then the optimal value of the LP below is zero:
\begin{subequations}
\begin{align*} 
\min ~~& 0 && \nonumber \\
\text{s.t.~~} & \sum_{\xi \in [N]} p_\xi w_v y^\xi_v \leq \bar{\theta}_v, & \forall v \in V_+ & \\
& y^\xi(\set) \geq \kscen{\set} + \bar{x}(E(\set)) - |\set|, & \forall \set \subseteq V_+, \xi \in [N], &&& (\alpha^\xi_\set) \\
& y^\xi_v \leq b_v, & \forall v \in V_+, \xi \in [N], &&& (\beta^\xi_v) \\
& y^\xi_v \geq 0, & \forall v \in V_+, \xi \in [N].
\end{align*}
\end{subequations}

\noindent
By Lagrangian duality, this holds if and only if, for every~$\alpha \geq 0$ and~$\beta \leq 0$,
\begin{subequations}
\label{formulation:proof-proj-single-lag-dual}
\begin{align} 
0~~\geq~~\min ~~& 
\begin{aligned}[t]
\nufunction{\alpha, \beta}
 - \sum_{\xi \in [N]} \sum_{v \in V_+}
   \left(\beta^\xi_v + \sum_{\set \subseteq V_+ : v \in \set}
   \alpha^\xi_\set \right) y^\xi_v + \sum_{\xi \in [N]} \sum_{\set \subseteq V_+}
   \alpha^\xi_\set \, \bar{x}(E(\set))
\end{aligned}
\\
\text{s.t.~~} & \sum_{\xi \in [N]} p_\xi w_v y^\xi_v \leq \bar{\theta}_v, \quad \quad \forall v \in V_+ \\
& y^\xi_v \geq 0, \quad\quad\quad\quad\quad\quad~~ \forall v \in V_+, \xi \in [N].
\end{align}
\end{subequations}
In particular, for any~$(\alpha, \beta) \in \dualset$, there exists~$y$ feasible for Formulation~\eqref{formulation:proof-proj-single-lag-dual} and such that
$$\sum_{\xi \in [N]} \sum_{v \in V_+}
\left(\beta^\xi_v + \sum_{S \subseteq V_+ : v \in S} \alpha^\xi_S \right) y^\xi_v - \sum_{\xi \in [N]} \sum_{S \subseteq V_+} \bar{\alpha}^\xi_S \, \bar{x}(E(S)) \geq \nufunction{\bar{\alpha}, \bar{\beta}}.$$
In other words,~$\bar{\theta} \in \proj_{\theta}(\Gammaset(\bar{x}, \alpha, \beta))$. Choose~$a^1, \ldots, a^N$ and~$h$ in Lemma~\ref{lemma:projection_single_inequality} so that~$\sum_{\xi \in [N]} (a^\xi)^\T y^\xi \geq h$ is equivalent to the above inequality. By the same lemma,~$\bar{\theta}$ satisfies the projected SRI for~$(\alpha, \beta)$, showing one side of the inclusion.

For the other side, suppose that~$\theta' \in \R^{V_+}_+$ satisfies the projected SRIs associated with every~$(\alpha, \beta) \in \dualset$. By Lemma~\ref{lemma:projection_single_inequality}, this implies that~$\theta' \in \proj_{\theta}(\Gammaset(\bar{x}, \alpha, \beta))$, for all~$\alpha \geq 0$ and~$\beta \leq 0$ (even if~$(\alpha, \beta) \notin \dualset$). Therefore, for every~$\alpha \geq 0$ and~$\beta \leq 0$, the optimal value of Formulation~\eqref{formulation:proof-proj-single-lag-dual} is at most zero, which implies that~$\theta'$ belongs to~$\proj_{\theta}(\srihat(\bar{x}))$.
\end{proof}

\section{Proofs of Lemma~\ref{lemma:robust_dual} and Theorem~\ref{theorem:dominance_set_cut}}
\label{appendix:proof_theorem_dominance_set_cut}

\lemmarobustdual*
\begin{proof}
    The condition~$\kscen{\set} - k' \leq b(\set)$ guarantees that Formulation~\eqref{formulation:recourse_lower_bound_set} is feasible, so the following dual formulation is bounded and has optimal objective value of~$\mc{L}^*_\xi(\set, k')$.
    \begin{subequations}
    \label{formulation:dual_recourse_set}
    \begin{align}
    \max ~~& \alpha^\xi_\set \, (\kscen{\set} - k') + \sum_{v \in V_+} \beta^\xi_v b_v \\ 
    \text{s.t.:~~} & \alpha^\xi_\set + \beta^\xi_v \leq w_v, & \forall v \in \set,\\
    & \beta^\xi_v \leq w_v, & \forall v \in V_+ \setminus \set,\\
    & \alpha^\xi_\set \geq 0, \\
    & \beta^\xi_v \leq 0, & \forall v \in V_+.
    \end{align}
    \end{subequations}
    \noindent
    If~$\kscen{\set} \leq k'$, then~$j = 1$, ~$(\bar{\alpha}^\xi_\set, \bar{\beta}^\xi) = (0, \mathbf{0})$ and~$\mc{L}^*_\xi(\set, k') = 0$, so we can safely assume that~$\kscen{\set} > k'$. 
    
    Let us verify that~$(\bar{\alpha}^\xi_\set, \bar{\beta}^\xi)$ is feasible for Formulation~\eqref{formulation:dual_recourse_set}. Take a customer~$v \in V_+$. If~$v \notin S$, then~$\bar{\beta}^\xi_v = 0 \leq w_v$. If~$v \in \{v_i\}_{i \in [j - 1]}$, then~$\bar{\alpha}^\xi_\set + \bar{\beta}^\xi_v = w_v$. Lastly, if~$v = v_i$, for some~$i \geq j$, then~$\bar{\alpha}^\xi_\set + \bar{\beta}^\xi_v = w_{v_j} \leq w_{v_i}$. 
    
    Now, the solution~$(\bar{\alpha}^\xi_\set, \bar{\beta}^\xi)$ has the same objective value as the greedy algorithm:
    \begin{equation}
        \label{eq:proof_set_cut_dual}
        \bar{\alpha}^\xi_\set (\kscen{\set} - k') + \sum_{v \in V_+} \bar{\beta}^\xi_v b_v = \sum_{i \in [j - 1]} b_{v_i} w_{v_i} + \left(\kscen{\set} - k' - \sum_{i \in [j - 1]} b_{v_i}\right) w_{v_j} = \mc{L}^*_\xi(\set, k'),
    \end{equation}
    which implies that~$(\bar{\alpha}^\xi_\set, \bar{\beta}^\xi)$ is optimal for Formulation~\eqref{formulation:dual_recourse_set}. 
    
    To show that~$w_{v_j} = \bar{\alpha}^\xi_\set \leq \mc{L}^*_\xi(\set, k')$, suppose first that~$j = 1$. By equation~\eqref{eq:proof_set_cut_dual},~$\mc{L}^*_\xi(\set, k') = (\kscen{\set} - k') \, \bar{\alpha}^\xi_\set$ and we are done. On the other hand, if~$j \geq 2$, the choice of~$j$ implies~$\kscen{\set} - k' > \sum_{i \in [j - 1]} b_{v_i}$, and since the terms are integers,~$\kscen{\set} - k' - \sum_{i \in [j - 1]} b_{v_i} \geq 1$. Equation~\eqref{eq:proof_set_cut_dual} then gives~$\mc{L}^*_\xi(\set, k') \geq \sum_{i \in [j - 1]} b_{v_i} w_{v_i} + w_{v_j} \geq \bar{\alpha}^\xi_\set$.
\end{proof}

\theoremdominancesetcut*
\begin{proof}
    Fix~$\bar{x} \in \{x \in \mc{X} : x(E(\set)) \leq |\set| - k'\}$ and recall that~$\nufunction{\hat{\alpha}, \hat{\beta}} = \sum_{\xi \in [N]} \sum_{\set \subseteq V_+} \hat{\alpha}^\xi_\set ( \kscen{\set} - |\set| ) + \sum_{\xi \in [N]} \sum_{v \in V_+} \hat{\beta}^\xi_v b_v$. We rewrite the inequality~\eqref{ineq:conic_comb} defining~$\Gammaset(\bar{x}, \hat{\alpha}, \hat{\beta})$ as follows:
    \begin{align*}
        & \sum_{\xi \in [N]} \sum_{v \in V_+} \left(\hat{\beta}^\xi_v + \sum_{\set' \subseteq V_+ : v \in \set'} \hat{\alpha}^\xi_{\set'} \right) y^\xi_v \geq \sum_{\xi \in [N]} \sum_{\set' \subseteq V_+} \hat{\alpha}^\xi_{\set'} \, \bar{x}(E(\set)) + \nufunction{\hat{\alpha}, \hat{\beta}} \nonumber \\
        \iff & \sum_{\xi \in [N]} p_\xi \sum_{v \in \set} \left(\bar{\beta}^\xi_v + \bar{\alpha}^\xi_\set \right) y^\xi_v \geq \sum_{\xi \in [N]} p_\xi \left( \bar{\alpha}^\xi_\set \, (\kscen{\set} + \bar{x}(E(\set)) - |\set|) + \sum_{v \in V_+} \bar{\beta}^\xi_v b_v\right).
    \end{align*}
    where we used that $\hat{\beta}^\xi_v = 0$ if $v\notin S$, and $\hat{\alpha}^\xi_{\set'}=0$ if $S'\neq S$.

    Fix a scenario~$\xi \in [N]$. It follows from Lemma~\ref{lemma:robust_dual} and the choice of~$(\bar{\alpha}^\xi_\set, \bar{\beta}^\xi)$ that~$\bar{\beta}^\xi_v + \bar{\alpha}^\xi_\set \leq w_v$, for all~$v \in \set$. In particular, whenever~$w_v = 0$, we have that~$\hat{\beta}^\xi_v + \hat{\alpha}^\xi_\set \leq 0$, so~$(\hat{\alpha}, \hat{\beta}) \in \dualset$. In addition, since~$\bar{\alpha}^\xi_\set \leq \mc{L}^*_\xi(\set, k')$ and~$\bar{x}(E(\set)) - |\set| + k' \leq 0$,
    \begin{equation*}
        \bar{\alpha}^\xi_\set \, (\kscen{\set} + \bar{x}(E(\set)) - |\set|) + \sum_{v \in V_+} \bar{\beta}^\xi_v b_v = \underbrace{\bar{\alpha}^\xi_\set \, (\kscen{\set} - k') + \sum_{v \in V_+} \bar{\beta}^\xi_v \, b_v}_{= \mc{L}^*_\xi(\set, k')} + \underbrace{\bar{\alpha}^\xi_\set ( \bar{x}(E(\set)) - |\set| + k')}_{\geq \mc{L}^*_\xi(\set, k') (\bar{x}(E(\set)) - |\set| + k')}.
    \end{equation*}

    \noindent
    Summing over all scenarios~$\xi \in [N]$, we learn that inequality~\eqref{ineq:set_cut} is valid for~$\proj_{\theta} (\Gammaset(\bar{x}, \hat{\alpha}, \hat{\beta}))$:
    $$\theta(\set) \geq \sum_{\xi \in [N]} \sum_{v \in \set} p_\xi w_v y^\xi_v \geq \sum_{\xi \in [N]} p_\xi \sum_{v \in \set} \left(\bar{\beta}^\xi_v + \bar{\alpha}^\xi_\set \right) y^\xi_v \geq \mc{L}^*(\set, k') \, (1 + \bar{x}(E(\set)) - |\set| + k').$$ 
    
    To show the second part of the statement, we recall that, by Lemma~\ref{lemma:projection_single_inequality}, the projected SRI~\eqref{ineq:projected_sri} associated with~$(\hat{\alpha}, \hat{\beta})$ describes~$\proj_{\theta} (\Gammaset(\bar{x}, \hat{\alpha}, \hat{\beta}))$:
    $$\proj_{\theta} (\Gammaset(\bar{x}, \hat{\alpha}, \hat{\beta})) = \left\{ \theta \in \R^{V_+}_+ : \sum_{v \in V_+ : w_v > 0} \phiv(\alpha, \beta) \, \theta_v \geq \sum_{\xi \in [N]} \sum_{S \subseteq V_+} \hat{\alpha}^\xi_S \, \bar{x}(E(S)) + \nufunction{\hat{\alpha}, \hat{\beta}} \right\}.$$
    Thus, any~$(\bar{x}, \bar{\theta}) \in \mc{X} \cap \R^{V_+}_+$ satisfying the projected SRI~\eqref{ineq:projected_sri} associated with~$(\hat{\alpha}, \hat{\beta})$ also satisfies the set cut~\eqref{ineq:set_cut}.
\end{proof}

\section{\mytitle{Example of a projected SRI dominating a set cut}}
\label{appendix:example_set_cut}

Let~$\set = \{v_1, v_2, v_3\} \subseteq V_+$, and suppose that~$w_{v_1} = 2$, $w_{v_2} = 3$, $w_{v_3} = 4$, and~$b = \allones$. We set~$k' = 1$ and note that~$x(E(\set)) \leq |\set| - 1$ is valid for~$\mc{X}$. Suppose that~$\xi \in [N]$ is such that~$\kscen{\set} = 3$, while for every other~$\xi' \in [N] \setminus \{\xi\}$ we have~$k_{\xi'}(\set) \leq 1$. Since~$\mc{L}^*(\set, 1) = 5 p_\xi$, the set cut~\eqref{ineq:set_cut} becomes
\begin{equation}
    \label{ineq:example_set_cut}
    \theta_{v_1} + \theta_{v_2} + \theta_{v_3} \geq 5 p_\xi + 5 p_\xi \cdot (x(E(\set)) - |\set| + 1).
\end{equation}

Applying Lemma~\ref{lemma:robust_dual}, we obtain a dual solution~$(\bar{\alpha}^\xi_\set, \bar{\beta}^\xi)$ with~$\bar{\alpha}^\xi_\set = 3$,~$\bar{\beta}^\xi_{v_1} = -1$, and~$\bar{\beta}^\xi_v = 0$ for all~$v \in V_+ \setminus \{v_1\}$. Note that~$\mc{L}^*_\xi(S, 1) = \bar{\alpha}^\xi_\set (\kscen{\set} - k') + \sum_{v \in V_+} \bar{\beta}^\xi_v b_v = 3 \cdot 2 - 1 = 5$.

Following the construction in the proof of Theorem~\ref{theorem:dominance_set_cut}, we sum the following two inequalities:
\begin{align*}
    (3 p_\xi) \cdot y^\xi(\set) & \geq (3 p_\xi) \cdot \left(\kscen{\set} + x(E(\set)) - |\set| \right) = (3 p_\xi) \cdot \left(2 + x(E(\set)) - |\set| + 1\right),\\
    (-1 p_\xi) \cdot y^\xi_{v_1} & \geq (-1 p_\xi) \cdot 1,
\end{align*}
which yields
$$(2 p_\xi) y^\xi_{v_1} + (3 p_\xi) y^\xi_{v_2} + (3 p_\xi) y^\xi_{v_3} \geq 5 p_\xi + 3 p_\xi \cdot (x(E(\set)) - |\set| + 1).$$
By Lemma~\ref{lemma:projection_single_inequality}, the projection of the above inequality onto the~$(x, \theta)$-space is given by the projected SRI
$$\frac{2}{2} \theta_{v_1} + \frac{3}{3} \theta_{v_2} + \frac{3}{4} \theta_{v_3} \geq 5 p_\xi + 3 p_\xi \cdot (x(E(\set)) - |\set| + 1),$$
which indeed dominates the set cut~\eqref{ineq:example_set_cut}.~\hfill\Halmos

\section{Proof of Theorem~\ref{theorem:dominance_sri}}
\label{appendix:proof_comparison}

We first need a few technical lemmas.
\begin{lemma}
    \label{lemma:existence_dual}
    Let~$w \in \Q^{V_+}_+$ and~$\xi \in [N]$. Let~$H$ be a partial route and suppose that Formulation~\eqref{formulation:recourse_lower_bound} is feasible (with respect to~$H$ and~$\xi$). Then there exists an optimal solution~$(\bar{\alpha}^\xi, \bar{\beta}^\xi)$ for the dual of Formulation~\eqref{formulation:recourse_lower_bound} such that
    \begin{enumerate}[(i)]
        \item $\bar{\beta}^\xi_v = 0$, for every~$v \in V_+ \setminus V_+(H)$;~ and \label{item:lemma_existence_dual1}
        \item $\sum_{H' \subseteq H} \bar{\alpha}^\xi_{H'} \leq \mc{L}^*_\xi(H)$. \label{item:lemma_existence_dual2}
    \end{enumerate}
\end{lemma}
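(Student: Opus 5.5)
Write down the dual of Formulation~\eqref{formulation:recourse_lower_bound}:
\[
\max\ \sum_{H' \subseteq H} \alpha^\xi_{H'}(\kscen{H'}-1) + \sum_{v\in V_+}\beta^\xi_v b_v
\quad\text{s.t.}\quad \beta^\xi_v + \sum_{H'\subseteq H:\, v\in V_+(H')}\alpha^\xi_{H'} \le w_v \ \ \forall v,\qquad \alpha^\xi\ge 0,\ \beta^\xi\le 0 .
\]
Since the primal is feasible and bounded below by $0$ (as $w\ge 0$ and $y\ge 0$), LP duality gives a dual optimum with value $\mc{L}^*_\xi(H)$. The plan is to start from an arbitrary optimal dual solution and modify it so that it satisfies~\ref{item:lemma_existence_dual1} and~\ref{item:lemma_existence_dual2} while remaining optimal.

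For~\ref{item:lemma_existence_dual1}, take $v\notin V_+(H)$. No $\alpha$ variable appears in the constraint for $v$, so that constraint reads $\beta^\xi_v\le w_v$. Since $b_v\ge 0$, replacing $\beta^\xi_v$ by $0$ keeps feasibility (because $0\le w_v$) and does not decrease the objective. Optimality is therefore preserved.

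For~\ref{item:lemma_existence_dual2}, I will use a complementary slackness and integrality argument.

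First discard the $H'$ with $\kscen{H'}\le 1$. Their coefficient $\kscen{H'}-1$ is at most $0$, so setting their $\alpha^\xi_{H'}$ to zero keeps feasibility, since it only loosens the constraints. It also does not decrease the objective. Hence we may assume that $\bar\alpha^\xi_{H'}>0$ implies $\kscen{H'}-1\ge 1$.

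The objective then satisfies
\[
\mc{L}^*_\xi(H)=\sum_{H'}\bar\alpha^\xi_{H'}(\kscen{H'}-1)+\sum_{v}\bar\beta^\xi_v b_v \;\ge\; \sum_{H'}\bar\alpha^\xi_{H'} + \sum_v \bar\beta^\xi_v b_v ,
\]
but the $\beta$ term is negative, so this inequality alone is not enough.

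Instead, I will compare against an optimal primal solution $y^*$ and use complementary slackness. We have $\mc{L}^*_\xi(H)=\sum_v w_v y^*_v$. Whenever $y^*_v>0$, the dual constraint for $v$ is tight, that is, $w_v=\bar\beta^\xi_v+\sum_{H'\ni v}\bar\alpha^\xi_{H'}$. Moreover, $\bar\beta^\xi_v<0$ forces $y^*_v=b_v$, and every $H'$ with $\bar\alpha^\xi_{H'}>0$ satisfies $y^*(H')=\kscen{H'}-1\ge 1$.

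Substituting the tight constraints gives
\[
\sum_v w_v y^*_v=\sum_v y^*_v\bigl(\bar\beta^\xi_v+\textstyle\sum_{H'\ni v}\bar\alpha^\xi_{H'}\bigr).
\]
The trouble is again the $\bar\beta$ term, since $\bar\beta^\xi_v\le 0$. We need $\mc{L}^*_\xi(H)\ge\sum_{H'}\bar\alpha^\xi_{H'}$ despite this negative contribution.

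To handle it, I will write $\bar\beta^\xi_v y^*_v = \bar\beta^\xi_v b_v$, which holds by complementary slackness. The tight constraints then give
\[
\mc{L}^*_\xi(H)=\sum_v y^*_v \max\bigl(w_v, \textstyle\sum_{H'\ni v}\bar\alpha^\xi_{H'}\bigr) - \ldots
\]
This bookkeeping is messy. A cleaner route is to choose $y^*$ integral, which is possible because the constraint matrix is an interval matrix and hence TU. I then intend to bound $\sum_v w_v y^*_v\ge\sum_{H'}\bar\alpha^\xi_{H'}$ using $w_v\ge\max(0,\sum_{H'\ni v}\bar\alpha^\xi_{H'}+\bar\beta^\xi_v)$.

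I expect this step to be the main obstacle. The $\beta$ terms can reduce the per-vertex weight below the $\alpha$-sum, so the inequality $\sum_v w_v y^*_v \ge \sum_{H'}\bar\alpha^\xi_{H'}$ is not automatic. The fallback is a greedy or uncrossing construction of the dual in the spirit of Lemma~\ref{lemma:robust_dual}, choosing the dual optimum so that each unit of $y^*$ "pays" at least the $\alpha$-mass it covers.

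Concretely, I will take a lexicographically chosen dual optimum that minimizes $-\sum\beta^\xi$, that is, one using as little $\beta$ as possible. Shifting $\alpha$ mass off intervals that contain a vertex with $\bar\beta^\xi_v<0$ then gives $\bar\beta^\xi_v=0$ wherever that vertex is covered. In that case $\sum_v w_v y^*_v \ge \sum_{H'}\bar\alpha^\xi_{H'}\, y^*(H') \ge \sum_{H'}\bar\alpha^\xi_{H'}$, which is~\ref{item:lemma_existence_dual2}.
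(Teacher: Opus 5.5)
Your treatment of item~(i), and the preliminary zeroing of $\alpha^\xi_{H'}$ when $k_\xi(H')\le 1$, are fine. The argument for item~(ii), however, breaks at its final step. You assert that shifting $\alpha$-mass produces an optimal dual with $\bar\beta^\xi_v=0$ at every vertex covered by some $H'$ with $\bar\alpha^\xi_{H'}>0$, so that $w_v\ge\sum_{H'\ni v}\bar\alpha^\xi_{H'}$ there. This is false in general.

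Take $H=(S_1)$ with $S_1=\{v_1,v_2,v_3\}$, so $H$ is its only sub-partial-route. Let $b=\mathbf{1}$, $w_{v_1}=2$, $w_{v_2}=3$, $w_{v_3}=4$, and $k_\xi(S_1)=3$ (e.g.\ $d^\xi(v_i)=C$). Every optimal dual has $\beta_{v_i}=\min\{0,w_{v_i}-\alpha_H\}$. The resulting objective $2\alpha_H+\sum_i\min\{0,w_{v_i}-\alpha_H\}$ attains its maximum $5$ exactly for $\alpha_H\in[3,4]$, where $\beta_{v_1}=2-\alpha_H<0$. Forcing $\beta=\mathbf{0}$ on $S_1$ caps the value at $4$.

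Accordingly, your chain $\sum_v w_v y^*_v\ge\sum_{H'}\bar\alpha^\xi_{H'}y^*(H')$ fails here: $5<6$ for $y^*=(1,1,0)$ and $\alpha_H=3$, even though the lemma itself holds ($3\le 5$). Moving mass from $\alpha^\xi_{H'}$ onto negative $\beta^\xi_v$'s preserves optimality only when it costs nothing in the objective. That is exactly the case one must isolate.

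The missing idea is to keep the negative $\beta$'s and account for them exactly, as the paper does. Take an optimal dual of minimum support. You may normalize $\bar\beta^\xi_v=\min\{w_v-\sum_{H'\ni v}\bar\alpha^\xi_{H'},0\}$ without losing optimality or minimality, and this already gives (i). Let $S=\{v:\bar\beta^\xi_v<0\}$. Substituting the tight constraints on $S$ into the objective gives
\[ \mc{L}^*_\xi(H)=\sum_{v\in S}w_vb_v+\sum_{H'\subseteq H}\bar\alpha^\xi_{H'}\bigl(k_\xi(H')-1-b(V_+(H')\cap S)\bigr). \]
Suppose some $H'$ with $\bar\alpha^\xi_{H'}>0$ had $k_\xi(H')-1-b(V_+(H')\cap S)\le 0$. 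Lower $\bar\alpha^\xi_{H'}$ by $\varepsilon$ and raise $\bar\beta^\xi_v$ by $\varepsilon$ for all $v\in V_+(H')\cap S$, where $\varepsilon$ is the minimum of $\bar\alpha^\xi_{H'}$ and the values $-\bar\beta^\xi_v$. This stays feasible, changes the objective by $\varepsilon\bigl(b(V_+(H')\cap S)-k_\xi(H')+1\bigr)\ge 0$, and shrinks the support, a contradiction. By integrality each coefficient is therefore at least $1$. Since $\sum_{v\in S}w_vb_v\ge 0$, item~(ii) follows.

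In your complementary-slackness language: for $H'$ with $\bar\alpha^\xi_{H'}>0$, the quantity that must be at least $1$ is $y^*(V_+(H')\setminus S)=k_\xi(H')-1-b(V_+(H')\cap S)$, not $y^*(V_+(H'))$. The units of $y^*$ sitting on $S$ are paid for by the separate nonnegative term $\sum_{v\in S}w_vb_v$, not by the $\alpha$'s.
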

\begin{proof}
    Consider the dual of Formulation~\eqref{formulation:recourse_lower_bound}:
    \begin{subequations}
    \label{formulation:dual_recourse}
    \begin{align}
    \max & \sum_{H' \subseteq H} \alpha^\xi_{H'} \, (\kscen{H'} - 1) + \sum_{v \in V_+} \beta^\xi_v b_v \\ 
    \text{s.t.:} & \sum_{H' \subseteq H : v \in V_+(H')} \alpha^\xi_{H'} + \beta^\xi_v \leq w_v, & \forall v \in V_+, \label{ineq:formulation:dual_recourse} \\
    & \alpha^\xi \geq 0, \\
    & \beta^\xi \leq 0.
    \end{align}
    \end{subequations}
    
    Let~$(\bar{\alpha}^\xi, \bar{\beta}^\xi)$ be an optimal solution for Formulation~\eqref{formulation:dual_recourse} and let~$S = \{v \in V_+ : \bar{\beta}^\xi_v < 0\}$. Suppose that we choose~$(\bar{\alpha}^\xi, \bar{\beta}^\xi)$ so that the size of its support (i.e.,~$|\{H' \subseteq H : \bar{\alpha}^\xi_{H'} > 0\}| + |S|$) is minimum. Since~$b \geq 0$, it follows from the optimality of~$(\bar{\alpha}^\xi, \bar{\beta}^\xi)$ that we may assume~$\bar{\beta}^\xi_v = \min\{w_v - \sum_{H' \subseteq H : v \in V_+(H')} \bar{\alpha}^\xi_{H'}, 0\}$, for all~$v \in V_+$. This already gives item~\ref{item:lemma_existence_dual1}, so it remains to show item~\ref{item:lemma_existence_dual2}.

    Rewriting the objective value of~$(\bar{\alpha}^\xi, \bar{\beta}^\xi)$ yields
    \begin{align*}
        \mc{L}^*_{\xi}(H) & = \sum_{H' \subseteq H} \bar{\alpha}^\xi_{H'} \, ( \kscen{H'} - 1 ) + \sum_{v \in V_+} \bar{\beta}^\xi_v b_v \\
        & = \sum_{H' \subseteq H} \bar{\alpha}^\xi_{H'} \, ( \kscen{H'} - 1 ) + \sum_{v \in S} \left( w_v - \sum_{H' \subseteq H : v \in V_+(H')} \bar{\alpha}^\xi_{H'} \right) b_v \\
        & = \sum_{v \in S} w_v b_v + \sum_{H' \subseteq H} \bar{\alpha}^\xi_{H'} \, ( \kscen{H'} - b(V_+(H') \cap S) - 1) \\
        & \geq \sum_{H' \subseteq H} \bar{\alpha}^\xi_{H'},
    \end{align*}
    where the last inequality follows because minimality of the support implies that~$$\kscen{H'} - b(V_+(H') \cap S) \geq 2,$$ for every~$H' \subseteq H$ with~$\bar{\alpha}^\xi_{H'} > 0$.

    To see this last claim, suppose by contradiction that~$H' \subseteq H$ is such that~$\bar{\alpha}^\xi_{H'} > 0$ and~$\kscen{H'} - b(V_+(H') \cap S) \leq 1$. Consider the direction~$r = (-\mathbbm{1}_{H'}, \mathbbm{1}_{V_+(H') \cap S})$, where~$\mathbbm{1}_{H'}$ is the incident vector of~$H'$ and~$\mathbbm{1}_{V_+(H') \cap S}$ is the incident vector of~$V_+(H') \cap S$. Take~$\varepsilon = \min\left\{\bar{\alpha}^\xi_{H'}, \min_{v \in V_+(H') \cap S} \{-\bar{\beta}^\xi_v\}\right\} > 0$ and construct~$(\hat{\alpha}^\xi, \hat{\beta}^\xi) = (\bar{\alpha}^\xi, \bar{\beta}^\xi) + \varepsilon r$. In other words, we obtain~$(\hat{\alpha}^\xi, \hat{\beta}^\xi)$ by decreasing~$\bar{\alpha}^\xi_{H'}$ by~$\varepsilon$ and increasing~$\bar{\beta}^\xi_v$ by~$\varepsilon$ at every~$v \in V_+(H') \cap S$. 
    
    By the choice of~$\varepsilon$,~$(\hat{\alpha}^\xi, \hat{\beta}^\xi)$ is feasible for Formulation~\eqref{formulation:dual_recourse}, has smaller support than~$(\bar{\alpha}^\xi, \bar{\beta}^\xi)$, and has objective value~$$\mc{L}_\xi^*(H) + \varepsilon \, (-\kscen{H'} + 1 + b(V_+(H') \cap S)) \geq \mc{L}_\xi^*(H).$$
    Therefore,~$(\hat{\alpha}^\xi, \hat{\beta}^\xi)$ is optimal for Formulation~\eqref{formulation:dual_recourse}, contradicting the minimality of the support of~$(\bar{\alpha}^\xi, \bar{\beta}^\xi)$.
\end{proof}

Let~$H = (\set_1, \ldots, \set_\ell)$ be a partial route. To simplify the notation, we abbreviate~$x(E(H) \setminus \delta(0))$ as~$x(H)$. Recall the activation function from Section~\ref{subsection:ils}:
\begin{equation*}
    \Wof(x; \supsetXh{H}) = 1 + (x(H) - |V_+(H)| + 1) + \sum_{i \in \{2,  \ell - 1\} \cap [\ell]}(x(E(\set_i)) - |\set_i| + 1).
\end{equation*}
To prove Theorem~\ref{theorem:dominance_sri}, we use Lemma~\ref{lemma:sri_activation} below. In what follows, for any disjoint~$S, T \subseteq V_+$, we repeatedly use~$x(E(S \cup T)) = x(E(S)) + x(E(T)) + x(E(S, T))$ and that~$\bar{x}\in \mc{X}$ satisfies~$\bar{x}(E(S))\leq |S| - 1$, for every~$\emptyset \subsetneq S \subseteq V_+$.

\begin{lemma}
    \label{lemma:sri_activation}
    For every~$H' \subseteq H$ and~$\bar{x} \in \mc{X}$,~$\bar{x}(H') - |V_+(H')| + 1 \geq \Wof(\bar{x}; \supsetXh{H}) - 1$.
\end{lemma}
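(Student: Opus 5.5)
Write $H=(\set_1,\ldots,\set_\ell)$ and $H'=(\set_i,\ldots,\set_j)$ with $1\le i\le j\le \ell$. Recall $x(H)=x(E(H)\setminus\delta(0))$, which consists of the edges inside each $\set_t$ plus the edges between consecutive sets $\set_t,\set_{t+1}$. Substituting the definition of $\Wof$, the claim becomes
\begin{equation*}
\bar{x}(H') - |V_+(H')| \geq \bigl(\bar{x}(H) - |V_+(H)|\bigr) + \sum_{t \in \{2,\ell-1\}\cap[\ell]} \bigl(\bar{x}(E(\set_t)) - |\set_t| + 1\bigr).
\end{equation*}
Every term in the last sum is $\le 0$, since $\bar x(E(\set_t))\le|\set_t|-1$. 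So a natural plan is to compute the difference $\bigl(\bar{x}(H)-|V_+(H)|\bigr)-\bigl(\bar{x}(H')-|V_+(H')|\bigr)$ and show it is at most that sum.

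The difference is supported on the part of $H$ lying outside $H'$. Let $L=\set_1\cup\cdots\cup\set_{i-1}$ be the left part and $Q=\set_{j+1}\cup\cdots\cup\set_\ell$ the right part. Since $x(H)$ is additive over blocks and consecutive links, the difference splits as
\begin{equation*}
\bigl[\bar{x}(E(L)\cap E(H)) - |L| + \bar{x}(E(\set_{i-1},\set_i))\bigr] + \bigl[\bar{x}(E(Q)\cap E(H)) - |Q| + \bar{x}(E(\set_j,\set_{j+1}))\bigr].
\end{equation*}
Here a bracket is simply absent when $L$ (respectively $Q$) is empty, so the two sides can be bounded independently.

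For the left bracket, note that $E(L)\cap E(H)\subseteq E(L)$ and that the link edges lie inside $E(L\cup\set_i)$. This gives the chain
\begin{equation*}
\bar x(E(L)\cap E(H))+\bar x(E(\set_{i-1},\set_i)) \le \bar x(E(L\cup \set_i))-\bar x(E(\set_i)) \le |L|+|\set_i|-1-\bar x(E(\set_i)).
\end{equation*}
Hence the left bracket is at most $-(\bar x(E(\set_i))-|\set_i|+1)\cdot(-1)$, which is the wrong sign. A more careful choice is needed: I will use nonnegativity of $\bar x$ together with the degree constraints $\bar x(\delta(v))=2$.

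\textbf{Main obstacle.} The main obstacle is getting the bound to come out as exactly $\bar x(E(\set_2))-|\set_2|+1$ on the left, and the analogous term with $\set_{\ell-1}$ on the right. This uses the defining property of a partial route: no two consecutive sets are both non-singletons.

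I would split into cases on $i$.
\begin{itemize}
\item If $i=1$, the left bracket is $0$. This is fine, since the corresponding sum term is $\le 0$ and we need to bound the difference by something $\le 0$. Care is needed here: the term for $t=2$ is still present on the right-hand side, so a zero difference requires that term to be $\ge 0$, which fails in general. I would therefore redistribute, attributing the $t=2$ term to whichever side contains $\set_2$.
\item If $i\ge2$, the left bracket equals $\bar x(E(L\cup\set_i)\cap E(H)) - |L| - \bar x(E(\set_i))$. Using $\bar x(E(\set_1\cup\cdots\cup\set_i))\le |\set_1|+\cdots+|\set_i|-1$ (a subtour constraint on a union), I would bound this by $|\set_i|-1-\bar x(E(\set_i))$, minus contributions from the interior blocks.
\end{itemize}

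To obtain the sharp form, I would apply the subtour inequality to the union that omits $\set_2$ when $\set_2$ is a non-singleton; its neighbours $\set_1$ and $\set_3$ are then singletons. This is how the term $\bar x(E(\set_2))-|\set_2|+1$ is isolated.

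If this direct route gets messy, I would fall back on proving the inequality only for $\bar x\in\mc X\cap\Z^E$ by case analysis on $\Wof(\bar x;\supsetXh{H})$, and then extend. That fallback is only valid if the statement is needed just for integer points, which it is not. So the subtour-union argument above, with careful choice of unions around $\set_2$ and $\set_{\ell-1}$, is the intended path.
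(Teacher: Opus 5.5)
Your reformulation is correct, but the next sentence flips a sign, and the rest of the plan inherits it. Write $D:=\bigl(\bar x(H)-|V_+(H)|\bigr)-\bigl(\bar x(H')-|V_+(H')|\bigr)$ and $\Sigma:=\sum_{t\in\{2,\ell-1\}\cap[\ell]}\bigl(\bar x(E(S_t))-|S_t|+1\bigr)\le 0$. The lemma says $D\le-\Sigma$, which is a \emph{nonnegative} bound. You set out to prove $D\le\Sigma$, and that is false. Take $H=(\{a\},\{b_1,b_2\},\{c\})$, $H'=(\{c\})$, and an integer routing plan containing the route $(a,b_1,c,b_2)$: then $D=0$ but $\Sigma=-1$.

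This sign error is why you discard your own chain as having ``the wrong sign'' and why you think the case $i=1$ needs a redistribution of terms. With the correct sign, $i=1$ is trivial, since the left bracket is $0\le-\Sigma$. At $i=2$, your chain already gives exactly the needed bound $|S_2|-1-\bar x(E(S_2))$; this is the subtour inequality on $S_1\cup S_2$, as in the paper. The concrete step you propose instead (a subtour inequality on a union omitting $S_2$, to make the $S_2$-term appear with negative sign) is aimed at the false target and cannot succeed.

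Once the sign is fixed, what is genuinely missing is the case $i\ge 3$ (symmetrically $j\le\ell-2$), where you must show the left bracket is $\le 0$.
\begin{itemize}
\item If $S_i$ is a singleton, your chain gives this.
\item If $S_i$ is not a singleton, the chain only yields $|S_i|-1-\bar x(E(S_i))$, which can be positive. You need three further facts: $S_{i-1}=\{v\}$ is then a singleton (partial-route property); the degree bound $\bar x(E(S_{i-2},v))+\bar x(E(v,S_i))\le\bar x(\delta(v))=2$ holds; and the subtour inequality applies on $S_1\cup\cdots\cup S_{i-2}$.
\end{itemize}
This is precisely the paper's claim that a maximal counterexample has $i\le 2$ and $j\ge\ell-1$. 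The paper proves it by extending $H'$ by one block (if the next block is not a singleton) or by two blocks (if it is), without increasing $\bar x(H')-|V_+(H')|$.

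Your remark that the two brackets can be bounded independently also breaks down for $\ell=3$ and $H'=(S_2)$. There $\Sigma$ has only the $S_2$-term, yet each bracket alone can use up the whole slack $|S_2|-1-\bar x(E(S_2))$. For example, take $S_1=\{a\}$, $S_2=\{b_1,b_2\}$ and the route $(b_1,a,b_2)$: the left bracket equals $1$, which is the full slack. So adding two separate bounds is too weak. This case needs the subtour inequality on all of $V_+(H)$, which the paper uses explicitly.

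With these three repairs (the sign, the case $i\ge 3$, and the overlap at $\ell=3$), your direct left/right decomposition becomes a valid proof, essentially the paper's argument written without the contradiction.
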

\begin{proof}
    Suppose by contradiction that~$H' = (\set_i, \ldots, \set_j) \subseteq H$ and~$\bar{x} \in \mc{X}$ is such that~$\bar{x}(H') - |V_+(H')| + 1 < \Wof(\bar{x}; \supsetXh{H}) - 1$. Furthermore, assume that~$H'$ is a maximal counterexample, meaning that, for every~$H'' \subseteq H$ with~$H' \subseteq H''$ and~$H'' \neq H'$, we have that~$\bar{x}(H'') - |V_+(H'')| + 1 \geq \Wof(\bar{x}; \supsetXh{H}) - 1$.

    \begin{quote}
    \begin{claim}
    \label{claim:proof_activation_function}
    $j \geq \ell - 1$ and~$i \leq 2$.
    \end{claim}
    \begin{proof}
    By symmetry, we only show that~$j \geq \ell - 1$. Suppose by contradiction that~$j \leq \ell - 2$ (so~$\ell \geq 3$). We first consider the case~$|\set_{j + 1}| > 1$. By the definition of partial routes, we know that~$|\set_j| = 1$, so~$\bar{x}(E(\set_j, \set_{j + 1})) + \bar{x}(E(\set_{j + 1})) = \bar{x}(E(\set_j \cup \set_{j + 1})) \leq |\set_j \cup \set_{j + 1}| - 1 = |\set_{j + 1}|$. Hence, for~$H'' = (S_i, \ldots, S_{j + 1})$,
    \begin{align*}
        \bar{x}(H'') - |V_+(H'')| + 1 & = (\bar{x}(H') + \bar{x}(E(\set_j, \set_{j + 1})) + \bar{x}(E(\set_{j + 1}))) - (|V_+(H')| + |\set_{j + 1}|) + 1 \\
        & = (\bar{x}(H') - |V_+(H')| + 1) + (\bar{x}(E(\set_j \cup \set_{j + 1})) - |\set_{j + 1}|) \\
        & \leq \bar{x}(H') - |V_+(H')| + 1,
    \end{align*}
    contradicting the maximality of~$H'$. If~$\set_{j + 1} = \{v\}$, we set~$H'' = (\set_i, \ldots, \set_{j + 2})$ and note that~$\bar{x}(E(v, \set_j \cup \set_{j + 2})) \leq 2$. This implies that
    \begin{align*}
        & \bar{x}(H'') - |V_+(H'')| + 1 \\
        & \quad = (\bar{x}(H') + \bar{x}(E(v, \set_j \cup \set_{j + 2})) + \bar{x}(E(\set_{j + 2})) - (|V_+(H')| + |\set_{j + 2}| + 1) + 1 \\
        & \quad = (\bar{x}(H') - |V_+(H')| + 1) + (\bar{x}(E(v, \set_j \cup \set_{j + 2})) - 2) + (\bar{x}(E(\set_{j + 2})) - |\set_{j + 2}| + 1) \\
        & \quad \leq \bar{x}(H') - |V_+(H')| + 1,
    \end{align*}
    again contradicting the maximality of~$H'$.
    \end{proof}
    \end{quote}
    
    It remains to check the cases where~$i \in \{1, 2\}$ and~$j \in \{\ell - 1, \ell\}$. Note that~$\bar{x}(H) - |V_+(H)| + 1 \geq \Wof(\bar{x}; \supsetXh{H}) - 1$, so we cannot have~$i = 1$ and~$j = \ell$, which implies that~$\ell \geq 2$. 
    
    Suppose first that~$j = \ell - 1$ and~$i = 1$. Using the formula for~$\Wof$, 
    \begin{align*}
        \Wof(\bar{x}; \supsetXh{H}) - 1 &
        = (\bar{x}(H) - |V_+(H)| + 1) + \sum_{i \in \{2,  \ell - 1\} \cap [\ell]}(x(E(\set_i)) - |\set_i| + 1) \\
        & \leq (\bar{x}(H) - |V_+(H)| + 1) + (\bar{x}(E(\set_{\ell - 1})) - |\set_{\ell - 1}| + 1) \\
        & = (\bar{x}(H') + \bar{x}(E(\set_{\ell - 1}, \set_\ell)) + \bar{x}(E(\set_\ell)) - |V_+(H')| - |\set_\ell| + 1) \\
        & \qquad + (\bar{x}(E(\set_{\ell - 1})) - |\set_{\ell - 1}| + 1) \\
        & = (\bar{x}(H') - |V_+(H')| + 1) + (\bar{x}(E(\set_{\ell - 1} \cup \set_\ell)) - |\set_{\ell - 1} \cup \set_\ell| + 1) \\
        & \leq \bar{x}(H') - |V_+(H')| + 1.
    \end{align*}
    By symmetry, this also handles the case~$i = 2$ and~$j = \ell$. 
    
    Suppose now that~$i = 2$ and~$j = \ell - 1$. Since~$i \leq j$, we have~$\ell \geq 3$. If~$\ell = 3$, we have that~$H' = (\set_2)$, so
    \begin{align*}
        \Wof(\bar{x}; \supsetXh{H}) - 1 & = (\bar{x}(H) - |V_+(H)| + 1) + (x(E(\set_2)) - |\set_2| + 1) \\
        &=  (\bar{x}(H) - |V_+(H)| + 1) + (\bar{x}(H') - |V_+(H')| + 1) \leq \bar{x}(H') - |V_+(H')| + 1. 
    \end{align*}
    On the other hand, if~$\ell \geq 4$,
    \begin{align*}
        \Wof(\bar{x}; \supsetXh{H}) - 1
        & = (\bar{x}(H) - |V_+(H)| + 1)
        + \sum_{i \in \{2, \ell - 1\}} (\bar{x}(E(\set_i)) - |\set_i| + 1) \\
        & = \bar{x}(H') - |V_+(H')| + 1 \\
        & \qquad + (\bar{x}(E(\set_1)) + \bar{x}(E(\set_1, \set_2)) - |\set_1|) + (\bar{x}(E(\set_\ell)) + \bar{x}(E(\set_{\ell - 1}, \set_\ell)) - |\set_\ell|) \\
        & \qquad + (\bar{x}(E(\set_2)) - |\set_2| + 1) + (\bar{x}(E(\set_{\ell - 1})) - |\set_{\ell - 1}| + 1) \\
        & = \bar{x}(H') - |V_+(H')| + 1 \\
        & \qquad (\bar{x}(E(\set_1 \cup \set_2)) - |\set_1 \cup \set_2| + 1) + (\bar{x}(E(\set_{\ell - 1} \cup \set_\ell)) - |\set_{\ell -1} \cup \set_\ell| + 1)\\
        & \leq \bar{x}(H') - |V_+(H')| + 1.
    \end{align*}
\end{proof}

\theoremdominancesri*
\begin{proof}
Take an arbitrary scenario~$\xi \in [N]$. We first show that inequality 
\begin{equation}
    \label{ineq:proof_dominance_sri}
    \sum_{v \in V_+(H)} w_v y^\xi_v \geq \mc{L}_\xi^*(H) \cdot \Wof(x; \supsetXh{H})
\end{equation}
is valid for the LP relaxation of Formulation~\eqref{formulation:sri}.

Let~$R$ be a route adhering to~$H$, since~$\policy(R) \cap [\mathbf{0}, b]^N \neq \emptyset$, we know that Formulation~\eqref{formulation:recourse_lower_bound} is feasible.
Let~$(\bar{\alpha}^\xi, \bar{\beta}^\xi)$ be an optimal dual solution chosen according to Lemma~\ref{lemma:existence_dual}. We construct an inequality from~$(\bar{\alpha}^\xi, \bar{\beta}^\xi)$ as follows. For each~$H' \subseteq H$, we multiply the SRI~$y^\xi(H') \geq \kscen{H'} + x(E(V_+(H'))) - |H'|$ by~$\bar{\alpha}^\xi_{H'} \geq 0$, and for each~$v \in V_+(H)$, we multiply~$y^\xi_v \leq b_v$ by~$\bar{\beta}^\xi_v \leq 0$. Summing these inequalities we get an inequality (valid for the LP relaxation of~\eqref{formulation:sri}) of the form~$A^\xi(y) \geq B^\xi(x)$, where
\begin{align*}
    A^\xi(y) & = \sum_{v \in V_+(H)} \left( \bar{\beta}^\xi_v + \sum_{H' \subseteq H : v \in V_+(H')} \bar{\alpha}^\xi_{H'} \right) y^\xi_v \qquad \text{ and} \\
    B^\xi(x) &= \sum_{H' \subseteq H} \bar{\alpha}^\xi_{H'} 
    \left[(\kscen{H'} - 1) + x(E(V_+(H'))) - |H'| + 1 \right] + \sum_{v \in V_+(H)} \bar{\beta}^\xi_v b_v.
\end{align*}

\noindent
Item~\ref{item:lemma_existence_dual1} of Lemma~\ref{lemma:existence_dual} implies that~$\sum_{v \in V_+(H)} \bar{\beta}^\xi_v b_v = \sum_{v \in V_+} \bar{\beta}^\xi_v b_v$, so by optimality of~$(\bar{\alpha}^\xi, \bar{\beta}^\xi)$ to Formulation~\eqref{formulation:dual_recourse} (which is the dual of Formulation~\eqref{formulation:recourse_lower_bound}),~$$B^\xi(x) = \mc{L}_\xi^*(H) + \sum_{H' \subseteq H} \bar{\alpha}_{H'} \cdot (\bar{x}(E(V_+(H'))) - |H'| + 1).$$ Consequently, for every~$\bar{x} \in \mc{X}$,
\begin{align*}
    B^\xi(\bar{x}) & = \mc{L}_\xi^*(H) + \sum_{H' \subseteq H} \bar{\alpha}_{H'} \cdot (\bar{x}(E(V_+(H'))) - |H'| + 1) & \\
    & \geq \mc{L}_\xi^*(H) + \left(\sum_{H' \subseteq H} \bar{\alpha}^\xi_{H'} \right) \cdot (\Wof(\bar{x} ; \supsetXh{H}) - 1) & \text{(by Lemma~\ref{lemma:sri_activation})}\\
    & \geq \mc{L}_\xi^*(H) \cdot \Wof(\bar{x} ; \supsetXh{H}). & \text{(by item~\ref{item:lemma_existence_dual2} of Lemma~\ref{lemma:existence_dual})}
\end{align*}

\noindent
Now, feasibility of~$(\bar{\alpha}^\xi, \bar{\beta}^\xi)$ to Formulation~\eqref{formulation:dual_recourse} implies~$A^\xi(y) \leq \sum_{v \in V_+(H)} w_v y^\xi_v$, so we conclude that~$A^\xi(y) \geq B^\xi(x)$ dominates inequality~\eqref{ineq:proof_dominance_sri}, as desired.

To obtain the desired projected SRI, construct~$(\hat{\alpha}, \hat{\beta})$ as follows. For each~$\xi \in [N]$ and~$v \in V_+$, set~$\hat{\beta}^\xi_v = p_\xi \, \bar{\beta}^\xi_v$; and for each~$\xi \in [N]$ and~$S \subseteq V_+$, set 
\begin{equation*}
    \hat{\alpha}^\xi_S =
    \begin{cases}
    p_\xi \, \bar{\alpha}^\xi_{H'}, & \text{if~$S = V_+(H')$ for some~$H' \subseteq H$,}\\
    0, & \text{otherwise.}
    \end{cases}
\end{equation*}
Observe that, for every scenario~$\xi \in [N]$,~$(\bar{\alpha}^\xi, \bar{\beta}^\xi)$ is feasible for Formulation~\eqref{formulation:dual_recourse}, so~$(\hat{\alpha}, \hat{\beta}) \in \dualset$.

Summing inequality~\eqref{ineq:proof_dominance_sri} over all scenarios, we learn that, for every~$\bar{x} \in \mc{X}$, inequality $$\theta(V_+(H)) \geq \sum_{\xi \in [N]} \sum_{v \in V_+(H)} p_\xi w_v y^\xi_v \geq \sum_{\xi \in [N]} p_\xi \, A^\xi(y) \geq \sum_{\xi \in [N]} p_\xi \, B^\xi(x) \geq \mc{L}^*(H) \cdot \Wof(\bar{x} ; \supsetXh{H})$$ is valid for~$\proj_{\theta}(\Gammaset(\bar{x}, \hat{\alpha}, \hat{\beta}))$. By Lemma~\ref{lemma:projection_single_inequality}, the projected SRI~\eqref{ineq:projected_sri} associated with~$(\hat{\alpha}, \hat{\beta})$ dominates the partial route cut~\eqref{ineq:partial_route_cut}.
\end{proof}

\section{Proof of Theorem~\ref{theorem:single_inequality}}
\label{appendix:proof_theorem_single_inequality}

\theoremsingleinequality*
\begin{proof}
    Fix~$\alpha \geq 0$ and~$\beta \leq 0$ and write
    \begin{align*}
    z' 
    &\coloneqq \min\{c^\T x + \allones^\T \theta : x \in \mc{X},~\theta \in \proj_{\theta}(\Gammaset(x, \alpha, \beta))\}\\
    & = \min\left\{c^\T x + \allones^\T \theta : x \in \mc{X},~(\theta, y) \in \Gammaset(x, \alpha, \beta)\right\} \\
    &= \min~c^\T x + \sum_{v \in V_+} \sum_{\xi \in [N]} p_\xi w_v y^\xi_v \\
    &\quad~\text{s.t.~~}
    \sum_{\xi \in [N]} \sum_{v \in V_+} \left(\beta^\xi_v + \sum_{S \subseteq V_+ : v \in S} \alpha^\xi_S \right) y^\xi_v - \sum_{\xi \in [N]} \sum_{S \subseteq V_+} \alpha^\xi_S x(E(S)) \geq \nufunction{\alpha, \beta}, \\
    &\qquad\quad~~ x \in \mc{X}, ~~y \geq 0.
    \end{align*}
    Dualizing the inequality with~$(\alpha, \beta)$ into the objective function gives the Lagrangian dual: 
    \begin{equation*}
    z' = \max_{\eta \geq 0} \left\{ \min_{x \in \mc{X}, y \geq 0} \left\{
    \begin{aligned} 
    & c^\T x + \sum_{v \in V_+} \sum_{\xi \in [N]} p_\xi w_v y^\xi_v - \eta \sum_{\xi \in [N]} \sum_{v \in V_+} \left(\beta^\xi_v + \sum_{S \subseteq V_+ : v \in S} \alpha^\xi_S \right) y^\xi_v \\
    & \quad + \eta \sum_{\xi \in [N]} \sum_{S \subseteq V_+} \alpha^\xi_S x(E(S)) + \eta \, \nufunction{\alpha, \beta}
    \end{aligned} \right\} \right\}.
    \end{equation*}
    Taking~$\eta = 1$ we conclude that~$z' \geq \sigmax{\alpha} + \sigmay{\alpha, \beta} + \nufunction{\alpha, \beta}$.
\end{proof}

\section{Extension to Assumption~\ref{assumption:recourse_lower_bound2}}
\label{appendix:extension}

Fix~$w \in \Q^{A}_+$ and~$b 
\in \Z^{V_+}_+$ according to Assumption~\ref{assumption:recourse_lower_bound2}. Let~$R = (v_1, \ldots, v_\ell)$ and consider the recourse function
\begin{equation}
    \tag{$\tilde{\mc{Q}}^*$}
    \tilde{\mc{Q}}^*(R) \coloneqq \min \left\{ \sum_{\xi \in [N]} \sum_{i \in [\ell]} p_\xi \min\{w_{(v_i, u)} : u \in \{0, v_{i - 1}, v_{i + 1}\} \} \cdot (\bar{y}_R)^\xi_{v_i} : ~ y \in \policy(R) \cap [\mathbf{0}, b]^N \right\}.
\end{equation}
\noindent
The SRI-based formulation developed in Section~\ref{section:scenario_optimal} can be adapted to the recourse function~$\tilde{\mc{Q}}^*$ as follows. 

We introduce variables~$s_v$, for every~$v \in V_+$, and~$r_a$, for every~$a \in A$. To gain an intuition for these variables, suppose that~$\bar{x} \in \mc{X} \cap \Z^E$ is a first-stage solution such that~$R = (v_1, \ldots, v_\ell) \in \mc{R}(\bar{x})$ and let~$i \in [\ell]$ (recall that~$v_{0} = v_{\ell + 1} = 0$). For this solution, we set variable~$s_{v_i}$ so that it represents the expected number of failures that route~$R$ observes at customer~$v_i$. This value is then ``distributed'' along the variables~$r$ via the equation~$s_v = r_{(v_i, 0)} + r_{(v_i, v_{i - 1})} + r_{(v_i, v_{i + 1})}$. The expected recourse cost at a customer~$v \in V_+$ is then given by~$w_{(v_i, 0)}\, r_{(v_i, 0)} + w_{(v_i, v_{i - 1})}\, r_{(v_i, v_{i - 1})} + w_{(v_i, v_{i + 1})}\, r_{(v_i, v_{i + 1})}$. 

Our formulation is detailed in Corollary~\ref{corollary:sri_extension}. Note that variables~$s_v$ could be replaced by the expression~$\sum_{\xi \in [N]} p_\xi y^\xi_v$, but keeping these variables now will be useful later on.

\begin{corollary}
    \label{corollary:sri_extension}
    Problem~$\vrpsd{\mc{X}, \tilde{\mc{Q}}^*}$ can be formulated as
    \begin{subequations}
    \label{formulation:sri_extension}
    \begin{align} 
    \min ~~& c^\T x + \sum_{v \in V_+} \theta_v, & \nonumber \\
    \text{s.t.~~} & y^\xi(\set) \geq \kscen{\set} + x(E(\set)) - |\set|, & \forall \emptyset \subsetneq \set \subseteq V_+, \xi \in [N], \\
    & \theta_v \geq \sum_{u \in V \setminus \{v\}} w_{(v, u)} r_{(v, u)}, & \forall v \in V_+, \\
    & s_v \geq \sum_{\xi \in [N]} p_\xi y^\xi_v, & \forall v \in V_+, \\
    & s_v = \sum_{u \in V \setminus \{v\}} r_{(v, u)}, & \forall v \in V_+, \\
    & r_{(v, u)} \leq b_v \cdot x_{\{v, u\}}, &  \forall (v, u) \in V_+ \times V_+, u \neq v, \\
    & (x, \theta, y, r) \in (\mc{X} \cap \Z^E) \times \R^{V_+}_+ \times [\mathbf{0}, b]^N \times \R^{A}_+.
    \end{align}
    \end{subequations}
\end{corollary}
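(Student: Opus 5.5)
The plan is to prove that, for every fixed $\bar{x} \in \mc{X} \cap \Z^E$, the minimum over the remaining variables of Formulation~\eqref{formulation:sri_extension} equals $c^\T\bar{x} + \sum_{R \in \mc{R}(\bar{x})} \tilde{\mc{Q}}^*(R)$. Optimizing over $\bar{x}$ then gives $\vrpsd{\mc{X}, \tilde{\mc{Q}}^*}$. We fix $\bar{x}$ throughout.

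First we eliminate $\theta$, $s$ and $r$ for fixed $(\bar{x}, y)$. Because $\bar{x}$ is integral and encodes a routing plan, the constraint $r_{(v,u)} \le b_v \bar{x}_{\{v,u\}}$ forces $r_{(v,u)}=0$ unless $u$ is a neighbour of $v$ on its route, i.e.\ $u \in \{v_{i-1}, v_{i+1}\}$ for $v=v_i$. Arcs $(v,0)$ carry no such bound, so the depot is always an admissible target.

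For each $v=v_i$, the cheapest way to split $s_v$ among the admissible $r$-variables places all of it on $\arg\min\{w_{(v_i,u)} : u \in \{0,v_{i-1},v_{i+1}\}\}$. This is allowed because $s_v \le b_v \le b_v \bar{x}_{\{v,u\}}$ whenever $\bar{x}_{\{v,u\}} \ge 1$. We should verify this bound: since $y \le b$ and $\sum_\xi p_\xi = 1$, we can take $s_v = \sum_\xi p_\xi y^\xi_v \le b_v$, and increasing $s_v$ beyond that never helps because $w \ge 0$. The inner minimum over $(\theta,s,r)$ is therefore
\[
\sum_{i} \tilde{w}_{v_i} \sum_{\xi} p_\xi y^\xi_{v_i}, \qquad \tilde{w}_{v_i} \coloneqq \min\{w_{(v_i,u)} : u \in \{0, v_{i-1}, v_{i+1}\}\},
\]
where $v_i$ ranges over all customers with respect to their routes in $\mc{R}(\bar{x})$. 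There is a corner case when $v_{i\pm1}=0$: the arc $(v_i,0)$ is always admissible and is already in the minimum, so it causes no problem. For single-customer routes, where $\bar{x}_{0v}=2$, the same reasoning applies.

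Once $\theta, s, r$ are eliminated, the problem becomes minimizing the linear function $\sum_v \tilde{w}_v \sum_\xi p_\xi y^\xi_v$ over the $y$ satisfying the SRIs with $x=\bar{x}$, together with $y \in [\mathbf{0},b]^N$. By Theorem~\ref{theorem:sri_formulation}, that feasible set is $\conv(\policy(\bar{x}) \cap [\mathbf{0},b]^N)$. A linear objective attains its minimum at an integral point, so the value equals the minimum over $\policy(\bar{x}) \cap [\mathbf{0},b]^N$. Fact~\ref{fact:policy_split} then decomposes this minimum route by route into $\sum_{R} \tilde{\mc{Q}}^*(R)$, since the weights $\tilde{w}$ are determined by each route separately.

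The main obstacle is the bookkeeping in the first step. We must argue carefully that the $r$-bounds with integral $\bar{x}$ exactly restrict $r$ to route-adjacent arcs plus the depot arc, and that the capacity $b_v\bar{x}_{\{v,u\}}$ never binds. The second concern is that Theorem~\ref{theorem:sri_formulation} was proven in the $y$-space only; this is fine because the reduced objective depends only on $y$ once $\bar{x}$ is fixed.
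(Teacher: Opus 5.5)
Your proposal is correct and follows essentially the same route as the paper's proof. Fix $\bar{x}$, and use $r_{(v,u)} \le b_v \bar{x}_{\{v,u\}}$ together with $\sum_{\xi} p_\xi y^\xi_v \le b_v$ to collapse the $(\theta, s, r)$-part into the single weight $\tilde{w}_{v_i}$ per customer. Then invoke Theorem~\ref{theorem:sri_formulation} and Fact~\ref{fact:policy_split} to pass from the SRI polytope to $\sum_{R \in \mc{R}(\bar{x})} \tilde{\mc{Q}}^*(R)$.
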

\begin{proof}
    We first eliminate variables~$s$ to simplify the formulation:
    \begin{subequations}
    \label{formulation:sri_extension2}
    \begin{align} 
    \min ~~& c^\T x + \sum_{v \in V_+} \theta_v, & \nonumber \\
    \text{s.t.~~} & y^\xi(\set) \geq \kscen{\set} + x(E(\set)) - |\set|, & \forall \emptyset \subsetneq \set \subseteq V_+, \xi \in [N], \\
    & \theta_v \geq \sum_{u \in V \setminus \{v\}} w_{(v, u)} r_{(v, u)}, & \forall v \in V_+, \\
    & \sum_{u \in V \setminus \{v\}} r_{(v, u)} \geq \sum_{\xi \in [N]} p_\xi y^\xi_v, & \forall v \in V_+, \\
    & r_{(v, u)} \leq b_v \cdot x_{\{v, u\}}, & \forall (v, u) \in V_+ \times V_+, u \neq v, \\
    & (x, \theta, y, r) \in (\mc{X} \cap \Z^E) \times \R^{V_+}_+ \times [\mathbf{0}, b]^N \times \R^{A}_+.
    \end{align}
    \end{subequations}
    
    Now fix~$x$ to some~$\bar{x} \in \mc{X} \cap \Z^E$ in the above formulation. For each~$R = (v_1, \ldots, v_\ell) \in \mc{R}(\bar{x})$ and~$i \in [\ell]$, set~$\tilde{w}_{v_i} = \min\{w_{(v_i, 0)}, w_{(v_i, v_{i - 1})}, w_{(v_i, v_{i + 1})} \}$. Note that we may assume that if variable~$r_{(v_i, u)}$ attains a positive value, then~$w_{(v_i, u)} = \tilde{w}_{v_i}$. Moreover, since~$\sum_{\xi \in [N]} p_\xi = 1$, we know that~$\sum_{\xi \in [N]} p_\xi y^\xi_v \leq b_v$, for all~$v \in V_+$. Hence, fixing~$x = \bar{x}$ we rewrite Formulation~\eqref{formulation:sri_extension2} as:
    \begin{align*} 
    \min ~~& c^\T \bar{x} + \sum_{v \in V_+} \theta_v, & \nonumber \\
    \text{s.t.~~} & y^\xi(\set) \geq \kscen{\set} + \bar{x}(E(\set)) - |\set|, & \forall \emptyset \subsetneq \set \subseteq V_+, \xi \in [N], \\
    & \theta_v \geq \tilde{w}_v \sum_{\xi \in [N]} p_\xi y^\xi_v, & \forall v \in V_+, \\
    & y \in [\mathbf{0}, b]^N.
    \end{align*}

    \noindent
    By Theorem~\ref{theorem:sri_formulation}, this is equivalent to
    \begin{align*}
        &~~c^\T \bar{x} + \min \left\{\sum_{\xi \in [N]} \sum_{v \in V_+} p_\xi \tilde{w}_v y^\xi_v : y \in \conv(\policy(\bar{x}) \cap [\mathbf{0}, b]^N) \right\} \\
        = &~~c^\T \bar{x} + \min \left\{ \sum_{\xi \in [N]} \sum_{v \in V_+} p_\xi \tilde{w}_v y^\xi_v : y \in \policy(\bar{x}) \cap [\mathbf{0}, b]^N \right\},
    \end{align*}
    so we are done by Fact~\ref{fact:policy_split}.
\end{proof}

Using the same argument as in Proposition~\ref{proposition:valid_cuts_recourse}, we thus obtain the following result.
\begin{corollary}
    \label{corollary:valid_cuts_recourse_extension}
    For any recourse function~$\mc{Q}$ that satisfies Assumption~\ref{assumption:recourse_lower_bound2},
    $$\Fq{\mc{X}, \mc{Q}} \subseteq \tsc{proj}_{(x, \theta)} \left(\{(x, \theta, y, r, s) : (x, \theta, y, r, s) \text{ is feasible for Formulation~\eqref{formulation:sri_extension}}\}\right).$$
\end{corollary}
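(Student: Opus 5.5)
The statement is Corollary~\ref{corollary:valid_cuts_recourse_extension}. I will follow the proof of Proposition~\ref{proposition:valid_cuts_recourse} almost verbatim. The only new work is to build the auxiliary variables $(y, r, s)$ from the route structure of $\bar{x}$ and the local weight choice in Assumption~\ref{assumption:recourse_lower_bound2}.

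Fix $(\bar{x}, \bar{\theta}) \in \Fq{\mc{X}, \mc{Q}}$. For each route $R = (v_1, \ldots, v_\ell) \in \mc{R}(\bar{x})$, pick $\bar{y}_R \in \policy(R) \cap [\mathbf{0}, b]^N$ as given by Assumption~\ref{assumption:recourse_lower_bound2}. Combine these vectors via Fact~\ref{fact:policy_split} into $\bar{y} \in \policy(\bar{x}) \cap [\mathbf{0}, b]^N$. Since $\bar{y}$ is integer, Lemma~\ref{lemma:flow_formulation} and Proposition~\ref{proposition:benders}, with the right-hand side rounded as in Theorem~\ref{theorem:sri_formulation}, show that $(\bar{x}, \bar{y})$ satisfies every SRI.

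Next define $\bar{s}_v \coloneqq \sum_{\xi} p_\xi \bar{y}^\xi_v$. For each $v = v_i$ in route $R$, choose an arc $(v_i, u^*)$ with $u^* \in \{0, v_{i-1}, v_{i+1}\}$ attaining $\min\{w_{(v_i,u)} : u \in \{0, v_{i-1}, v_{i+1}\}\}$. Set $\bar{r}_{(v_i, u^*)} = \bar{s}_{v_i}$ and all other $\bar{r}_{(v,u)} = 0$. Then $\bar{s}_v = \sum_u \bar{r}_{(v,u)}$ holds by construction.

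The linking constraint $r_{(v,u)} \le b_v x_{\{v,u\}}$ needs checking only when $u \in V_+$ (when $u^* = 0$ it is not imposed). If $u^* = v_{i\pm1}$ is a customer, then $\{v_i, u^*\}$ is an edge of $R$, so $\bar{x}_{\{v_i,u^*\}} \ge 1$. Also $\bar{s}_{v} \le b_v$ because $\sum_\xi p_\xi = 1$ and $\bar{y} \le b$. This case split is the only mildly delicate point. The degenerate case of a single-customer route also works, since then the neighbors are the depot and the choice is $u^* = 0$.

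Finally, the $\theta$ constraint is the chain
\[
\bar{\theta}_v \ge \sum_{R' \in \mc{R}(\bar{x})} \mc{Q}(R', v) \ge \mc{Q}(R, v) \ge w_{(v,u^*)} \sum_{\xi} p_\xi (\bar{y}_R)^\xi_v = w_{(v,u^*)} \bar{s}_v = \sum_{u} w_{(v,u)} \bar{r}_{(v,u)}.
\]
The first inequality is the definition of $\Fq{\mc{X}, \mc{Q}}$, the second uses nonnegativity of the disaggregation, and the third is Assumption~\ref{assumption:recourse_lower_bound2}. The equality $\sum_\xi p_\xi (\bar{y}_R)^\xi_v = \bar{s}_v$ uses Fact~\ref{fact:policy_split}, which gives $(\bar{y}_R)^\xi_v = \bar{y}^\xi_v$ for $v \in V_+(R)$.

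Domain and nonnegativity conditions are immediate. Hence $(\bar{x}, \bar{\theta}, \bar{y}, \bar{r}, \bar{s})$ is feasible for Formulation~\eqref{formulation:sri_extension}, and $(\bar{x}, \bar{\theta})$ lies in its projection.
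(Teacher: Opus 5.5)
Your proof is correct and follows the paper's approach: the paper proves this corollary by invoking the argument of Proposition~\ref{proposition:valid_cuts_recourse}, and your route-by-route construction of $\bar{y}$ via Fact~\ref{fact:policy_split} together with the chain of inequalities for $\bar{\theta}_v$ is that argument. You also make explicit two points the paper leaves implicit. First, the combined integer $\bar{y}$ satisfies the SRIs, which you get from Lemma~\ref{lemma:flow_formulation}, Proposition~\ref{proposition:benders} and rounding, or directly from Theorem~\ref{theorem:sri_formulation}. Second, putting all of $\bar{s}_v$ on a minimizing arc $(v,u^*)$ satisfies the linking constraints, since $\bar{s}_v \le b_v$ and $\bar{x}_{\{v,u^*\}} \ge 1$ whenever $u^*$ is a customer neighbour.
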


We can also extend some of our projection results for this setting. Let~$\bar{x} \in \mc{X} \cap \Z^E$ and define
\begin{equation}
\label{set:sri_bar2}
\widetilde{\tsc{sri}}(\bar{x}) \coloneqq 
\left\{(s,  y) \in \R^{V_+}_+ \times [\mathbf{0}, b]^N :~
\begin{aligned}
    & y^\xi(\set) \geq \kscen{\set} + \bar{x}(E(\set)) - |\set|, & \forall \emptyset \subsetneq \set \subseteq V_+,~\xi \in [N] \\
    & s_v \geq \sum_{\xi \in [N]} p_\xi y^\xi_v, & \forall v \in V_+
\end{aligned}
\right\}.
\end{equation}

\noindent
That is,~$\widetilde{\tsc{sri}}(\bar{x})$ is equivalent to~$\srihat(\bar{x})$ in the special case that~$w_v = 1$, for every customer~$v \in V_+$. 

For any~$\alpha \geq 0$,~$\beta \leq 0$ and~$v \in V_+$, define
\begin{equation}
\label{eq:phi_tilde}
\tilde{\phi}_v(\alpha, \beta) \coloneqq \left( \max_{\xi \in [N]}\left\{ \frac{\beta^\xi_v + \sum_{\set \subseteq V_+ : v \in \set} \alpha^\xi_\set}{p_\xi} \right\} \right)^+.
\end{equation}
By the same argument as in Proposition~\ref{proposition:proj_gamma_characterization}, we obtain the following characterization of~$\proj_{s} (\widetilde{\tsc{sri}}(\bar{x}))$:
\begin{corollary}
    For every~$\bar{x} \in \mc{X}$,
    {\small
    \begin{equation*}
    \proj_{s} (\widetilde{\tsc{sri}}(\bar{x})) =
    \left\{ s\in \R^{V_+}_+ : \sum_{v \in V_+} \tilde{\phi}_v(\alpha, \beta) \, s_v \geq \sum_{\xi \in [N]} \sum_{\set \subseteq V_+} \alpha^\xi_\set \, \bar{x}(E(\set)) + \nufunction{\alpha, \beta}, \quad \forall \alpha \geq 0, \beta \leq 0
    \right\}.
    \end{equation*}}
\end{corollary}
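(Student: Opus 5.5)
This corollary is Proposition~\ref{proposition:proj_gamma_characterization} specialized to unit weights, and I would prove it by repeating that argument with $\theta$ replaced by $s$ and $w$ replaced by $\allones$. Since every weight is now positive, the exceptional case of Lemma~\ref{lemma:projection_single_inequality} (some $w_v = 0$ with a positive coefficient) never occurs. So the condition $(\alpha,\beta)\in\dualset$ reduces to $\alpha \geq 0$, $\beta \leq 0$, and $\phi_v$ becomes $\tilde{\phi}_v$ for every $v \in V_+$.

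For the inclusion ``$\subseteq$'', fix $\bar{s} \in \proj_s(\widetilde{\tsc{sri}}(\bar{x}))$ with a witness $\bar{y}$, and fix $\alpha \geq 0$, $\beta \leq 0$.
\begin{enumerate}[(i)]
\item Take the conic combination of the SRIs (with $x=\bar{x}$) weighted by $\alpha$ and the bounds $y^\xi_v \leq b_v$ weighted by $\beta$. The point $\bar{y}$ satisfies the resulting inequality~\eqref{ineq:conic_comb}.
\item Hence $(\bar{s},\bar{y})$ lies in the analogue of $\Gammaset(\bar{x},\alpha,\beta)$ with $w=\allones$.
\item Apply Lemma~\ref{lemma:projection_single_inequality} with $w=\allones$, $a^\xi_v = \beta^\xi_v + \sum_{\set \ni v}\alpha^\xi_\set$ and $h = \sum_{\xi}\sum_{\set}\alpha^\xi_\set\,\bar{x}(E(\set)) + \nufunction{\alpha,\beta}$.
\item The second bullet of that lemma gives exactly $\sum_v \tilde{\phi}_v(\alpha,\beta)\, \bar{s}_v \geq h$. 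Dropping $y\geq 0$ only enlarges the relaxed set used in the lemma, so the implication still holds.
\end{enumerate}

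The inclusion ``$\supseteq$'' is where the real work lies, and I would use LP duality. Suppose $s' \geq 0$ satisfies all the listed inequalities but $s' \notin \proj_s(\widetilde{\tsc{sri}}(\bar{x}))$. Then the LP in $y$ with constraints $\sum_\xi p_\xi y^\xi_v \leq s'_v$, the SRIs, $y\leq b$ and $y\geq 0$ is infeasible. By Farkas' lemma (or by the Lagrangian argument in the proof of Proposition~\ref{proposition:proj_gamma_characterization}), there exist multipliers $\alpha \geq 0$ on the SRIs and $\beta \leq 0$ on the upper bounds such that $y$ cannot satisfy both the aggregated inequality~\eqref{ineq:conic_comb} and $\sum_\xi p_\xi y^\xi_v \leq s'_v$ for all $v$. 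In other words, $s' \notin \proj_s(\Gamma)$ for that pair $(\alpha,\beta)$. Lemma~\ref{lemma:projection_single_inequality} then says $s'$ violates the corresponding inequality $\sum_v \tilde{\phi}_v(\alpha,\beta)\, s_v \geq h$, which contradicts the assumption on $s'$.

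The main obstacle is obtaining this single aggregated certificate. There are infinitely many (exponentially many) SRIs, but the LP is finite and the separating multipliers can be taken on finitely many rows, so the Farkas step goes through. The only care needed is to keep the $s$-constraints outside the dualization, which is exactly what Lemma~\ref{lemma:projection_single_inequality} then handles.
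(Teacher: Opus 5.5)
Your proposal is correct and is essentially the paper's argument. The paper gets this corollary by rerunning the proof of Proposition~\ref{proposition:proj_gamma_characterization} with $w=\allones$, so that $\dualset$ becomes all pairs $\alpha\geq 0$, $\beta\leq 0$ and $\phi_v$ becomes $\tilde{\phi}_v$; that proof is LP/Lagrangian duality followed by Lemma~\ref{lemma:projection_single_inequality}, and your Farkas certificate is the theorem-of-the-alternative form of the same duality step.

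One small precision concerns which constraint is relaxed. The set whose projection Lemma~\ref{lemma:projection_single_inequality} characterizes keeps $y\geq 0$ and drops $y\leq b$, not the reverse. So in the converse direction, state the output of your Farkas step as infeasibility over all $y\geq 0$ with no upper bounds. That is exactly what the certificate gives and what the lemma needs. The weaker statement $s'\notin\proj_s(\Gamma)$ alone would not suffice, because $\proj_s(\Gamma)$ can be strictly smaller than the lemma's set.
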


\noindent
Consequently, Formulation~\eqref{formulation:sri_extension} can be reformulated as
\begin{subequations}
\label{formulation:sri_extension3}
\begin{align} 
\min ~~& c^\T x + \sum_{v \in V_+} \theta_v, & \nonumber \\
\text{s.t.~~} & \tilde{\phi}_v(\alpha, \beta) \, s_v \geq \sum_{\xi \in [N]} \sum_{\set \subseteq V_+} \alpha^\xi_\set \, x(E(\set)) + \nufunction{\alpha, \beta}, & \forall \alpha \geq 0, \beta \leq 0, \\
& \theta_v \geq \sum_{u \in V \setminus \{v\}} w_{(v, u)} r_{(v, u)}, & \forall v \in V_+, \\
& s_v = \sum_{u \in V \setminus \{v\}} r_{(v, u)}, & \forall v \in V_+, \label{eq:form:sri3:svar} \\
& r_{(v, u)} \leq b_v \cdot x_{\{v, u\}}, & \forall (v, u) \in V_+ \times V_+, u \neq v, \\
& (x, \theta, r) \in (\mc{X} \cap \Z^E) \times \R^{V_+}_+ \times \R^A_+,
\end{align}
\end{subequations}
and we may eliminate variables~$s$ by using Equations~\eqref{eq:form:sri3:svar}.

\section{\mytitle{Separation algorithms}}
\label{appendix:separation_algorithm}

The following are the pseudocodes for our separation of ILS cuts and SRIs. In these algorithms, we implicitly assume that we have flags indicating whether~$\mc{X} = \Xcvrp$ or~$\mc{X} = \Xsub$, and whether~$\mc{Q} = \optQ$ or $\mc{Q} = \Qc$. Algorithm~\tsc{GetPartialRoutes} corresponds to the routine described in~\cite{part1} to find partial routes, and the value~$\mc{L}_C(H)$ refers to their proposed recourse lower bound. When~$H$ corresponds to a route~$R$, we have~$\mc{L}_C(H) = \Qc(R)$. We refer the reader to~\cite{part1} for more details on the computation of~$\mc{L}_C(H)$.

\begin{algorithm}[H]
\begin{algorithmic}[1]
    \Procedure {\textsc{AddSetCutOrSRI}}{$\bar{x}, \bar{\theta}, \mathtt{use\_sri}, \set, k'$}        
    \State{Compute~$\mc{L}^*(\set, k')$ using the greedy algorithm mentioned in Section~\ref{subsection:set_cuts_comparison}.}

    \If{$\mathtt{use\_sri} = \mathtt{true}$}
        \State{$\Xi \gets \{\xi \in [N] : \kscen{\set} + \bar{x}(E(\set)) - |\set| > 0 \}$}
        \If{$\mc{X} = \Xcvrp$}
            \State{$\Xi \gets \{\xi \in \Xi : \kscen{\set} > \kbar{\set}\}$}
        \EndIf
        \If{$\sum_{v \in \set} \max_{\xi \in \Xi} \left\{\frac{1}{p_\xi w_v}\right\} \, \bar{\theta}_v < |\Xi| \cdot (\bar{x}(E(\set)) - |\set|) + \sum_{\xi \in \Xi} \kscen{\set}$}
            \State{Add projected SRI~$\sum_{v \in \set} \max_{\xi \in [\Xi]} \left\{\frac{1}{p_\xi w_v}\right\} \, \theta_v \geq |\Xi| \cdot (x(E(\set)) - |\set|) + \sum_{\xi \in \Xi} \kscen{\set}$.}
            \State{\textbf{return}~$\mathtt{true}$}
        \EndIf
    \Else
        \If{$\bar{\theta}(\set) < \mc{L}^*(\set, k') \cdot (\bar{x}(E(\set)) - |\set| + k')$}
            \State{Add set cut~$\theta(\set) \geq \mc{L}^*(\set, k') \cdot (x(E(\set)) - |\set| + k')$.}
            \State{\textbf{return}~$\mathtt{true}$}
        \EndIf
    \EndIf
    \State{\textbf{return}~$\mathtt{false}$}
\EndProcedure
\end{algorithmic}
\caption{\textsc{AddSetCutOrSRI}}
\label{algorithm:add_set_cut}
\end{algorithm}

\begin{algorithm}[H]
\begin{algorithmic}[1]
    \Procedure {\textsc{AddPartialRouteCutOrSRI}}{$\bar{x}, \bar{\theta}, \mathtt{use\_sri}, H$}        
    \State{Compute~$\mc{L}^*(H)$ using Formulation~\eqref{formulation:recourse_lower_bound}.}

    \If{$\mathtt{use\_sri} = \mathtt{true}$}
        \State{Compute~$(\alpha, \beta)$ using Lemma~\ref{lemma:existence_dual} and Theorem~\ref{theorem:dominance_sri}.}
        \If{$\sum_{v \in V_+} \phiv(\alpha, \beta) \, \bar{\theta}_v < \sum_{\xi \in [N]} \sum_{\set \subseteq V_+} \alpha^\xi_\set \, \bar{x}(E(\set)) + \nufunction{\alpha, \beta}$}
            \State{Add projected SRI~$\sum_{v \in V_+} \phiv(\alpha, \beta) \, \theta_v \geq \sum_{\xi \in [N]} \sum_{\set \subseteq V_+} \alpha^\xi_\set \, x(E(\set)) + \nufunction{\alpha, \beta}$.}
            \State{\textbf{return}~$\mathtt{true}$}
        \EndIf
    \Else
        \If{$\bar{\theta}(V_+(H)) < \mc{L}^*(H) \cdot \Wof(\bar{x} ; \supsetXh{H})$}
            \State{Add partial route cut~$\theta(V_+(H)) \geq \mc{L}^*(H) \cdot \Wof(x ; \supsetXh{H})$.}
            \State{\textbf{return}~$\mathtt{true}$}
        \EndIf
    \EndIf
    \State{\textbf{return}~$\mathtt{false}$}
\EndProcedure
\end{algorithmic}
\caption{\textsc{AddPartialRouteCutOrSRI}}
\label{algorithm:add_partial_route_cut}
\end{algorithm}

\begin{algorithm}[H]
\textbf{Input:} A candidate solution~$(\bar{x}, \bar{\theta}) \in \R^E_+ \times \Q^{V_+}_+$ and a boolean flag~$\mathtt{use\_sri}$, which indicates if we use projected SRIs or not.

\begin{algorithmic}[1]
    \Procedure {\textsc{SeparationVRPSD}}{$\bar{x}, \bar{\theta}, \mathtt{use\_sri}$}
    \State {Call CVRPSEP to get a family of customer sets~$\mc{\set} \subseteq 2^{V_+}$ (if~$\mc{X} = \Xsub$, we use~$\bar{d}(V_+) + 1$ as the capacity in CVRPSEP).}
    \For {$\set \in \mc{\set}$}
        \State{$k' \gets 1 + (\kbar{\set} - 1) \cdot \mb{I}(\mc{X} = \Xcvrp)$}
        \State{Add inequality~$x(E(\set)) \leq |\set| - k'$.}
        \State{$\tsc{AddSetCutOrSRI}(\bar{x}, \bar{\theta}, \mathtt{use\_sri}, \set, k')$}
    \EndFor
    \If {$\mc{\set} \neq \emptyset$}
        \State{\textbf{return}}
    \EndIf
    \State {$\mc{H} \gets \tsc{GetPartialRoutes}(\bar{x}, \bar{\theta})$}
    \For {$H \in \mc{H}$}
        \State{$k' \gets 1 + (\kbar{\set} - 1) \cdot \mb{I}(\mc{X} = \Xcvrp)$}
        \If{$\tsc{AddSetCutOrSRI}(\bar{x}, \bar{\theta}, \mathtt{use\_sri}, V_+(H), k')$}
            \State{\textbf{continue}}
        \EndIf
        \If{$\tsc{AddPartialRouteCutOrSRI}(\bar{x}, \bar{\theta}, \mathtt{use\_sri}, H)$}
            \State{\textbf{continue}}
        \EndIf
        \If{$\mc{Q} = \Qc$~\textbf{and}~$\bar{\theta}(V_+(H)) < \mc{L}_C(H) \cdot \Wof(\bar{x} ; \supsetXh{H})$}
            \State{Add inequality~$\theta(V_+(H)) \geq \mc{L}_C(H) \cdot \Wof(x ; \supsetXh{H})$.}
        \EndIf
    \EndFor
\EndProcedure
\end{algorithmic}
\caption{\textsc{SeparationVRPSD}}
\label{algorithm:vrpsd_separation}
\end{algorithm}

\section{\mytitle{Detailed computational experiments}}
\label{appendix:experiments}

The following figures were generated analogously to Figure~\ref{figure:experiments_time}, but the considered instances were split according to the first-stage feasibility region~$\mc{X}$, the recourse function~$\mc{Q}$, and the instance set. For example, in Figure~\ref{figure:jabali_cvrp_scenopt_time}, a point~$p = (p_1, p_2) \in \R^2$ (with~$p_1 \leq 1800$) on the curve of~\tsc{sri} indicates that algorithm~\tsc{sri} solved a fraction~$p_2$ of the instances from~\cite{jabali2014} with~$\mc{X} = \Xcvrp$ and~$\mc{Q} = \optQ$ within~$p_1$ seconds. For~$p_1 > 1800$, the value of~$p_2$ is defined relative to the final optimality gap.

\begin{figure}
    \centering
    \subfloat[Execution time and final optimality gaps.]{\includegraphics[scale=0.15]{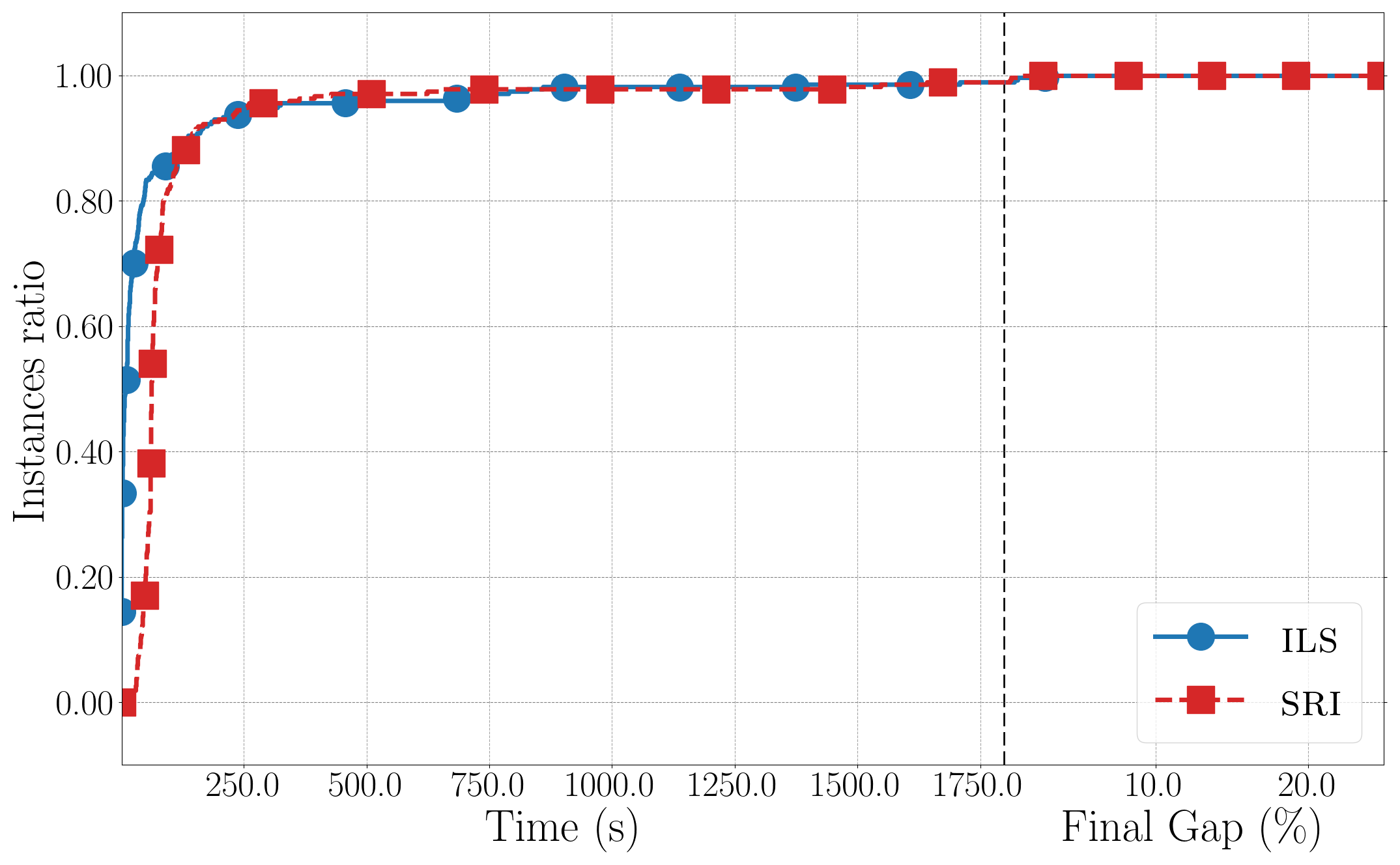} \label{figure:jabali_cvrp_scenopt_time}}
    \hfill
    \subfloat[Root gaps.]{\includegraphics[scale=0.15]{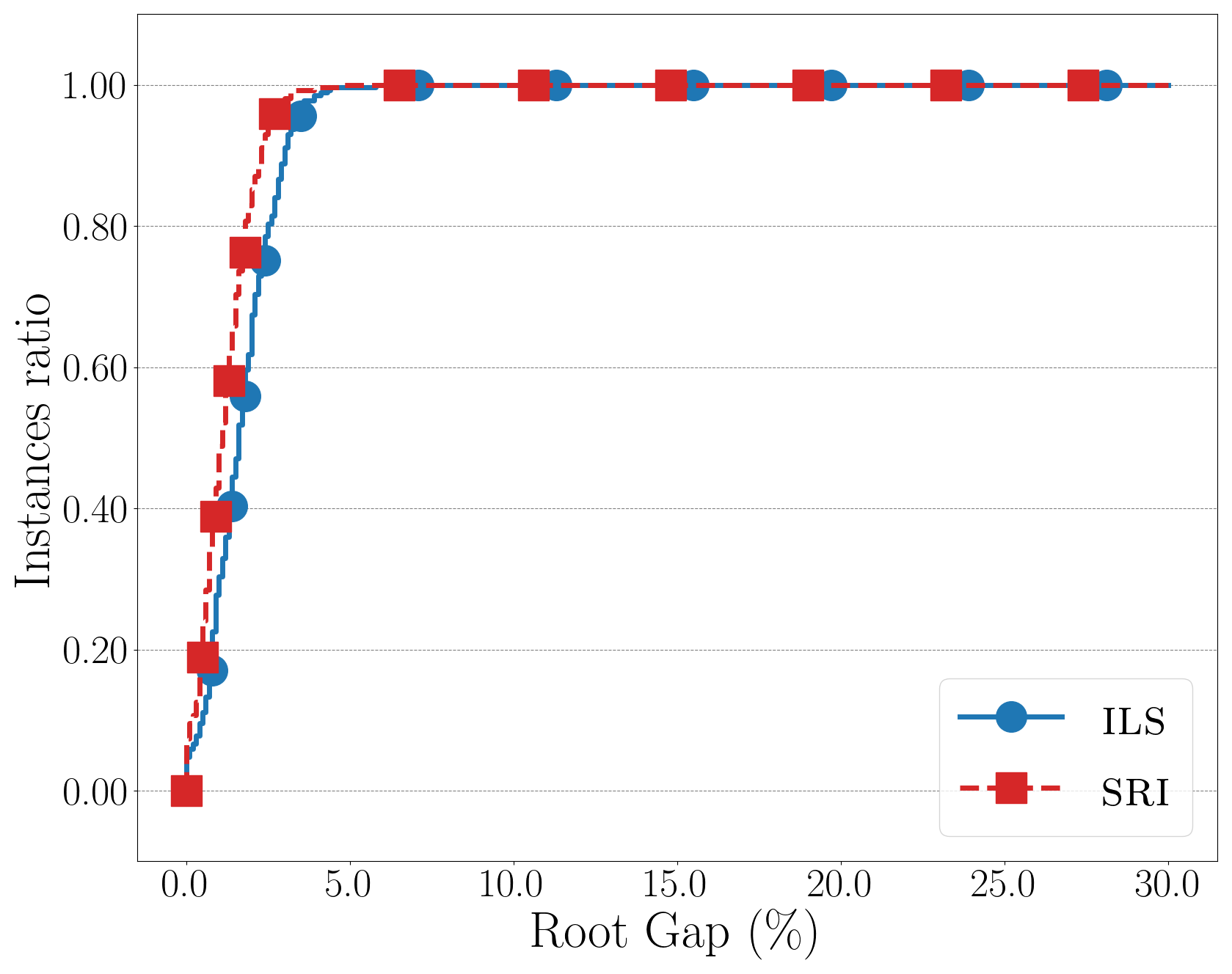}\label{figure:jabali_cvrp_scenopt_gap}}
    \\[0.5cm]
    \caption{Empirical cumulative distribution of the execution times and root gaps for~\cite{jabali2014} instances, with~$\mc{X} = \Xcvrp$ and~$\mc{Q} = \optQ$.}
    \label{figure:jabali_cvrp_scenopt}
\end{figure}

\begin{figure}
    \centering
    \subfloat[Execution time and final optimality gaps.]{\includegraphics[scale=0.15]{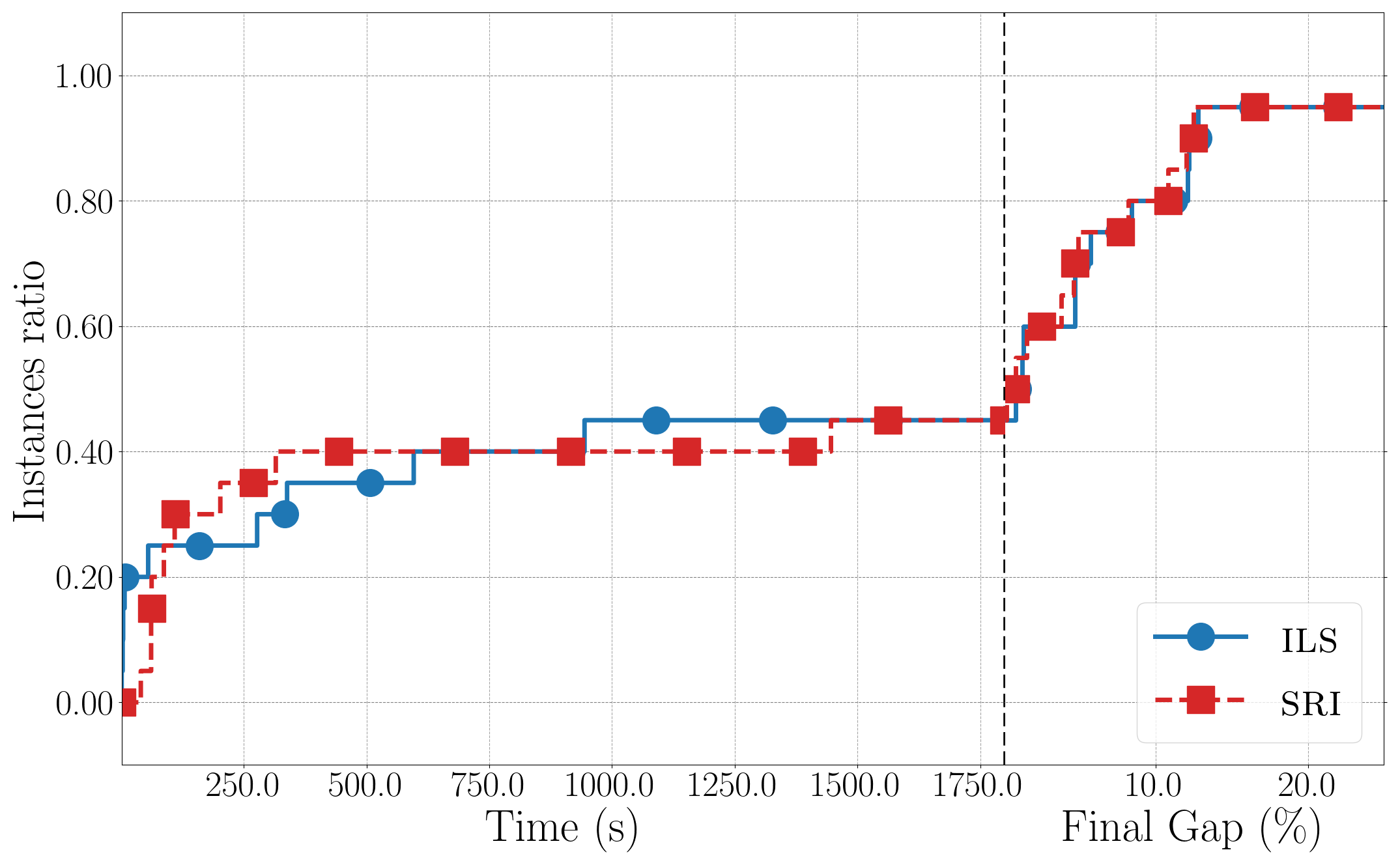} \label{figure:dinh_cvrp_scenopt_time}}
    \hfill
    \subfloat[Root gaps.]{\includegraphics[scale=0.15]{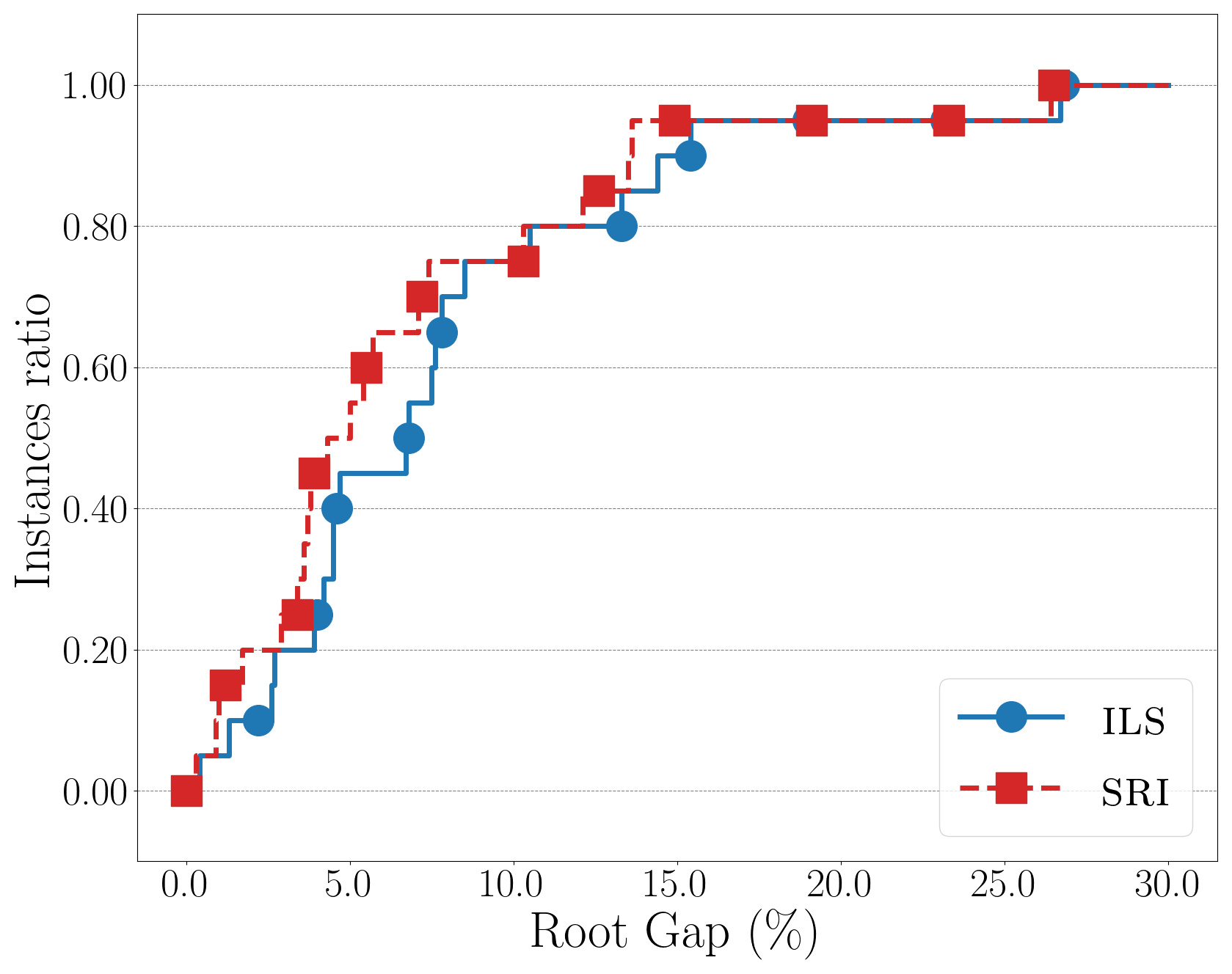}\label{figure:dinh_cvrp_scenopt_gap}}
    \\[0.5cm]
    \caption{Empirical cumulative distribution of the execution times and root gaps for~\cite{Dinh2018} instances, with~$\mc{X} = \Xcvrp$ and~$\mc{Q} = \optQ$.}
    \label{figure:dinh_cvrp_scenopt}
\end{figure}

\begin{figure}
    \centering
    \subfloat[Execution time and final optimality gaps.]{\includegraphics[scale=0.15]{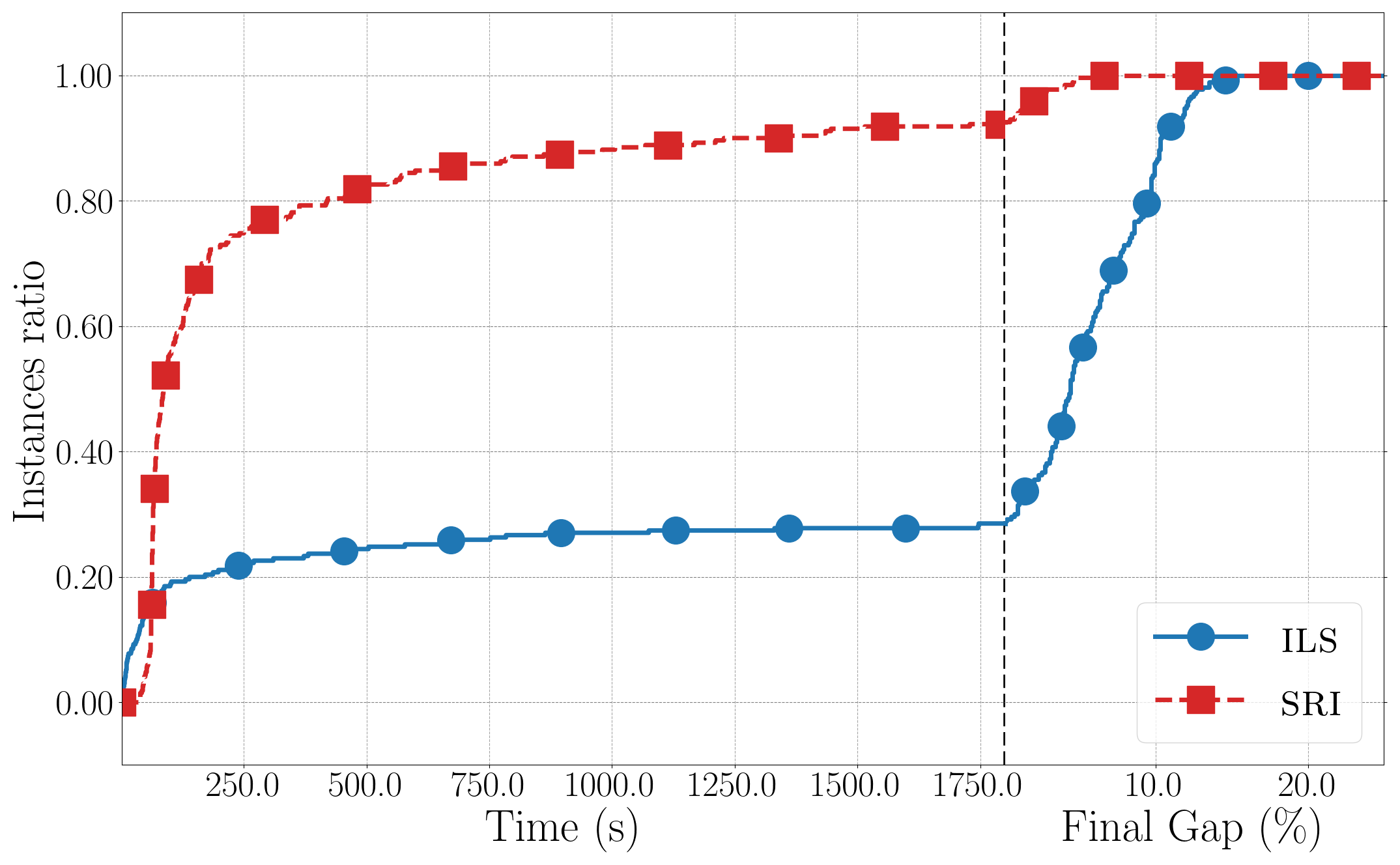} \label{figure:jabali_basic_scenopt_time}}
    \hfill
    \subfloat[Root gaps.]{\includegraphics[scale=0.15]{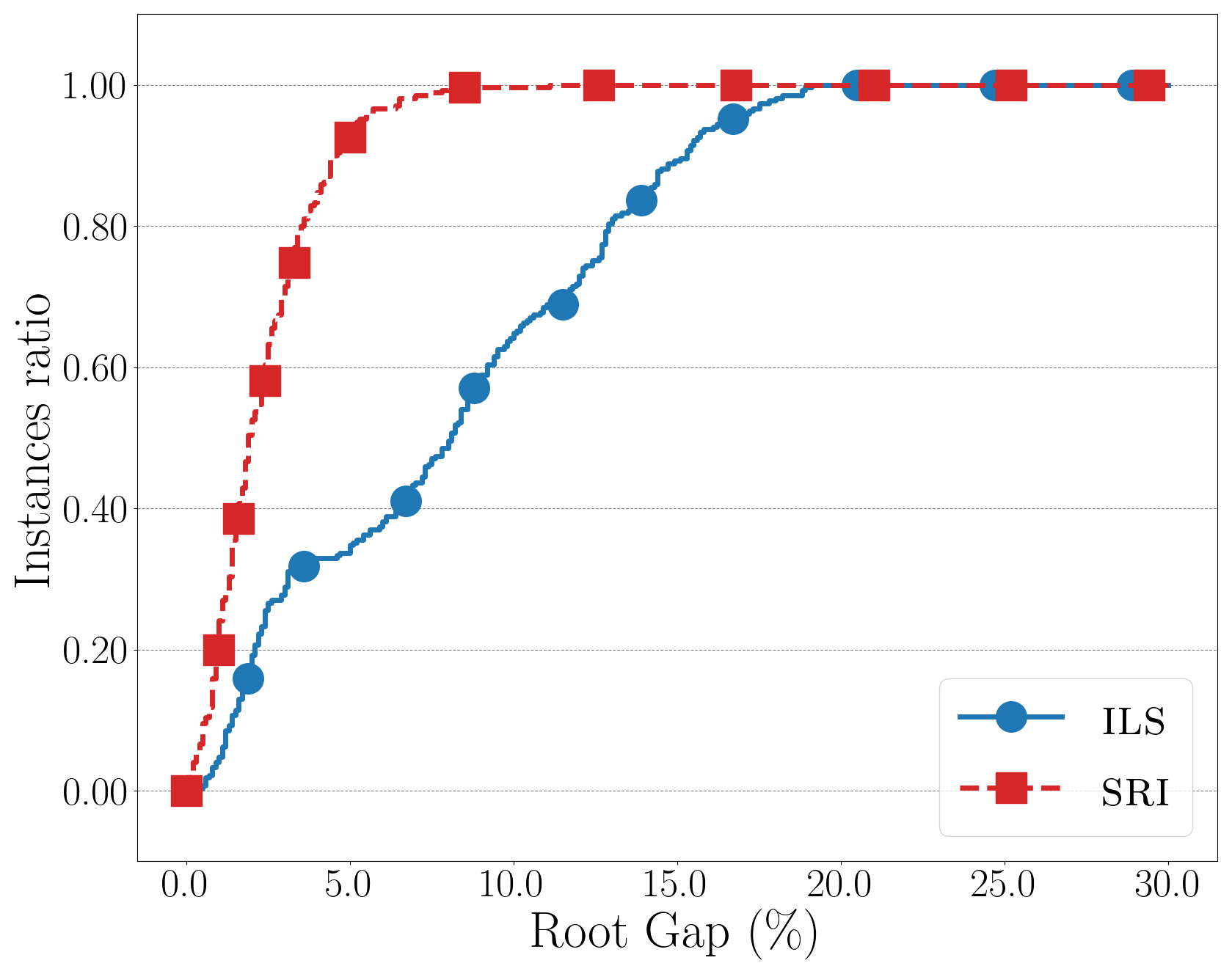}\label{figure:jabali_basic_scenopt_gap}}
    \\[0.5cm]
    \caption{Empirical cumulative distribution of the execution times and root gaps for~\cite{jabali2014} instances, with~$\mc{X} = \Xsub$ and~$\mc{Q} = \optQ$.}
    \label{figure:jabali_basic_scenopt}
\end{figure}

\begin{figure}
    \centering
    \subfloat[Execution time and final optimality gaps.]{\includegraphics[scale=0.15]{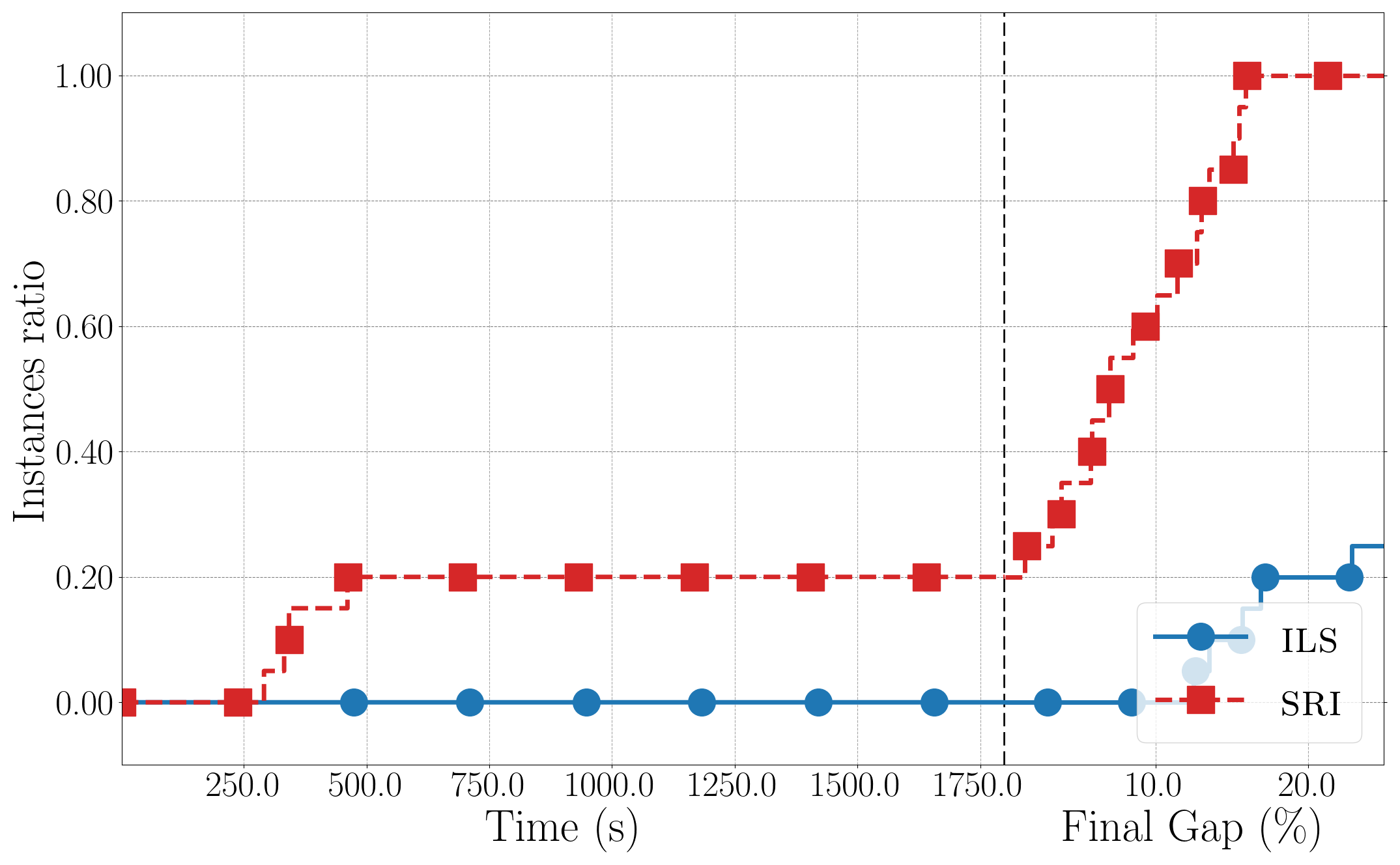} \label{figure:dinh_basic_scenopt_time}}
    \hfill
    \subfloat[Root gaps.]{\includegraphics[scale=0.15]{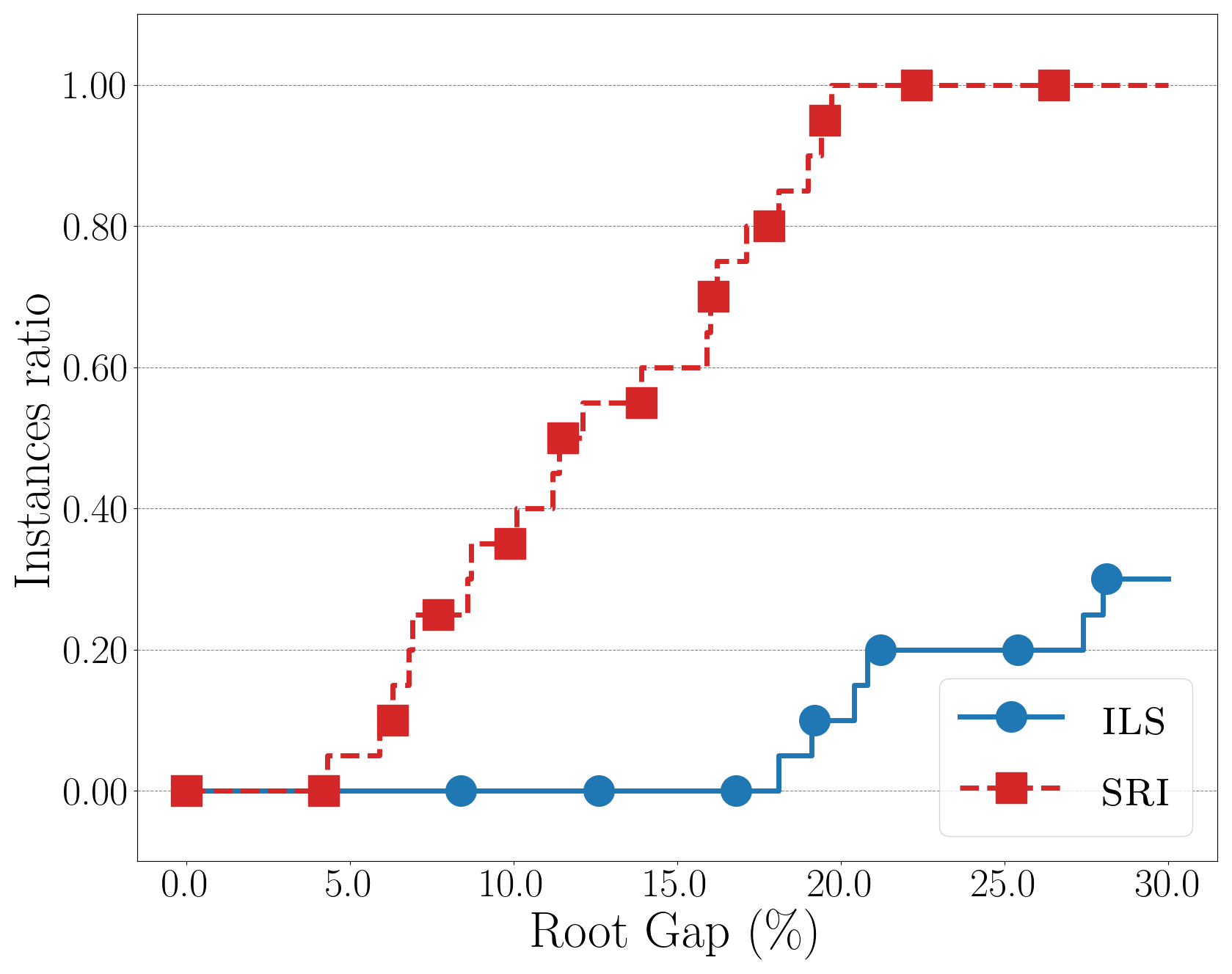}\label{figure:dinh_basic_scenopt_gap}}
    \\[0.5cm]
    \caption{Empirical cumulative distribution of the execution times and root gaps for~\cite{Dinh2018} instances, with~$\mc{X} = \Xsub$ and~$\mc{Q} = \optQ$.}
    \label{figure:dinh_basic_scenopt}
\end{figure}

\begin{figure}
    \centering
    \subfloat[Execution time and final optimality gaps.]{\includegraphics[scale=0.15]{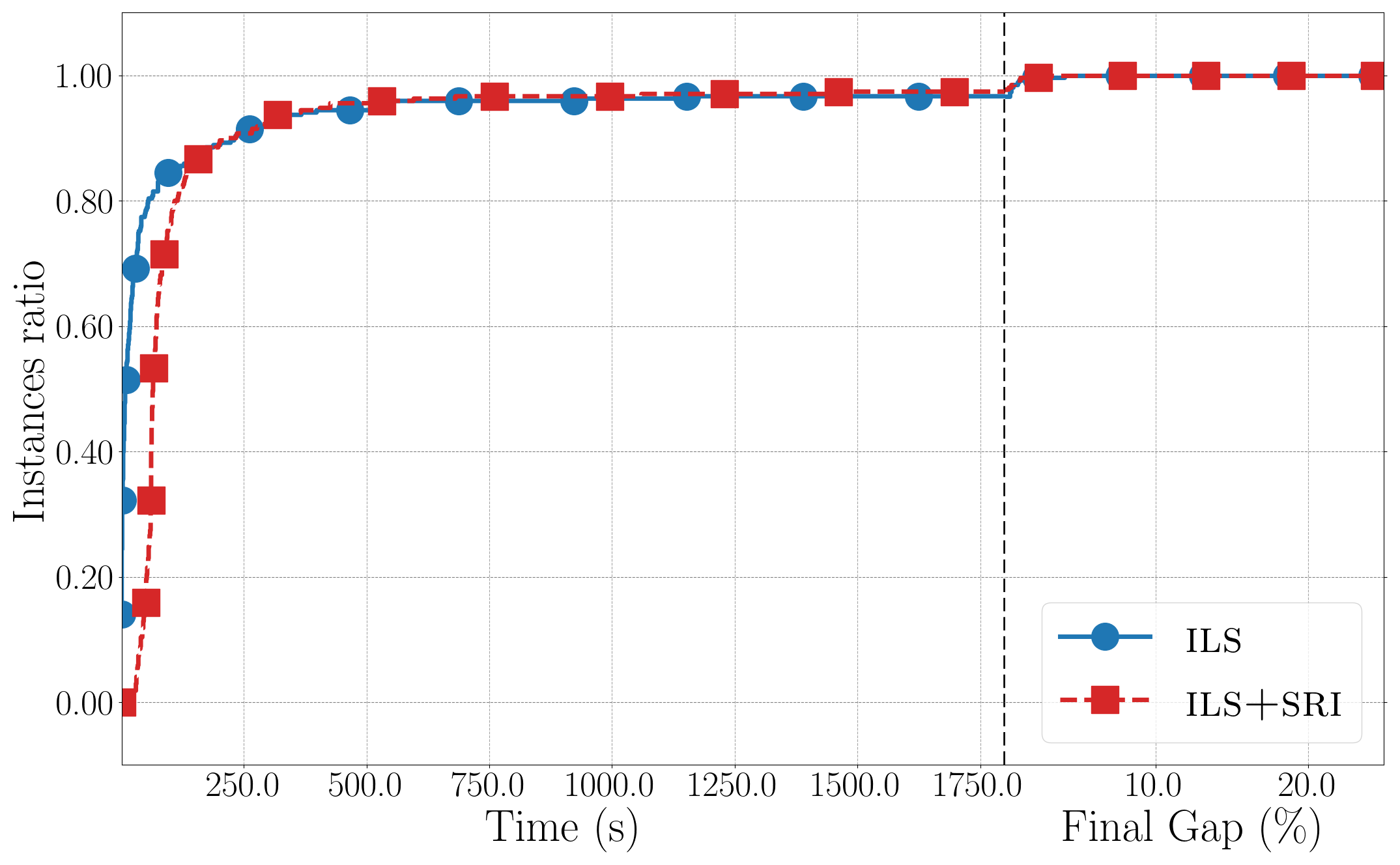} \label{figure:jabali_cvrp_classical_time}}
    \hfill
    \subfloat[Root gaps.]{\includegraphics[scale=0.15]{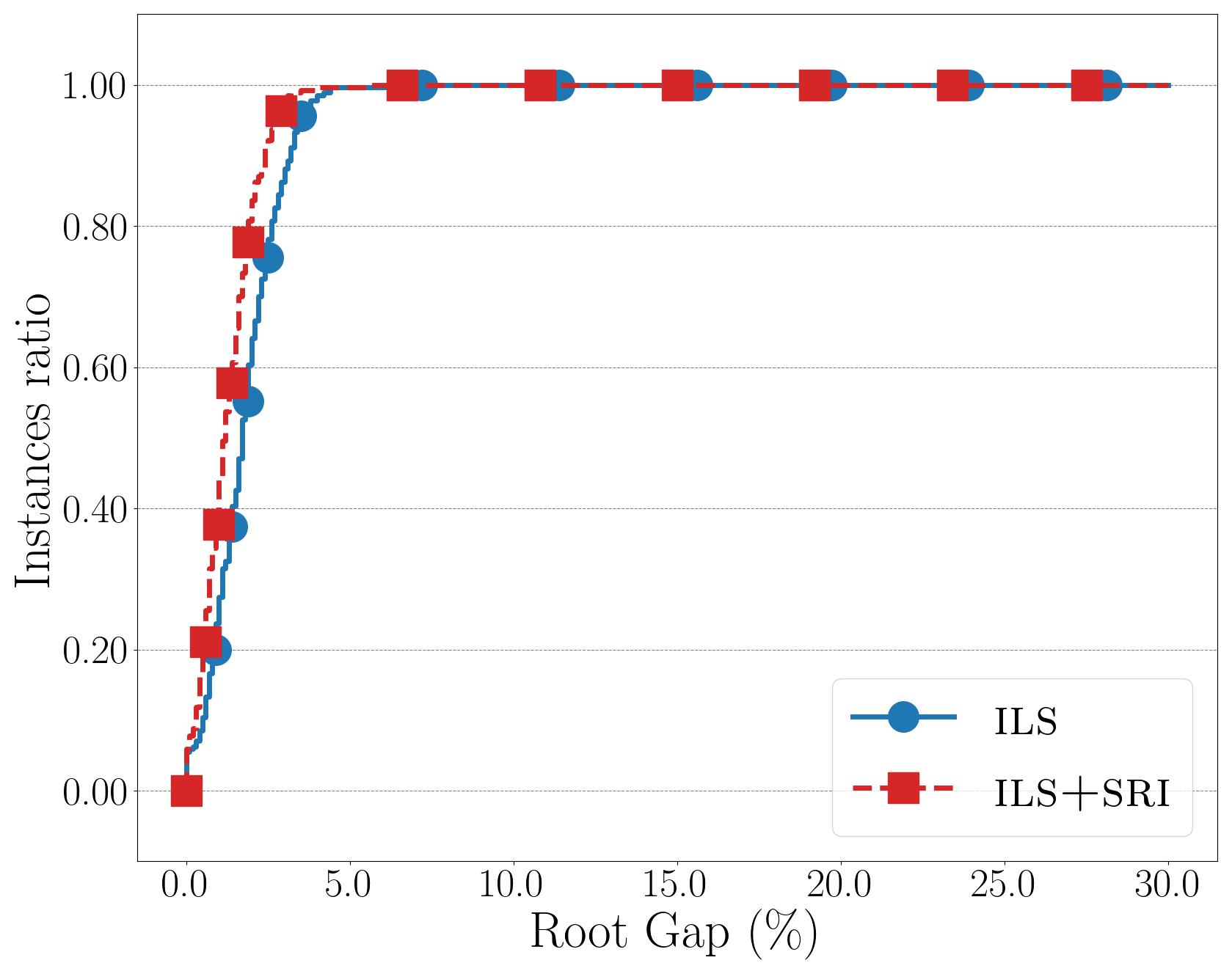}\label{figure:jabali_cvrp_classical_gap}}
    \\[0.5cm]
    \caption{Empirical cumulative distribution of the execution times and root gaps for~\cite{jabali2014} instances, with~$\mc{X} = \Xcvrp$ and~$\mc{Q} = \Qc$.}
    \label{figure:jabali_cvrp_classical}
\end{figure}

\begin{figure}
    \centering
    \subfloat[Execution time and final optimality gaps.]{\includegraphics[scale=0.15]{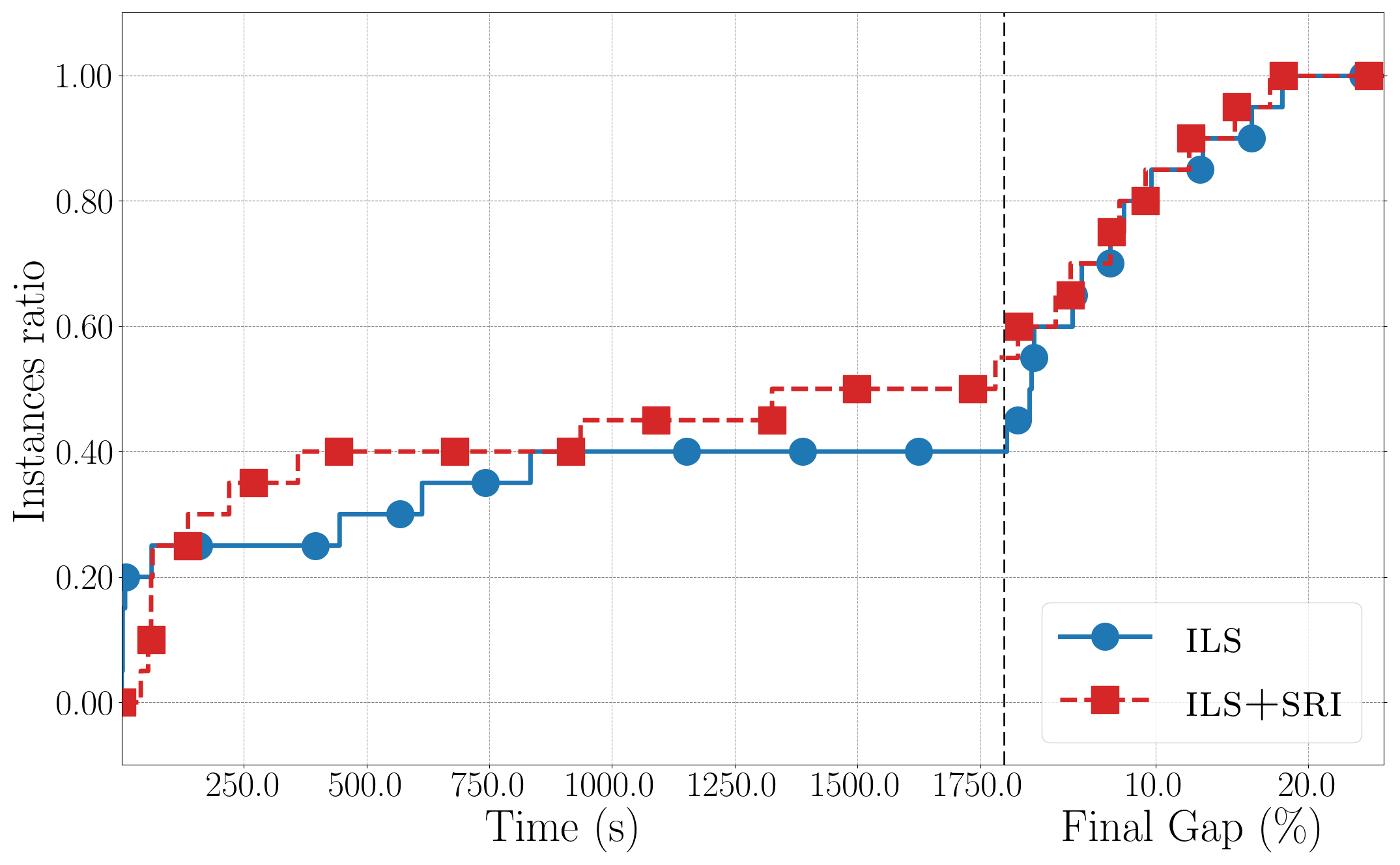} \label{figure:dinh_cvrp_classical_time}}
    \hfill
    \subfloat[Root gaps.]{\includegraphics[scale=0.15]{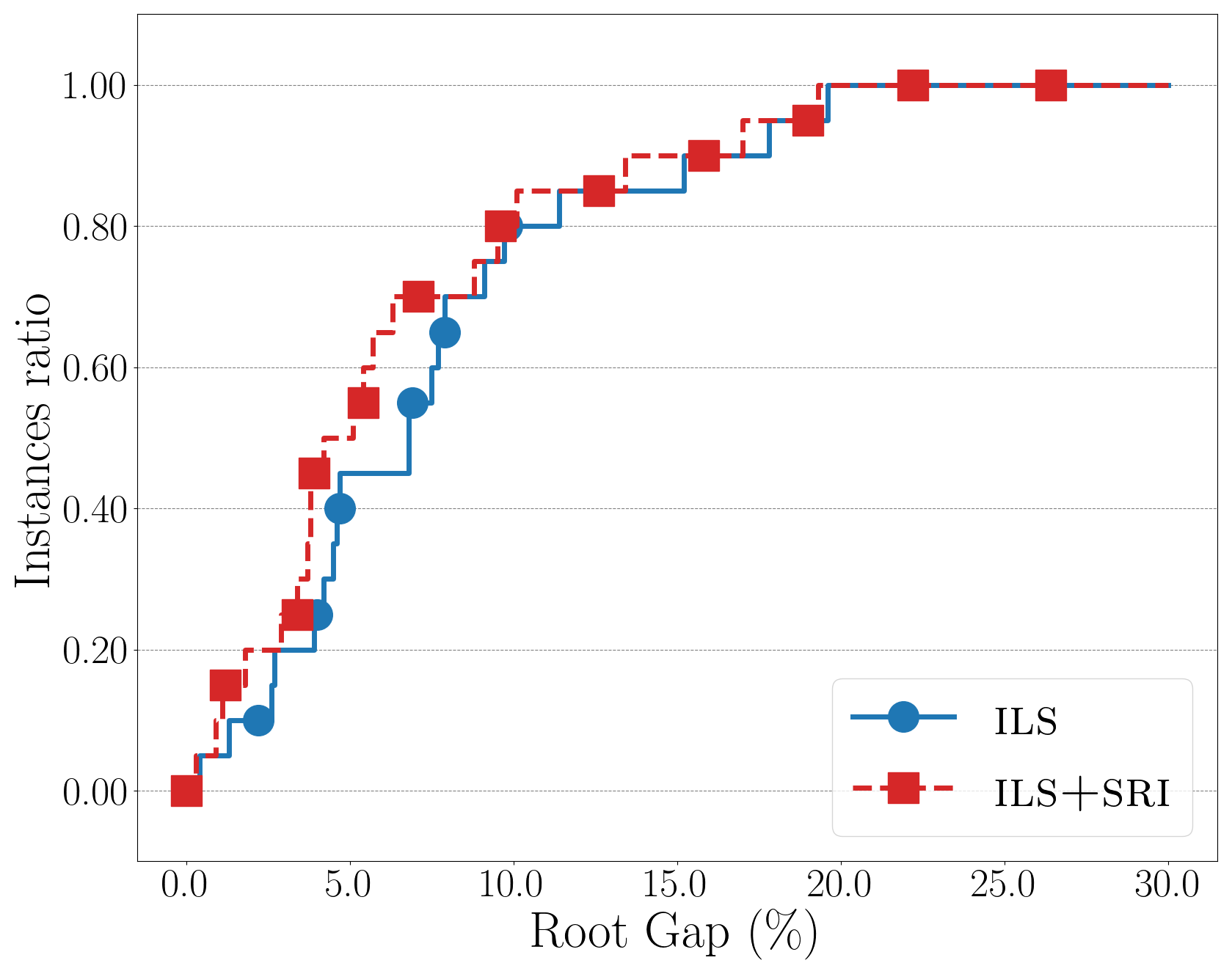}\label{figure:dinh_cvrp_classical_gap}}
    \\[0.5cm]
    \caption{Empirical cumulative distribution of the execution times and root gaps for~\cite{Dinh2018} instances, with~$\mc{X} = \Xcvrp$ and~$\mc{Q} = \Qc$.}
    \label{figure:dinh_cvrp_classical}
\end{figure}

\begin{figure}
    \centering
    \subfloat[Execution time and final optimality gaps.]{\includegraphics[scale=0.15]{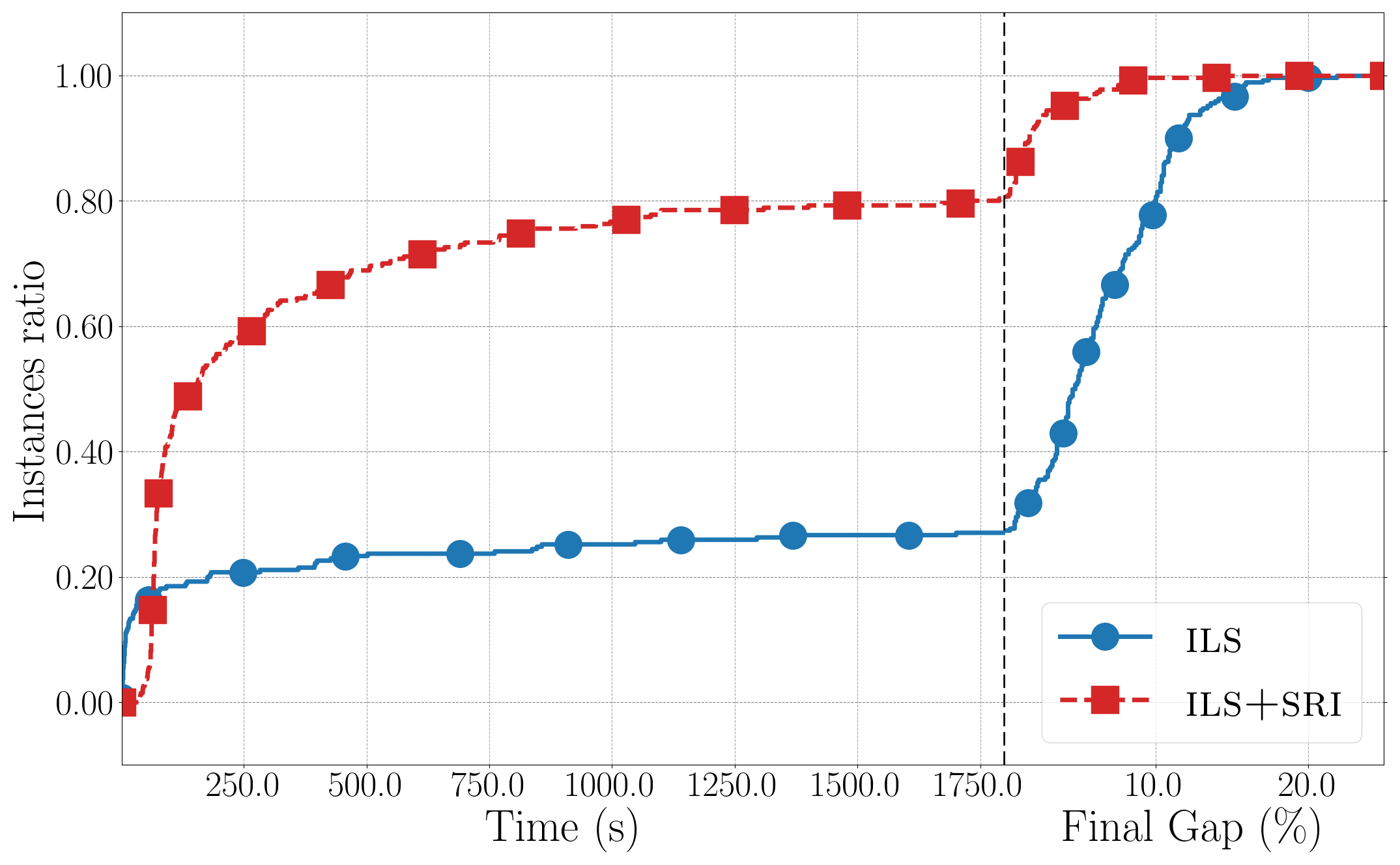} \label{figure:jabali_basic_classical_time}}
    \hfill
    \subfloat[Root gaps.]{\includegraphics[scale=0.15]{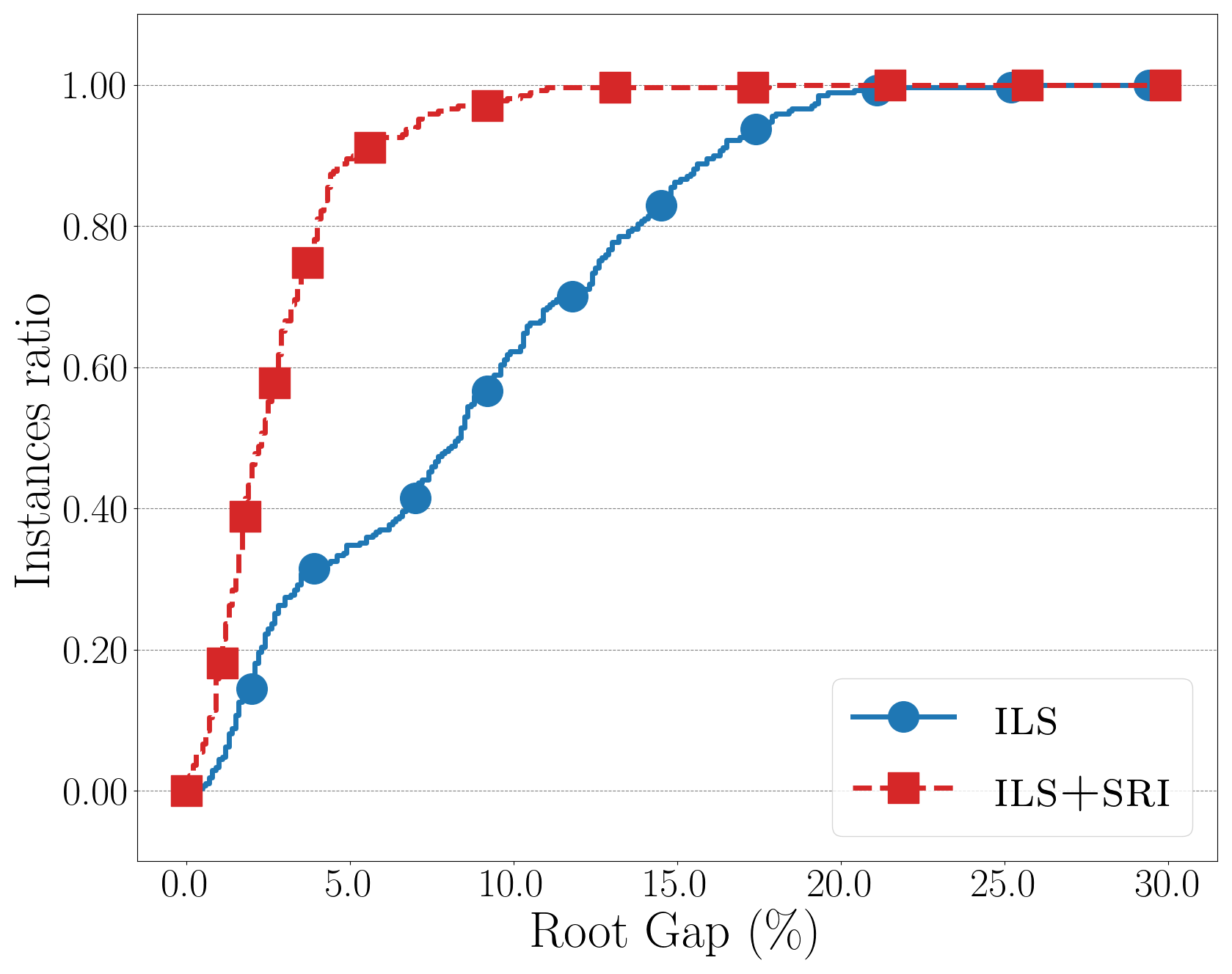}\label{figure:jabali_basic_classical_gap}}
    \\[0.5cm]
    \caption{Empirical cumulative distribution of the execution times and root gaps for~\cite{jabali2014} instances, with~$\mc{X} = \Xsub$ and~$\mc{Q} = \Qc$.}
    \label{figure:jabali_basic_classical}
\end{figure}

\begin{figure}
    \centering
    \subfloat[Execution time and final optimality gaps.]{\includegraphics[scale=0.15]{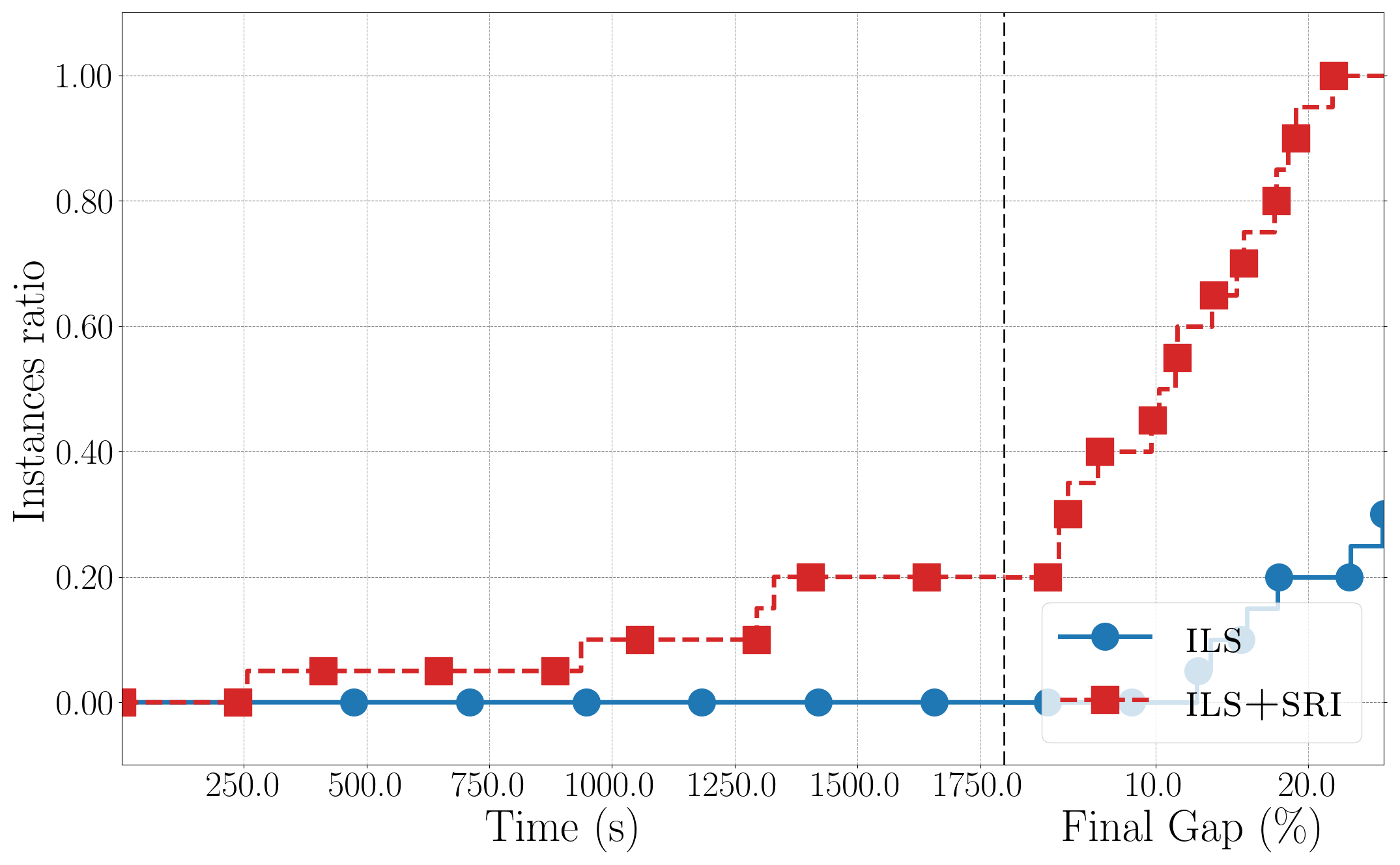} \label{figure:dinh_basic_classical_time}}
    \hfill
    \subfloat[Root gaps.]{\includegraphics[scale=0.15]{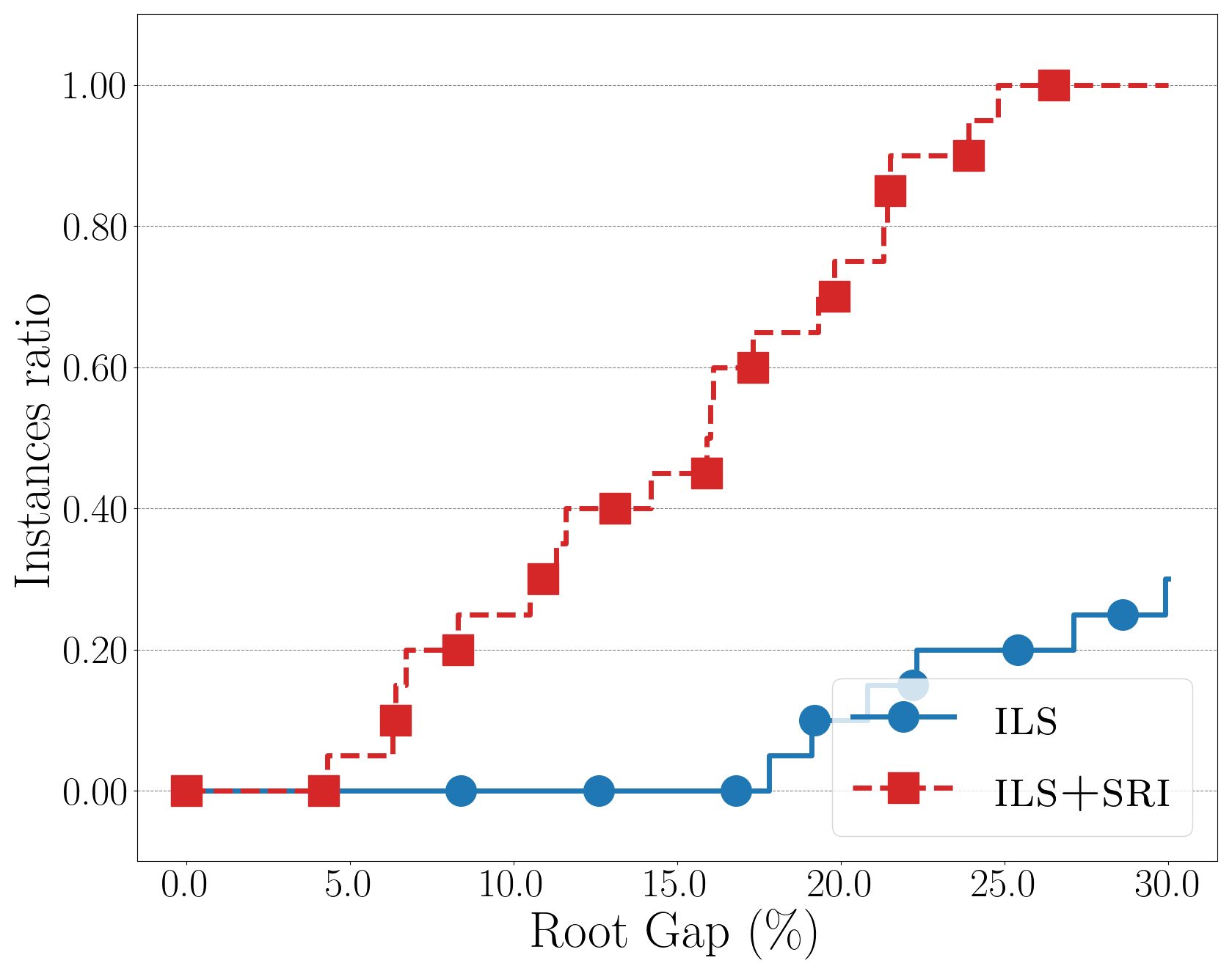}\label{figure:dinh_basic_classical_gap}}
    \\[0.5cm]
    \caption{Empirical cumulative distribution of the execution times and root gaps for~\cite{Dinh2018} instances, with~$\mc{X} = \Xsub$ and~$\mc{Q} = \Qc$.}
    \label{figure:dinh_basic_classical}
\end{figure}

\end{APPENDICES}

\end{document}